\documentclass{amsart}
\usepackage{amsmath, amssymb, amscd, amsthm, amsfonts, amstext, enumitem,
  amsbsy,  mathrsfs, hyperref,  upgreek, mathtools, stmaryrd, mathabx}
\usepackage[all]{xy}
\usepackage[shadow]{todonotes}

% environments
\newenvironment{enumerate-(a)}{\begin{enumerate}[label={\upshape (\alph*)}, leftmargin=2pc]}{\end{enumerate}}

\newenvironment{enumerate-(a)-r}{\begin{enumerate}[label={\upshape (\alph*)}, leftmargin=2pc,resume]}{\end{enumerate}}

\newenvironment{enumerate-(A)}{\begin{enumerate}[label={\upshape (\Alph*)}, leftmargin=2pc]}{\end{enumerate}}

\newenvironment{enumerate-(A)-r}{\begin{enumerate}[label={\upshape (\Alph*)}, leftmargin=2pc,resume]}{\end{enumerate}}

\newenvironment{enumerate-(i)}{\begin{enumerate}[label={\upshape (\roman*)}, leftmargin=2pc]}{\end{enumerate}}

\newenvironment{enumerate-(i)-r}{\begin{enumerate}[label={\upshape (\roman*)}, leftmargin=2pc,resume]}{\end{enumerate}}

\newenvironment{enumerate-(I)}{\begin{enumerate}[label={\upshape (\Roman*)}, leftmargin=2pc]}{\end{enumerate}}

\newenvironment{enumerate-(I)-r}{\begin{enumerate}[label={\upshape (\Roman*)}, leftmargin=2pc,resume]}{\end{enumerate}}

\newenvironment{enumerate-(1)}{\begin{enumerate}[label={\upshape (\arabic*)}, leftmargin=2pc]}{\end{enumerate}}

\newenvironment{enumerate-(1)-r}{\begin{enumerate}[label={\upshape (\arabic*)}, leftmargin=2pc,resume]}{\end{enumerate}}

\newenvironment{enumerate-(Ia)}{\begin{enumerate}[label={\upshape (I\alph*)}, leftmargin=2pc]}{\end{enumerate}}

\newenvironment{enumerate-(IIa)}{\begin{enumerate}[label={\upshape (II\alph*)}, leftmargin=2pc]}{\end{enumerate}}

\newenvironment{enumerate-(IIc)}{\begin{enumerate}[label={\upshape (IIc)}, leftmargin=2pc]}{\end{enumerate}}

\newenvironment{enumerate-(1a)}{\begin{enumerate}[label={\upshape (1\alph*)}, leftmargin=2pc]}{\end{enumerate}}

\newenvironment{enumerate-(2a)}{\begin{enumerate}[label={\upshape (2\alph*)}, leftmargin=2pc]}{\end{enumerate}}

\newenvironment{enumerate-(2c)}{\begin{enumerate}[label={\upshape (2c)}, leftmargin=2pc]}{\end{enumerate}}

% other fonts

\newcommand{\F}{\mathcal{F}}
\newcommand{\RR}{\mathbb{R}}

\newcommand{\pre}[2]{{}^{#1} #2}
\newcommand{\seq}[2]{(#1)_{#2}}

\newcommand{\D}{\mathsf{D}}
\newcommand{\Dec}{\mathsf{dec}}

% operators
\newcommand{\id}{\operatorname{id}}

\newcommand{\gr}{\operatorname{graph}}
\newcommand{\cl}{\operatorname{cl}}
\newcommand{\dom}{\operatorname{dom}}
\newcommand{\range}{\operatorname{range}}

% spaces

% axioms

% others

% theorems
\newtheorem{theorem}{Theorem}[section]
\newtheorem{lemma}[theorem]{Lemma}
\newtheorem{corollary}[theorem]{Corollary}
\newtheorem{proposition}[theorem]{Proposition}
\newtheorem{conjecture}[theorem]{Conjecture}
\newtheorem{question}[theorem]{Question}

\theoremstyle{definition}

\newtheorem{definition}[theorem]{Definition}
\newtheorem{fact}[theorem]{Fact}
\newtheorem{example}[theorem]{Example}
\theoremstyle{remark}
\newtheorem{remark}[theorem]{Remark}

\begin{document}

\title[Finite level and \(\omega\)-decomposable Borel functions]{On the structure of finite level and \(\omega\)-decomposable Borel functions}
\date{\today}
\author{Luca Motto Ros}
\address{Albert-Ludwigs-Universit\"at Freiburg \\
Mathematisches Institut -- Abteilung f\"ur Mathematische Logik\\
Eckerstra{\ss}e, 1 \\
 D-79104 Freiburg im Breisgau\\
Germany}
\email{luca.motto.ros@math.uni-freiburg.de}
\subjclass[2010]{03E15, 54H05, 26A21, 54C10, 54E40, 46J10, 46B04}
\thanks{The author warmly thanks M.~Sabok for many useful discussions on this topic, and the anonymous referee for many comments, suggestions, and historical remarks which considerably improved the quality of the paper.}
\keywords{Finite level Borel function, countably continuous function, Baire class of functions, decomposable function}

\begin{abstract}
We give a full description of the structure under inclusion of all finite level Borel classes of 
functions, and provide an elementary proof of the well-known 
fact that not every Borel function can be written as a countable union of 
$\boldsymbol{\Sigma}^0_\alpha$-measurable functions (for every fixed $1 
\leq \alpha < \omega_1$). Moreover, we present some results concerning 
those Borel functions which are $\omega$-decomposable into continuous functions (also 
called countably continuous functions in the literature): such results should be viewed as a contribution 
towards the goal of 
generalizing a remarkable theorem of Jayne and Rogers to all finite levels,
and in fact  they allow us to prove some restricted forms of such generalizations. We also analyze finite level Borel 
functions in terms of composition of simpler functions, and we finally present an application to Banach space theory.
\end{abstract}

\maketitle

\section{Introduction}

The study of \(\omega\)-decomposable Borel 
functions, i.e.\ Borel functions that can be written as countable unions of 
strictly simpler functions, was originated by an old question of Lusin
asking whether every Borel function is \(\omega\)-decomposable 
into \emph{continuous} functions%
\footnote{In the literature, functions admitting such a 
decomposition are usually called countably continuous. However, we preferred to 
use the new terminology (which is closer to the one adopted in~\cite{soldecomposing}) because we will also consider other kinds of countable 
decompositions.}. 
Such question was answered negatively in~\cite{keldys,adynov}, 
and until now many other counterexamples have appeared in 
the literature further showing that in fact 
the ``typical'' (in a certain precise technical sense which will not be discussed here) real-valued Baire class \( 1 \) 
function is \emph{not} \(\omega\)-decomposable into continuous functions~\cite{cm, jacmau, milpol, dar}. A 
particularly simple counterexample to Lusin's question, which 
was in principle available already in the Thirties, is 
the following.

\begin{example} \label{exisomorphism}
Let \( f \) be any Borel isomorphism
between the Hilbert cube \( [0,1]^\omega \)  (or any other Polish space of topological dimension \( \infty \)) and the Baire space 
\( \pre{\omega}{\omega} \) --- such an \( f \) can 
be taken so that both \( f \) and \( f^{-1} \) are Baire class 
\( 1 \) functions by~\cite[p.~212]{kur}. Then at least one of \( f , f^{-1} \) is not \(\omega\)-decomposable into continuous functions. In fact, assume towards a contradiction that  \( f = \bigcup_{n \in \omega } f_n \) and 
\( f^{-1} = \bigcup_{n \in \omega } g_n \) with all the \( f_n \)'s and 
\( g_n \)'s continuous partial functions. Setting 
\( X_{n,m} = \dom(f_n) \cap \range(g_m) \) and 
\( Y_{m,n} = \dom(g_m) \cap \range(f_n) \), we get that 
\( [0,1]^\omega = \bigcup_{n, m \in \omega} X_{n,m} \), 
\( \pre{\omega}{\omega} = \bigcup_{n,m \in \omega } Y_{m,n} \), and  
\( f \restriction X_{n,m} \) is an homeomorphism between \( X_{n,m} \) and 
\( Y_{m,n} \) for every \( n,m \in \omega \). Since all the \( Y_{m,n} \) are 
zero-dimensional, this would imply that \( [0,1]^\omega \) can be written as a 
countable union of zero-dimensional spaces, contradicting a classical
result of Hurewicz  on Polish spaces of topological dimension \( \infty \)  (see e.g.\ \cite[pp.\ 50-52]{hurwal}). 
\end{example}

Lusin's question can be naturally generalized by replacing continuous functions with 
\( \boldsymbol{\Sigma}^0_\alpha \)-measurable functions (for some fixed 
\( 1 \leq \alpha < \omega_1 \)), namely:

\begin{question}[Generalized Lusin's question] \label{questionLusin}
Let \( 1 \leq \alpha < \omega_1 \). Is it true that every Borel function between two uncountable Polish spaces is 
\(\omega\)-decomposable into \( \boldsymbol{\Sigma}^0_\alpha \)-measurable 
functions (i.e.\ it can be written as a countable union of \( \boldsymbol{\Sigma}^0_\alpha \)-measurable partial functions)?
\end{question}

\noindent
According 
to ~\cite{cm}, also this question was answered negatively in~\cite{keldys} for functions from 
\( \pre{\omega}{\omega} \) to the unit interval \( I = [0,1] \),  and  in an unpublished preprint of Laczkovich for functions from any 
uncountable Polish space to \( I \). Even stronger (in a technical sense) 
counterexamples were presented in~\cite{cm, cmps} using the universal functions method.

Despite the fact that Question~\ref{questionLusin} has been answered negatively,
for some natural classes of functions there are positive 
results asserting that certain (definable) decompositions into countably many continuous 
functions are always possible. Given \( 1 \leq \beta \leq \alpha < \omega_1 \), a 
function \( f \colon X \to Y \) will be called \emph{\( \Sigma_{\alpha,\beta} \) 
function} if \( f^{-1}(S) \in \boldsymbol{\Sigma}^0_\alpha(X) \) for every 
\( S \in \boldsymbol{\Sigma}^0_\beta(Y) \). The following remarkable 
theorem was first proved by Jayne and Rogers in~\cite[Theorem 5]{jayrog}, but see 
also~\cite{motsem,kacmotsem} for a simpler proof and some minor strengthenings of it.

\begin{theorem}[Jayne-Rogers] \label{theorjaynerogers} 
Let \( X,Y \) be separable metrizable spaces with \( X \) analytic,%
\footnote{The original Jayne-Rogers theorem holds also in the broader context of nonseparable metrizable spaces when \( X \) is assumed to be absolute Souslin-\( \mathscr{F} \)  (i.e.\ the counterpart of an analytic space in the realm of nonseparable spaces). However, for the sake of simplicity, here and in the 
subsequent weak and strong generalization of the Jayne-Rogers theorem 
(Conjectures~\ref{conjweakJR} and~\ref{conjstrongJR}) we will just  consider the already relevant and well-studied  case of separable metrizable spaces (see e.g.\ 
\cite{soldecomposing}).} 
and let 
\( f \colon X \to Y \). Then 
\( f \) is a \( \Sigma_{2,2} \) function if and only if
\( f \) can be written as a countable union of continuous  functions with  \( \boldsymbol{\Delta}^0_2(X) \) 
(equivalently, closed) domains.
\end{theorem}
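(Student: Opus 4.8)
The plan is to prove the Jayne--Rogers theorem by splitting into the two implications, the easy one being that a countable union of continuous functions with \(\boldsymbol{\Delta}^0_2\) domains is a \(\Sigma_{2,2}\) function.

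\medskip

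\emph{The easy direction.} Suppose \(f = \bigcup_{n \in \omega} f_n\) where each \(f_n\) is continuous with domain \(D_n \in \boldsymbol{\Delta}^0_2(X)\), and the \(D_n\) cover \(X\). Given \(S \in \boldsymbol{\Sigma}^0_2(Y)\), I would write \(f^{-1}(S) = \bigcup_{n} (f_n^{-1}(S) \cap D_n)\). Since \(f_n\) is continuous on \(D_n\) and \(S\) is \(\boldsymbol{\Sigma}^0_2\) in \(Y\), the set \(f_n^{-1}(S)\) is \(\boldsymbol{\Sigma}^0_2\) relative to \(D_n\); intersecting with \(D_n \in \boldsymbol{\Delta}^0_2(X) \subseteq \boldsymbol{\Sigma}^0_2(X)\) keeps it in \(\boldsymbol{\Sigma}^0_2(X)\) (using that \(\boldsymbol{\Sigma}^0_2\) is closed under intersection with \(\boldsymbol{\Delta}^0_2\) sets and that a relatively-\(\boldsymbol{\Sigma}^0_2\) subset of a \(\boldsymbol{\Sigma}^0_2\) set is \(\boldsymbol{\Sigma}^0_2\)). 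A countable union of \(\boldsymbol{\Sigma}^0_2(X)\) sets is \(\boldsymbol{\Sigma}^0_2(X)\), so \(f^{-1}(S) \in \boldsymbol{\Sigma}^0_2(X)\), as required. Disjointifying the \(D_n\) at the start (replacing \(D_n\) by \(D_n \setminus \bigcup_{m<n} D_m\), still in \(\boldsymbol{\Delta}^0_2\)) makes the bookkeeping cleaner but is not essential.

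\medskip

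\emph{The hard direction.} This is the heart of the matter. Assume \(f \colon X \to Y\) is a \(\Sigma_{2,2}\) function with \(X\) analytic. I would follow the change-of-topology / game-theoretic strategy from \cite{motsem,kacmotsem}. The key reduction is: it suffices to find, for each point (or for a dense/comeager set of points suitably localized), a \(\boldsymbol{\Delta}^0_2\) set on which \(f\) is continuous, and then cover \(X\) by countably many such sets. Concretely, one shows that the family \(\mathcal{C}\) of closed (equivalently \(\boldsymbol{\Delta}^0_2\)) sets \(C \subseteq X\) such that \(f \restriction C\) is continuous has the property that \(X\) is a countable union of members of \(\mathcal{C}\). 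The standard route is a transfinite/rank argument: using that \(f^{-1}(\boldsymbol{\Sigma}^0_2) \subseteq \boldsymbol{\Sigma}^0_2\), one builds a suitable well-founded ``derivative'' on the closed subsets of \(X\) whose successive differences are \(\boldsymbol{\Delta}^0_2\) pieces on which the preimages of a countable basis of \(Y\) become relatively open, hence on which \(f\) is continuous; analyticity of \(X\) (together with a boundedness/separation argument, e.g.\ the first separation theorem for \(\boldsymbol{\Sigma}^1_1\) sets, or a Baire-category argument on each closed piece) guarantees the derivative terminates at a countable stage, yielding countably many pieces.

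\medskip

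\emph{Main obstacle.} The delicate point is proving that the derivation process stops countably often and that the resulting pieces genuinely carry a \emph{continuous} restriction of \(f\) — not merely a Baire class \(1\) one. For the first, I expect to need analyticity of \(X\) in an essential way (a boundedness principle for \(\boldsymbol{\Pi}^1_1\)-ranks, or alternatively an application of the Hurewicz-type dichotomy), since without some definability hypothesis the statement can fail. For the second, the argument must exploit the full strength of the hypothesis \(f^{-1}(S) \in \boldsymbol{\Sigma}^0_2(X)\) for \emph{all} \(\boldsymbol{\Sigma}^0_2\) sets \(S\) (equivalently, \(f^{-1}(F) \in \boldsymbol{\Pi}^0_2(X)\) for closed \(F\)): on each piece \(C\) of the derivation one wants the preimage of every closed subset of \(Y\) to be closed in \(C\), which is exactly continuity of \(f\restriction C\). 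Balancing these — making the pieces small enough to be \(\boldsymbol{\Delta}^0_2\) and to carry continuity, yet few enough to be countably many — is the technical crux, and it is where the cited simpler proofs \cite{motsem,kacmotsem} do their work via a clever unraveling of the \(\boldsymbol{\Sigma}^0_2\)-measurability condition rather than the original Jayne--Rogers approach through Banach-space-valued selection theorems.
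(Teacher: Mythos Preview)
The paper does not prove this theorem: it is stated as a known result of Jayne and Rogers \cite{jayrog}, with references to the simplified proofs in \cite{motsem,kacmotsem}, and is used throughout as a black box. So there is no ``paper's own proof'' to compare your proposal against.

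That said, a few comments on your proposal. Your easy direction is correct and complete. Your hard direction, however, is not a proof but a plan with an explicitly flagged gap: you describe a derivative/rank scheme and correctly identify that analyticity of \(X\) must be used to bound the process and that one must extract genuine continuity (not just Baire class \(1\)) on the pieces, but you do not actually carry out either step. In particular, the sentence ``analyticity of \(X\) \dots\ guarantees the derivative terminates at a countable stage'' hides exactly the nontrivial content of the theorem; the shape of the derivative you have in mind is not specified, and no argument is given for why its fixed point is empty or why the successive differences are countably many and carry continuous restrictions. The actual proofs in \cite{motsem,kacmotsem} do not proceed via a transfinite derivative and a \(\boldsymbol{\Pi}^1_1\)-rank boundedness argument as you suggest: the approach in \cite{motsem} is a direct Baire-category argument on closed subsets of \(X\), showing that if \(f\) is \(\Sigma_{2,2}\) then on every nonempty closed set there is a relatively open piece on which \(f\) is continuous, after which a standard exhaustion yields the countable closed partition. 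If you intend to write a self-contained proof, that is the line to pursue; as written, your hard direction is an outline that defers the substance to the cited references.
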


In~\cite{andslo} it was asked whether such result can be generalized to all finite 
levels, namely whether the following statement is true:

\begin{conjecture}[Weak generalization of the Jayne-Rogers theorem] \label{conjweakJR}
Let \( X, Y \) be separable metrizable spaces with \( X \) analytic, and let \( 1 
< n < \omega \). Then 
\( f \colon X \to Y \)  is a \( \Sigma_{n,n} \) function if and only if 
\( f \) can be written as a countable union of continuous  functions with  
\( \boldsymbol{\Delta}^0_n(X) \) (equivalently, \( \boldsymbol{\Pi}^0_{n-1}(X) \)) domains.
\end{conjecture}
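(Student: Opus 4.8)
The plan is to treat the two implications separately; the forward implication is routine, and the converse is the crux (and the reason the statement is only conjectured). For the routine direction, suppose \( f = \bigcup_{k \in \omega} f_k \) with each \( f_k \) continuous on a domain \( D_k \in \boldsymbol{\Pi}^0_{n-1}(X) \). Fixing \( S \in \boldsymbol{\Sigma}^0_n(Y) \), continuity of \( f_k \colon D_k \to Y \) gives \( f_k^{-1}(S) = A_k \cap D_k \) for some \( A_k \in \boldsymbol{\Sigma}^0_n(X) \), and since \( D_k \in \boldsymbol{\Pi}^0_{n-1}(X) \subseteq \boldsymbol{\Delta}^0_n(X) \) we get \( f_k^{-1}(S) \in \boldsymbol{\Sigma}^0_n(X) \); as \( f^{-1}(S) = \bigcup_k f_k^{-1}(S) \) and \( \boldsymbol{\Sigma}^0_n(X) \) is closed under countable unions, \( f \) is a \( \Sigma_{n,n} \) function. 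The equivalence between \( \boldsymbol{\Delta}^0_n \) and \( \boldsymbol{\Pi}^0_{n-1} \) domains is immediate: every \( \boldsymbol{\Delta}^0_n \) set is a countable union of \( \boldsymbol{\Pi}^0_{n-1} \) sets, and the restriction of a continuous function to a subdomain stays continuous, so any \( \boldsymbol{\Delta}^0_n \)-decomposition refines to a \( \boldsymbol{\Pi}^0_{n-1} \)-one.

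For the converse I would argue by induction on \( n \), the base case \( n = 2 \) being precisely the Jayne--Rogers theorem (Theorem~\ref{theorjaynerogers}), whose simpler proofs in~\cite{motsem,kacmotsem} suggest the techniques to adapt. Given a \( \Sigma_{n,n} \) function \( f \), the goal is a countable cover of \( X \) by \( \boldsymbol{\Pi}^0_{n-1} \) sets on each of which \( f \) is continuous. After the usual reductions to the case \( X, Y \subseteq \pre{\omega}{\omega} \) (refining to zero-dimensional topologies, which preserves the finite Borel classes), I would introduce a two-player game \( G(f) \) of backtracking type: Player I builds \( x \in \pre{\omega}{\omega} \) coordinate by coordinate, Player II builds a candidate value for \( f(x) \) together with a bounded amount of backtracking information whose admissible patterns are calibrated to encode membership in a \( \boldsymbol{\Pi}^0_{n-1} \) piece, and II wins iff the value produced equals \( f(x) \) and the backtracking eventually stabilizes. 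By Borel determinacy \( G(f) \) is determined. The plan is then to show that (a) a winning strategy for II yields, by uniformizing over the tree of legal positions, a decomposition of \( f \) into continuous functions with \( \boldsymbol{\Pi}^0_{n-1} \) domains, while (b) a winning strategy for I produces a set \( S \in \boldsymbol{\Sigma}^0_n(Y) \) with \( f^{-1}(S) \notin \boldsymbol{\Sigma}^0_n(X) \), contradicting that \( f \) is \( \Sigma_{n,n} \); determinacy then forces II to win.

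The main obstacle is the exact calibration in step (b): one must design the bound on II's backtracking so that it corresponds precisely to the class \( \boldsymbol{\Pi}^0_{n-1} \), so that the impossibility of II winning translates into an honest \( \boldsymbol{\Sigma}^0_n \)-hard preimage on I's side. For \( n \geq 4 \) the combinatorics of admissible backtracking patterns becomes delicate and no canonical game with this exact correspondence is known, which is exactly why the statement remains conjectural. An alternative route, avoiding the design of a bespoke game, is the effective one: relativize to the lightface hierarchy and use a jump-inversion argument (via the Shore--Slaman join theorem) to show that the uniform dependence of the \( \boldsymbol{\Sigma}^0_n \)-complexity of \( f^{-1} \) on the \( (n-1) \)-st jump forces \( \boldsymbol{\Pi}^0_{n-1} \) pieces of continuity. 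In either approach the inductive hypothesis at level \( n-1 \) would govern the behaviour of \( f \) on the residual ``simpler'' part of \( X \) obtained after peeling off one \( \boldsymbol{\Pi}^0_{n-1} \) continuity piece by a transfinite Cantor--Bendixson-style derivative.
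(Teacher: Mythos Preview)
The statement is labelled a \emph{conjecture} in the paper and is not proved there; it remains open for \( n \geq 4 \). Your proposal is honest about this: you correctly isolate the easy direction (decomposition implies \( \Sigma_{n,n} \), which in the paper's notation is the inclusion \( \D^1_n \subseteq \D_n \) of Lemma~\ref{lemmadefdecomposable}), and you correctly flag the hard direction as open and explain why your game calibration breaks down for \( n \geq 4 \).

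Your plan for the hard direction, however, is quite different from what the paper actually does with the conjecture. You propose a direct attack via a calibrated backtracking game in the style of Semmes (which is indeed how Theorem~\ref{theorsemmes} handles \( n = 3 \)) or, alternatively, an effective jump-inversion argument. The paper pursues neither. Instead it \emph{reformulates} the conjecture: Theorem~\ref{theorJR} shows that \( \D_n = \D^1_n \) already holds when restricted to functions whose graph lies in \( \boldsymbol{\Sigma}^0_n(X \times \widetilde{Y}) \), so the full conjecture is equivalent to the purely descriptive statement that every \( f \in \D_n \) has \( \boldsymbol{\Sigma}^0_n \) graph (Conjecture~\ref{conjweak}). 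The engine behind this reduction is Lemma~\ref{lemmafinite}: every \( f \in \D_n \) is automatically \(\omega\)-decomposable into continuous pieces, proved by an elementary inductive extension of Solecki's dichotomy (Lemma~\ref{lemmasoleckidichotomyfinite}) rather than by any new game. Once one knows \( f \) is \(\omega\)-decomposable and has \( \boldsymbol{\Sigma}^0_n \) graph, Proposition~\ref{propdefinabledec}\ref{propdefinabledec-1} upgrades the arbitrary decomposition to one with \( \boldsymbol{\Delta}^0_n \) pieces. Thus the paper trades the combinatorial problem of designing a level-\( n \) game for a (still open, but more concrete) question about graph complexity, and this trade already buys the unconditional partial results of Corollary~\ref{corsimplecases} --- something your game outline would not deliver without first solving the calibration problem you identify.
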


\noindent
Notice that the Jayne-Rogers theorem~\ref{theorjaynerogers} cannot be 
extended to infinite levels because, as already observed, there are Baire class 
\( 1 \) functions (hence \( \Sigma_{\omega,\omega} \) functions) which are 
not even \(\omega\)-decomposable into continuous functions. 
Using 
game-theoretic techniques,  
Semmes proved  in his Ph.D. thesis~\cite{semmesthesis} that Conjecture~\ref{conjweakJR}
is true for the level \( n = 3 \) when \( X = Y = \pre{\omega}{\omega} \). 
More precisely, he proved the following results:

\begin{theorem}[Semmes] \label{theorsemmes}
Let \( f \colon \pre{\omega}{\omega} \to \pre{\omega}{\omega} \).
\begin{enumerate-(a)}
\item \label{theorsemmes-1}
\( f \) is a \( \Sigma_{3,2} \) function if and only if \( f \) can be written as a 
countable union of \( \boldsymbol{\Sigma}^0_2 \)-measurable functions with 
\( \boldsymbol{\Delta}^0_3(\pre{\omega}{\omega}) \) (equivalently, \( \boldsymbol{\Pi}^0_2(\pre{\omega}{\omega}) \)) domains.
\item \label{theorsemmes-2}
\( f \) is a \( \Sigma_{3,3} \) function if and only if \( f \) can be written as a 
countable union of continuous functions with 
\( \boldsymbol{\Delta}^0_3(\pre{\omega}{\omega}) \) (equivalently, \( \boldsymbol{\Pi}^0_2(\pre{\omega}{\omega}) \)) domains.
\end{enumerate-(a)}
\end{theorem}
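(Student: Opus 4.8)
The plan is to follow the blueprint of the simpler proof of the Jayne--Rogers theorem from \cite{motsem,kacmotsem}, lifting the $n=2$ argument one level up by replacing ``continuous'' with ``$\boldsymbol{\Sigma}^0_2$-measurable'' and shifting the pointclasses accordingly, while simultaneously exploiting the game-theoretic machinery of Semmes to handle the genuinely new phenomena at level~$3$. For each of the two equivalences the nontrivial direction is left-to-right: starting from a $\Sigma_{3,\beta}$ function $f$ (with $\beta \in \{2,3\}$) one must produce the desired countable decomposition. The right-to-left directions are routine verifications: if $f = \bigcup_{n} f_n$ with each $f_n$ a $\boldsymbol{\Sigma}^0_\beta$-measurable (resp.\ continuous) function with $\boldsymbol{\Pi}^0_2$ domain $D_n$, then for $S \in \boldsymbol{\Sigma}^0_\beta(\pre{\omega}{\omega})$ one has $f^{-1}(S) = \bigcup_n f_n^{-1}(S)$, and $f_n^{-1}(S)$ is $\boldsymbol{\Sigma}^0_\beta$ relative to $D_n$, hence of the form $A_n \cap D_n$ with $A_n \in \boldsymbol{\Sigma}^0_\beta$; since $D_n \in \boldsymbol{\Pi}^0_2 \subseteq \boldsymbol{\Sigma}^0_3$ and $\boldsymbol{\Sigma}^0_\beta \subseteq \boldsymbol{\Sigma}^0_3$, the intersection lies in $\boldsymbol{\Sigma}^0_3$, and a countable union of such sets is again in $\boldsymbol{\Sigma}^0_3$. (For part~\ref{theorsemmes-2} one additionally notes that continuous with $\boldsymbol{\Pi}^0_2$ domain implies $\boldsymbol{\Sigma}^0_2$-measurable, so \ref{theorsemmes-2} formally follows from \ref{theorsemmes-1} together with the extra fact that the pieces can be taken continuous, which is where the real work of \ref{theorsemmes-2} resides.)

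For the hard direction I would introduce, following Semmes, a two-player game $G(f)$ associated to $f$ in which players alternately build points $x \in \pre{\omega}{\omega}$ (player~I) and $y \in \pre{\omega}{\omega}$ (player~II), with II constrained so that a winning run certifies $y = f(x)$ together with a ``reduction witness'' of the appropriate complexity. The key structural input is a determinacy-plus-strategy-analysis argument: the game is Borel, hence determined, and one shows that player~I cannot have a winning strategy precisely when $f$ is $\Sigma_{3,\beta}$ (this uses the hypothesis to defeat any putative I-strategy by a Baire-category or fusion argument on the tree of I-positions). Consequently player~II has a winning strategy $\tau$. One then decomposes $\pre{\omega}{\omega}$ according to the finitely much ``lookahead'' or ``erasing'' behaviour that $\tau$ exhibits along a run: the domains $D_n$ are defined by fixing the combinatorial type of how $\tau$ responds, each such $D_n$ turns out to be $\boldsymbol{\Pi}^0_2$, and $f \restriction D_n$ is computed from $\tau$ by a map of the promised complexity ($\boldsymbol{\Sigma}^0_2$-measurable for \ref{theorsemmes-1}, continuous for \ref{theorsemmes-2}). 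The distinction between the two parts is encoded in the precise rules imposed on II: in \ref{theorsemmes-1} II is allowed to ``erase'' finitely many previously written values (a ``backtrack game''), which yields only $\boldsymbol{\Sigma}^0_2$-measurability of the pieces, whereas in \ref{theorsemmes-2} the stricter rule (no erasing, but a controlled delay) forces the pieces to be continuous.

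The main obstacle, and the place where the argument is most delicate, is showing that ``$f$ is $\Sigma_{3,\beta}$'' implies ``player~I has no winning strategy in $G(f)$'': one must take an arbitrary strategy $\sigma$ for~I and, using only the hypothesis that preimages of $\boldsymbol{\Sigma}^0_\beta$ sets are $\boldsymbol{\Sigma}^0_3$, construct a play consistent with $\sigma$ that II wins. This requires understanding the $\boldsymbol{\Sigma}^0_3 = $ countable union of $\boldsymbol{\Pi}^0_2$ structure of $f^{-1}(S)$ in sufficient uniformity to build, level by level, a $\boldsymbol{\Pi}^0_2$-``core'' on which $f$ behaves nicely and then to diagonalize against $\sigma$ inside that core — essentially an effective, iterated version of the separation/reduction arguments that appear already at level~$2$ in \cite{kacmotsem}, now nested one degree deeper. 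A secondary difficulty is the bookkeeping needed to check that the pieces $D_n$ one reads off from II's winning strategy are genuinely $\boldsymbol{\Pi}^0_2$ (and not merely Borel) and that they cover all of $\pre{\omega}{\omega}$; this is a careful but ultimately mechanical analysis of the strategy tree, and it is here that the special role of the Baire space (as opposed to an arbitrary Polish space) is used, since the combinatorics of finite sequences underpins both the game and the computation of the complexity of the resulting decomposition.
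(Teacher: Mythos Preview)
Your proposal is a plan rather than a proof: you correctly identify the architecture of Semmes' game-theoretic argument (design a tree game \(G(f)\), invoke Borel determinacy, show I cannot win when \(f\) is \(\Sigma_{3,\beta}\), and read the decomposition off a winning II-strategy), but every substantive step --- the precise rules of the game, the argument defeating an arbitrary I-strategy, and the verification that II's strategy yields \(\boldsymbol{\Pi}^0_2\) pieces of the right measurability --- is described rather than carried out. The step you single out as ``most delicate'' is in fact the entire content of the theorem, and nothing in your text executes it.

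That said, the paper does not prove Theorem~\ref{theorsemmes} either: it is quoted from Semmes' thesis, and part~\ref{theorsemmes-1} is simply imported as a black box. Where your outline and the paper genuinely diverge is in the passage from~\ref{theorsemmes-1} to~\ref{theorsemmes-2}. You propose (following Semmes' original route) to run a second game with stricter rules on II so that the resulting pieces are continuous. The paper instead gives a short soft argument: from~\ref{theorsemmes-1} one has \(\D_3 \subseteq \Sigma_{3,2} = \D^2_3\), and then Corollary~\ref{corsimplecases} (resting on Solecki's dichotomy via Lemma~\ref{lemmafinite}, together with the graph-complexity characterization of \(\D^1_n\) in Theorems~\ref{theormain} and~\ref{theorJR}) automatically upgrades \(\D^2_3 \cap \D_3\) to \(\D^1_3\). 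This bypasses any second game analysis entirely, replacing it by the observation that an \(\omega\)-decomposable function with \(\boldsymbol{\Sigma}^0_3\) graph already lies in \(\D^1_3\). So for~\ref{theorsemmes-2} your sketch follows the harder original path, while the paper's derivation is both shorter and conceptually different.
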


\noindent
Actually, in~\cite{semmesthesis} Theorem~\ref{theorsemmes}\ref{theorsemmes-2} is derived from  
Theorem~\ref{theorsemmes}\ref{theorsemmes-1}. Notice that  
Theorem~\ref{theorsemmes}\ref{theorsemmes-2} can be generalized to all 
Polish spaces of topological dimension \( \neq \infty \), like the Euclidean 
spaces \( \RR^n \) or the Euclidean cubes \( I^n \)  
(\( 1 \leq n < \omega \)), by~\cite[Corollary 4.25]{motschsel}. Similarly, 
using~\cite[Theorem 4.21]{motschsel} and~\cite[Theorem 7.8]{kec}, it is 
easy to see that also Theorem~\ref{theorsemmes}\ref{theorsemmes-1} can 
be generalized to all functions from a Polish space of topological dimension 
\( \neq \infty \) to any zero-dimensional separable metrizable space.

Theorem~\ref{theorsemmes}\ref{theorsemmes-1} suggests to consider the 
following further strengthening of Theorem~\ref{theorjaynerogers}.

\begin{conjecture}[Strong generalization of the Jayne-Rogers theorem] \label{conjstrongJR}
Let \( X ,Y\) be separable metrizable spaces with \( X \) analytic, and let \( 1 < n \leq m < \omega \). Then 
\( f \colon X \to Y \)  is a \( \Sigma_{m,n} \) function if and only if 
\( f \) can be written as a countable union of 
\( \boldsymbol{\Sigma}^0_{m-n+1} \)-measurable functions with 
\( \boldsymbol{\Delta}^0_m(X) \) (equivalently, \( \boldsymbol{\Pi}^0_{m-1}(X) \)) domains.
\end{conjecture}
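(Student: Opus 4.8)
The plan is to treat the two implications quite differently: the backward direction (``decomposable $\Rightarrow$ $\Sigma_{m,n}$'') is a routine complexity computation, while the forward direction is the heart of the conjecture and, beyond the cases already covered by Theorem~\ref{theorjaynerogers} and Theorem~\ref{theorsemmes}, is where the real work — and, for now, the open part — lies; what is realistically attainable is a collection of restricted forms of it.

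\emph{Backward direction.} Suppose $f = \bigcup_{k \in \omega} f_k$ with each $f_k$ a $\boldsymbol{\Sigma}^0_{m-n+1}$-measurable partial function whose domain lies in $\boldsymbol{\Delta}^0_m(X)$. A straightforward induction on $\beta$ shows that any $\boldsymbol{\Sigma}^0_\gamma$-measurable function pulls $\boldsymbol{\Sigma}^0_\beta$ sets back to $\boldsymbol{\Sigma}^0_{\gamma+\beta-1}$ sets (preimage commutes with complement and with countable union); with $\gamma = m-n+1$ and $\beta = n$ this gives $f_k^{-1}(S) \in \boldsymbol{\Sigma}^0_m(\dom f_k)$ for every $S \in \boldsymbol{\Sigma}^0_n(Y)$. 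Since $\dom f_k \in \boldsymbol{\Delta}^0_m(X) \subseteq \boldsymbol{\Sigma}^0_m(X)$ and $\boldsymbol{\Sigma}^0_m$ is closed under finite intersections, $f_k^{-1}(S) \in \boldsymbol{\Sigma}^0_m(X)$, and closure under countable unions yields $f^{-1}(S) = \bigcup_k f_k^{-1}(S) \in \boldsymbol{\Sigma}^0_m(X)$. (No assumption on $X$ is used here.)

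\emph{Forward direction.} The natural strategy proceeds in three stages. (1) \emph{Reduce the spaces.} On the target side, replace $Y$ by a zero-dimensional separable metrizable space: this is vacuous when $Y$ is already zero-dimensional, and in general one would embed $Y$ into the Hilbert cube and try to push the problem down — but this is precisely a point where the infinite topological dimension of the target intervenes. On the domain side, when $X$ is a finite-dimensional Polish space one partitions it into countably many pieces each homeomorphic to a subspace of $\pre{\omega}{\omega}$, using the decomposition results of Motschenbacher already invoked in the discussion of Theorem~\ref{theorsemmes}, thereby reducing to functions $f \colon \pre{\omega}{\omega} \to \pre{\omega}{\omega}$. (2) \emph{Characterize $\Sigma_{m,n}$ functions by a game}, in the spirit of the Semmes games behind Theorem~\ref{theorsemmes}: a two-player game in which Player II builds the value $f(x)$ digit by digit while being constrained to ``commit'' in a pattern encoding $\boldsymbol{\Sigma}^0_{m-n+1}$-measurability, Player I probes along a $\boldsymbol{\Sigma}^0_n$ test set, and one shows (via Borel determinacy) that Player II wins exactly when $f$ is $\Sigma_{m,n}$. (3) \emph{Extract the decomposition} from a winning strategy for Player II: such a strategy stratifies $\dom f$ according to the recorded pattern of commitments, each stratum carries a $\boldsymbol{\Sigma}^0_{m-n+1}$-measurable restriction of $f$, and — with careful bookkeeping of \emph{when} commitments are made — the strata can be arranged to lie in $\boldsymbol{\Delta}^0_m$.

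The main obstacle is Stages (2)--(3) beyond the small values of $(m,n)$ settled by Theorem~\ref{theorsemmes} (in particular the diagonal cases $m = n \geq 4$, i.e.\ Conjecture~\ref{conjweakJR} at level $4$ and above, and already $\Sigma_{4,2}$): there is at present no known game that both characterizes the relevant class of functions with a provably determined winner and whose Player~II strategies can be decoded into a decomposition with domains of the optimal complexity $\boldsymbol{\Delta}^0_m$ — the combinatorics of the ``backtracking'' moves proliferates with $m-n$, and controlling domain complexity (as opposed to merely obtaining \emph{some} Borel decomposition, which is comparatively easy) is the delicate point. A second obstacle is the case of infinite-dimensional — or merely analytic, non-Polish — $X$, and of non-zero-dimensional $Y$, where the space-reduction of Stage~(1) breaks down and one would need an argument closer in spirit to the original proof of Theorem~\ref{theorjaynerogers}, which handles level $2$ uniformly in the dimension. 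Accordingly, the feasible targets — and the ones pursued in this paper — are restricted forms: the full statement for $X$ zero-dimensional or finite-dimensional and $Y$ zero-dimensional, or for small $m-n$, or with the conclusion weakened (pieces of class $\boldsymbol{\Sigma}^0_{m-n+1}$ but with domains only in $\boldsymbol{\Sigma}^0_m$, or pieces of class $\boldsymbol{\Sigma}^0_{m-n+2}$), each of which neutralizes exactly one of the obstructions above.
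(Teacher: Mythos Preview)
You correctly recognize that the statement is a \emph{conjecture}: the paper does not prove the forward direction in general, and your backward direction is fine and coincides with the paper's Lemma~\ref{lemmadefdecomposable}\ref{lemmadefdecomposable-2} (the inclusion $\D^{m-n+1}_m(X,Y) \subseteq \Sigma_{m,n}(X,Y)$).

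Where your proposal diverges from the paper is in the strategy for attacking the forward direction. You outline a Semmes-style programme: reduce to zero-dimensional spaces, design a game characterizing $\Sigma_{m,n}$, and read off the decomposition from a winning strategy. The paper deliberately avoids games (see the introduction) and instead develops two different lines. First, it controls the \emph{graph}: Lemma~\ref{lemmagraph} shows that every $f \in \D^1_n$ has $\boldsymbol{\Delta}^0_n$ graph, and conversely Theorem~\ref{theormain} shows that any $\omega$-decomposable $f$ with $\boldsymbol{\Sigma}^0_n$ graph already lies in $\D^1_n$; combined with an elementary extension of Solecki's dichotomy (Lemma~\ref{lemmasoleckidichotomyfinite}) showing that every $f \in \D_n$ is $\omega$-decomposable, this yields Theorem~\ref{theorJR} and reduces Conjecture~\ref{conjweakJR} to the graph-complexity Conjecture~\ref{conjweak}. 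Second, the paper reformulates Conjecture~\ref{conjstrongJR} as a \emph{change-of-topology} statement (Conjecture~\ref{conjstrong}, Theorem~\ref{theorsgJR}): one can lower $m$ by one after refining the topology of $X$ inside $\boldsymbol{\Sigma}^0_2$. These tools give the restricted forms the paper actually proves (Corollary~\ref{corsimplecases}, Theorem~\ref{theorcomposition}), and they work uniformly for analytic $X$ and arbitrary separable metrizable $Y$, bypassing the dimension-reduction issues you flag in Stage~(1). Your game-theoretic sketch is a legitimate alternative programme --- indeed it is how Semmes obtained the $m \leq 3$ cases --- but it is not what this paper does, and the obstacles you describe (combinatorial explosion of backtracking, domain-complexity control) are precisely why the paper pursues the graph and topology-refinement routes instead.

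A minor point: ``Motschenbacher'' is not a name appearing in the references; presumably you mean the results of~\cite{motschsel}.
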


\noindent
Note that setting \( n = m \) in Conjecture~\ref{conjstrongJR} we get Conjecture~\ref{conjweakJR}.

As a by-product of his representation of classes of functions as strategies in 
corresponding games, using diagonalization arguments  Semmes also showed  in~\cite{semmesthesis}
that the diagram of inclusions of Figure~\ref{figinclusion} holds, where \( \Sigma_{m,n} \) represents the class of all \( \Sigma_{m,n} \) 
functions from \( \pre{\omega}{\omega} \) to itself, arrows represent proper 
inclusions, and no inclusion relation holds between classes which are not connected by (composition of)
arrows.

\begin{figure}[!htbp] 
\centering
\mbox{
\xymatrix{
& & \Sigma_{3,1}\\
& \Sigma_{2,1}   \ar@{->}[ur] \ar@{->}[dr] & \\
\Sigma_{1,1} \ar@{->}[ur] \ar@{->}[dr] & & \Sigma_{3,2} \ar@{->}[uu] \\
& \Sigma_{2,2} \ar@{->}[dr] \ar@{->}[uu] \ar@{->}[ur] & \\
& & \Sigma_{3,3} \ar@{->}[uu] }
}
\caption{The inclusion diagram for the classes \( \Sigma_{m,n} \) for \( m \leq 3 \).}
\label{figinclusion}
\end{figure}
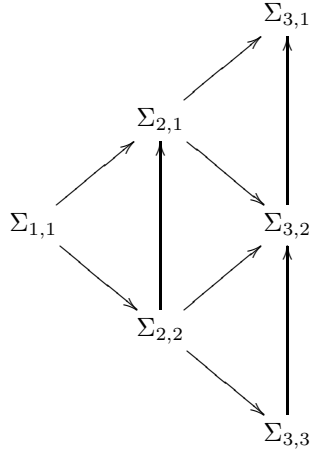

\noindent
It is  then very natural to ask whether such diagram can be extended to 
include at least all the classes \( \Sigma_{m,n} \) for \( 1 \leq n \leq m < \omega \), and 
how such extension looks like.

This paper contains a variety of results which deal with all the above 
mentioned problems, aiming to show that finite level Borel classes of functions
(see Definition~\ref{deffinitelevel})
constitute a rich and interesting subject to be further analyzed.

In Section~\ref{secfinitelevel} we extend  the inclusion 
diagram of Figure~\ref{figinclusion} to all finite level Borel classes.  Although game 
representations of virtually all the classes of functions involved are now 
available by~\cite{motgames}, the present proof does not use such 
representations at all: in fact, instead of using diagonalization arguments over 
the corresponding strategies (as done in~\cite{semmesthesis}), we will 
directly define certain canonical functions which show that all 
obvious inclusions in the resulting diagram are in fact proper, and that no other 
inclusion relation holds 
(Theorem~\ref{theorinclusion} and Figure~\ref{figdiagram}). 

The same ideas will then be used in 
Section~\ref{sectiondecomposable} to provide a general way to construct 
further  counterexamples to the generalized Lusin's question~\ref{questionLusin} 
(Corollary~\ref{cor:luzin}). The technique employed, which differs from the 
universal functions method used in~\cite{cm,cmps}, is rather elementary, and 
 allows to deal not only with real-valued functions%
\footnote{For example, the universal functions method cannot be used for classes of functions ranging into \emph{zero-dimensional} Polish spaces: in fact it can be shown e.g.\ that for every \( 1 \leq \alpha < \omega_1 \) there is no universal function for the class of all \( \boldsymbol{\Sigma}^0_\alpha \)-measurable functions between an uncountable Polish space \( X \) and a zero-dimensional Polish space \( Y \).}
 (as done 
in~\cite{keldys,cm,cmps}), but also with functions between arbitrary 
uncountable analytic spaces. Our counterexamples are constructed in an explicit way
as countable powers of suitable characteristic functions (see Corollary~\ref{cor3} and Corollary~\ref{cor:luzin}): in particular, we get that the obvious generalization of the Pawlikowski 
function (a somewhat canonical example of a Baire class \( 1 \) function which is not \(\omega\)-decomposable into 
continuous functions) for the level \( n \in \omega \) is not \(\omega\)-decomposable into 
\( \boldsymbol{\Sigma}^0_n \)-measurable functions (Corollary~\ref{corPnindecomposable}).

In Section~\ref{sectionJR} we prove slightly varied forms of Conjecture~\ref{conjweakJR} (see~\eqref{eqmain} of Theorem~\ref{theormain} and~\eqref{eqrestrictedJR} of Theorem~\ref{theorJR}), or its restriction to some 
special classes of functions (Corollary~\ref{corsimplecases}). Theorem~\ref{theorJR} also shows that the weak generalization of the Jayne-Rogers theorem is equivalent to a natural condition  concerning the minimal complexity of the graph of functions 
appearing in the relevant Borel classes (Conjecture~\ref{conjweak}). This raises 
natural questions and open problems which are of independent interest, and surely deserve further investigation:  some of them are collected in Question~\ref{quest1}.

In Section~\ref{secstrongJR} we
introduce another 
natural condition (Conjecture~\ref{conjstrong}) which is 
 shown to be equivalent to the strong generalization 
of the Jayne-Rogers theorem (Theorem~\ref{theorsgJR}): such condition is related to certain 
minimal refinements of 
topology in the spirit of~\cite[Theorem 13.1]{kec}.

Section~\ref{seccomposition} characterizes the class \( \Sigma_{3,2} \) in terms of 
composition of simpler functions (namely, of a function in \( \Sigma_{2,1} \) 
and a function in \( \Sigma_{2,2} \)), and shows that if the strong generalization of the Jayne-Rogers theorem (that is, 
Conjecture~\ref{conjstrongJR}) is true then analogous characterizations would hold for all finite level Borel classes. 

Finally, 
Section~\ref{sectionBanach} contains an application of the results from Section~\ref{sectionJR} to Banach space theory, more precisely to the isomorphism problem for Baire classes of functions.

\begin{remark}
We recently learned that a variant of 
Theorem~\ref{theorJR} (together with some instances of its 
Corollary~\ref{corsimplecases}) has been independently obtained using essentially the same 
methods by M.~Sabok and 
J.~Pawlikowski in~\cite[Theorem 1.2 and Corollary 1.3]{pawsabrev}.
\end{remark}

\section{Preliminaries and notation} \label{secpreliminaries}

We will freely use the standard (descriptive) set theoretic notation and terminology --- for all undefined notions and concepts we refer the reader to the standard monograph~\cite{kec}.

\subsection{Ordinals}

Greek letters \( \alpha , \beta , \gamma\) (possibly with various 
decorations) will
always denote \emph{countable} ordinals, while the symbols \( + \) and \( \cdot \) 
will denote, 
respectively, the usual operations of ordinal addition and ordinal multiplication. 
Given 
any \( \alpha, \beta \), we let \( \beta \dotdiv \alpha \) be the unique 
\( \gamma \) 
such that \( \alpha + \gamma = \beta \) if \( \alpha \leq \beta \), and \( 0 \) 
otherwise. Notice that \( \alpha + (\beta \dotdiv \alpha) = \max \{ \alpha, \beta \} \) for all \( \alpha,\beta \).

\subsection{Topological spaces}

A topological space is called \emph{Polish} if it is separable and completely metrizable, and \emph{analytic} if it is (homeomorphic to) an analytic subset of a Polish space. Typical examples of Polish spaces are the \emph{Baire space} \( \pre{\omega}{\omega} \) of all 
\(\omega\)-sequences of natural numbers and the \emph{Cantor space} 
\( \pre{\omega}{2} \) of all binary \(\omega\)-sequences, both endowed with 
the countable product of the discrete topology on \(\omega\) and \( 2 \), respectively. 
To simplify the presentation, given a metrizable space \( Y \) we will denote by \( \widetilde{Y} \) the completion of \( Y \) with respect to some compatible metric on it, 
and regard \( \widetilde{Y} \) as a topological space by endowing it with the metric topology --- the choice of the actual metric used to define \( \widetilde{Y} \) will be irrelevant in all the results below (see e.g.\ Remark~\ref{remmain-2}). Obviously,  \( \widetilde{Y} \) is a Polish space whenever \( Y \) is separable metrizable.

In most of the results presented in this paper we will assume that the spaces under consideration are analytic or Polish: however, it is maybe worth noticing that some of
them (especially the ones asserting the existence of functions with certain special properties) actually hold for 
a wider class of topological spaces --- the interested reader can easily recover from the proofs the exact assumptions needed for each of them.

\subsection{Borel sets and their stratification}

Given a topological space \( (X, \tau	) \) and a set \( A \subseteq X \), we denote by \( \cl(A) \) the closure of \(A \) in \( X\).
 A set \( A \subseteq X \) is called 
\emph{Borel} if it belongs to the minimal \(\sigma\)-algebra containing 
\( \tau	\), the collection of all open sets of \( X \). As explained in e.g.\ \cite[Section 11.B]{kec}, the class of Borel subsets 
of a \emph{metrizable} space \( X \) can be stratified into the cumulative hierarchy consisting of the classes \( \boldsymbol{\Sigma}^0_\alpha(X,\tau) \), \( \boldsymbol{\Pi}^0_\alpha(X,\tau) \), and \( \boldsymbol{\Delta}^0_\alpha(X,\tau) \) (for \( 1 \leq \alpha < \omega_1 \)). Such construction cannot be literally applied to nonmetrizable spaces \( X \), because in this case there could be open subsets of \( X \) which are not \( F_\sigma \). To overcome this difficulty, for arbitrary topological spaces \( (X,\tau) \) we redefine the class \( \boldsymbol{\Sigma}^0_2(X,\tau) \) to be the collection of all sets of the form \(  \bigcup_{n \in \omega} (U^0_n 
\setminus U^1_n ) \) with \( U^i_n \in \boldsymbol{\Sigma}^0_1(X,\tau) \) 
for every \( n \in \omega \) and \( i = 0,1 \), and let the definitions of all other classes in the stratifications be unchanged (note that when \( X \) is metrizable, this definition is equivalent to the classical one by e.g.~\cite[Proposition 22.1]{kec}). It is straightforward to check that with this minor modification one can always recover the usual structure under inclusion of the hierarchy and the classical closure properties of its levels.

Given a topological space \( ( X, \tau ) \) and \( \alpha \geq 1 \), a set 
\(A \subseteq X \) is called \emph{proper 
\( \boldsymbol{\Sigma}^0_\alpha(X) \) set} if 
\( A \in \boldsymbol{\Sigma}^0_\alpha(X, \tau) \setminus 
\boldsymbol{\Pi}^0_\alpha(X,\tau) = \boldsymbol{\Sigma}^0_\alpha(X,\tau) 
\setminus \boldsymbol{\Delta}^0_\alpha(X,\tau) \), and is called 
\emph{ \( \boldsymbol{\Sigma}^0_\alpha(X) \)-hard} if for every zero-dimensional Polish space \( (Y,\tau_Y) \) it holds that for every 
\( B \in \boldsymbol{\Sigma}^0_\alpha(Y,\tau_Y) \) there is a continuous function \( f \colon Y \to X \) such that
\( B = f^{-1}(A) \). A \( \boldsymbol{\Sigma}^0_\alpha(X) \)-hard set 
belonging to \( \boldsymbol{\Sigma}^0_\alpha(X,\tau) \) is called \emph{\( 
\boldsymbol{\Sigma}^0_\alpha(X) \)-complete}; 
it is easy to check that every 
\( \boldsymbol{\Sigma}^0_\alpha(X) \)-complete set is automatically a proper 
\( \boldsymbol{\Sigma}^0_\alpha(X) \)-set. \emph{Proper \( 
\boldsymbol{\Pi}^0_\alpha(X) \) sets}, 
\emph{\( \boldsymbol{\Pi}^0_\alpha(X) \)-hard} sets, and \emph{\( 
\boldsymbol{\Pi}^0_\alpha(X) \)-complete} sets are defined in a similar 
way. In the sequel we will often use the following standard fact.

\begin{fact} \label{factWadge} 
Let \( X \) be a Polish space and \( \alpha \geq 1 \). If \( A \) is a Borel set 
not in \( \boldsymbol{\Sigma}^0_\alpha(X) \) then 
\( A \) is \( 
\boldsymbol{\Pi}^0_\alpha(X) \)-hard (the case \( \alpha = 1 \) is trivial, the case \( \alpha = 2 \) follows from~\cite[Theorem 21.22]{kec}, while the case \( \alpha \geq 3 \) follows from~\cite[Theorem 28.19]{kec}).
In particular, every proper \( 
\boldsymbol{\Sigma}^0_\alpha(X) \) (respectively, proper \( 
\boldsymbol{\Pi}^0_\alpha(X) \)) set is also \( 
\boldsymbol{\Sigma}^0_\alpha(X) \)-complete (respectively, \( 
\boldsymbol{\Pi}^0_\alpha(X) \)-complete), 
\end{fact}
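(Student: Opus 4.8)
\textbf{Proof plan for Fact~\ref{factWadge}.}

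The plan is to deduce the statement from the two classical tools already cited: the first periodicity/Hausdorff--Kuratowski style analysis for the level \( \alpha = 2 \) (via \cite[Theorem 21.22]{kec}), and the Wadge-theoretic machinery for \( \alpha \geq 3 \) (via \cite[Theorem 28.19]{kec}). The case \( \alpha = 1 \) is essentially trivial: if \( A \) is not open, pick a point \( x \in \cl(A) \setminus A \) together with a sequence of basic open sets shrinking to \( x \), and use these to pull back an arbitrary closed subset of a zero-dimensional Polish space \( Y \) by a continuous map into \( X \). So the real content is in \( \alpha \geq 2 \).

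For \( \alpha = 2 \), I would argue as follows. Suppose \( A \) is Borel and \( A \notin \boldsymbol{\Sigma}^0_2(X) \); we want \( A \) to be \( \boldsymbol{\Pi}^0_2(X) \)-hard. Let \( (Y,\tau_Y) \) be zero-dimensional Polish and let \( B \in \boldsymbol{\Pi}^0_2(Y) \) be arbitrary; we must produce a continuous \( f \colon Y \to X \) with \( f^{-1}(A) = B \). Fix a canonical \( \boldsymbol{\Pi}^0_2 \)-complete set \( P \), say the set of eventually-zero sequences inside \( \pre{\omega}{\omega} \) (or \( \mathbb{Q} \cap 2^{-\omega} \) inside \( \pre{\omega}{2} \)), and a continuous reduction \( g \colon Y \to \pre{\omega}{\omega} \) of \( B \) to \( P \). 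It then suffices to obtain a continuous reduction \( h \) of \( P \) to \( A \), i.e.\ \( h^{-1}(A) = P \), and compose. Such an \( h \) exists because \( A \notin \boldsymbol{\Sigma}^0_2(X) \): by \cite[Theorem 21.22]{kec} any Borel set that is not potentially \( \boldsymbol{\Sigma}^0_2 \) continuously reduces the complete \( \boldsymbol{\Pi}^0_2 \) set; one extracts \( h \) from the proof of that theorem, or more directly one uses that \( A \) not being \( F_{\sigma\delta} \)-like (in the redefined sense for nonmetrizable \( X \)) gives a Cantor-scheme of open subsets of \( X \) witnessing the reduction of \( P \). I would then note that composition of continuous functions is continuous and that \( (g \circ h')^{-1} \) behaves correctly, where I write \( f = h \circ g \).

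For \( \alpha \geq 3 \), the argument is cleaner because \cite[Theorem 28.19]{kec} (the Wadge-style dichotomy for the Borel classes at levels \( \geq 3 \), equivalently Steel--Van Wesep / Louveau--Saint-Raymond for Borel sets) directly says: for a Borel set \( A \) in a Polish space, either \( A \in \boldsymbol{\Sigma}^0_\alpha(X) \), or every \( \boldsymbol{\Pi}^0_\alpha \) subset of \( \pre{\omega}{\omega} \) (equivalently, of any zero-dimensional Polish space) continuously reduces to \( A \). That is precisely the assertion that \( A \) is \( \boldsymbol{\Pi}^0_\alpha(X) \)-hard, so there is nothing left to do once one matches the definitions. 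The only point requiring care is the passage from ``reduces a \( \boldsymbol{\Pi}^0_\alpha \) subset of \( \pre{\omega}{\omega} \)'' to ``reduces a \( \boldsymbol{\Pi}^0_\alpha \) subset of an arbitrary zero-dimensional Polish space \( Y \)'': this is handled by first continuously reducing the given \( B \subseteq Y \) to a \( \boldsymbol{\Pi}^0_\alpha \)-complete subset of \( \pre{\omega}{\omega} \) (which exists and is standard, cf.\ \cite[\S 23]{kec}) and composing, exactly as in the \( \alpha = 2 \) case.

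For the ``in particular'' clause: if \( A \) is a proper \( \boldsymbol{\Sigma}^0_\alpha(X) \) set then by definition \( A \in \boldsymbol{\Sigma}^0_\alpha(X) \setminus \boldsymbol{\Pi}^0_\alpha(X) \); in particular \( A \notin \boldsymbol{\Pi}^0_\alpha(X) \), which a fortiori gives \( A \notin \boldsymbol{\Sigma}^0_{\alpha}(X) \)? — no, that is false, so instead one applies the already-proved main assertion to the \emph{complement}: \( X \setminus A \) is Borel and not in \( \boldsymbol{\Sigma}^0_\alpha(X) \) (since \( A \notin \boldsymbol{\Pi}^0_\alpha(X) \)), hence \( X \setminus A \) is \( \boldsymbol{\Pi}^0_\alpha(X) \)-hard, and dualizing the reductions (precompose nothing, just read the definitions with complements) shows \( A \) is \( \boldsymbol{\Sigma}^0_\alpha(X) \)-hard; being also in \( \boldsymbol{\Sigma}^0_\alpha(X) \), it is \( \boldsymbol{\Sigma}^0_\alpha(X) \)-complete. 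The \( \boldsymbol{\Pi}^0_\alpha \) case is symmetric.

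The main obstacle I anticipate is purely bookkeeping rather than conceptual: making sure the reductions produced by \cite[Theorem 21.22 and 28.19]{kec}, which are stated for subsets of \( \pre{\omega}{\omega} \) (or Polish spaces), are transported to the setting where the \emph{source} space \( Y \) is an arbitrary zero-dimensional Polish space and the \emph{target} \( X \) is merely Polish, and double-checking that the redefinition of \( \boldsymbol{\Sigma}^0_2 \) for possibly-nonmetrizable \( X \) does not interfere — here it does not, since in Fact~\ref{factWadge} \( X \) is assumed Polish hence metrizable, so the redefined and classical notions coincide. Everything else is a routine composition-of-continuous-functions argument.
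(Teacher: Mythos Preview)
Your approach is exactly the paper's: the Fact is stated without a separate proof, and the parenthetical citations to \cite[Theorem 21.22]{kec} and \cite[Theorem 28.19]{kec} \emph{are} the entire justification, together with the routine composition-through-a-complete-set argument and the dualization you spell out for the ``in particular'' clause. So structurally there is nothing to compare.

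There is, however, one genuine slip in your \( \alpha = 1 \) sketch. You write ``pick a point \( x \in \cl(A) \setminus A \)'', but if \( A \) is closed and not open (e.g.\ a singleton in \( \RR \)) then \( \cl(A) \setminus A = \emptyset \), so no such point exists. What you need is a point \( x \in A \setminus \operatorname{int}(A) \): since \( A \) is not open there is \( x \in A \) with a sequence \( x_n \to x \), \( x_n \notin A \). Then for a closed \( B \subseteq Y \) (zero-dimensional Polish) write \( Y \setminus B \) as a disjoint union of clopen sets \( \bigcup_n C_n \) with diameters tending to \( 0 \) along any convergent sequence, and send \( B \) to \( x \) and \( C_n \) to \( x_n \); this is continuous and \( f^{-1}(A) = B \). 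Your version, with \( x \notin A \) approximated by points of \( A \), would instead show \( A \) is \( \boldsymbol{\Sigma}^0_1 \)-hard, which is the wrong conclusion. Once this is corrected the argument is fine.
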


A \emph{(countable) partition of \( X \) in \( 
\boldsymbol{\Delta}^0_\alpha(X, \tau) \)-pieces} is a sequence \( \seq{X_n}{n \in \omega} \) such that \( \bigcup_{n \in \omega} X_n = X \), 
\( X_n \cap X_m = \emptyset \) whenever \( n \neq m \), and \( X_n \in 
\boldsymbol{\Delta}^0_\alpha(X, \tau) \) for every \( n \in \omega \) (and similarly with \( \boldsymbol{\Delta}^0_\alpha(X,
\tau) \) replaced by \( \boldsymbol{\Sigma}^0_\alpha(X,\tau) \) or \( \boldsymbol{\Pi}^0_\alpha(X,\tau) \)). Note that every 
countable partition of \( X \) in \( \boldsymbol{\Sigma}^0_\alpha(X,\tau) \)-pieces is automatically a countable partition of \( 
X \) in \( \boldsymbol{\Delta}^0_\alpha(X,\tau) \)-pieces. Moreover, for  \( \alpha \geq 2 \) every countable partition of 
\( X \) in \( \boldsymbol{\Delta}^0_{\alpha+1}(X,\tau) \)-pieces can be refined to a countable partition of \( X \) in \( 
\boldsymbol{\Pi}^0_\alpha(X,\tau) \)-pieces (the same is true also for \( \alpha = 1 \) if \( (X,\tau) \) is second-countable 
and zero-dimensional). Given \( \alpha \geq 1 \) and two topological spaces \( (X,\tau_X) \), \( (Y,\tau_Y) \), a function \( f \colon X \to Y \) is called \emph{\( \boldsymbol{\Sigma}^0_\alpha \)-measurable} if \( f^{-1}(U) \in \boldsymbol{\Sigma}^0_\alpha(X,\tau_X) \) for every (\( \tau_Y \)-)open \( U \subseteq Y \) (and similarly with \( \boldsymbol{\Sigma}^0_\alpha \) replaced by \( \boldsymbol{\Pi}^0_\alpha \) or \( \boldsymbol{\Delta}^0_\alpha \)).
When 
the space \( X \) or its topology \( \tau \) will be clear from the context, 
we shall omit any reference to them in all the notation and terminology 
introduced above.

\subsection{Countable powers and the Pawlikowski function}

Throughout the paper, every finite or infinite product of topological spaces (hence, in particular, every countable power \( \pre{\omega}{X} \) of a topological space \( X \)) will be always tacitly endowed with the product topology. Moreover, given  \( x \in X \) we denote by \( \vec{x} \) the \(\omega\)-sequence constantly equal to \( x \), which is of course an element of \( \pre{\omega}{X} \).

\begin{definition} \label{def:ctblpower}
Let \(X,Y \) be topological spaces. The \emph{countable power} \( f_\omega \) of the function \( f \colon X \to Y \) is defined by 
\[ f_\omega \colon \pre{\omega}{X} \to \pre{\omega}{Y} \colon \seq{x_m}{m \in \omega} \mapsto
\seq{ f(x_m) }{ m \in \omega }.
\]
\end{definition}

It is not hard to check that \( f \) is continuous if and only if \( f_\omega \)
is continuous; more generally, for every \( \alpha \geq 1 \) we get that \( f \) is \( \boldsymbol{\Sigma}^0_\alpha \)-measurable if and only if \( f_\omega \) is \( \boldsymbol{\Sigma}^0_\alpha \)-measurable%
\footnote{The nontrivial direction of the equivalence may be proved by an argument as in Lemma~\ref{lemma3}.} 
(see Section~\ref{secfinitelevel} for the relevant definitions). Fix a bijection $\langle \cdot,\cdot \rangle\colon \omega \times \omega \to \omega$.  For each \( n \in \omega \), define the ``projection'' 
\[  \pi^{X}_n \colon \pre{\omega}{X} \to \pre{\omega}{X} \colon \seq{ x_m }{ m \in \omega } \mapsto \seq{ x_{\langle n,m \rangle} }{ m \in \omega }. \]
Clearly, every projection
 is
surjective, continuous and open. Moreover, every countable power \( f_\omega \) commutes with the projections, i.e.\ \( \pi^Y_n \circ f_\omega = f_\omega \circ \pi^X_n \) for every \( f \colon X \to Y \) and every \( n \in \omega \).

The Pawlikowski function \( P  \) is the countable power \( P =  f_\omega \colon 
\pre{\omega}{(\omega+1)} \to \pre{\omega}{\omega} \) of the function \( f 
\colon \omega+1 \to \omega \) defined by \( f(n) = n+1 \) if \( n \neq 
\omega \) and \( f(\omega) = 0 \), where \( \omega+1 \) is endowed with the 
\emph{order} topology and \(\omega\) with the discrete topology. 
By~\cite[Theorem 4.1]{soldecomposing}  (see 
Theorem~\ref{theorsoleckidichotomy} below), \( P \) is in a sense the 
canonical example of a Baire class \( 1 \) function which is not 
\(\omega\)-decomposable into continuous functions.

\begin{definition}
Let \( X, Y, X', Y' \) be topological spaces and \( f \colon X \to Y \), \( g \colon X' \to Y' \). We say that \( f \) is \emph{contained} in \( g \) (\( f \sqsubseteq g \) in symbols) if there are topological embeddings \( \varphi \colon X \to X' \) and \( \psi \colon f(X) \to Y' \) such that \( g \circ \varphi = \psi \circ f \).
\end{definition}

\begin{theorem}[Solecki] \label{theorsoleckidichotomy}
Let \( X, Y \) be separable metrizable spaces with \( X \) analytic. If \( f \colon X \to Y \) is \( \boldsymbol{\Sigma}^0_2 \)-measurable, then either \( f \) is \(\omega\)-decomposable into continuous functions, or else \( P \sqsubseteq f \).
\end{theorem}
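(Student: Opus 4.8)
The plan is to obtain the dichotomy from a Cantor--Bendixson-type derivative on the closed subsets of $X$. This derivative either terminates, in which case $f$ is a countable union of continuous partial functions, or else stabilises at a nonempty closed set on which $f$ is non-decomposable on every relatively open piece; on the latter we build, by a Lusin-scheme recursion modelled on the coordinate structure of $P$, the pair of embeddings witnessing $P \sqsubseteq f$. A harmless preliminary reduction: composing with a topological embedding of $Y$ into the Hilbert cube we may assume $f$ is of Baire class $1$ (a $\boldsymbol{\Sigma}^0_2$-measurable map into the Hilbert cube is Baire class $1$ coordinatewise), since this affects neither alternative --- a continuous decomposition of the composite transfers back along the homeomorphism onto the embedded copy of $Y$, and an embedding $\psi$ witnessing $P \sqsubseteq (\text{composite})$ has range inside that copy, hence transfers back too. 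Fix compatible metrics on $X$ and on $\widetilde Y$.

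For closed $C \subseteq X$ set $\partial C := C \setminus \bigcup\{ U \subseteq X \text{ open} : f \restriction (U \cap C) \text{ is } \omega\text{-decomposable into continuous functions}\}$. By the hereditary Lindel\"of property, $f \restriction (C \setminus \partial C)$ is again a countable union of continuous partial functions, and $\partial C$ is closed. Iterating transfinitely gives a decreasing chain $X \supseteq \partial X \supseteq \partial^2 X \supseteq \cdots$ which (a strictly decreasing chain of closed subsets of a second-countable space having countable length) stabilises at a countable stage to a closed set $X_\infty$. If $X_\infty = \emptyset$, collecting the countably many pieces removed shows $f$ is $\omega$-decomposable into continuous functions, and we are in the first alternative. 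So assume $X_\infty \neq \emptyset$; then $\partial X_\infty = X_\infty$, i.e.\ $f \restriction (U \cap X_\infty)$ fails to be $\omega$-decomposable into continuous functions --- in particular is discontinuous --- for every open $U$ meeting $X_\infty$, a property inherited by every nonempty relatively open subset of $X_\infty$. As the required embeddings may take values in $X_\infty \subseteq X$, we replace $X$ by $X_\infty$ (still analytic; for the Baire-category arguments one works inside closed Polish subspaces, which is where the merely-analytic case needs an extra reduction).

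For the second alternative, we construct a Lusin scheme $(C_s)$ of nonempty closed subsets of $X$ together with reals $\varepsilon_s > 0$, indexed by the tree $\pre{<\omega}{(\omega+1)}$ of finite sequences from $\omega+1$ (whose branches are exactly the points of $\pre{\omega}{(\omega+1)}$); write $sa$ for the one-term extension of $s$ by $a$. At each node $s$, the set $C_s$ is non-decomposable on every relatively open piece (so the recursion continues into it), and is split into children $C_{s\omega}$ (an ``$\omega$-face'') and $C_{sm}$ ($m \in \omega$, the ``finite blocks'') so that: the closures $\overline{f(C_{sa})}$ ($a \in \omega+1$) are pairwise disjoint; $C_{sm} \to C_{s\omega}$ as $m \to \infty$ (every neighbourhood of $C_{s\omega}$ contains all but finitely many $C_{sm}$); $d(f(z), f(z')) \geq \varepsilon_s$ whenever $z \in C_{s\omega}$ and $z' \in \bigcup_m C_{sm}$; and $\operatorname{diam}(C_s), \operatorname{diam}(f(C_s)) \to 0$ along branches. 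Such a splitting exists because $f \restriction C_s$, being non-decomposable, is robustly discontinuous and takes uncountably many values: using the Baire class $1$ property of $f$ one locates a relatively open piece on which $f$ has a genuine jump of fixed size $\varepsilon_s$, puts $C_{s\omega}$ on one side, and takes the $C_{sm}$ as diameter-shrinking pieces approaching it from the other side, each chosen near a continuity point of the relevant restriction of $f$ so as to have small $f$-diameter and pairwise disjoint $f$-value-closures. The scheme then induces $\varphi \colon \pre{\omega}{(\omega+1)} \to X$ sending $u$ to the unique point of $\bigcap_n C_{u \restriction n}$; the disjointness, the ``$C_{sm} \to C_{s\omega}$'' clauses, and $\operatorname{diam}(C_s) \to 0$ make $\varphi$ a topological embedding, while $f \circ \varphi$ is the map sending $u$ to the unique point of $\bigcap_n \overline{f(C_{u\restriction n})}$, which by $\operatorname{diam}(f(C_s)) \to 0$ is injective (pairwise disjointness of the $\overline{f(C_{sa})}$) and continuous for the \emph{discrete}-product topology on $\pre{\omega}{(\omega+1)}$. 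Since $P$ is injective with $\range(P) = \pre{\omega}{\omega}$ and $P^{-1}$ continuous, and the discrete-product topology is exactly the topology $P$ pulls back from $\pre{\omega}{\omega}$, it follows that $\psi := (f\circ\varphi) \circ P^{-1} \colon \pre{\omega}{\omega} \to \widetilde Y$ is a well-defined topological embedding with $\psi \circ P = f \circ \varphi$ and $\range(\psi) \subseteq f(X) \subseteq Y$. Thus $P \sqsubseteq f$.

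The main obstacle is the one-step splitting. Since $f$ is discontinuous one cannot simply shrink an arbitrary piece to control $f$ on it: to obtain the $C_{sm}$ with small $f$-diameter and with $f$-values uniformly away from $f(C_{s\omega})$ one must land inside neighbourhoods of continuity points of a restriction of $f$ whose values are suitably far from the face value, and such continuity points need not be available near an arbitrary ``bad'' point, so locating them may require descending into relatively closed subsets on which the non-decomposable behaviour concentrates --- a secondary rank analysis nested inside the first. One must also bookkeep the $\varepsilon_s$ and the diameters so that the jumps survive passage to the limit and the induced $\psi$ is genuinely bicontinuous, i.e.\ so that the scheme reproduces both the topology of $\pre{\omega}{(\omega+1)}$ and the behaviour of $P$ exactly --- being continuous along ``discrete'' convergence of coordinates and exhibiting a definite jump along any convergence genuinely using the order topology of some $\omega+1$-coordinate. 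Finally, reducing the merely-analytic case to a closed Polish subspace of $X$ on which $f$ is still non-decomposable requires a separate argument.
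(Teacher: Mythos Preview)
The paper does not prove this theorem; it is quoted from Solecki~\cite{soldecomposing} and used as a black box (notably in the proof of Lemma~\ref{lemmasoleckidichotomyfinite}). So there is no ``paper's own proof'' to compare against.

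Your outline is recognisably Solecki's original strategy: a Cantor--Bendixson-type derivative to isolate a closed kernel on which $f$ is nowhere decomposable, followed by a Lusin scheme indexed by $\pre{<\omega}{(\omega+1)}$ to manufacture the embeddings witnessing $P\sqsubseteq f$. The derivative half and the global shape of the scheme are correctly described, and your reading of $\psi$ as $(f\circ\varphi)\circ P^{-1}$ is the right way to see why the containment falls out once the scheme is built. However, as you yourself flag in your final paragraph, the one-step splitting is not actually carried out, and this is not a routine detail. The tension you identify is real: each child $C_{sm}$ must have small $f$-diameter (forcing it near a continuity point of a suitable restriction of $f$), yet must also support the recursion (so, in your formulation, remain ``non-decomposable on every relatively open piece''). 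These requirements pull in opposite directions, and the naive recursion invariant you state does not survive the splitting. Solecki resolves this with a more refined derivative and an inner rank argument --- precisely the ``secondary rank analysis'' you allude to but do not perform --- and the passage from analytic $X$ to a setting where the Baire-category input is available is a separate, nontrivial reduction in his paper as well. What you have written is a correct proof \emph{plan} with the hard step correctly located and honestly flagged, not a proof.
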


The Pawlikowski function \( P \) can also be viewed as the countable power of a sort of characteristic function of the proper \( \boldsymbol{\Pi}^0_1 \) subset \( \{ \omega \} \) of \( \omega +1 \). This suggests the following definition.

\begin{definition} \label{defPn}
For each \( 1 \leq n < \omega \), fix a proper \( \boldsymbol{\Pi}^0_n(\pre{\omega}{2}) \) set \( C_n \), and consider its characteristic function \( \chi_{C_n} \colon \pre{\omega}{2} \to 2 \) mapping \( x \in \pre{\omega}{2} \) to \( 1 \) if and only if \( x \in C_n \). 
The \emph{generalized Pawlikowski function} \( P_n \colon \pre{\omega}{(\pre{\omega}{2})} \to \pre{\omega}{2} \) is then defined as
\[
P_n = (\chi_{C_n})_\omega.
 \]
\end{definition}

In particular, since \( P_n \) is the countable power of the characteristic function of a 
proper \( \boldsymbol{\Pi}^0_n(\pre{\omega}{2}) \) set, it is a 
\( \boldsymbol{\Sigma}^0_{n+1} \)-measurable 
(equivalently, a Baire class \( n \)) function (see Proposition~\ref{proppower}). We will prove in Corollary~\ref{corPnindecomposable} that the 
functions  \( P_n \) correctly generalize the Pawlikowski function \( P \), in that they cannot be written as countable 
unions of \( \boldsymbol{\Sigma}^0_n \)-measurable functions. It is maybe worth noting that when \( n \geq 3 \) the functions \( P_n \) 
are in a sense canonical, i.e.\ that they do not depend in an essential way on the choice of the set \( C_n \) used
to define them, as shown by the next proposition. Following~\cite[Definition 8]{jayrog}, call a bijection \( f \) between two metrizable spaces \( X,Y \) 
\emph{first level Borel isomorphism} if both \( f \) and \( f^{-1} \) map \( F_\sigma \) sets to \( F_\sigma \) sets. This is equivalent to requiring that both \( f \) and \( f^{-1} \) are \( 
\boldsymbol{\Delta}^0_2 \)-measurable or, in the terminology of Definition~\ref{defSigma}, that they belong to \( \D_2(X,Y) \) and \( 
\D_2(Y,X) \), respectively. Notice that if \( \alpha \geq 2 \) then the classes \( 
\boldsymbol{\Sigma}^0_\alpha, \boldsymbol{\Pi}^0_\alpha, \boldsymbol{\Delta}^0_\alpha \) are all preserved by first level Borel isomorphisms.

\begin{proposition} \label{propPn}
Let \( 3 \leq n < \omega \) and \( C_n,C'_n \) be two proper \( \boldsymbol{\Pi}^0_n(\pre{\omega}{2}) \) sets. Set \( P_n = (\chi_{C_n})_\omega \) and \( P'_n = (\chi_{C'_n})_\omega \).
\begin{enumerate-(a)}
\item \label{propPn-1}
There are topological embeddings \( g_0, g_1 \colon \pre{\omega}{(\pre{\omega}{2})} \to \pre{\omega}{(\pre{\omega}{2})} \) such that \( P_n = P'_n \circ g_0 \) and \( P'_n = P_n \circ g_1 \). In particular, \( P_n \sqsubseteq P'_n \) and \( P'_n \sqsubseteq P_n \).
\item \label{propPn-2}
There exists a first level Borel isomorphism
\( g \colon \pre{\omega}{(\pre{\omega}{2})} \to \pre{\omega}{(\pre{\omega}{2})} \) such that
\[ 
P_n = P'_n \circ f. 
 \] 
\end{enumerate-(a)}
\end{proposition}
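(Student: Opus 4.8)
The plan is to reduce both parts to one statement about the sets themselves, which I will call $(\star)$: \emph{for $3 \le n < \omega$, any two proper $\boldsymbol{\Pi}^0_n(\pre{\omega}{2})$ sets $A,B$ admit a topological embedding $e\colon \pre{\omega}{2} \to \pre{\omega}{2}$ with $e^{-1}(B)=A$.} Granting $(\star)$, pick embeddings $h_0,h_1\colon \pre{\omega}{2}\to\pre{\omega}{2}$ with $h_0^{-1}(C'_n)=C_n$ and $h_1^{-1}(C_n)=C'_n$, and set $g_0:=(h_0)_\omega$, $g_1:=(h_1)_\omega$. Since the countable power of an embedding of a \emph{compact} space is again an embedding, and moreover one with closed range (the image of a compact set is closed, and a product of closed sets is closed), $g_0,g_1$ are embeddings of $\pre{\omega}{(\pre{\omega}{2})}$ with closed range. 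From $h_0^{-1}(C'_n)=C_n$ we get $\chi_{C_n}=\chi_{C'_n}\circ h_0$ pointwise, and since $(\cdot)_\omega$ commutes with composition,
\[
P_n=(\chi_{C_n})_\omega=(\chi_{C'_n})_\omega\circ(h_0)_\omega=P'_n\circ g_0,
\]
and symmetrically $P'_n=P_n\circ g_1$. This proves~\ref{propPn-1}; the relations $P_n\sqsubseteq P'_n$ and $P'_n\sqsubseteq P_n$ follow at once by taking $\varphi=g_0$ (resp.\ $g_1$) and $\psi$ the identity of $\pre{\omega}{2}$, using that $P_n$ and $P'_n$ are surjective because $C_n,C'_n$ are proper, so $\chi_{C_n},\chi_{C'_n}$ are onto $\{0,1\}$.

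For~\ref{propPn-2} (where the displayed formula should read $P_n=P'_n\circ g$) I would obtain $g$ from the embeddings $g_0,g_1$ of~\ref{propPn-1} by a Cantor--Schr\"oder--Bernstein back-and-forth: $g$ agrees with $g_0$ on $E:=\bigcup_k E_k$, where $E_0=\pre{\omega}{(\pre{\omega}{2})}\setminus\range(g_1)$ and $E_{k+1}=g_1(g_0(E_k))$, and with $g_1^{-1}$ on the complement $\pre{\omega}{(\pre{\omega}{2})}\setminus E\subseteq\range(g_1)$. Because $g_0,g_1$ are embeddings with \emph{closed} range, each $E_k$ is $\boldsymbol{\Delta}^0_2$ (iterated images of an open set under such embeddings), the ``infinite backward orbit'' set is a decreasing intersection of closed sets hence closed, and one checks that $E$ and its complement are both $\boldsymbol{\Delta}^0_2$. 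Since $g$ is a homeomorphism on each of these two $\boldsymbol{\Delta}^0_2$ pieces, both $g$ and $g^{-1}$ are $\boldsymbol{\Delta}^0_2$-measurable, i.e.\ $g$ is a first level Borel isomorphism; and $P_n=P'_n\circ g$ holds piecewise, using $P_n=P'_n\circ g_0$ on $E$ and $P'_n=P_n\circ g_1$ (hence $P_n=P'_n\circ g_1^{-1}$ on $\range(g_1)$) on the complement. (One could instead extract from $(\star)$, with symmetric roles, a first level Borel isomorphism of $\pre{\omega}{2}$ carrying $C_n$ to $C'_n$ and take its countable power, which is a first level Borel isomorphism since $(\cdot)_\omega$ preserves $\boldsymbol{\Sigma}^0_2$-measurability and a $\boldsymbol{\Sigma}^0_2$-measurable bijection with $\boldsymbol{\Sigma}^0_2$-measurable inverse is exactly an $F_\sigma$-preserving bijection both ways; but the Schr\"oder--Bernstein route needs no extra input.)

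Everything thus rests on $(\star)$, and this is where $n\ge 3$ is essential and where the real work lies. By Fact~\ref{factWadge}, $B$ is $\boldsymbol{\Pi}^0_n$-hard, so there is a \emph{continuous} $h$ with $h^{-1}(B)=A$; the problem is to upgrade $h$ to an embedding without destroying the reduction. The diagonal map $x\mapsto(h(x),x)$ is an embedding of $\pre{\omega}{2}$ into $\pre{\omega}{2}\times\pre{\omega}{2}$ reducing $A$ to $B\times\pre{\omega}{2}$, so it would suffice to pair-embed $(\pre{\omega}{2}\times\pre{\omega}{2},\, B\times\pre{\omega}{2})$ into $(\pre{\omega}{2},B)$; reading $\pre{\omega}{2}\times\pre{\omega}{2}$ as a copy of $\pre{\omega}{2}$, the set $B\times\pre{\omega}{2}$ is again proper $\boldsymbol{\Pi}^0_n$, so this is an instance of the mutual pair-embeddability of proper $\boldsymbol{\Pi}^0_n(\pre{\omega}{2})$ sets, which is really the content of $(\star)$ itself — the diagonal trick only shifts the difficulty, not removes it. The genuine argument is a direct Cantor-scheme/fusion construction of $e$, exploiting that for $n\ge 3$ both a $\boldsymbol{\Pi}^0_n$-complete set and its $\boldsymbol{\Sigma}^0_n$ complement stay ``complicated'' inside arbitrarily small clopen pieces, so that one always has room to build, piece by piece, a compact copy of $(\pre{\omega}{2},A)$ sitting inside $(\pre{\omega}{2},B)$ (isolated points of $A$ or $B$ being the one subtlety, accommodated on a separate clopen block). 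That $(\star)$ genuinely fails for $n\le 2$ — for $n=1$ already on cardinality grounds (a singleton versus a Cantor set), for $n=2$ because of first-level-invariant distinctions such as $\sigma$-compactness — is precisely why the proposition is restricted to $n\ge 3$. Carrying out this scheme cleanly is the main obstacle; the rest is routine bookkeeping with countable powers and with $\boldsymbol{\Delta}^0_2$-measurable maps.
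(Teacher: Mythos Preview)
Your overall strategy is exactly the paper's: reduce to the statement $(\star)$ at the level of $\pre{\omega}{2}$, take countable powers for~\ref{propPn-1}, and run a Schr\"oder--Bernstein argument on the resulting closed-range embeddings $g_0,g_1$ for~\ref{propPn-2}. The bookkeeping you describe (countable powers of embeddings of compact spaces are embeddings with closed range, the $\boldsymbol{\Delta}^0_2$ nature of the Schr\"oder--Bernstein pieces, the verification of $P_n=P'_n\circ g$ piecewise) is correct and matches the paper almost verbatim.

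The one substantive gap is that you treat $(\star)$ as something to be built by hand and explicitly flag ``carrying out this scheme cleanly'' as the main obstacle you have not overcome. The paper does not attempt this construction at all: it simply invokes \cite[Exercise~26.11]{kec}, which says precisely that for $n\ge 3$ a continuous Wadge reduction to a $\boldsymbol{\Pi}^0_n(\pre{\omega}{2})$-hard set can always be upgraded to an injective continuous reduction (hence a topological embedding, by compactness). With that citation in hand, $(\star)$ is immediate from Fact~\ref{factWadge}, and nothing further is needed. Two side remarks: your assertion that $(\star)$ ``genuinely fails for $n=2$'' goes beyond what is known---the paper records the case $n=2$ as open; and the ``isolated points'' issue you mention does not arise, since $\pre{\omega}{2}$ is perfect.
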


\begin{proof}
To prove~\ref{propPn-1} it is clearly enough to construct the desired \( g_0 \), as the embedding \( g_1 \) can be obtained in the same way by switching the role of \( C_n \) and \( C'_n \).
Since \( n \geq 3 \) and  \( C'_n \), being a proper \( \boldsymbol{\Pi}^0_n(\pre{\omega}{2}) \) set, is \( \boldsymbol{\Pi}^0_n(\pre{\omega}{2}) \)-hard (see Fact~\ref{factWadge}), by e.g.~\cite[Exercise 26.11]{kec} we get that there is a topological embedding \( f \colon \pre{\omega}{2} \to \pre{\omega}{2} \) such that \( f^{-1}(C'_n) = C_n \). Then \( g_0 = f_\omega \colon \pre{\omega}{(\pre{\omega}{2})} \to \pre{\omega}{(\pre{\omega}{2})} \) is a topological embedding such that \( P_n = P'_n \circ g_0 \).

Part~\ref{propPn-2} is proved using a Schr\"oder-Bernstein argument as in~\cite[Lemma 4.8]{motschsel}. Let \( g_0, g_1 
\) be as in~\ref{propPn-1}. Recursively define the sets \( X_n, Y_n \subseteq \pre{\omega}{(\pre{\omega}{2})} \) by 
setting \( X_0 = Y_0 = \pre{\omega}{(\pre{\omega}{2})} \), \( X_{n+1} = g_1(Y_n) \) and \( Y_{n+1} = g_0(X_n) \). Set 
also \( X_\omega = \bigcap_{n \in \omega} X_n \), \( Y_\omega = \bigcap_{n \in \omega} Y_n \), \( X' = X_\omega \cup \bigcup_{i \in \omega} (X_{2i} \setminus X_{2i+1}) \), and  \( Y' = Y_\omega 
\cup \bigcup_{i \in \omega} (Y_{2i+1} \setminus Y_{2i+2}) \). It is easy to inductively 
check that since \( \pre{\omega}{(\pre{\omega}{2})} \) is a compact (Hausdorff) space and both \( g_0 \) and \( g_1 \) 
are continuous, then all of \( X_n,Y_n,X_\omega,Y_\omega \) are closed sets, whence \( X', Y' \in \boldsymbol{\Delta}^0_2(\pre{\omega}{(\pre{\omega}{2})}) \). Moreover, \( g_0 \restriction X' \) is an 
homeomorphism between \( X' \) and \( Y' \) and, similarly, \( g_1^{-1} \restriction (X \setminus X') \) is an 
homeomorphism between \( X \setminus X' \) and \( Y \setminus Y' \). It easily follows that the function \( f = (g_0 \restriction X') \cup (g_1^{-1} 
\restriction (X \setminus X')) \) is a first level Borel isomorphism (see also 
Lemma~\ref{lemmadefdecomposable}). Finally, \( P_n = P'_n \circ f \) because  \( P_n = P'_n \circ g_0 \) and \( P'_n = P_n 
\circ g_1 \).
\end{proof}

\begin{remark}
Notice that Proposition~\ref{propPn} cannot be extended to the case \( n = 1 \) (while whether it can be extended to the case \( n = 2 \) is still an open problem). To see this, let \( C'  = \{ x \} \) for some 
\( x \in \pre{\omega}{2} \) and \( C \) be a proper \( \boldsymbol{\Pi}^0_2(\pre{\omega}{2}) \) set containing at least two 
points. Set \( P_1 = (\chi_C)_\omega \) and \( P'_1 = (\chi_{C'})_\omega \): we claim that there is no injective function \( f 
\colon \pre{\omega}{(\pre{\omega}{2})} \to \pre{\omega}{(\pre{\omega}{2})} \) such that \( P_1 = P'_1 \circ f \). To 
see this consider the sets \( C_\omega = \{ \seq{ x_n }{ n \in \omega } \in \pre{\omega}{(\pre{\omega}{2})} 
\mid \forall n \, (x_n \in C ) \} \) and \( C'_\omega = \{ \seq{ x_n }{ n \in \omega } \in \pre{\omega}{(\pre{\omega}
{2})} \mid \forall n \, (x_n \in C' ) \} \). Then \( C_\omega \) has the cardinality of the continuum, while \( C'_\omega \) contains 
exactly one element, namely \( \vec{x} \). Let \( f  \colon \pre{\omega}{(\pre{\omega}{2})} \to \pre{\omega}
{(\pre{\omega}{2})} \) be such that \( P_n = P'_n \circ f \). Then
\[ 
x \in C_\omega \iff P_1(x) = \vec{1} \iff P'_1(f(x)) = \vec{1} \iff f(x) \in C'_\omega,
 \] 
which implies that \( f \restriction C_\omega \) is constantly equal to \( \vec{x} \). In particular, \( f \) cannot be injective.
\end{remark}

\subsection{Changes of topologies}

Part of the results of this paper are heavily based on the possibility of enriching the topology of a space without loosing its main properties. In particular, the following minor variation of~\cite[Theorem 22.18]{kec}, essentially due to Kuratowski, will be used repeatedly in the paper.

\begin{lemma} \label{lemmachangetopology}
Let \( (X, \tau) \) be a metrizable space, \( \alpha > 1 \), and \( \mathcal{A} = \{ A_n \mid n \in \omega \} \subseteq \boldsymbol{\Sigma}^0_\alpha(X,\tau) \). Then \(\tau\) can be refined by a topology \( \tau' \subseteq \boldsymbol{\Sigma}^0_\alpha(X,\tau) \) such that \( \mathcal{A} \subseteq \tau' \) and \( Z = (X, \tau') \) is still metrizable. If moreover \( (X,\tau) \) is separable (respectively, analytic, or Polish), then the topology \( \tau' \) above can be chosen so that \( Z \) is zero-dimensional and separable (respectively, analytic, or Polish).
\end{lemma}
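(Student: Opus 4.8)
The plan is to take \( \tau' \) to be the topology generated by \( \tau \) together with a suitably enlarged countable family, and then to check the four requirements almost by hand, the only real work being the preservation of Polishness in the last case. First I would exploit \( \alpha > 1 \) to reduce the \( A_n \)'s to \( \boldsymbol{\Delta}^0_\alpha \) sets: since any \( \boldsymbol{\Sigma}^0_\alpha(X,\tau) \) set with \( \alpha > 1 \) is a countable union of sets lying in \( \bigcup_{\beta < \alpha} \boldsymbol{\Pi}^0_\beta(X,\tau) \subseteq \boldsymbol{\Delta}^0_\alpha(X,\tau) \), I would fix for each \( n \) such a decomposition \( A_n = \bigcup_m B_{n,m} \) and set \( \mathcal{A}' = \{ B_{n,m} \mid n,m \in \omega \} \cup \{ X \setminus B_{n,m} \mid n,m \in \omega \} \), noting that \( \mathcal{A}' \subseteq \boldsymbol{\Delta}^0_\alpha(X,\tau) \) is countable and closed under complements, and that making each \( B_{n,m} \) open makes each \( A_n \) open. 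In the separable/analytic/Polish case I would further enlarge \( \mathcal{A}' \) by a countable \( \tau \)-basis and its complements; this is legitimate because \( \alpha \geq 2 \) gives \( \boldsymbol{\Sigma}^0_1(X,\tau) \cup \boldsymbol{\Pi}^0_1(X,\tau) \subseteq \boldsymbol{\Delta}^0_2(X,\tau) \subseteq \boldsymbol{\Delta}^0_\alpha(X,\tau) \).

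Now set \( \tau' := \) the topology generated by \( \tau \cup \mathcal{A}' \). That \( \mathcal{A} \subseteq \tau' \) is clear. That \( \tau' \subseteq \boldsymbol{\Sigma}^0_\alpha(X,\tau) \) follows from countability of \( \mathcal{A}' \): since \( \mathcal{A}' \) has only countably many finite intersections, every \( \tau' \)-open set can be written as \( \bigcup_{l \in \omega} (U_l \cap D_l) \) with \( U_l \in \tau \) and \( D_l \) a finite intersection of members of \( \mathcal{A}' \), and \( \boldsymbol{\Sigma}^0_\alpha(X,\tau) \) is closed under finite intersections and countable unions and contains \( \tau \cup \mathcal{A}' \). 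Metrizability of \( (X,\tau') \) follows by checking that \( x \mapsto (x, (\chi_A(x))_{A \in \mathcal{A}'}) \) is a homeomorphism of \( (X,\tau') \) onto a subspace of the metrizable space \( (X,\tau) \times \pre{\omega}{2} \) --- injectivity is immediate from the first coordinate, and that the pulled-back topology is exactly \( \tau' \) uses that \( \mathcal{A}' \) is closed under complements. In the separable case \( \mathcal{A}' \) is in addition a countable subbasis for \( \tau' \) made of clopen sets (this is where the \( \tau \)-basis is used; it also makes \( x \mapsto (\chi_A(x))_{A \in \mathcal{A}'} \) injective), so \( (X,\tau') \) is a second-countable Hausdorff space with a clopen basis, hence zero-dimensional, separable and metrizable; equivalently, the latter map embeds \( (X,\tau') \) into \( \pre{\omega}{2} \).

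For the analytic and Polish cases, keep the separable setup and consider \( e \colon X \to \pre{\omega}{2} \), \( e(x) = (\chi_A(x))_{A \in \mathcal{A}'} \): it is a Borel-measurable injection out of \( (X,\tau) \) and a homeomorphism of \( (X,\tau') \) onto its image with the subspace topology. If \( (X,\tau) \) is analytic, then \( e(X) \), being the Borel image of an analytic space, is an analytic subset of \( \pre{\omega}{2} \), so \( (X,\tau') \cong e(X) \) is analytic and we are done. The Polish case is the one that needs more: by the Lusin-Souslin theorem (\cite[Theorem 15.1]{kec}) \( e \) is a Borel isomorphism onto the Borel set \( e(X) \subseteq \pre{\omega}{2} \), but a Borel subspace of \( \pre{\omega}{2} \) need not be Polish, so I would instead appeal to the classical Kuratowski refinement theorem (\cite[Theorem 22.18]{kec}, of which the present statement is the announced minor variation): for \( (X,\tau) \) Polish it directly yields a Polish topology \( \tau' \supseteq \tau \) with \( \tau' \subseteq \boldsymbol{\Sigma}^0_\alpha(X,\tau) \) in which every member of \( \mathcal{A}' \) is clopen, and such a \( \tau' \) has all the desired properties by the previous paragraph. (Equivalently, one reproves it by induction on \( \alpha \): the base case clopen-izes a single closed set \( F \), where the refined space is the topological sum of the closed subspace \( F \) and the open subspace \( X \setminus F \) and hence remains Polish, while an \( \omega \)-chain of such refinements is absorbed by realizing the limit topology as the closed diagonal of a countable product, as in \cite[Theorems 13.1--13.3]{kec}.)

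I expect the only genuine obstacle to be exactly this last point: propagating ``Polish'' (rather than merely ``metrizable'' or ``Borel'') through the refinement. The bound \( \tau' \subseteq \boldsymbol{\Sigma}^0_\alpha(X,\tau) \) and the embeddings into \( \pre{\omega}{2} \) and \( (X,\tau) \times \pre{\omega}{2} \) give metrizability, separability, zero-dimensionality and (via injective Borel images) analyticity essentially for free, but they do not see complete metrizability; that has to come from the fact --- the substance of the Kuratowski/Kechris argument being quoted --- that the family of Polish topologies contained in a fixed Borel \( \sigma \)-algebra is stable under adjoining one clopen set and under joins of increasing \( \omega \)-sequences.
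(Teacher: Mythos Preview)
The paper does not actually prove this lemma: it is stated as ``a minor variation of~\cite[Theorem~22.18]{kec}, essentially due to Kuratowski'' and left without proof. Your proposal is a correct reconstruction of the standard argument behind that reference---reducing to a countable family of \( \boldsymbol{\Delta}^0_\alpha \) sets closed under complements, generating \( \tau' \), checking \( \tau' \subseteq \boldsymbol{\Sigma}^0_\alpha(X,\tau) \) by countability, and obtaining metrizability, separability, zero-dimensionality, and analyticity via the embeddings into \( (X,\tau) \times \pre{\omega}{2} \) and \( \pre{\omega}{2} \)---and you correctly isolate the Polish case as the only nontrivial step, resolving it by direct appeal to the very theorem the paper cites (or, equivalently, by the product-of-refinements trick from~\cite[Theorems~13.1--13.3]{kec}). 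There is nothing to compare beyond noting that the paper omits the argument entirely.
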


\begin{remark} \label{remfailure}
Assuming the Axiom of Choice, the fact that \( \mathcal{A} \) is countable is a necessary condition in 
Lemma~\ref{lemmachangetopology}, even in the case of a Polish space \( X \). To see this, set e.g.\ \( X = \pre{\omega}{\omega}	 \). Assume first that the Continuum Hypothesis \( \mathsf{CH} \) holds, and let \( \mathcal{A} = \{ \{ x 
\} \mid x \in \pre{\omega}{\omega} \} \), so that \( \mathcal{A} \) has cardinality \( \aleph_1 \). Then \( \mathcal{A} \subseteq \boldsymbol{\Pi}^0_1(\pre{\omega}{\omega}) \subseteq \boldsymbol{\Sigma}^0_\alpha(\pre{\omega}{\omega}) \) for every \( \alpha > 1 \). However, the unique 
topology \( \tau' \) on \( \pre{\omega}{\omega} \) such that \( \mathcal{A} 
\subseteq \tau' \) is the discrete topology, so that \( \tau' \nsubseteq 
\boldsymbol{\Sigma}^0_\alpha(\pre{\omega}{\omega}) \) for every \( \alpha 
> 1 \).
Assume now that \( \mathsf{CH} \) fails, and let \( A \subseteq \pre{\omega}{\omega} \) be any set of cardinality \( \aleph_1 \). Let \( \mathcal{A} = \{ \{ x \} \mid x \in A \} \), so that \( \mathcal{A} \subseteq \boldsymbol{\Pi}^0_1(\pre{\omega}{\omega}) \subseteq \boldsymbol{\Sigma}^0_\alpha(\pre{\omega}{\omega}) \) for every \( \alpha > 1 \) again. If \( \tau' \) is a topology such that \( \mathcal{A} \subseteq \tau' \), then \( A \in \tau' \). But then \( \tau' \nsubseteq \boldsymbol{\Sigma}^0_\alpha(\pre{\omega}{\omega}) \) (for every \( \alpha > 1 \)) because every Borel subset of \( \pre{\omega}{\omega} \) is either countable or has cardinality \( 2^{\aleph_0} > \aleph_1 \)  by e.g.\ \cite[Theorem 13.6]{kec}, and hence \( A \) cannot be Borel.
\end{remark}

\section{The structure of finite level Borel classes} \label{secfinitelevel}

The  notion of continuous function can be generalized in two natural ways. Given \(  \alpha \geq 1\), a function \( f \colon X \to Y \) between the topological spaces \(X,Y \) is called:
\begin{enumerate-(a)}
\item \label{item1}
\emph{\( \boldsymbol{\Sigma}^0_\alpha \)-measurable} if the preimage of each open set of \( Y \) is in \( \boldsymbol{\Sigma}^0_\alpha(X) \);
\item \label{item2}
\emph{\( \boldsymbol{\Delta}^0_\alpha \)-function} if the preimage of every set in \( \boldsymbol{\Sigma}^0_\alpha(Y) \) still belongs to \( \boldsymbol{\Sigma}^0_\alpha(X) \).
\end{enumerate-(a)}
In both cases, taking \( \alpha = 1 \) we get the class of continuous functions, 
and letting \( \alpha \) vary on the nonzero countable ordinals we get two 
interesting and natural stratifications of the class of all Borel 
functions which are intimately related (see  
\cite{motnewcharacterization}). In fact, although they give a probably less known stratification of 
the Borel functions (compared to the classical notions of \( 
\boldsymbol{\Sigma}^0_\alpha \)-measurable or Baire class \(\alpha\) 
functions), the classes of \( \boldsymbol{\Delta}^0_\alpha \)-functions are 
quite useful: for example, being closed under composition, they provide more 
natural reducibilities between subsets of Polish 
spaces~\cite{motamenable}, and they sometimes allow to transfer topological 
results from one space to another --- see e.g.\ \cite[Corollaries 4.25 and 4.26]
{motschsel}. In Section~\ref{sectionBanach} we will also present an application to Banach spaces whose proof 
involves these classes of functions.

The following definition further generalizes 
both~\ref{item1} and~\ref{item2}.

\begin{definition} \label{defSigma}
Let \( X,Y \) be topological spaces. For \( 1 \leq \beta \leq \alpha \) we let \( \Sigma_{\alpha,\beta}(X,Y) \) be the collection of all \emph{\( \Sigma_{\alpha,\beta} \) functions} from \( X \) to \( Y \), i.e.\ of those \( f \colon X \to Y \) such that \( f^{-1}(S) \in \boldsymbol{\Sigma}^0_\alpha(X) \) for every \( S \in \boldsymbol{\Sigma}^0_\beta(Y) \). To simplify the notation, we put \( \Sigma_\alpha(X,Y) = \Sigma_{\alpha,1}(X,Y) \) and \( \D_\alpha(X,Y) = \Sigma_{\alpha,\alpha}(X,Y) \), while
\( \mathsf{Bor}(X,Y) = \bigcup_{\alpha \geq 1} \D_\alpha(X,Y) = \bigcup_{\alpha \geq 1} \Sigma_\alpha(X,Y)  = \bigcup_{\alpha, \beta \geq 1} \Sigma_{\alpha,\beta}(X,Y) \) denotes the class of all \emph{Borel (measurable) functions} from \( X \) to \( Y \).
\end{definition}

\noindent
Thus \( \Sigma_\alpha(X,Y) \) and \( \D_\alpha(X,Y) \) are, respectively,  the collection of all \( \boldsymbol{\Sigma}^0_\alpha \)-measurable functions and all \( \boldsymbol{\Delta}^0_\alpha  \)-functions from \( X \) to \( Y \) considered in~\ref{item1} and~\ref{item2}. 
For simplicity of notation, we let \( \Sigma_{< \alpha,\beta}(X,Y) = 
\bigcup_{\gamma< \alpha} \Sigma_{\gamma,\beta} (X,Y) \); the classes \( 
\Sigma_{\alpha,<\beta}(X,Y) \), \( \Sigma_{< \alpha,<\beta}(X,Y) \), \( 
\Sigma_{<\alpha}(X,Y) \), and \( \D_{<\alpha}(X,Y) \) are defined similarly. 
Notice that if \( \alpha \) is limit, then the class \( \Sigma_{< \alpha, \beta}
(X,Y) \) is in general properly contained in the class of those \( f \colon X \to 
Y \) such that \( f^{-1}(S) \in \bigcup_{\gamma < \alpha} 
\boldsymbol{\Sigma}^0_\gamma(X) \) for every \( S \in 
\boldsymbol{\Sigma}^0_\beta(Y) \). Similar observations hold for the other 
mentioned classes.

\begin{definition} \label{deffinitelevel}
A class of the form \( \Sigma_{m,n} (X,Y) \) for \( 1 \leq n \leq m < \omega \) is called \emph{finite level Borel class}, and the functions in \( \bigcup_{1 \leq n \leq m < \omega} \Sigma_{m,n}(X,Y)  = \bigcup_{1 \leq m < \omega } \Sigma_m(X,Y) \) are called \emph{finite level Borel fuctions}.
\end{definition}

Note that by the observation following Definition~\ref{def:ctblpower}, \( f \colon X \to Y \) is a finite level Borel function if and only if \( f_\omega \colon \pre{\omega}{X} \to \pre{\omega}{Y} \) is a finite level Borel function.

We will now list some basic properties of the classes \( \Sigma_{\alpha,\beta}(X,Y) \) which will be tacitly used throughout the paper, leaving to the reader as an easy exercise to check their validity. Let \( X,Y \) be topological spaces and \( 1 \leq \beta \leq \alpha \).
 Obviously, if \( Y \subseteq Y' \) then \( 
\Sigma_{\alpha,\beta}(X,Y) \subseteq \Sigma_{\alpha,\beta}(X,Y') \) (in fact, \( \Sigma_{\alpha,\beta}(X,Y) = \{ f \in \Sigma_{\alpha,\beta}(X,Y') \mid \range(f) \subseteq Y \} \)), and if \( f \in \Sigma_{\alpha,\beta}(X,Y) \) and \( X' \subseteq X \) then \( f \restriction X' \in \Sigma_{\alpha,\beta}(X',Y) \). 
Moreover, if \( f \colon X \to Y \), 
\( g \colon X' \to Y' \), and \( f \sqsubseteq g \), then \( g \in \Sigma_{\alpha, 
\beta}(X',Y') \) implies \( f \in \Sigma_{\alpha,\beta}(X,Y) \). 
Arguing by induction on \( \gamma  \), for every \( \gamma < \omega_1 \) we have \( \Sigma_{\alpha, \beta}(X,Y) \subseteq \Sigma_{\alpha+\gamma,\beta+\gamma}(X,Y) \). Hence, in particular,  
 \( \Sigma_\alpha(X,Y) \subseteq \Sigma_{\alpha+\gamma,1+\gamma}(X,Y) \) for every \( \gamma < \omega_1 \),
 and \( \D_\alpha(X,Y) \subseteq \D_{\alpha'}(X,Y) \) for every \( 1 \leq \alpha \leq 
\alpha' \). Moreover, \( \D_\alpha(X,Y) \subseteq \Sigma_\alpha(X,Y) \) and, 
more generally \( \Sigma_{\alpha,\beta}(X,Y) \subseteq \Sigma_{\alpha', 
\beta'}(X,Y) \) whenever \( 1 \leq \beta' \leq \beta \leq \alpha \leq \alpha' \). Conversely, \( 
\Sigma_\alpha(X,Y) \subseteq \D_{\alpha \cdot \omega}(X,Y) \), where \( 
\alpha \cdot \omega \) is the first additively closed ordinal above \( \alpha \), 
and the index \( \alpha \cdot \omega \) cannot be lowered (see Corollary 
\ref{cor3variant}). Finally, we have the following simple property, which will be repeatedly used in the results below.

\begin{fact} \label{fct:basicSigmaalphabeta}
Let \( X,Y \) be arbitrary topological spaces, and let \( 1 \leq \beta \leq \alpha \). If \( \seq{ X_n }{ n \in \omega } \) is a partition of \( X \) into \( \boldsymbol{\Delta}^0_\alpha(X) \)-pieces and \( f_n \in \Sigma_{\alpha,\beta}(X_n,Y) \) for each \( n \in \omega \), then \( f = \bigcup_{n \in \omega} f_n \in \Sigma_{\alpha,\beta}(X,Y) \).
\end{fact}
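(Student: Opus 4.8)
The plan is to verify the two conditions required for $f = \bigcup_{n \in \omega} f_n$ to be a $\Sigma_{\alpha,\beta}$ function from $X$ to $Y$: first, that $f$ is indeed a well-defined function on all of $X$, and second, that $f^{-1}(S) \in \boldsymbol{\Sigma}^0_\alpha(X)$ for every $S \in \boldsymbol{\Sigma}^0_\beta(Y)$. For the first point, since $\seq{X_n}{n \in \omega}$ is a \emph{partition} of $X$, the domains of the $f_n$ are pairwise disjoint and cover $X$, so $f$ is a single-valued function with domain $X$; for $x \in X_n$ we have $f(x) = f_n(x)$.

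For the measurability condition, fix $S \in \boldsymbol{\Sigma}^0_\beta(Y)$. The key computation is the identity
\[
f^{-1}(S) = \bigcup_{n \in \omega} f_n^{-1}(S),
\]
which holds because the $X_n$ partition $X$ and $f$ agrees with $f_n$ on $X_n$. Now, since each $f_n \in \Sigma_{\alpha,\beta}(X_n, Y)$, we have $f_n^{-1}(S) \in \boldsymbol{\Sigma}^0_\alpha(X_n, \tau_X \restriction X_n)$, where the topology on $X_n$ is the subspace topology inherited from $X$. To conclude, I would use the fact that $X_n \in \boldsymbol{\Delta}^0_\alpha(X)$: a standard relativization argument shows that if $A \in \boldsymbol{\Sigma}^0_\alpha(X_n)$ (subspace topology) and $X_n \in \boldsymbol{\Delta}^0_\alpha(X)$, then $A \in \boldsymbol{\Sigma}^0_\alpha(X)$. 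Indeed, $\boldsymbol{\Sigma}^0_\alpha(X_n) = \{ B \cap X_n \mid B \in \boldsymbol{\Sigma}^0_\alpha(X) \}$ by the usual behavior of the Borel hierarchy under restriction to subspaces (taking into account the modified definition of $\boldsymbol{\Sigma}^0_2$ for general spaces given in the preliminaries, where the same relativization still holds), and $B \cap X_n$ is the intersection of a $\boldsymbol{\Sigma}^0_\alpha(X)$ set with a $\boldsymbol{\Delta}^0_\alpha(X) \subseteq \boldsymbol{\Sigma}^0_\alpha(X)$ set, hence lies in $\boldsymbol{\Sigma}^0_\alpha(X)$ by closure of $\boldsymbol{\Sigma}^0_\alpha(X)$ under finite (indeed countable) intersections with $\boldsymbol{\Pi}^0_\alpha(X)$ sets — more simply, $\boldsymbol{\Sigma}^0_\alpha$ is closed under intersection with $\boldsymbol{\Delta}^0_\alpha$ sets. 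Therefore each $f_n^{-1}(S) \in \boldsymbol{\Sigma}^0_\alpha(X)$, and since $\boldsymbol{\Sigma}^0_\alpha(X)$ is closed under countable unions, $f^{-1}(S) = \bigcup_n f_n^{-1}(S) \in \boldsymbol{\Sigma}^0_\alpha(X)$, as desired.

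There is no real obstacle here; this is exactly the kind of routine verification the authors leave to the reader. The only point requiring a modicum of care is the relativization of the Borel hierarchy to a $\boldsymbol{\Delta}^0_\alpha$ subspace, and in particular making sure it is compatible with the nonstandard definition of $\boldsymbol{\Sigma}^0_2$ for non-metrizable spaces adopted earlier in the paper; but that definition was explicitly engineered so that "one can always recover the usual structure under inclusion of the hierarchy and the classical closure properties of its levels," which covers precisely what is needed. One could also phrase the argument without explicit relativization: directly, for $\alpha \geq 2$, writing $f_n^{-1}(S)$ in terms of open sets of $X_n$, each of which is of the form $U \cap X_n$ for $U$ open in $X$, and then pushing the computation through the definition of the level $\alpha$ of the hierarchy, again using $X_n \in \boldsymbol{\Delta}^0_\alpha(X)$ at the end. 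Either route gives the result immediately, so the proof is short.
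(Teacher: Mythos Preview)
Your proof is correct and is precisely the standard argument; the paper does not actually supply a proof of this fact, explicitly leaving it ``to the reader as an easy exercise,'' so there is nothing to compare against. Your care about the relativization of the Borel hierarchy to a \(\boldsymbol{\Delta}^0_\alpha\) piece (and its compatibility with the modified \(\boldsymbol{\Sigma}^0_2\) for nonmetrizable spaces) is exactly the one subtlety worth flagging, and you handle it appropriately.
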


The following lemma 
gives a sufficient condition for inclusion between the classes of functions introduced in Definition~\ref{defSigma}.

\begin{lemma} \label{lemma1}
Let \( X,Y \) be topological spaces. Let \( 1 \leq  \beta \leq \alpha \) and \( 1 \leq \beta' \leq \alpha' \). If \( \alpha \leq \alpha' \) and  \( \beta' \dotdiv \beta \leq \alpha' \dotdiv \alpha \), then \( \Sigma_{\alpha,\beta}(X,Y) \subseteq \Sigma_{\alpha',\beta'}(X,Y) \). 
\end{lemma}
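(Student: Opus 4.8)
The plan is to reduce the statement to two elementary monotonicity properties of the classes $\Sigma_{\alpha,\beta}(X,Y)$ that were already recorded just before the lemma, so that the proof becomes pure bookkeeping with ordinal arithmetic and no topology beyond the definition of $\Sigma_{\alpha,\beta}$ enters. The two facts I would invoke are: (i) $\Sigma_{\alpha,\beta}(X,Y) \subseteq \Sigma_{\alpha+\gamma,\beta+\gamma}(X,Y)$ for every ordinal $\gamma$ (the ``shift'' inclusion, proved by induction on $\gamma$), and (ii) $\Sigma_{\alpha_1,\beta_1}(X,Y) \subseteq \Sigma_{\alpha_2,\beta_2}(X,Y)$ whenever $1 \leq \beta_2 \leq \beta_1 \leq \alpha_1 \leq \alpha_2$ (``raise the first index, lower the second'').

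Concretely, I would set $\gamma = \beta' \dotdiv \beta$, so that $\beta + \gamma = \max\{\beta,\beta'\}$ by the basic property of $\dotdiv$ noted in the Preliminaries. Applying (i) gives $\Sigma_{\alpha,\beta}(X,Y) \subseteq \Sigma_{\alpha+\gamma,\beta+\gamma}(X,Y)$. The key claim is then the chain of inequalities
\[ \beta' \leq \beta+\gamma \leq \alpha+\gamma \leq \alpha'. \]
The first inequality is just $\beta' \leq \max\{\beta,\beta'\}$. The second follows from $\beta \leq \alpha$ by (non-strict) left monotonicity of ordinal addition. For the third, the hypothesis gives $\gamma = \beta'\dotdiv\beta \leq \alpha'\dotdiv\alpha$, hence, using right monotonicity of ordinal addition together with $\alpha \leq \alpha'$,
\[ \alpha+\gamma \leq \alpha + (\alpha'\dotdiv\alpha) = \max\{\alpha,\alpha'\} = \alpha'. \]
With this chain in hand, property (ii) applied with $\alpha_1 = \alpha+\gamma$, $\beta_1 = \beta+\gamma$, $\alpha_2 = \alpha'$, $\beta_2 = \beta'$ yields $\Sigma_{\alpha+\gamma,\beta+\gamma}(X,Y) \subseteq \Sigma_{\alpha',\beta'}(X,Y)$, and composing with the earlier inclusion completes the proof.

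The nearest thing to an obstacle is purely notational: one must keep straight that ordinal addition is only \emph{right}-monotone in the strong sense and merely non-strictly monotone on the left, which is precisely why the two halves of the hypothesis ($\alpha \leq \alpha'$ and $\beta'\dotdiv\beta \leq \alpha'\dotdiv\alpha$) are phrased as they are. Everything else is a direct appeal to the properties listed above, so no further machinery is needed.
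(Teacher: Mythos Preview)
Your proof is correct and follows essentially the same approach as the paper: shift by $\gamma = \beta' \dotdiv \beta$ using property~(i), then apply property~(ii). The only difference is cosmetic---the paper splits into the cases $\beta' \leq \beta$ and $\beta < \beta'$, while you handle both uniformly via $\beta + \gamma = \max\{\beta,\beta'\} \geq \beta'$, which is a slight streamlining but not a different argument.
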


\begin{proof}
If \( \beta' \leq \beta \) the result is obvious (see the properties listed above), so we can
 assume \( \beta < \beta' \). This implies \( \alpha < \alpha' \) because \( 0 < \beta' \dotdiv \beta \leq \alpha' \dotdiv \alpha \). Since
 \( \beta + (\beta' \dotdiv \beta) = \beta' \) (by \( \beta < \beta' \)) and
 \( \alpha + (\beta' \dotdiv \beta) \leq \alpha + (\alpha' \dotdiv \alpha) = \alpha' \) (by our hypotheses),
we have 
\[ 
\Sigma_{\alpha,\beta}(X,Y) \subseteq \Sigma_{\alpha+(\beta' \dotdiv \beta), \beta+(\beta' \dotdiv \beta)}(X,Y) = \Sigma_{\alpha+(\beta' \dotdiv \beta),\beta'}(X,Y) \subseteq \Sigma_{\alpha', \beta'}. \qedhere
\]
\end{proof}

The next lemma shows that the condition \( \alpha \leq \alpha' \) in the previous statement is necessary.

\begin{lemma}\label{lemma2}
Let \( X \) be an uncountable analytic space and \( Y \) be a nontrivial topological space.
For every \( 1 \leq \beta \leq \alpha \),
\[ 
\Sigma_{\alpha,\beta}(X,Y) \setminus \bigcup \{ \Sigma_{\alpha',\beta'}(X,Y) \mid 1 \leq \beta' \leq \alpha' < \alpha \} \neq \emptyset.
 \] 
Hence, in particular, \( \Sigma_{\alpha,\beta}(X,Y) \nsubseteq \Sigma_{\alpha',\beta'}(X,Y) \) whenever \( 1 \leq \beta' \leq \alpha' < \alpha \).
\end{lemma}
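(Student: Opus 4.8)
The plan is to exhibit, for each pair $1 \leq \beta \leq \alpha$, a single function in $\Sigma_{\alpha,\beta}(X,Y)$ which fails to lie in any $\Sigma_{\alpha',\beta'}(X,Y)$ with $\alpha' < \alpha$. The key observation is that since the target index $\beta'$ is bounded by $\alpha' < \alpha$, it suffices to produce a function whose preimage of some open set is a set that is properly $\boldsymbol{\Sigma}^0_\alpha$, together with enough uniformity to control preimages of \emph{all} $\boldsymbol{\Sigma}^0_\beta$ sets (so that the function genuinely sits in $\Sigma_{\alpha,\beta}$). The cleanest way to get both is to build the function on a copy of $\pre{\omega}{2}$ and then transport to $X$ via an embedding, using the fact (noted in the paragraph after Definition~\ref{defSigma}, via Fact~\ref{factWadge}) that properness and the Wadge-style hardness properties behave well.

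First I would reduce to a concrete pair of spaces: pick points $y_0 \neq y_1$ in $Y$, and work with the two-point subspace $\{y_0, y_1\}$, which is discrete; by the monotonicity property $\Sigma_{\alpha,\beta}(X, \{y_0,y_1\}) \subseteq \Sigma_{\alpha,\beta}(X,Y)$ it is enough to find the desired function into $\{y_0,y_1\} \cong 2$. Next, since $X$ is uncountable analytic, it contains a topological copy of $\pre{\omega}{2}$ (a standard fact, \cite[Theorem 6.2 and the perfect set property, or Theorem 7.8]{kec}); more than that, I would want $\pre{\omega}{2}$ embedded as a $\boldsymbol{\Delta}^0_2$, indeed closed-up-to-the-rest, piece — but actually it is cleanest to handle the general analytic $X$ directly. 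So the core construction is: fix a set $A \subseteq \pre{\omega}{2}$ which is $\boldsymbol{\Sigma}^0_\alpha(\pre{\omega}{2})$-complete, and let $h = \chi_A \colon \pre{\omega}{2} \to 2$ be its characteristic function, viewed as a map into $\{y_0,y_1\}$. Then for any open $U \subseteq 2$ we have $h^{-1}(U) \in \{\emptyset, A, \pre{\omega}{2}\setminus A, \pre{\omega}{2}\}$, all of which are in $\boldsymbol{\Sigma}^0_\alpha$ since $\alpha \geq \beta \geq 1$ and $\boldsymbol{\Sigma}^0_\alpha$ is closed under the relevant operations; more generally preimages of $\boldsymbol{\Sigma}^0_\beta$ subsets of the discrete space $2$ are again among these four sets, so trivially in $\boldsymbol{\Sigma}^0_\alpha \supseteq \boldsymbol{\Sigma}^0_\beta$. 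Hence $h \in \Sigma_{\alpha,\beta}(\pre{\omega}{2}, 2)$ — indeed $h \in \Sigma_{\alpha,1}$, which is the smallest class in sight and is contained in $\Sigma_{\alpha,\beta}$.

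Now to transport this to $X$: I would use a change-of-topology / retraction argument, or more directly the following. It is a standard fact that an uncountable analytic space $X$ admits a continuous surjection (or at least a $\boldsymbol{\Delta}^0_2$-function) onto $\pre{\omega}{2}$ whose fibers are nice; even simpler, $X$ contains a closed (in some countable refinement) copy $C$ of $\pre{\omega}{2}$, and one can retract $X$ onto $C$ — but to stay within a fixed topology the honest route is: embed $\pre{\omega}{2}$ into $X$ as a subspace $C$, pick a continuous retraction-like map is not available in general, so instead I invoke the embedding $\iota \colon \pre{\omega}{2} \hookrightarrow X$ and extend $h \circ \iota^{-1}$ arbitrarily (say constantly $y_0$) on $X \setminus C$. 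For this union to be in $\Sigma_{\alpha,\beta}(X,Y)$ via Fact~\ref{fct:basicSigmaalphabeta} I need $C$ to be $\boldsymbol{\Delta}^0_\alpha(X)$, which holds since $C$ is closed hence $\boldsymbol{\Pi}^0_1 \subseteq \boldsymbol{\Delta}^0_2 \subseteq \boldsymbol{\Delta}^0_\alpha$ (using $\alpha \geq 2$; the case $\alpha = 1$, $\beta = 1$ is $\Sigma_{1,1} = $ continuous functions, where one instead just takes any non-continuous-free argument — actually for $\alpha=1$ the statement is vacuous since there is no $\alpha' < 1$). So $f := (h\circ\iota^{-1}) \cup (\text{const}_{y_0} \restriction X\setminus C) \in \Sigma_{\alpha,\beta}(X,Y)$. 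Finally, suppose toward a contradiction that $f \in \Sigma_{\alpha',\beta'}(X,Y)$ with $\beta' \leq \alpha' < \alpha$. Then $f\restriction C = h \circ \iota^{-1} \in \Sigma_{\alpha',\beta'}(C,Y)$ by the restriction property, hence (transporting back through the embedding $\iota$, using that $f \sqsubseteq$-type preservation or directly that $\iota$ is a homeomorphism onto $C$) $h \in \Sigma_{\alpha',\beta'}(\pre{\omega}{2}, 2)$. Taking $\{1\}$, which is $\boldsymbol{\Sigma}^0_1(2) \subseteq \boldsymbol{\Sigma}^0_{\beta'}(2)$, we get $A = h^{-1}(\{1\}) \in \boldsymbol{\Sigma}^0_{\alpha'}(\pre{\omega}{2})$, contradicting that $A$ is $\boldsymbol{\Sigma}^0_\alpha$-complete hence properly $\boldsymbol{\Sigma}^0_\alpha$ and $\alpha' < \alpha$.

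The main obstacle is the transport step: ensuring that an arbitrary uncountable analytic $X$ genuinely admits a copy of $\pre{\omega}{2}$ sitting as a $\boldsymbol{\Delta}^0_\alpha$ (ideally closed) subspace, and that the extension of $h$ off this copy keeps the function in $\Sigma_{\alpha,\beta}$. For Polish $X$ the perfect set theorem gives a closed copy of $\pre{\omega}{2}$ directly; for merely analytic $X$ one uses that $X$ contains a closed-in-$X$ copy of $\pre{\omega}{2}$ (every uncountable analytic space has the perfect set property and the perfect kernel is closed), so $C \in \boldsymbol{\Pi}^0_1(X) \subseteq \boldsymbol{\Delta}^0_\alpha(X)$ for $\alpha \geq 2$, and Fact~\ref{fct:basicSigmaalphabeta} applies to the two-piece partition $\{C, X\setminus C\}$. (For $\alpha = \beta = 1$ there is nothing to prove as remarked.) Everything else is bookkeeping with the closure properties of the pointclasses and the elementary behaviour of $\Sigma_{\alpha,\beta}$ under restriction and under the relation $\sqsubseteq$.
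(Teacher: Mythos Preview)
There is a genuine gap. You take \( A \subseteq \pre{\omega}{2} \) to be \( \boldsymbol{\Sigma}^0_\alpha \)-\emph{complete} and then assert that all four sets \( \emptyset, A, \pre{\omega}{2}\setminus A, \pre{\omega}{2} \) lie in \( \boldsymbol{\Sigma}^0_\alpha \). But a \( \boldsymbol{\Sigma}^0_\alpha \)-complete set is a \emph{proper} \( \boldsymbol{\Sigma}^0_\alpha \) set, so \( A \notin \boldsymbol{\Pi}^0_\alpha \) and hence \( \pre{\omega}{2}\setminus A \notin \boldsymbol{\Sigma}^0_\alpha \). Since \( \{0\} \) is open in \( 2 \), the preimage \( h^{-1}(\{0\}) = \pre{\omega}{2}\setminus A \) already witnesses \( h \notin \Sigma_\alpha(\pre{\omega}{2},2) \), let alone \( \Sigma_{\alpha,\beta} \). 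The transported function \( f \) on \( X \) then fails to be in \( \Sigma_{\alpha,\beta}(X,Y) \) for exactly the same reason (restrict to \( C \) and pull back the open set isolating \( y_0 \)).

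The fix is precisely what the paper does: choose \( A \in \boldsymbol{\Delta}^0_\alpha(X) \setminus \bigcup_{\alpha'<\alpha}\boldsymbol{\Sigma}^0_{\alpha'}(X) \) (such sets exist in every uncountable analytic space). Then both \( A \) and its complement are in \( \boldsymbol{\Delta}^0_\alpha \subseteq \boldsymbol{\Sigma}^0_\alpha \), so the two-valued map \( f \) sending \( A \) to \( y_0 \) and \( X\setminus A \) to \( y_1 \) genuinely lies in \( \Sigma_{\alpha,\beta}(X,Y) \) (indeed \( f^{-1}(S) \in \{\emptyset,A,X\setminus A,X\} \subseteq \boldsymbol{\Delta}^0_\alpha \) for \emph{every} \( S \subseteq Y \)), while the preimage of a nontrivial open \( U \ni y_0 \) is \( A \notin \boldsymbol{\Sigma}^0_{\alpha'} \) for any \( \alpha'<\alpha \). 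Note also that once you pick the right \( A \), the entire detour through \( \pre{\omega}{2} \), the embedding \( \iota \), the extension by a constant, and the invocation of Fact~\ref{fct:basicSigmaalphabeta} become unnecessary: the argument works directly in \( X \) in two lines.
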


\begin{proof}
Let \(A \in  \boldsymbol{\Delta}^0_\alpha(X) \setminus \bigcup_{\alpha' < 
\alpha} \boldsymbol{\Sigma}^0_{\alpha'}(X) \), and let \( \emptyset \neq U \subsetneq Y \) be a nontrivial open set. 
Then fix \( y_0 \in U \) and \( y_1 \in Y \setminus U \), and set \( f(x) = y_0 \) if \( x 
\in A \) and \( f(x) = y_1 \) otherwise. Clearly \( f \in \Sigma_{\alpha, \beta}
(X,Y) \) (in fact, \( f^{-1}(S) \in \{ X, \emptyset, A , X \setminus A\} \subseteq \boldsymbol{\Delta}^0_\alpha(X) \subseteq 
\boldsymbol{\Sigma}^0_\alpha(X) \) for \emph{every} \( S \subseteq Y \)),
and since \( f^{-1}(U) = A \) and \( U \in 
\boldsymbol{\Sigma}^0_1(Y) \subseteq \boldsymbol{\Sigma}^0_{\beta'}(Y) 
\) for every \( \beta' \geq 1 \), \( f \notin \Sigma_{\alpha',\beta'}(X,Y) \) for every \( 1 \leq \beta' \leq \alpha' < \alpha \) by our choice of \( A \).
\end{proof}

Currently, we do not know if the condition \( \beta' \dotdiv \beta \leq \alpha' \dotdiv \alpha \) of Lemma \ref{lemma1} is necessary too, but since if \( 1 \leq \alpha 
\leq \alpha' < \omega \) then \( \beta' \dotdiv \beta \leq \alpha' \dotdiv \alpha  
\iff \alpha - \beta \leq \alpha' - \beta ' \), Theorem \ref{theorinclusion}\ref{theorinclusion-2} will show 
that this is the case for all finite levels. To prove this, we first need to consider 
some preliminary results. First of all,
arguing as in \cite[Lemma 6.5]{motbai}, it is easy to prove the following result.

\begin{lemma}\label{lemmaold}
Let \( X \) be a topological space, \( \alpha \geq 1 \), and for each \( n \in \omega \), let \( A_n  \in  \boldsymbol{\Sigma}^0_\alpha(\pre{\omega}{X}) \). Then
\[ 
A = \{ x \in \pre{\omega}{X} \mid \exists n\,  ( \pi^X_n(x) \in A_n) \} \in \boldsymbol{\Sigma}^0_\alpha (\pre{\omega}{X}).
 \] 
If moreover there is an increasing sequence  \( \seq{ \alpha_n }{ n \in \omega } \) of ordinals smaller than \( \alpha \) and cofinal in it such that 
each \( A_n \) is \( \boldsymbol{\Pi}^0_{\alpha_n}(\pre{\omega}{X}) \)-hard, then \( A \) is also 
\( \boldsymbol{\Sigma}^0_\alpha(\pre{\omega}{X}) \)-hard, i.e.\ a \( \boldsymbol{\Sigma}^0_\alpha(\pre{\omega}{X}) \)-complete set (and 
hence also a proper \( \boldsymbol{\Sigma}^0_\alpha(\pre{\omega}{X}) \) set).
\end{lemma}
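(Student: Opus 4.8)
The plan is to prove the two assertions in turn: the first is pure closure properties of the Borel hierarchy, and the second is a gluing of partial continuous reductions witnessing the hardness of the individual \( A_n \)'s.

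For the first assertion, I would simply note that by definition \( A = \bigcup_{n \in \omega} (\pi^X_n)^{-1}(A_n) \). Each projection \( \pi^X_n \colon \pre{\omega}{X} \to \pre{\omega}{X} \) is continuous, and a straightforward induction on \( \alpha \) (using the redefinition of \( \boldsymbol{\Sigma}^0_2 \) for non-metrizable spaces) shows that \( \boldsymbol{\Sigma}^0_\alpha \) is closed under continuous preimages; hence each \( (\pi^X_n)^{-1}(A_n) \in \boldsymbol{\Sigma}^0_\alpha(\pre{\omega}{X}) \), and since \( \boldsymbol{\Sigma}^0_\alpha \) is closed under countable unions we get \( A \in \boldsymbol{\Sigma}^0_\alpha(\pre{\omega}{X}) \).

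For the second assertion, first observe that the extra hypothesis forces \( \alpha \) to be a limit ordinal (necessarily of cofinality \( \omega \)), since an increasing sequence of ordinals below a successor ordinal cannot be cofinal in it. Fix a zero-dimensional Polish space \( (Y,\tau_Y) \) and \( B \in \boldsymbol{\Sigma}^0_\alpha(Y,\tau_Y) \); we must build a continuous \( f \colon Y \to \pre{\omega}{X} \) with \( f^{-1}(A) = B \). Write \( B \) as a countable union \( \bigcup_k C_k \) with \( C_k \in \boldsymbol{\Pi}^0_{\beta_k}(Y) \) for suitable \( \beta_k < \alpha \); choosing by recursion a strictly increasing \( h \colon \omega \to \omega \) with \( \alpha_{h(k)} \geq \beta_k \) for all \( k \) (possible since the \( \alpha_n \) are cofinal in \( \alpha \)), we may set \( B_{h(k)} = C_k \) and \( B_n = \emptyset \) for \( n \notin \range(h) \), obtaining \( B = \bigcup_{n \in \omega} B_n \) with each \( B_n \in \boldsymbol{\Pi}^0_{\alpha_n}(Y) \). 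Since \( A_n \) is \( \boldsymbol{\Pi}^0_{\alpha_n}(\pre{\omega}{X}) \)-hard and \( Y \) is zero-dimensional Polish, for each \( n \) there is a continuous \( g_n \colon Y \to \pre{\omega}{X} \) with \( g_n^{-1}(A_n) = B_n \).

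It remains to glue the \( g_n \)'s. Define \( f \colon Y \to \pre{\omega}{X} \) by letting the \( \langle n,m \rangle \)-th coordinate of \( f(y) \) be the \( m \)-th coordinate of \( g_n(y) \), so that \( \pi^X_n \circ f = g_n \) for every \( n \). Each coordinate of \( f \) is the composition of \( g_n \) with a coordinate projection of \( \pre{\omega}{X} \), hence continuous, so \( f \) is continuous for the product topology; and for every \( y \in Y \),
\[
f(y) \in A \iff \exists n\, \bigl( \pi^X_n(f(y)) \in A_n \bigr) \iff \exists n\, \bigl( g_n(y) \in A_n \bigr) \iff \exists n\, ( y \in B_n ) \iff y \in B .
\]
Thus \( f^{-1}(A) = B \), so \( A \) is \( \boldsymbol{\Sigma}^0_\alpha(\pre{\omega}{X}) \)-hard; combined with the first assertion this shows \( A \) is \( \boldsymbol{\Sigma}^0_\alpha(\pre{\omega}{X}) \)-complete, hence a proper \( \boldsymbol{\Sigma}^0_\alpha(\pre{\omega}{X}) \) set. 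No step here is genuinely hard; the only point requiring a moment's care is the reindexing producing the \( B_n \)'s with the correct complexity.
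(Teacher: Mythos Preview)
Your argument is correct and is essentially the standard one (the paper itself omits the proof, deferring to \cite[Lemma 6.5]{motbai}). One inaccuracy should be corrected, though: the claim that the extra hypothesis forces \( \alpha \) to be a limit ordinal is false in the intended reading. Here ``increasing'' means \emph{weakly} increasing; indeed, in the proof of Lemma~\ref{lemma3} the present lemma is explicitly applied with \( \alpha = \beta + 1 \) a successor and the constant sequence \( \alpha_n = \beta \). Fortunately you never actually use that \( \alpha \) is limit: your reindexing via a strictly increasing \( h \colon \omega \to \omega \) with \( \alpha_{h(k)} \geq \beta_k \) goes through verbatim when the \( \alpha_n \) are merely non-decreasing and cofinal in \( \alpha \) (for \( \alpha = \beta + 1 \) one may simply take \( h(k) = k \), since every \( C_k \in \boldsymbol{\Pi}^0_{\beta_k}(Y) \subseteq \boldsymbol{\Pi}^0_\beta(Y) \)). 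So the parenthetical remark should simply be deleted; the rest stands.
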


\begin{definition} \label{defproperfunction}
Let \( X,Y \) be topological spaces and \( 1 \leq \beta \leq \alpha \). A function \( f \colon X \to Y \) is a \emph{proper \( \Sigma_{\alpha,\beta} \) function} if \( f \in \Sigma_{\alpha, \beta}(X,Y) \) and there is \( S \in \boldsymbol{\Sigma}^0_\beta(Y) \) such that \( f^{-1}(S) \) is a proper \( \boldsymbol{\Sigma}^0_\alpha(X) \) set. For ease of notation, when \( \beta = 1 \) in the definition above we simply say that \( f \) is a \emph{proper \( \Sigma_\alpha \) function}.
\end{definition}

It is easy to check that properness is preserved \( \sqsubseteq \)-upwards inside each \( \Sigma_{\alpha,\beta} \):

\begin{lemma} \label{lemmaproperpreserved}
Let \( X,Y,X',Y' \) be topological spaces, \( 1 \leq \beta \leq \alpha \), \( f \in \Sigma_{\alpha,\beta}(X,Y) \), and \( g \in \Sigma_{\alpha, \beta}(X',Y') \). If \( f \) is a proper \( \Sigma_{\alpha,\beta} \) function and \( f \sqsubseteq g \), then \( g \) is a proper \( \Sigma_{\alpha,\beta} \) function as well.
\end{lemma}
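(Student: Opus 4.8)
The plan is to unwind the two definitions in play — \( f \sqsubseteq g \) and ``proper \( \Sigma_{\alpha,\beta} \) function'' — and transport a witnessing \( \boldsymbol{\Sigma}^0_\beta \) set from \( Y \) to \( Y' \) along the embedding \( \psi \). Concretely, fix \( S \in \boldsymbol{\Sigma}^0_\beta(Y) \) witnessing that \( f \) is proper, i.e.\ \( f^{-1}(S) \in \boldsymbol{\Sigma}^0_\alpha(X) \setminus \boldsymbol{\Pi}^0_\alpha(X) \), and fix topological embeddings \( \varphi \colon X \to X' \) and \( \psi \colon f(X) \to Y' \) with \( g \circ \varphi = \psi \circ f \). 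Since \( \psi \) is a homeomorphism of \( f(X) \) onto \( \psi(f(X)) \), the set \( \psi(S \cap f(X)) \) lies in \( \boldsymbol{\Sigma}^0_\beta(\psi(f(X))) \); hence, by the relativization property of the Borel hierarchy (valid for arbitrary topological spaces with the conventions of Section~\ref{secpreliminaries}), there is \( S' \in \boldsymbol{\Sigma}^0_\beta(Y') \) with \( S' \cap \psi(f(X)) = \psi(S \cap f(X)) \). I claim this \( S' \) witnesses that \( g \) is a proper \( \Sigma_{\alpha,\beta} \) function.

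Since \( g \in \Sigma_{\alpha,\beta}(X',Y') \) and \( S' \in \boldsymbol{\Sigma}^0_\beta(Y') \), we immediately get \( g^{-1}(S') \in \boldsymbol{\Sigma}^0_\alpha(X') \), so it only remains to check that \( g^{-1}(S') \notin \boldsymbol{\Pi}^0_\alpha(X') \). Suppose otherwise. Pulling back along the continuous map \( \varphi \) preserves \( \boldsymbol{\Pi}^0_\alpha \), so \( \varphi^{-1}(g^{-1}(S')) \in \boldsymbol{\Pi}^0_\alpha(X) \). On the other hand, using \( g \circ \varphi = \psi \circ f \) and \( \range(f) \subseteq f(X) \),
\[ \varphi^{-1}(g^{-1}(S')) = (\psi \circ f)^{-1}(S') = f^{-1}\bigl( \psi^{-1}(S') \bigr) = f^{-1}\bigl( S \cap f(X) \bigr) = f^{-1}(S), \]
where the third equality uses that \( \psi \) is injective with image \( \psi(f(X)) \), so that \( \psi^{-1}(S') = \psi^{-1}\bigl( S' \cap \psi(f(X)) \bigr) = \psi^{-1}\bigl( \psi(S \cap f(X)) \bigr) = S \cap f(X) \). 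Thus \( f^{-1}(S) \in \boldsymbol{\Pi}^0_\alpha(X) \), contradicting the choice of \( S \). Hence \( g^{-1}(S') \) is a proper \( \boldsymbol{\Sigma}^0_\alpha(X') \) set and \( g \) is a proper \( \Sigma_{\alpha,\beta} \) function.

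The argument is essentially a diagram chase, so I do not expect a genuine obstacle; the only points requiring a little care are (i) invoking the relativization fact \( \boldsymbol{\Sigma}^0_\beta(Z) = \{ A \cap Z \mid A \in \boldsymbol{\Sigma}^0_\beta(W) \} \) and the fact that an embedding pushes \( \boldsymbol{\Sigma}^0_\beta \) subsets of \( f(X) \) to \( \boldsymbol{\Sigma}^0_\beta \) subsets of its image in the possibly nonmetrizable generality of the statement — both follow by a routine induction on \( \beta \) from the modified definition of \( \boldsymbol{\Sigma}^0_2 \) adopted in Section~\ref{secpreliminaries} — and (ii) keeping track that \( \psi \) is defined only on \( f(X) \), so one works throughout with \( S \cap f(X) \) rather than \( S \) itself and uses \( \range(f) \subseteq f(X) \) at the last step.
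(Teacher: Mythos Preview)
Your proof is correct. The paper does not actually prove this lemma: it merely prefaces the statement with ``It is easy to check that properness is preserved \( \sqsubseteq \)-upwards inside each \( \Sigma_{\alpha,\beta} \)'' and leaves the verification to the reader. Your argument is exactly the natural diagram chase one would expect, and the two points of care you flag (relativization of \( \boldsymbol{\Sigma}^0_\beta \) in the nonmetrizable setting with the modified \( \boldsymbol{\Sigma}^0_2 \), and the domain of \( \psi \) being only \( f(X) \)) are handled correctly.
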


The next lemma is essentially a generalized version of~\cite[Proposition 6.6]{motbai}.

\begin{lemma} \label{lemma3}
Let \( X \) be a Polish space and \( Y \) be an arbitrary topological space. Let also \( \alpha > 1 \) and
\( f \colon X \to Y \) be such that 
\( f \in \Sigma_\alpha(X,Y)  \setminus  \Sigma_{<\alpha}(X,Y) \). Then \( f_\omega \colon \pre{\omega}{X} \to \pre{\omega}{Y}  \) is 
a proper \( \Sigma_{\alpha+\gamma, 1+ \gamma} \) function for every 
\( \gamma < \omega_1 \).
The same conclusion holds with \( \alpha = 1 \) whenever \( f \colon X \to Y \) is a non constant continuous function.
\end{lemma}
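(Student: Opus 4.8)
The plan is to establish, by transfinite induction on $\gamma < \omega_1$, the following more flexible statement: \emph{if $h = g_\omega$ for some $g\colon X \to Y$ with $X$ Polish, and $h$ is a proper $\Sigma_{\alpha',\beta'}$ function with $\alpha' \geq 1$, then $h$ is also a proper $\Sigma_{\alpha'+1,\beta'+1}$ function}. This yields the successor steps of the induction (applied with $\alpha' = \alpha+\gamma$, $\beta' = 1+\gamma$, and $g = f$), while the base case $\gamma = 0$ is obtained directly from the hypothesis $f \in \Sigma_\alpha \setminus \Sigma_{<\alpha}$ (or, when $\alpha = 1$, from the non-constancy of the continuous function $f$), and the limit steps by a variant of the successor argument. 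I will first note that the membership $f_\omega \in \Sigma_{\alpha+\gamma,1+\gamma}(\pre{\omega}{X},\pre{\omega}{Y})$ — one half of what being a proper $\Sigma_{\alpha+\gamma,1+\gamma}$ function requires — comes essentially for free: by the equivalence recorded after Definition~\ref{def:ctblpower} (whose easy direction is just $f \sqsubseteq f_\omega$ via $x \mapsto \vec x$, and whose other direction reduces to the observation that $f_\omega^{-1}$ of a basic open box of $\pre{\omega}{Y}$ is a finite intersection of $\boldsymbol{\Sigma}^0_\alpha(\pre{\omega}{X})$ sets) one has $f_\omega \in \Sigma_\alpha$, and then $\Sigma_\alpha \subseteq \Sigma_{\alpha+\gamma,1+\gamma}$ by the monotonicity facts listed in Section~\ref{secfinitelevel}. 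Thus the real content, for each $\gamma$, is to exhibit an $S \in \boldsymbol{\Sigma}^0_{1+\gamma}(\pre{\omega}{Y})$ with $f_\omega^{-1}(S)$ a proper $\boldsymbol{\Sigma}^0_{\alpha+\gamma}(\pre{\omega}{X})$ set; since $\pre{\omega}{X}$ is Polish, by Fact~\ref{factWadge} it is equivalent (and is what I will actually arrange) to make $f_\omega^{-1}(S)$ a $\boldsymbol{\Sigma}^0_{\alpha+\gamma}(\pre{\omega}{X})$-\emph{complete} set.

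For the base case $\gamma = 0$ with $\alpha > 1$, I would fix an increasing sequence $\seq{\alpha_n}{n\in\omega}$ of ordinals $\geq 1$ cofinal in $\alpha$ (taking it constantly equal to $\delta$ when $\alpha = \delta+1$), and use $f \notin \Sigma_{\alpha_n}(X,Y)$ to pick open sets $U_n \subseteq Y$ with $B_n := f^{-1}(U_n) \in \boldsymbol{\Sigma}^0_\alpha(X) \setminus \boldsymbol{\Sigma}^0_{\alpha_n}(X)$; by Fact~\ref{factWadge} each $B_n$ is $\boldsymbol{\Pi}^0_{\alpha_n}(X)$-hard. Pulling $B_n$ back along the projection $\pre{\omega}{X} \to X$ onto the $0$-th coordinate, which admits a continuous section (so hardness is preserved), I get a set $A_n \in \boldsymbol{\Sigma}^0_\alpha(\pre{\omega}{X})$ that is $\boldsymbol{\Pi}^0_{\alpha_n}(\pre{\omega}{X})$-hard; Lemma~\ref{lemmaold} (in its limit form, or the obvious constant-sequence variant for successor $\alpha$) then shows that $\{x \mid \exists n\,(\pi^X_n(x) \in A_n)\} = \{x \mid \exists n\,(f(x_{\langle n,0\rangle}) \in U_n)\}$ is $\boldsymbol{\Sigma}^0_\alpha(\pre{\omega}{X})$-complete. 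Since this set equals $f_\omega^{-1}(S)$ for the \emph{open} set $S = \{y \in \pre{\omega}{Y} \mid \exists n\,(y_{\langle n,0\rangle} \in U_n)\}$, this handles the case $\gamma = 0$, $\alpha > 1$. When instead $\alpha = 1$, the function $f$ is continuous and non-constant; I would pick an open $U$ with $\emptyset \neq B := f^{-1}(U) \subsetneq X$ and put $S = \{y \mid \exists n\,(y_n \in U)\}$, so that $f_\omega^{-1}(S) = \{x \mid \exists n\,(x_n \in B)\}$ is open (as $f$ is continuous) but not closed — indeed its complement $\{x \mid \forall n\,(x_n \notin B)\}$ is not open in the product topology once $\emptyset \neq B \neq X$ — hence a proper $\boldsymbol{\Sigma}^0_1(\pre{\omega}{X})$ set.

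For the inductive step, write $h := f_\omega$ and suppose $h$ is a proper $\Sigma_{\alpha',\beta'}$ function, witnessed by some $S_0 \in \boldsymbol{\Sigma}^0_{\beta'}(\pre{\omega}{Y})$ with $h^{-1}(S_0)$ a proper — hence, by Fact~\ref{factWadge} and the Polishness of $\pre{\omega}{X}$, $\boldsymbol{\Sigma}^0_{\alpha'}$-complete — subset of $\pre{\omega}{X}$. Then $A := \pre{\omega}{X} \setminus h^{-1}(S_0) = h^{-1}(\pre{\omega}{Y} \setminus S_0)$ is $\boldsymbol{\Pi}^0_{\alpha'}$-complete, in particular $\boldsymbol{\Pi}^0_{\alpha'}$-hard, and it lies in $\boldsymbol{\Pi}^0_{\alpha'}(\pre{\omega}{X}) \subseteq \boldsymbol{\Sigma}^0_{\alpha'+1}(\pre{\omega}{X})$. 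Using that $h = f_\omega$ commutes with the projections, $(\pi^X_n)^{-1}(A) = h^{-1}\bigl((\pi^Y_n)^{-1}(\pre{\omega}{Y}\setminus S_0)\bigr)$ for every $n$, so the set $S := \bigcup_n (\pi^Y_n)^{-1}(\pre{\omega}{Y}\setminus S_0) \in \boldsymbol{\Sigma}^0_{\beta'+1}(\pre{\omega}{Y})$ satisfies $h^{-1}(S) = \{x \mid \exists n\,(\pi^X_n(x) \in A)\}$, which is $\boldsymbol{\Sigma}^0_{\alpha'+1}(\pre{\omega}{X})$-complete by Lemma~\ref{lemmaold} (with all the relevant pieces equal to $A$); together with $h \in \Sigma_{\alpha',\beta'} \subseteq \Sigma_{\alpha'+1,\beta'+1}$ this exhibits $h$ as a proper $\Sigma_{\alpha'+1,\beta'+1}$ function. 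Taking $\alpha' = \alpha+\gamma$ and $\beta' = 1+\gamma$ passes from $\gamma$ to $\gamma+1$. For a limit $\gamma$ I would fix an increasing sequence $\seq{\gamma_n}{n\in\omega}$ cofinal in $\gamma$, use the inductive hypotheses to get $S_n \in \boldsymbol{\Sigma}^0_{1+\gamma_n}(\pre{\omega}{Y})$ with $f_\omega^{-1}(S_n)$ a $\boldsymbol{\Sigma}^0_{\alpha+\gamma_n}(\pre{\omega}{X})$-complete set, and set $S := \bigcup_n (\pi^Y_n)^{-1}(\pre{\omega}{Y}\setminus S_n)$; then one checks that $S \in \boldsymbol{\Sigma}^0_{1+\gamma}(\pre{\omega}{Y})$ and that $f_\omega^{-1}(S) = \{x \mid \exists n\,(\pi^X_n(x) \in \pre{\omega}{X}\setminus f_\omega^{-1}(S_n))\}$, where the sets $\pre{\omega}{X}\setminus f_\omega^{-1}(S_n) \in \boldsymbol{\Pi}^0_{\alpha+\gamma_n}(\pre{\omega}{X}) \subseteq \boldsymbol{\Sigma}^0_{\alpha+\gamma}(\pre{\omega}{X})$ are $\boldsymbol{\Pi}^0_{\alpha+\gamma_n}$-hard, so that Lemma~\ref{lemmaold} applied to the increasing cofinal sequence $\seq{\alpha+\gamma_n}{n\in\omega}$ below the limit ordinal $\alpha+\gamma$ yields that $f_\omega^{-1}(S)$ is $\boldsymbol{\Sigma}^0_{\alpha+\gamma}(\pre{\omega}{X})$-complete. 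The delicate point is the successor step, where the hypothesis that $f_\omega$ is a \emph{countable power} does all the work: it is precisely the commutation $\pi^Y_n \circ f_\omega = f_\omega \circ \pi^X_n$ that lets one replace a single $\boldsymbol{\Pi}^0_{\alpha'}$-complete preimage by $\omega$ many mutually independent $\boldsymbol{\Pi}^0_{\alpha'}$-hard ones and glue them into a $\boldsymbol{\Sigma}^0_{\alpha'+1}$-complete preimage.
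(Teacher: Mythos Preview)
Your proof is correct and follows essentially the same approach as the paper's: both argue by transfinite induction on \(\gamma\), establishing the base case from the hypothesis on \(f\) together with Fact~\ref{factWadge}, and the inductive step by passing to complements and applying Lemma~\ref{lemmaold} (using the commutation \(\pi^Y_n \circ f_\omega = f_\omega \circ \pi^X_n\)). The only cosmetic differences are that the paper treats successor and limit \(\gamma>0\) uniformly (by allowing constant cofinal sequences), and that in the base case the paper works directly with the coordinate projections \((y_n)_{n}\mapsto y_n\) while you route through \(\pi^X_n\) composed with the \(0\)-th coordinate projection; these are equivalent reformulations.
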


Notice that in fact \( f \in \Sigma_\alpha(X,Y)  \setminus  \Sigma_{<\alpha}(X,Y) \) is a strictly weaker condition than that of \( f \) being a proper \( \Sigma_{\alpha} \) function: for example, if \( \alpha = \beta+1 \) then the characteristic function of a proper \( \boldsymbol{\Pi}^0_\beta(X) \) set is in \( \Sigma_\alpha(X,2)  \setminus  \Sigma_{<\alpha}(X,2) \) but is not \( \Sigma_\alpha \) proper.

\begin{proof}
Consider first the case \( \alpha > 1 \), and argue by induction on \( \gamma < \omega_1 \). Assume \( \gamma = 0 \).
Given \( k \in \omega \) and an \emph{arbitrary} open set \( V \subseteq Y \), let \( U_{k,V} = \{ \seq{ x_n }{ n \in \omega } \in \pre{\omega}{X} \mid x_k \in V \} \). Notice that each open set \( U \subseteq \pre{\omega}{Y} \) is generated by sets of the form \( U_{k,V} \) using only finite intersections and countable unions (independently of whether \( Y \) is second-countable or not):
therefore it is enough to check that \( f_\omega^{-1}(U_{k,V}) \in \boldsymbol{\Sigma}^0_\alpha(\pre{\omega}{X}) \). By definition of \( f_\omega \),  
\[ 
f_\omega^{-1}(U_{k,V}) = \{ \seq{ x_n }{ n \in \omega } \in \pre{\omega}{X} \mid x_k \in f^{-1}(V) \},
 \] 
which is in \( \boldsymbol{\Sigma}^0_\alpha(\pre{\omega}{X}) \) since \( f \in \Sigma_\alpha(X,Y) \) and the projection \( \seq{ x_n }{ n \in \omega } \mapsto x_k \) is continuous. Therefore \( f_\omega \in \Sigma_\alpha(\pre{\omega}{X},\pre{\omega}{Y}) \).
Now fix an increasing sequence \( \seq{ \alpha_n }{ n \in \omega } \) of ordinals smaller than \( \alpha \) and cofinal in it (if \( \alpha = \beta + 1\) we can e.g.\ take \( \alpha_n = \beta \) for every \( n \in \omega \)).
By our hypothesis on \( f \) and Fact~\ref{factWadge}, there are open \( 
U_n \subseteq Y \) such that \( f^{-1}(U_n) \) is \( 
\boldsymbol{\Pi}^0_{\alpha_n}(X) \)-hard for every \( n \in \omega \): arguing as in~\cite[Lemma 6.5]{motbai}, it is then immediate to check that  \( 
V = \{ \seq{ y_n }{ n \in \omega } \in \pre{\omega}{Y} \mid \exists n \, (y_n \in U_n)  \} \) is an open 
set and that \( f_\omega^{-1}(V) = \{ \seq{ x_n }{ n \in \omega } \in \pre{\omega}{X} \mid \exists n \, (x_n \in f^{-1}(U_n)) \} \) is 
a  \(\boldsymbol{\Sigma}^0_\alpha(\pre{\omega}{X})\)-complete set, hence also a proper \( \boldsymbol{\Sigma}^0_\alpha(\pre{\omega}{X}) \) set. This shows that \( f_\omega \) is a proper \( \Sigma_\alpha  \) function.

Assume now \( \gamma >0 
\). Since \( f_\omega \in \Sigma_\alpha(\pre{\omega}{X},\pre{\omega}{Y}) \subseteq \Sigma_{\alpha+ \gamma, 1 + \gamma}(\pre{\omega}{X},\pre{\omega}{Y}) \), it remains to show that \( f_\omega^{-1}(B) \) is a proper \( \boldsymbol{\Sigma}^0_{\alpha+\gamma}(\pre{\omega}{X}) \) set for some \( B \in \boldsymbol{\Sigma}^0_{1+\gamma}(\pre{\omega}{Y}) \). Let \( \seq{ \gamma_n }{ n \in \omega } \) be an increasing 
sequence of ordinals smaller than \( \gamma \) and cofinal in it. By induction hypothesis, let \( A_n \in \boldsymbol{\Sigma}^0_{1+\gamma_n}(\pre{\omega}{Y})  \) be such that \( 
f_\omega^{-1}(A_n) \) is a proper \( 
\boldsymbol{\Sigma}^0_{\alpha+\gamma_n}(\pre{\omega}{X}) \) set, and let \( B_n = 
\pre{\omega}{Y} \setminus A_n \). Let \( B = \{ y \in \pre{\omega}{Y} \mid 
\exists n \, (\pi^Y_n(y) \in B_n ) \} \). By Lemma \ref{lemmaold}, \( B \in 
\boldsymbol{\Sigma}^0_{1+\gamma}(\pre{\omega}{Y}) \). Since \( f_\omega^{-1}(B_n) \) is a proper \( 
\boldsymbol{\Pi}^0_{\alpha+\gamma_n}(\pre{\omega}{X}) \) set (hence also \( 
\boldsymbol{\Pi}^0_{\alpha+\gamma_n}(\pre{\omega}{X}) \)-hard by Fact~\ref{factWadge}), \( f_\omega^{-1}(B)  = \{ x \in \pre{\omega}{X} \mid \exists n \, (\pi^X_n(x) \in f_\omega^{-1}(B_n)) \} \) is a \( 
\boldsymbol{\Sigma}^0_{\alpha+\gamma}(\pre{\omega}{X}) \)-complete (hence also \( \boldsymbol{\Sigma}^0_{\alpha+\gamma}(\pre{\omega}{X}) \) proper) set by Lemma \ref{lemmaold} again, hence \( f_\omega \) is a proper \( \Sigma_{\alpha+\gamma,1+\gamma} \) function.

The case \( \alpha = 1 \) can be treated in a similar way, using the fact that if \( f \colon X \to Y \) is continuous and non constant then there is an open \( U \subseteq Y \) such that \( f^{-1}(U) \) is neither empty nor the entire \( X \).	 
\end{proof}

Since the characteristic functions \( \chi_{C_n} \) used in Definition~\ref{defPn} to construct the generalized Pawlikowski functions are clearly in \( \Sigma_{n+1}(\pre{\omega}{2},2) \setminus \Sigma_n(\pre{\omega}{2},2) \), Lemma~\ref{lemma3} yields to the following corollary.

\begin{corollary} \label{corPn}
For every \( 1 \leq n < \omega \) and every \( \gamma < \omega_1 \), \( P_n \colon \pre{\omega}{(\pre{\omega}{2})} \to \pre{\omega}{2}\) is a proper \( \Sigma_{n+1+\gamma,1+\gamma} \) function. Moreover, the original Pawlikowski function \( P \colon \pre{\omega}{(\omega+1)} \to \pre{\omega}{\omega}\) is a proper \( \Sigma_{2+\gamma,1+\gamma} \) function (for every \( \gamma < \omega_1 \)).
\end{corollary}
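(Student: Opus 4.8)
The plan is to read off both assertions directly from Lemma~\ref{lemma3}, applied to the appropriate base functions; the only thing requiring an argument is checking that those base functions have the complexity demanded by the hypothesis of that lemma, and in both cases the relevant ordinal is $\geq 2 > 1$, so the main case of Lemma~\ref{lemma3} applies (we never need its continuous, non-constant clause).

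For the first statement I would apply Lemma~\ref{lemma3} with \( X = \pre{\omega}{2} \), \( Y = 2 \), \( f = \chi_{C_n} \), and \( \alpha = n+1 \). Since \( 2 \) is discrete, its only open sets are \( \emptyset, \{0\}, \{1\}, 2 \), whose \( \chi_{C_n} \)-preimages are \( \emptyset \), \( \pre{\omega}{2}\setminus C_n \), \( C_n \), \( \pre{\omega}{2} \); all of these lie in \( \boldsymbol{\Sigma}^0_{n+1}(\pre{\omega}{2}) \) because \( C_n \in \boldsymbol{\Pi}^0_n \subseteq \boldsymbol{\Sigma}^0_{n+1} \) and \( \pre{\omega}{2}\setminus C_n \in \boldsymbol{\Sigma}^0_n \subseteq \boldsymbol{\Sigma}^0_{n+1} \), so \( \chi_{C_n} \in \Sigma_{n+1}(\pre{\omega}{2},2) \). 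On the other hand \( \chi_{C_n}^{-1}(\{1\}) = C_n \notin \boldsymbol{\Sigma}^0_n(\pre{\omega}{2}) \) exactly because \( C_n \) was chosen to be a \emph{proper} \( \boldsymbol{\Pi}^0_n \) set, so \( \chi_{C_n} \notin \Sigma_n(\pre{\omega}{2},2) = \Sigma_{<n+1}(\pre{\omega}{2},2) \). Hence Lemma~\ref{lemma3} gives that \( P_n = (\chi_{C_n})_\omega \) is a proper \( \Sigma_{(n+1)+\gamma,\,1+\gamma} \) function for every \( \gamma < \omega_1 \), which is the claim since \( (n+1)+\gamma = n+1+\gamma \) by associativity of ordinal addition.

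For the ``moreover'' part I would instead apply Lemma~\ref{lemma3} with \( X = \omega+1 \) (order topology), \( Y = \omega \) (discrete), \( f \) the base function of \( P \) (i.e.\ \( f(n)=n+1 \) for \( n\neq\omega \) and \( f(\omega)=0 \)), and \( \alpha = 2 \). The space \( \omega+1 \) is compact metrizable, hence Polish, and it is \emph{countable}, so every one of its subsets is a countable union of closed singletons and therefore belongs to \( \boldsymbol{\Sigma}^0_2(\omega+1) \); in particular \( f^{-1}(S) \in \boldsymbol{\Sigma}^0_2(\omega+1) \) for every \( S \subseteq \omega \), giving \( f \in \Sigma_2(\omega+1,\omega) \). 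But \( f^{-1}(\{0\}) = \{\omega\} \) is not open in \( \omega+1 \) (it is the unique non-isolated point), so \( f \) is not continuous, i.e.\ \( f \notin \Sigma_1(\omega+1,\omega) = \Sigma_{<2}(\omega+1,\omega) \). Lemma~\ref{lemma3} then yields that \( P = f_\omega \) is a proper \( \Sigma_{2+\gamma,\,1+\gamma} \) function for every \( \gamma < \omega_1 \).

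I do not expect any genuine obstacle: the entire weight of the corollary is carried by Lemma~\ref{lemma3}, and both hypothesis checks amount to the defining properties of \( C_n \) and of the base function together with the elementary inclusions \( \boldsymbol{\Pi}^0_n \cup \boldsymbol{\Sigma}^0_n \subseteq \boldsymbol{\Sigma}^0_{n+1} \) and the fact that a proper \( \boldsymbol{\Pi}^0_n \) set is never \( \boldsymbol{\Sigma}^0_n \).
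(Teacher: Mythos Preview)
Your proposal is correct and follows exactly the approach the paper takes: the corollary is stated immediately after the observation that \( \chi_{C_n} \in \Sigma_{n+1}(\pre{\omega}{2},2) \setminus \Sigma_n(\pre{\omega}{2},2) \), and Lemma~\ref{lemma3} is applied directly (and analogously for \( P \)). Your more explicit verification of the hypothesis (checking the preimages and noting that \( \omega+1 \) is Polish) just spells out what the paper leaves as ``clearly''.
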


\begin{corollary}\label{cor3}
Let \( X,Y \) be two uncountable analytic spaces, and let 
\( \alpha > 1 \). Then there is a function \( h \colon X \to Y \) such that  \( h \) 
is a proper \( \Sigma_{\alpha+\gamma,1+\gamma} \) function for every 
\( \gamma < \omega_1 \).
The same conclusion holds also for \( \alpha = 1 \) if either \( X \) is 
zero-dimensional or \( Y \) contains a homeomorphic copy of \( \RR \).
\end{corollary}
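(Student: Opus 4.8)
The plan is to build one ``master'' proper function on (a copy of) the Cantor space and then transport it into \( X \) and \( Y \) via the relation \( \sqsubseteq \), invoking Lemmas~\ref{lemma3} and~\ref{lemmaproperpreserved}. First I would fix the master function on \( \pre{\omega}{2} \): for \( \alpha > 1 \) take \( f = \chi_A \colon \pre{\omega}{2} \to 2 \), where (as in the proof of Lemma~\ref{lemma2}) \( A \in \boldsymbol{\Delta}^0_\alpha(\pre{\omega}{2}) \setminus \bigcup_{\alpha' < \alpha} \boldsymbol{\Sigma}^0_{\alpha'}(\pre{\omega}{2}) \); then \( f \in \Sigma_\alpha(\pre{\omega}{2},2) \setminus \Sigma_{<\alpha}(\pre{\omega}{2},2) \) because \( f^{-1}(\{1\}) = A \). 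For \( \alpha = 1 \) take instead the non-constant continuous map \( f \colon \pre{\omega}{2} \to 2 \), \( f(x) = x(0) \). In either case \( A \) (resp.\ \( \{x \mid x(0) = 1\} \)) is a nontrivial subset of \( \pre{\omega}{2} \), so \( g := f_\omega \colon \pre{\omega}{(\pre{\omega}{2})} \to \pre{\omega}{2} \) is surjective; by Lemma~\ref{lemma3} it is a proper \( \Sigma_{\alpha + \gamma, 1+\gamma} \) function for every \( \gamma < \omega_1 \), and in particular \( g \in \Sigma_\alpha(\pre{\omega}{(\pre{\omega}{2})}, \pre{\omega}{2}) \). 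Since \( \pre{\omega}{(\pre{\omega}{2})} \) and \( \pre{\omega}{2} \) are both homeomorphic to the Cantor space, it then suffices to produce \( h \colon X \to Y \) with \( g \sqsubseteq h \) and \( h \in \Sigma_\alpha(X,Y) \): indeed \( h \) will then lie in \( \Sigma_{\alpha+\gamma,1+\gamma}(X,Y) \) for every \( \gamma \), and Lemma~\ref{lemmaproperpreserved} will upgrade it to a proper \( \Sigma_{\alpha+\gamma,1+\gamma} \) function for every \( \gamma \), which is exactly the assertion.

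To produce \( h \), I would use the perfect set property for analytic spaces to fix topological embeddings \( \varphi \colon \pre{\omega}{(\pre{\omega}{2})} \to X \) and \( \psi \colon \pre{\omega}{2} \to Y \), and set \( L = \range(\varphi) \), a compact, hence closed, subset of \( X \). When \( \alpha > 1 \) I would simply let \( h \) be \( \psi \circ g \circ \varphi^{-1} \) on \( L \) and constantly \( y_0 \) (some fixed point of \( Y \)) off \( L \). Then \( h \circ \varphi = \psi \circ g \), so \( g \sqsubseteq h \); and since \( \alpha > 1 \), \( \{L, X \setminus L\} \) is a partition of \( X \) into \( \boldsymbol{\Delta}^0_\alpha(X) \)-pieces, on \( L \) the map \( h \) is \( \boldsymbol{\Sigma}^0_\alpha \)-measurable (a homeomorphism composed with \( g \) composed with a homeomorphism) and off \( L \) it is continuous, so \( h \in \Sigma_\alpha(X,Y) \) by Fact~\ref{fct:basicSigmaalphabeta}. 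This disposes of the case \( \alpha > 1 \).

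The hard part is the case \( \alpha = 1 \), where \( g \) is continuous but the gluing above fails because \( L \) need not be clopen; here one must genuinely extend the continuous, Cantor-set-valued map \( \psi \circ g \circ \varphi^{-1} \colon L \to Y \) to a continuous map on all of \( X \), and this is exactly what the two extra hypotheses provide. If \( Y \) contains a copy of \( \RR \), I would arrange \( \psi = \iota \circ \kappa \) with \( \kappa \colon \pre{\omega}{2} \to [0,1] \) a Cantor embedding and \( \iota \colon \RR \to Y \) a topological embedding; then \( \kappa \circ g \circ \varphi^{-1} \colon L \to [0,1] \) is continuous on the closed set \( L \), so the Tietze extension theorem (\( X \) is metrizable, hence normal) gives a continuous extension \( \widetilde{g} \colon X \to [0,1] \), and \( h = \iota \circ \widetilde{g} \) works. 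If instead \( X \) is zero-dimensional, I would build a continuous retraction \( r \colon X \to L \) --- embed \( X \) into \( \pre{\omega}{\omega} \), observe that \( L \) goes to a closed (compact) subset, use that closed subsets of \( \pre{\omega}{\omega} \) are \( 1 \)-Lipschitz retracts (follow \( x \) along the defining tree, switching to the leftmost available node when forced), and pull the retraction back through the embedding --- and then set \( h = \psi \circ g \circ \varphi^{-1} \circ r \). In both sub-cases \( h \colon X \to Y \) is continuous with \( h \restriction L = \psi \circ g \circ \varphi^{-1} \), hence \( h \circ \varphi = \psi \circ g \) and \( g \sqsubseteq h \), and \( h \in \Sigma_1(X,Y) \); the conclusion then follows via Lemma~\ref{lemmaproperpreserved} exactly as in the first paragraph.

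Besides this extension step, the remaining points are routine: the existence and nontriviality of the master set \( A \) (already used in Lemma~\ref{lemma2}), the surjectivity of \( f_\omega \), the bookkeeping showing \( h \restriction L \in \Sigma_\alpha \) and gluing it with the continuous part, and the two applications of Lemmas~\ref{lemma3} and~\ref{lemmaproperpreserved}. The one subtlety to be careful about in writing it up is that \( L \) is genuinely closed in \( X \) (which it is, being compact in a Hausdorff space), so that both the \( \boldsymbol{\Delta}^0_\alpha \)-partition argument and the Tietze/retraction arguments apply.
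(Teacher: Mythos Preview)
Your proposal is correct and follows essentially the same route as the paper's proof: build the master function as the countable power of a suitable characteristic function, invoke Lemma~\ref{lemma3}, embed Cantor copies into \( X \) and \( Y \), extend by a constant when \( \alpha > 1 \), and handle the case \( \alpha = 1 \) via Tietze (when \( Y \supseteq \RR \)) or via a continuous extension/retraction from a closed subset of a zero-dimensional space, transferring properness through \( \sqsubseteq \) via Lemma~\ref{lemmaproperpreserved}. The only cosmetic difference is that where you spell out the retraction argument for the zero-dimensional case, the paper simply cites the corresponding extension theorems from~\cite{kec}.
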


\begin{proof} 
If \( \alpha \geq 1 \), \( X = \pre{\omega}{(\pre{\omega}{2})} \), and \( Y = \pre{\omega}{2} \), then by Lemma \ref{lemma3} it is enough to let \( h = (\chi_A)_\omega \), where \( \chi_A \) is the characteristic function of some \( A \in \boldsymbol{\Delta}^0_\alpha(\pre{\omega}{2}) \setminus \bigcup_{\beta<\alpha} (\boldsymbol{\Sigma}^0_\beta(\pre{\omega}{2}) \cup \boldsymbol{\Pi}^0_\beta(\pre{\omega}{2})) \) (where we set \( \boldsymbol{\Sigma}^0_0(\pre{\omega}{2}) = \{ \emptyset \} \) and \( \boldsymbol{\Pi}^0_0(\pre{\omega}{2}) = \{ \pre{\omega}{2} \} \)).

Now assume that \( X,Y \) are arbitrary uncountable analytic spaces, let \( C_X \subseteq X \), \( C_Y \subseteq Y \) be homeomorphic copies of \( \pre{\omega}{2} \), and fix \( \alpha \geq 1 \). Since \( \pre{\omega}{(\pre{\omega}{2})} \) and \( \pre{\omega}{2} \) are homeomorphic, by the previous paragraph there is \( h' \colon  C_X \to C_Y \) which is a proper \( \Sigma_{\alpha + \gamma, 1 + \gamma} \) function for every \( \gamma < \omega_1 \): therefore, by e.g.\ Lemma~\ref{lemmaproperpreserved} it is enough to extend \( h' \) to some \( h \in \Sigma_\alpha(X,Y) \). If \( \alpha > 1 \), simply extend \( h' \) by an arbitrary constant function on \( X \setminus C_X \) (the resulting \( h \) is still a \( \Sigma_\alpha \) function because \( C_X \) is closed in \( X \)). If instead \( \alpha = 1 \), then \( h' \) is a continuous function defined on a closed subset of \( X \). If \( X \) is zero-dimensional, then \( h' \) can be extended to a continuous  \( h \colon X \to Y \) by~\cite[Theorem 7.3]{kec}. Similarly, if \( Y \) contains a homeomorphic copy \( Y' \) of \( \RR \), then we can assume without loss of generality that \( C_Y \subseteq Y' \), so that \( h' \) can be extended to a continuous \( h \colon X \to Y' \subseteq Y \) by ~\cite[Theorem 1.3]{kec}.
\end{proof}

In particular, setting \( \alpha = m-n+1 \) and \( \gamma = n-1 \) in Corollary~\ref{cor3} we get that for every \( 1 \leq n \leq m < \omega \) there exists a \emph{proper} \( \Sigma_{m,n} \) function between any pair of uncountable analytic spaces \( X \), \( Y \). A much stronger result will be proved in Theorem~\ref{theorinclusion}\ref{theorinclusion-1}. The next corollary shows that the smallest \( \beta \) for which \( \Sigma_\alpha (X,Y) \subseteq \D_\beta(X,Y) \) is \( \beta = \alpha \cdot \omega \).

\begin{corollary}\label{cor3variant}
Let \( X,Y \) be uncountable analytic spaces. For every 
\( \alpha > 1 \) there is a function \( h \colon X \to Y \) such that 
\( h \in \Sigma_\alpha (X,Y) \setminus \D_{< \alpha \cdot \omega}(X,Y) \).
\end{corollary}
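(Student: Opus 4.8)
The plan is to extract the witness directly from Corollary~\ref{cor3}. Since \( X,Y \) are uncountable analytic and \( \alpha > 1 \), that corollary produces a single function \( h \colon X \to Y \) which is a proper \( \Sigma_{\alpha+\gamma,1+\gamma} \) function for \emph{every} \( \gamma < \omega_1 \). Taking \( \gamma = 0 \) gives \( h \in \Sigma_{\alpha,1}(X,Y) = \Sigma_\alpha(X,Y) \), so the entire content of the statement is the claim that \( h \notin \D_\beta(X,Y) \) for every \( \beta < \alpha \cdot \omega \). I would fix such a \( \beta \) (with \( \beta \geq 1 \)) and argue by contradiction, assuming \( h \in \D_\beta(X,Y) \).

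The idea is to play properness at the parameter \( \gamma = \beta \) against the boundedness forced by \( h \in \D_\beta(X,Y) \). On the one hand, since \( \beta \leq 1+\beta \) the basic inclusions recorded before Lemma~\ref{lemma1} give \( \D_\beta(X,Y) \subseteq \D_{1+\beta}(X,Y) \), hence \( h^{-1}(S) \in \boldsymbol{\Sigma}^0_{1+\beta}(X) \) for every \( S \in \boldsymbol{\Sigma}^0_{1+\beta}(Y) \). On the other hand, \( h \) being a proper \( \Sigma_{\alpha+\beta,1+\beta} \) function (Definition~\ref{defproperfunction}) yields some \( S \in \boldsymbol{\Sigma}^0_{1+\beta}(Y) \) for which \( h^{-1}(S) \) is a \emph{proper} \( \boldsymbol{\Sigma}^0_{\alpha+\beta}(X) \) set; in particular \( h^{-1}(S) \notin \boldsymbol{\Delta}^0_{\alpha+\beta}(X) \), so \( h^{-1}(S) \notin \boldsymbol{\Sigma}^0_\eta(X) \) for any \( \eta < \alpha+\beta \). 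These two facts are incompatible as soon as \( 1 + \beta < \alpha + \beta \).

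It therefore remains to verify the ordinal inequality \( 1 + \beta < \alpha + \beta \) under the hypotheses \( \alpha > 1 \) and \( \beta < \alpha\cdot\omega \); this is the only step requiring care, precisely because of the non-commutativity of ordinal addition — indeed \( 1 + \beta = \alpha + \beta \) genuinely holds once \( \beta \geq \alpha\cdot\omega \), since then the left summand is absorbed. To see the inequality, recall (as noted just before the corollary) that \( \alpha\cdot\omega \) is the first additively closed ordinal above \( \alpha \). We have \( 1 + \beta \leq \alpha+\beta \) by monotonicity; if equality held, then writing \( \alpha = 1 + (\alpha\dotdiv 1) \) and left-cancelling would give \( (\alpha\dotdiv 1) + \beta = \beta \) with \( \alpha\dotdiv 1 \geq 1 \), so \( \beta \) would be a fixed point of \( \xi \mapsto (\alpha\dotdiv 1)+\xi \), forcing \( \beta \geq (\alpha\dotdiv 1)\cdot\omega = \alpha\cdot\omega \), contrary to hypothesis. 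Hence the inequality is strict, the contradiction above goes through, and we conclude \( h \in \Sigma_\alpha(X,Y) \setminus \bigcup_{\beta < \alpha\cdot\omega} \D_\beta(X,Y) = \Sigma_\alpha(X,Y)\setminus \D_{<\alpha\cdot\omega}(X,Y) \). I expect the main obstacle to be nothing more than this ordinal bookkeeping: all the descriptive-set-theoretic substance is already packaged in Corollary~\ref{cor3} and in the elementary inclusion \( \D_\beta \subseteq \D_{1+\beta} \).
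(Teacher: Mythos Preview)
Your proof is correct and follows essentially the same approach as the paper: take the witness \(h\) from Corollary~\ref{cor3} and use its properness at a suitable parameter \(\gamma\) to contradict membership in \(\D_\beta\). The only difference is cosmetic: the paper first reduces, via the monotonicity \(\D_\beta \subseteq \D_{\beta'}\), to checking \(\beta = \alpha \cdot n\) for \(n \in \omega\), where the needed inequality \(1 + \alpha\cdot n < \alpha + \alpha\cdot n = \alpha\cdot(n+1)\) is immediate, whereas you treat arbitrary \(\beta < \alpha\cdot\omega\) and verify \(1+\beta < \alpha+\beta\) by a short fixed-point argument.
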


\begin{proof}
Let \( h \) be any function satisfying the conclusion of Corollary \ref{cor3}, 
so that \( h \in \Sigma_ \alpha(X,Y) \): 
since, \( \D_\beta(X,Y) \subseteq 
\D_{\beta'}(X,Y) \) whenever \( \beta \leq \beta' \), it is clearly enough to 
show that \( h \notin \D_{\alpha \cdot n}(X,Y) \) for every \( n \in \omega \). 
Fix \( n \in \omega \) and let \( \gamma = \alpha \cdot n \): by Corollary 
\ref{cor3}, \( h \) is a proper \( \Sigma_{\alpha+\alpha \cdot n, 1 + \alpha 
\cdot n} \) function, and hence \( h \notin \D_{\alpha \cdot n} \)	since \( 
\alpha > 1 \) implies \( \alpha + \alpha \cdot n = \alpha \cdot (n+1) > 
1+\alpha \cdot n \).
\end{proof}

The next theorem, which is the main result of this section, completely 
determines the structure of finite level Borel classes under inclusion in the context of uncountable analytic spaces. In 
particular, it shows 
that all the finite level Borel classes \( \Sigma_{m,n}(X,Y) \) are distinct, and 
that in fact
in each of these classes there are genuinely new 
functions (i.e.\ \( \Sigma_{m,n}(X,Y) \) is not covered by the collection of 
all finite level Borel classes which do not already contain it). 

\begin{theorem} \label{theorinclusion}
Let \( X,Y \) be uncountable analytic spaces and \( 1 \leq n \leq m < \omega \). 
\begin{enumerate-(a)}
\item \label{theorinclusion-1}
There is \( f \in \Sigma_{m,n}(X,Y) \) such that \( f \notin \Sigma_{m',n'}(X,Y) \) for every \( 1 \leq n' \leq m' < \omega \) satisfying (at least one of) \( m' < m \) or \( m' - n' < m-n \).
\item \label{theorinclusion-2}
Let 
\( 1 \leq n' \leq m' < \omega \). Then 
\[ 
\Sigma_{m,n}(X,Y) \subseteq \Sigma_{m',n'}(X,Y) \iff m \leq m' \wedge m-n \leq m'-n'. 
\] 
In particular,  if \( m \neq m' \) or \( n \neq n' \) then \( \Sigma_{m,n}(X,Y) \neq \Sigma_{m',n'}(X,Y) \).
\end{enumerate-(a)}
\end{theorem}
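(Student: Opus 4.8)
The plan is to prove the two parts together, deriving the whole theorem from the machinery already developed (Lemmas~\ref{lemma1}, \ref{lemma2}, \ref{lemma3}, and Corollaries~\ref{cor3}, \ref{cor3variant}). For part~\ref{theorinclusion-1}, fix $1 \leq n \leq m < \omega$. Applying Corollary~\ref{cor3} with $\alpha = m - n + 1 > 1$ (when $n < m$; the case $n = m$ needs a small separate argument, see below) produces a function $f \colon X \to Y$ which is a proper $\Sigma_{\alpha + \gamma, 1 + \gamma}$ function for every $\gamma < \omega_1$. Setting $\gamma = n - 1$ gives $f$ a proper $\Sigma_{m, n}$ function, so in particular $f \in \Sigma_{m,n}(X,Y)$. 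The content is showing $f \notin \Sigma_{m',n'}(X,Y)$ whenever $m' < m$ or $m' - n' < m - n$. If $m' < m$: I would note that $f$, being a proper $\Sigma_{m,n}$ function, witnesses $f \in \Sigma_m(\pre{\omega}{X},\pre{\omega}{Y}) \setminus \Sigma_{<m}$ (unravelling Definition~\ref{defproperfunction} and the fact that a proper $\boldsymbol{\Sigma}^0_m$ preimage cannot be $\boldsymbol{\Sigma}^0_{m'}$ for $m' < m$), and apply Lemma~\ref{lemma2} (or directly the definition) to conclude $f \notin \Sigma_{m'}(X,Y) \supseteq \Sigma_{m',n'}(X,Y)$. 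If $m' - n' < m - n$ but $m' \geq m$: here I would use that $f$ is \emph{also} a proper $\Sigma_{m + j, n + j}$ function for all $j$ (again Corollary~\ref{cor3} with $\gamma = n - 1 + j$), so taking $j = m' - m \geq 0$, $f$ is a proper $\Sigma_{m', n + (m'-m)}$ function; since $n + (m' - m) = m' - (m - n) > m' - (m'-n') = n'$ (using $m' - n' < m - n$), the preimage of some $\boldsymbol{\Sigma}^0_{n'}(Y) \subseteq \boldsymbol{\Sigma}^0_{n + (m'-m)}(Y)$ set is a proper $\boldsymbol{\Sigma}^0_{m'}(X)$ set, hence lies outside $\boldsymbol{\Sigma}^0_{m'}(X)$ is false — rather, I mean the preimage of a $\boldsymbol{\Sigma}^0_{n'}$ set would need to land in $\boldsymbol{\Sigma}^0_{m'}$, which it does, so this is not immediately a contradiction; the correct argument is to observe that if $f \in \Sigma_{m',n'}$ then by Lemma~\ref{lemma1} (with the roles reversed, pushing up to level $n + (m'-m)$) $f$ would fail to be \emph{proper} at level $\Sigma_{m', n+(m'-m)}$, contradicting what Corollary~\ref{cor3} gives. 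I will need to state this last implication carefully — this is the step I expect to be the main obstacle, since it requires extracting from "$f$ is proper at every $(\alpha+\gamma, 1+\gamma)$" the sharper statement that $f$ is \emph{not} in $\Sigma_{m',n'}$ for $n'$ too large relative to $m'$; the clean way is a lemma: if $g \in \Sigma_{m',n'}(X,Y)$ then for every $\boldsymbol{\Sigma}^0_{n'+k}(Y)$ set $S$, $g^{-1}(S) \in \boldsymbol{\Sigma}^0_{m'+k}(X)$, hence $g$ is \emph{not} a proper $\Sigma_{m'+k, n'+k}$ function is also false... Let me instead phrase it as: a function that is a proper $\Sigma_{a,b}$ function satisfies $g \notin \Sigma_{a',b'}$ whenever $a' < a$ \emph{or} ($a' = a$ and $b' > b$), and reduce everything to this via the monotonicity $\Sigma_{m,n} \subseteq \Sigma_{m+j,n+j}$.

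For part~\ref{theorinclusion-2}: the direction $(\Leftarrow)$ is exactly Lemma~\ref{lemma1}, since for finite ordinals $\beta' \dotdiv \beta \leq \alpha' \dotdiv \alpha$ is equivalent to $\alpha - \beta \leq \alpha' - \beta'$ as already remarked in the text before Lemma~\ref{lemmaold} — so I just cite that. For $(\Rightarrow)$, I argue contrapositively: suppose $m > m'$ or $m - n > m' - n'$. Then the function $f$ from part~\ref{theorinclusion-1} lies in $\Sigma_{m,n}(X,Y) \setminus \Sigma_{m',n'}(X,Y)$, so the inclusion fails. The final sentence ("if $m \neq m'$ or $n \neq n'$ then the classes differ") follows by symmetry: if $m \neq m'$, say $m < m'$, then $\Sigma_{m',n'} \not\subseteq \Sigma_{m,n}$ by the same argument with primes swapped (one never has $m' \leq m$), so the classes are distinct; if $m = m'$ but $n \neq n'$, say $n < n'$, then $m - n > m' - n'$, so $\Sigma_{m,n} \not\subseteq \Sigma_{m',n'}$ fails — wait, $m-n > m'-n'$ means exactly that the $(\Leftarrow)$ hypothesis $m-n \leq m'-n'$ fails, so $\Sigma_{m,n} \not\subseteq \Sigma_{m',n'}$, giving distinctness. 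I would lay this out as a short case analysis.

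Finally, the boundary case $n = m$ (so $\alpha = 1$ in Corollary~\ref{cor3}) needs the extra hypothesis in that corollary; but here I do not need the full strength of $\alpha = 1$: it suffices to exhibit \emph{some} $f \in \Sigma_{m,m}(X,Y)$ not in the smaller classes, and for $m \geq 2$ I can instead take $\alpha = 2$, $\gamma = m - 2$ in Corollary~\ref{cor3} (valid since $X,Y$ are uncountable analytic), obtaining a proper $\Sigma_{m, m-1}$ function $f$ — but that sits in $\Sigma_{m,m}$ too and still fails to be in $\Sigma_{m',n'}$ for $m' < m$, which is all that is required when $m = n$ (the condition "$m' - n' < m - n = 0$" is vacuous). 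The genuinely delicate sub-case is $m = n = 1$: here $\Sigma_{1,1}(X,Y)$ is the class of continuous functions, and I need a continuous $f \colon X \to Y$ that is "genuinely new" — but since there are no classes $\Sigma_{m',n'}$ with $m' < 1$, the claim is vacuous and any continuous function (e.g.\ a constant) works. I expect the bookkeeping of these degenerate cases, together with the precise formulation of the "properness rules out larger $\beta$" lemma flagged above, to be where most of the care goes; the rest is an assembly of results already in hand.
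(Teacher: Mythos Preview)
Your handling of part~\ref{theorinclusion-2} is fine and matches the paper: Lemma~\ref{lemma1} gives $(\Leftarrow)$, and part~\ref{theorinclusion-1} gives $(\Rightarrow)$ by contraposition. The problem is in part~\ref{theorinclusion-1}, specifically your claim that a \emph{single} function from Corollary~\ref{cor3} does the job.

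Take the concrete case $m=3$, $n=2$, so $\alpha = m-n+1 = 2$. The function $h$ produced by Corollary~\ref{cor3} is, by construction, in $\Sigma_2(X,Y) = \Sigma_{2,1}(X,Y)$. Now set $m'=2$, $n'=1$: here $m'<m$ and $m'-n' = 1 = m-n$, so this pair is one you must exclude --- yet $h \in \Sigma_{2,1} = \Sigma_{m',n'}$. Your inference ``$f$ is a proper $\Sigma_{m,n}$ function, hence $f \notin \Sigma_{<m}$'' is the mistake: properness at level $(m,n)$ only supplies a set $S \in \boldsymbol{\Sigma}^0_n(Y)$ with $f^{-1}(S)$ proper $\boldsymbol{\Sigma}^0_m(X)$, and for $n \geq 2$ this $S$ need not be open, so it says nothing about $f^{-1}(U)$ for open $U$. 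More generally, whenever $m' < m$ but $m'-n' \geq m-n$, Lemma~\ref{lemma1} gives $\Sigma_{m-n+1,1} \subseteq \Sigma_{m',n'}$, so the Corollary~\ref{cor3} function lands in $\Sigma_{m',n'}$ rather than avoiding it. The same defect afflicts your treatment of the $m=n$ case via $\alpha = 2$: the resulting function is in $\Sigma_2 \subseteq \Sigma_{m'}$ for all $2 \leq m' < m$.

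The paper's fix is to split $X$ into two uncountable pieces $X_0, X_1$ (with $X_0$ closed) and glue two functions: $f_0$ on $X_0$ from Lemma~\ref{lemma2} (a $\boldsymbol{\Delta}^0_m$-valued indicator, which is genuinely outside $\Sigma_{m'}$ for all $m'<m$) handles the condition $m'<m$, while $f_1$ on $X_1$ from Corollary~\ref{cor3} handles $m'-n'<m-n$ --- for this second part your instinct is right: setting $\gamma = n'-1$ makes $f_1$ proper $\Sigma_{m-n+n',\,n'}$, and $m' < m-n+n'$ gives $f_1 \notin \Sigma_{m',n'}$ directly, which is cleaner than the detour you were attempting through $j = m'-m$. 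The union $f = f_0 \cup f_1$ then lies in $\Sigma_{m,n}$ by Fact~\ref{fct:basicSigmaalphabeta}, and restricting to $X_0$ or $X_1$ rules out each family of bad $(m',n')$.
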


Notice that any \( f \) as in Theorem~\ref{theorinclusion}~\ref{theorinclusion-1} is necessarily a proper \( \Sigma_{m,n} \) function, as otherwise we would have \( f \in \Sigma_{m',n'}(X,Y) \) for \( m' = m \) and \( n' = n+1 \) (contradicting the choice of \( f \)).

\begin{proof}
First we show~\ref{theorinclusion-1}. If \( m = 1 \), then necessarily \( m= n 
= 1 \) and thus neither \( m' < m \) nor \( m' - n' < m-n \) can be realized by 
any \( 1 \leq n' \leq m' < \omega \). Therefore in this case~\ref{theorinclusion-1} 
amounts to show that \( \Sigma_{1,1}(X,Y) \), the class of continuous 
functions, is nonempty, which is trivially true. If \( m= n > 1 \) and \( 1 \leq n' 
\leq m' < \omega \), only the inequality \( m' < m \) can be realized, so~\ref{theorinclusion-1} holds by Lemma~\ref{lemma2}. Finally, assume \( m > n \geq 1 \). Let \( X_0 \) be an uncountable closed subset of \( X \) such that \( X_1 = X \setminus X_0 \) is uncountable too. Apply Lemma~\ref{lemma2} 
to get \( f_0 \in \Sigma_{m,n}(X_0, Y) \) such that \( f_0 \notin 
\Sigma_{m',n'}(X_0,Y) \) for every \( 1 \leq n' \leq m' < m \), and apply Corollary 
\ref{cor3} with \( \alpha = m-n+1 > m-n \geq 1 \) to get \( f_1  \colon X_1 \to Y \) such that \( f_1 \) is a proper \( \Sigma_{m-
n+1+\gamma,1+\gamma} \) function for every \( \gamma < \omega_1 \). 
Note that taking \( \gamma = n-1 \) we get \( f_1 \in \Sigma_{m,n}(X_1,Y) 
\), while taking \( \gamma = n'-1 \) we get that \( f_1 \) is a proper \( \Sigma_{m-n+n',n'} \) function, so that \( f_1 \notin \Sigma_{m',n'}(X_1,Y) \) for every \( 1 \leq n' \leq m'< \omega \) such that \( m' - n' < m-n \) (because in such case \( m' < m-n+n' \)).
Let now \( f = f_0 \cup f_1 \): then \( f \in 
\Sigma_{m,n}(X,Y) \) by Fact~\ref{fct:basicSigmaalphabeta}, which can be applied because \( m > 1 \) by case assumption (so that \( X_0, X_1 \in \boldsymbol{\Delta}^0_m \)). Finally, if \( f 
\in \Sigma_{m',n'}(X,Y) \) for some \( 1 \leq n' \leq m' < m \) then \( f_0 
= f \restriction X_0 \in \Sigma_{m',n'}(X_0,Y) \) too, contradicting the choice of \( f_0 \), while if \( f \in 
\Sigma_{m',n'}(X,Y) \) for some  \( 1 \leq n' \leq m' < \omega \) such that \( m' - n' 
< m-n \) then \( f_1 = f \restriction X_1 \in \Sigma_{m',n'}
(X_1,Y) \), contradicting the choice of \( f_1 \): therefore \( f \) is as required.

We now show~\ref{theorinclusion-2}.
Since~\ref{theorinclusion-1} implies that if either \( m' < m \) or \( m' - n' < m-n \) then \( \Sigma_{m,n}(X,Y) \not\subseteq \Sigma_{m',n'}(X,Y) \), only
the direction from right to left need to be proved. 
We consider two cases: if \( n' \leq n \) then automatically \( n' \dotdiv n = 0 \leq m' \dotdiv m \). 
If instead \( n \leq n' \), we get that \( n' \dotdiv n  = n' - n \), and therefore, since also 
\( m' \dotdiv m  = m' - m \) by the assumption \( m \leq m' \), we get \( n' \dotdiv n \leq m' \dotdiv m \) because \( m-n  \leq m' - n'  \iff n' - n \leq m' - m \).
So in all cases \( n' \dotdiv n \leq m' \dotdiv m \), and since we are assuming also \( m \leq m' \), then \( \Sigma_{m,n}(X,Y) \subseteq \Sigma_{m',n'}(X,Y) \) by Lemma \ref{lemma1}.
\end{proof}

In particular, Theorem \ref{theorinclusion}\ref{theorinclusion-2} allows us  to extend the inclusion diagram of Figure~\ref{figinclusion} to all finite level Borel classes of functions defined between arbitrary uncountable analytic spaces. The picture one gets is shown in Figure~\ref{figdiagram}.
\begin{figure}[!htbp] 
\centering
\mbox{
\xymatrix{
& & & & \\
& & & \Sigma_{4,1}(X,Y) \ar@{.>}[ur] \ar@{.>}[dr] &\\
& & \Sigma_{3,1}(X,Y)  \ar@{->}[ur] \ar@{->}[dr] & & \\
& \Sigma_{2,1}(X,Y)   \ar@{->}[ur] \ar@{->}[dr] & & \Sigma_{4,2}(X,Y) \ar@{->}[uu] \ar@{.>}[ur] \ar@{.>}[dr] &\\
\Sigma_{1,1}(X,Y) \ar@{->}[ur] \ar@{->}[dr] & & \Sigma_{3,2}(X,Y) \ar@{->}[uu] \ar@{->}[ur] \ar@{->}[dr] & & \\
& \Sigma_{2,2}(X,Y) \ar@{->}[dr] \ar@{->}[uu] \ar@{->}[ur] & & \Sigma_{4,3}(X,Y) \ar@{->}[uu] \ar@{.>}[ur] \ar@{.>}[dr] & \\
& & \Sigma_{3,3}(X,Y) \ar@{->}[uu] \ar@{->}[ur] \ar@{->}[dr] & &  \\
& & & \Sigma_{4,4}(X,Y) \ar@{->}[uu] \ar@{.>}[ur] \ar@{.>}[dr] & \\
& & & & }
}
\caption{The inclusion diagram of all finite level Borel classes.}
\label{figdiagram}
\end{figure}
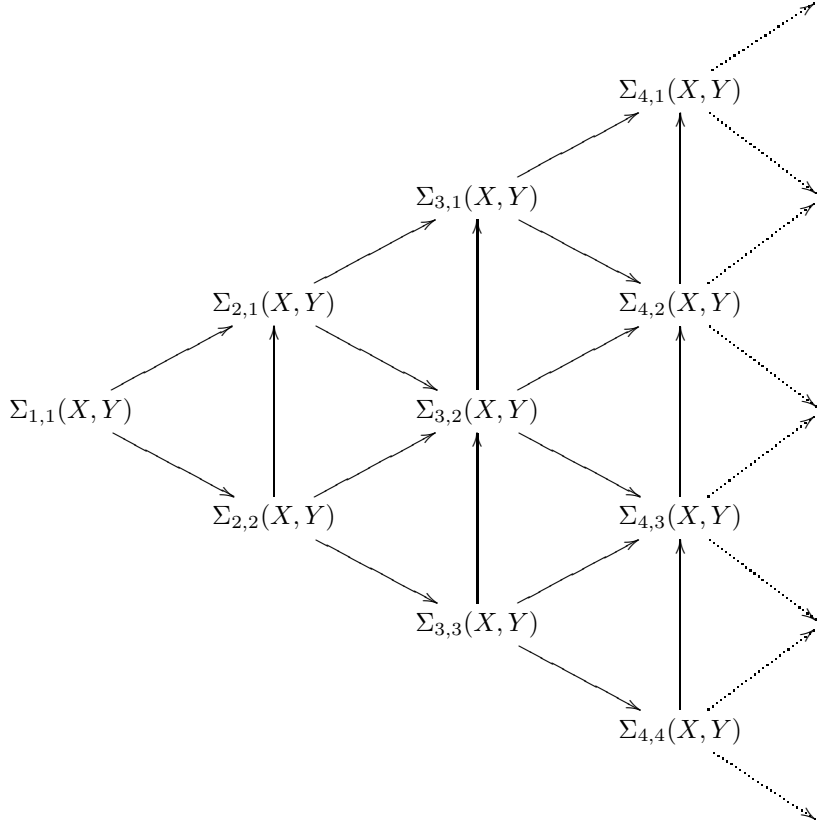

The use of countable powers of (finite level) Borel functions was a fundamental 
tool in proving Theorem~\ref{theorinclusion}, so it is quite natural to 
abstractly analyze this operation. Since we already noticed that continuous 
functions are closed under taking countable powers, a first question is to ask 
which other Borel classes are closed under such operation. Using the ideas 
involved in Lemmas~\ref{lemma2} and \ref{lemma3}, we get the following 
exhaustive answer.

\begin{proposition} \label{proppower} 
Let \( X \) be an uncountable analytic space and \( Y \) be a nontrivial topological space.
Then for every \( 1 \leq \beta  \leq \alpha \) the following are equivalent:
\begin{enumerate-(a)}
\item \label{proppower-1}
\( f \in \Sigma_{\alpha, \beta}(X,Y) \Rightarrow f_\omega \in \Sigma_{\alpha,\beta}(\pre{\omega}{X}, \pre{\omega}{Y}) \) for every \( f \colon X \to Y \);
\item \label{proppower-2}
\( \beta = 1 \);
\item \label{proppower-3}
\( f \in \Sigma_{\alpha, \beta}(X,Y) \iff f_\omega \in \Sigma_{\alpha,\beta}(\pre{\omega}{X}, \pre{\omega}{Y}) \) for every \( f \colon X \to Y \).
\end{enumerate-(a)}
\end{proposition}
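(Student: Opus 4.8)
The equivalence has the structure $\ref{proppower-2}\Rightarrow\ref{proppower-3}\Rightarrow\ref{proppower-1}\Rightarrow\ref{proppower-2}$, and the implication $\ref{proppower-3}\Rightarrow\ref{proppower-1}$ is trivial. So the real work is in the other two.

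For $\ref{proppower-2}\Rightarrow\ref{proppower-3}$, i.e.\ the case $\beta=1$, I would prove both directions of the biconditional. The forward direction ($f\in\Sigma_\alpha(X,Y)\Rightarrow f_\omega\in\Sigma_\alpha(\pre{\omega}{X},\pre{\omega}{Y})$) is exactly the computation carried out in the $\gamma=0$ step of the proof of Lemma~\ref{lemma3}: each basic open set $U_{k,V}\subseteq\pre{\omega}{Y}$ (those of the form $\{\seq{y_n}{n\in\omega}\mid y_k\in V\}$ for $V$ open in $Y$) has preimage $f_\omega^{-1}(U_{k,V})=\{\seq{x_n}{n\in\omega}\mid x_k\in f^{-1}(V)\}$, which is in $\boldsymbol{\Sigma}^0_\alpha(\pre{\omega}{X})$ because $f^{-1}(V)\in\boldsymbol{\Sigma}^0_\alpha(X)$ and the $k$-th coordinate projection is continuous; since every open subset of $\pre{\omega}{Y}$ is built from the $U_{k,V}$ by finite intersections and countable unions (regardless of second-countability), and $\boldsymbol{\Sigma}^0_\alpha$ is closed under these operations, we are done. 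For the converse, suppose $f_\omega\in\Sigma_\alpha(\pre{\omega}{X},\pre{\omega}{Y})$ and let $V\subseteq Y$ be open; then $U=\{\seq{y_n}{n\in\omega}\mid y_0\in V\}$ is open in $\pre{\omega}{Y}$, so $f_\omega^{-1}(U)=\{\seq{x_n}{n\in\omega}\mid x_0\in f^{-1}(V)\}\in\boldsymbol{\Sigma}^0_\alpha(\pre{\omega}{X})$. Fixing a point $\bar a\in X$ (using that $X$ is nonempty) and pushing through the continuous section $x\mapsto(x,\bar a,\bar a,\dots)$, whose preimage of $f_\omega^{-1}(U)$ is exactly $f^{-1}(V)$, gives $f^{-1}(V)\in\boldsymbol{\Sigma}^0_\alpha(X)$, as the classes $\boldsymbol{\Sigma}^0_\alpha$ are preserved under taking preimages by continuous maps. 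Hence $f\in\Sigma_\alpha(X,Y)$.

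For $\ref{proppower-1}\Rightarrow\ref{proppower-2}$, I argue by contraposition: assume $\beta>1$ and produce an $f\in\Sigma_{\alpha,\beta}(X,Y)$ with $f_\omega\notin\Sigma_{\alpha,\beta}(\pre{\omega}{X},\pre{\omega}{Y})$. The idea is to use a ``trivial'' $\Sigma_{\alpha,\beta}$ function in the style of Lemma~\ref{lemma2} whose countable power is genuinely more complex. Concretely, pick $A\in\boldsymbol{\Delta}^0_\alpha(X)\setminus\bigcup_{\alpha'<\alpha}\boldsymbol{\Sigma}^0_{\alpha'}(X)$ (available since $X$ is uncountable analytic; note $\alpha\ge\beta>1$) and pick an open $\emptyset\neq U\subsetneq Y$ together with $y_0\in U$, $y_1\in Y\setminus U$; set $f=\chi_A$ reinterpreted as the function sending $A$ to $y_0$ and $X\setminus A$ to $y_1$. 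As in Lemma~\ref{lemma2}, $f^{-1}(S)\in\{X,\emptyset,A,X\setminus A\}\subseteq\boldsymbol{\Delta}^0_\alpha(X)$ for \emph{every} $S\subseteq Y$, so in particular $f\in\Sigma_{\alpha,\beta}(X,Y)$. On the other hand, $f\in\Sigma_\alpha(X,Y)\setminus\Sigma_{<\alpha}(X,Y)$ (the preimage $f^{-1}(U)=A$ witnesses failure of $\Sigma_{<\alpha}$), so Lemma~\ref{lemma3} (applied with $\gamma=0$, noting $\alpha>1$) tells us $f_\omega$ is a \emph{proper} $\Sigma_\alpha$ function: there is an open $W\subseteq\pre{\omega}{Y}$ with $f_\omega^{-1}(W)$ a proper $\boldsymbol{\Sigma}^0_\alpha(\pre{\omega}{X})$ set, i.e.\ $f_\omega^{-1}(W)\notin\boldsymbol{\Pi}^0_\alpha(\pre{\omega}{X})$. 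Since $\beta\ge 2$, one shows $f_\omega\notin\Sigma_{\alpha,\beta}(\pre{\omega}{X},\pre{\omega}{Y})$ by exhibiting a set in $\boldsymbol{\Sigma}^0_\beta(\pre{\omega}{Y})$ whose $f_\omega$-preimage is not in $\boldsymbol{\Sigma}^0_\alpha(\pre{\omega}{X})$ — the complement $\pre{\omega}{Y}\setminus W$ lies in $\boldsymbol{\Pi}^0_1(\pre{\omega}{Y})\subseteq\boldsymbol{\Sigma}^0_2(\pre{\omega}{Y})\subseteq\boldsymbol{\Sigma}^0_\beta(\pre{\omega}{Y})$, and its preimage $f_\omega^{-1}(\pre{\omega}{Y}\setminus W)=\pre{\omega}{X}\setminus f_\omega^{-1}(W)$ is a proper $\boldsymbol{\Pi}^0_\alpha(\pre{\omega}{X})$ set, hence not in $\boldsymbol{\Sigma}^0_\alpha(\pre{\omega}{X})$. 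This contradicts $f_\omega\in\Sigma_{\alpha,\beta}$, completing the contrapositive.

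The main obstacle I anticipate is organizing the case analysis cleanly so that the ``nontrivial'' content — really all of it packaged inside Lemma~\ref{lemma3} — is invoked with the correct parameters, and making sure the degenerate cases (e.g.\ $\alpha=\beta$, so that $\Sigma_{\alpha,\beta}=\D_\alpha$, or the behavior at the bottom level) do not require separate treatment. Since $\beta>1$ forces $\alpha>1$ as well, the hypothesis of Lemma~\ref{lemma3} is met, so no extra argument for $\alpha=1$ is needed; and the nontriviality of $Y$ is exactly what lets us choose the open set $U$ and the two points $y_0,y_1$. Everything else is bookkeeping with the closure properties of the pointclasses $\boldsymbol{\Sigma}^0_\alpha$, $\boldsymbol{\Pi}^0_\alpha$ and the basic facts about $\Sigma_{\alpha,\beta}$ listed before Lemma~\ref{lemma1}.
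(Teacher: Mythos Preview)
Your proof is correct and follows essentially the same route as the paper. One minor technical point worth flagging: for (a)$\Rightarrow$(b) you invoke Lemma~\ref{lemma3}, but that lemma is stated only for \emph{Polish} $X$ (its proof uses Fact~\ref{factWadge}), whereas the proposition allows $X$ to be merely analytic. The paper sidesteps this by choosing $A\in\boldsymbol{\Delta}^0_\alpha(X)$ to be $\boldsymbol{\Pi}^0_\gamma(X)$-hard for every $\gamma<\alpha$ from the outset (such a set exists in any uncountable analytic space via an embedded copy of $\pre{\omega}{2}$) and then explicitly exhibiting the open set $V=\{(y_n)\mid\exists n\,(y_n\in U)\}$ whose $f_\omega$-preimage is proper $\boldsymbol{\Sigma}^0_\alpha$, rather than deriving hardness via Fact~\ref{factWadge}. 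This is the same computation as in your argument, just with hardness built into the choice of $A$; once you make that adjustment, your proof coincides with the paper's.
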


\begin{proof}
The implication \ref{proppower-3}\( \Rightarrow \)\ref{proppower-1} is trivial, while \ref{proppower-2}\( \Rightarrow \)\ref{proppower-3} by the observation following Definition~\ref{def:ctblpower}.
To prove \ref{proppower-1}\( \Rightarrow \)\ref{proppower-2} it is enough to show that there is \( f \in \D_\alpha(X,Y) \) 
such that \( f_\omega \) is a proper \( \Sigma_{\alpha} \) function, because this implies that for every \( 1 < \beta \leq 
\alpha \),  \( f \in \Sigma_{\alpha, \beta}(X,Y) \) but \( f_\omega \notin \Sigma_{\alpha,2}(\pre{\omega}{X}, \pre{\omega}{Y}) \supseteq \Sigma_{\alpha,\beta}(\pre{\omega}{X}, 
\pre{\omega}{Y}) \). Let \( \emptyset, X \neq A \in \boldsymbol{\Delta}^0_\alpha(X) \) be \( \boldsymbol{\Pi}^0_\gamma(X) \)-hard for every 
\( 1 \leq \gamma < \alpha \). 
Pick distinct \( y_0, y_1 \in Y \) such that \( y_0 \in U \) and \( y_1 \notin U \) for some open 
\( U \subseteq Y \), and let \( f \colon X \to \{ y_0, y_1 \} \subseteq Y \) be defined by \( f(x)  = y_0 \iff x \in A \). Then \( f \in 
\D_\alpha(X,Y) \) (because the preimage of \emph{every} subset of \( Y \) is in \( \{ \emptyset, X, A, X \setminus A \} \subseteq \boldsymbol{\Delta}^0_\alpha(X) \)). 
Moreover, arguing as in~\cite[Lemma 6.5]{motbai} we get 
that \( V = \{ \seq{ y_n }{ n \in \omega } \in \pre{\omega}{Y} \mid \exists n \, (y_n \in U ) \}  \subseteq \pre{\omega}{Y} \) is open and that
\[
 f^{-1}_\omega(V) = \{ \seq{ x_n }{ n \in \omega } \in \pre{\omega}{X} \mid \exists n \, (x_n \in f^{-1}(U)) \} = \{ \seq{ x_n }{ n \in \omega } \in \pre{\omega}{X} \mid \exists n \, (x_n \in A ) \}
\] 
is a proper \( \boldsymbol{\Sigma}^0_\alpha(X) \) set, so that \( f_\omega \) is a proper \( 
\Sigma_\alpha \) function.\end{proof}

We end this section by remarking that Corollary~\ref{cor3}  gives also some 
partial results concerning infinite level Borel classes. For example, it implies that 
for all uncountable analytic spaces \( X,Y \) and every \( 1 < \beta \leq \alpha 
\), \( \Sigma_{\alpha,\beta}(X,Y) \) is properly included in \( 
\Sigma_\alpha(X,Y) \): in fact, there is \( f \in \Sigma_\alpha(X,Y) \) such that 
\( f^{-1}(S) \) is a proper \( \boldsymbol{\Sigma}^0_{\alpha+(\beta \dotdiv 
1)}(X) \) set for some \( S \in  \boldsymbol{\Sigma}^0_\beta(Y) \), whence \( f \notin \Sigma_{\alpha,\beta}(X,Y) \). However, 
it seems that we are still far from a full comprehension of the entire structure 
of (all) Borel classes of functions under inclusion. For example, the methods 
developed in this paper do not answer the following sample question:

\begin{question}
Let \( X, Y \) be uncountable analytic spaces, and let \( 1 < m < n < \omega \). Is the Borel class \( \Sigma_{\omega,n}(X,Y) \) properly included in \( \Sigma_{\omega,m}(X,Y) \)?
\end{question}

\section{\(\omega\)-decomposable functions} \label{sectiondecomposable}

\begin{definition}\label{defdec} 
Let \( X, Y \) be topological spaces and \( \beta \geq 1 \). A function \( f \colon X \to Y \) is said \emph{\(\omega\)-decomposable in \( \Sigma_\beta \) functions} if there is a countable partition \( \seq{ X_n }{ n \in \omega } \) of \( X \) such that \( f \restriction X_n \in \Sigma_\beta(X_n, Y) \) for every \( n \in \omega \). 

The class of all  functions from \( X \) to \( Y \) which are \( \omega \)-decomposable in \( \Sigma_\beta \) functions is denoted by \( \Dec(\beta)(X,Y) \), and for every collection of functions  \( \F(X,Y) \) from \( X \) to \( Y \) we denote by \( \F^{\Dec(\beta)}(X,Y) \) the class \( \F(X,Y) \cap \Dec(\beta)(X,Y) \). 
\end{definition}

When \( \beta = 1 \) in the previous definition, i.e.\ when \( f \colon X \to Y \) is \(\omega\)-decomposable in \( \Sigma_1 \) (hence continuous) functions, we just say that \( f \) is \emph{\(\omega\)-decomposable}, and we write \( \Dec(X,Y) \) and \( \F^\Dec(X,Y) \) instead of, respectively, \( \Dec(1)(X,Y) \) and \( \F^{\Dec(1)}(X,Y) \). We also set \( \Dec(< \beta)(X,Y) = \bigcup_{\gamma < \beta} \Dec(\gamma)(X,Y) \). Obviously, \( \Dec(\beta)(X,Y) \subseteq \Dec(\beta')(X,Y) \) for every \( 1 \leq \beta \leq \beta' \), and \( f \restriction X' \in \Dec(\beta)(X',Y) \) for every \( f \in \Dec(\beta)(X,Y) \) and  \( X' \subseteq X \).

With this notation, the generalized Lusin's question~\ref{questionLusin} can be reformulated as follows: given two uncountable Polish spaces and \( \beta \geq 1 \), does \( \mathsf{Bor}(X,Y) \) equal \( \mathsf{Bor}^{\Dec(\beta)}(X,Y) \)?

Using Kuratowski's lemma~\ref{lemmaextension}, it is easy to check that if \( X,Y \) are metrizable spaces with \( Y \) separable and \( f \in \Dec(\beta)(X,Y) \) is a \emph{Borel} function, then the partition \( \seq{X_n}{n \in \omega} \) in Definition~\ref{defdec} may be taken to consist of Borel sets (in fact, the assumption of separability of \( Y \) may be dropped in the special case \( \beta = 1 \)) --- see e.g.\ Proposition \ref{propdefinabledec}\ref{propdefinabledec-2}). This suggests to consider the
following definable (Borel) version of \(\omega\)-decomposability in \( \Sigma_\beta \) functions.

\begin{definition}\label{defdecalpha}
Let \(\alpha, \beta \geq 1 \) and \( X,Y \) be topological spaces. We denote by \( \D^\beta_\alpha(X,Y) \) the collection of all
\( f \colon X \to Y \) which are \emph{in \( \Sigma_\beta \) on a \( \boldsymbol{\Delta}^0_\alpha \)-partition}, i.e.\ such that there is a countable partition \( \seq{ X_n }{ n \in \omega } \) of \( X \) in \( \boldsymbol{\Delta}^0_\alpha(X) \)-pieces such that \( f \restriction X_n \in \Sigma_\beta(X_n,Y) \) for every \( n \in \omega \). To simplify the presentation, we will say that a partition \( \seq{ X_n }{ n \in \omega } \) as above \emph{witnesses \( f \in \D^\beta_\alpha(X,Y) \)}.  
\end{definition}

For \( \alpha, \beta \geq 1 \), we also let \( \D^\beta_{< \alpha}(X,Y) = \bigcup_{\gamma<\alpha} \D^\beta_\gamma(X,Y) \). Obviously, \( \Sigma_\beta(X,Y) \subseteq \D^\beta_\alpha(X,Y) \),  if \( Y \subseteq Y' \) then \( \D^\beta_\alpha(X,Y) \subseteq  \D^\beta_\alpha(X,Y') \) (in fact, \( \D^\beta_\alpha(X,Y) = \{ f \in \D^\beta_\alpha(X,Y') \mid \range(f) \subseteq Y \} \)), and if \( f \in \D^\beta_\alpha(X,Y) \) and \( X' \subseteq X \) then \( f \restriction X' \in \D^\beta_\alpha(X',Y) \).  
Using the notation of Definition~\ref{defdecalpha}, Conjecture~\ref{conjweakJR} can be restated as
\( \D_n(X,Y) = \D^1_n(X,Y) \) for every \( n \in \omega \),
while Conjecture~\ref{conjstrongJR} can be reformulated as
\( \Sigma_{m,n}(X,Y) = \D_m^{m-n+1}(X,Y) \) for every \(  1 \leq n \leq m < \omega \).

The next lemma shows, in particular, that the class \( \D^\beta_\alpha(X,Y) \) becomes (potentially) interesting only if \( \beta < \alpha \).

\begin{lemma} \label{lemmadefdecomposable}
Let \( \alpha, \beta \geq 1 \) and \( X, Y \) be topological spaces.
\begin{enumerate-(a)}
\item \label{lemmadefdecomposable-1}
If \( \beta \geq \alpha \), then \( \D^\beta_\alpha(X,Y) = \Sigma_\beta(X,Y) \). 
\item \label{lemmadefdecomposable-2}
If \( \beta < \alpha \), then \( \D^\beta_\alpha(X,Y) \subseteq \Sigma_{\alpha, 1 + (\alpha \dotdiv \beta)}(X,Y) \).
\end{enumerate-(a)}
Hence, in particular, \( \D^1_\alpha(X,Y) \subseteq \D_\alpha(X,Y) \).
\end{lemma}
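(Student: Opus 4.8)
The plan is to treat the two cases separately, using only the elementary closure properties of the classes \( \Sigma_{\alpha,\beta} \) together with Lemma~\ref{lemma1} and Fact~\ref{fct:basicSigmaalphabeta}.

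For part~\ref{lemmadefdecomposable-1}, assume \( \beta \geq \alpha \). The inclusion \( \Sigma_\beta(X,Y) \subseteq \D^\beta_\alpha(X,Y) \) holds trivially (take the one-piece partition \( X_0 = X \)), so the content is the reverse inclusion. I would start from \( f \in \D^\beta_\alpha(X,Y) \) witnessed by a partition \( \seq{X_n}{n \in \omega} \) into \( \boldsymbol{\Delta}^0_\alpha(X) \)-pieces with \( f \restriction X_n \in \Sigma_\beta(X_n,Y) \), fix an open set \( U \subseteq Y \), and write \( f^{-1}(U) = \bigcup_{n \in \omega} (f \restriction X_n)^{-1}(U) \). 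Each term of this union lies in \( \boldsymbol{\Sigma}^0_\beta(X_n) \); since \( X_n \in \boldsymbol{\Delta}^0_\alpha(X) \subseteq \boldsymbol{\Sigma}^0_\beta(X) \) (this is where \( \alpha \leq \beta \) is used) and a subset of \( X_n \) which is \( \boldsymbol{\Sigma}^0_\beta \) relative to the subspace \( X_n \) is then \( \boldsymbol{\Sigma}^0_\beta(X) \), each term is in \( \boldsymbol{\Sigma}^0_\beta(X) \), and so is their countable union. Thus \( f \in \Sigma_\beta(X,Y) \).

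For part~\ref{lemmadefdecomposable-2}, assume \( \beta < \alpha \) and put \( \delta = \alpha \dotdiv \beta \), so that \( \beta + \delta = \alpha \) and, since \( \beta \geq 1 \), also \( 1 + \delta \leq \beta + \delta = \alpha \). Given \( f \in \D^\beta_\alpha(X,Y) \) witnessed by a \( \boldsymbol{\Delta}^0_\alpha(X) \)-partition \( \seq{X_n}{n \in \omega} \), each restriction satisfies \( f \restriction X_n \in \Sigma_\beta(X_n,Y) = \Sigma_{\beta,1}(X_n,Y) \subseteq \Sigma_{\alpha,1+\delta}(X_n,Y) \), where the inclusion is an instance of Lemma~\ref{lemma1} (the hypotheses \( \beta \leq \alpha \) and \( (1+\delta) \dotdiv 1 = \delta \leq \delta = \alpha \dotdiv \beta \) both hold). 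Fact~\ref{fct:basicSigmaalphabeta}, applied to the \( \boldsymbol{\Delta}^0_\alpha \)-partition \( \seq{X_n}{n \in \omega} \) with the index pair \( (\alpha, 1+\delta) \), then gives \( f = \bigcup_{n \in \omega} (f \restriction X_n) \in \Sigma_{\alpha, 1+\delta}(X,Y) = \Sigma_{\alpha, 1+(\alpha \dotdiv \beta)}(X,Y) \), as desired. The closing assertion follows by specializing \( \beta = 1 \): for \( \alpha = 1 \) it is the content of part~\ref{lemmadefdecomposable-1}, while for \( \alpha > 1 \) part~\ref{lemmadefdecomposable-2} yields \( \D^1_\alpha(X,Y) \subseteq \Sigma_{\alpha, 1+(\alpha \dotdiv 1)}(X,Y) = \Sigma_{\alpha,\alpha}(X,Y) = \D_\alpha(X,Y) \), using \( 1 + (\alpha \dotdiv 1) = \alpha \).

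I do not foresee any serious obstacle; the one spot that deserves a moment's attention is the subspace step in part~\ref{lemmadefdecomposable-1}, namely verifying that a set which is \( \boldsymbol{\Sigma}^0_\beta \) relative to the subspace \( X_n \) is genuinely \( \boldsymbol{\Sigma}^0_\beta \) in \( X \). This is standard and reduces to the facts that such a set has the form \( A \cap X_n \) with \( A \in \boldsymbol{\Sigma}^0_\beta(X) \) and that \( X_n \in \boldsymbol{\Sigma}^0_\beta(X) \) (the latter being exactly where \( \alpha \leq \beta \) is used, and the only place in the whole argument where the hypothesis \( \beta \geq \alpha \) of part~\ref{lemmadefdecomposable-1} is needed); everything else is a direct appeal to results already in the paper.
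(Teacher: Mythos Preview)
Your proof is correct and follows essentially the same route as the paper's: the only cosmetic difference is that in part~\ref{lemmadefdecomposable-1} you unroll by hand the computation that the paper dispatches by a single appeal to Fact~\ref{fct:basicSigmaalphabeta} (noting that the \( \boldsymbol{\Delta}^0_\alpha \)-partition is automatically a \( \boldsymbol{\Delta}^0_\beta \)-partition when \( \alpha \leq \beta \)), while in part~\ref{lemmadefdecomposable-2} both you and the paper use the inclusion \( \Sigma_\beta \subseteq \Sigma_{\alpha,1+(\alpha \dotdiv \beta)} \) on each piece and then Fact~\ref{fct:basicSigmaalphabeta} to reassemble.
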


\begin{proof}
Let \( \seq{ X_n }{ n \in \omega } \) witness \( f \in \D^\beta_\alpha(X,Y) \). Assume first \( \beta \geq \alpha \). Then \( X_n \in \boldsymbol{\Delta}^0_\alpha(X) \subseteq \boldsymbol{\Delta}^0_\beta(X) \) for every \( n \in \omega \), and hence \( f \in \Sigma_\beta(X,Y) \) by Fact~\ref{fct:basicSigmaalphabeta}: this shows \( \D^\beta_\alpha(X,Y) \subseteq \Sigma_\beta(X,Y) \), whence \( \D^\beta_\alpha(X,Y) = \Sigma_\beta(X,Y) \). Assume now \( \beta < \alpha \). Then for every \( n \in \omega \)
\[
 f \restriction X_n \in \Sigma_\beta(X_n,Y) \subseteq \Sigma_{\beta+(\alpha \dotdiv \beta), 1+ (\alpha \dotdiv \beta)}(X_n,Y) = \Sigma_{\alpha, 1 + (\alpha \dotdiv \beta)}(X_n,Y), 
\]
and hence \( f \in \Sigma_{\alpha,1+(\alpha \dotdiv \beta)}(X,Y) \) by Fact~\ref{fct:basicSigmaalphabeta} again: therefore \( \D^\beta_\alpha(X,Y) \subseteq \Sigma_{\alpha,1+(\alpha \dotdiv \beta)}(X,Y) \).
\end{proof}

Whether the reverse inclusion of Lemma \ref{lemmadefdecomposable}\ref{lemmadefdecomposable-2} holds for \( 1 \leq  \beta < \alpha < \omega \) is the content of the strong generalization of the Jayne-Rogers theorem (Conjecture~\ref{conjstrongJR}), while the weak generalization (Conjecture~\ref{conjweakJR}) corresponds to the case \( \beta = 1 \).

For the next proposition we will use the following well-known result (see e.g.~\cite[Chapter III \textsection 35.VI, p.~434]{kurbook}).

\begin{lemma}[Kuratowski]\label{lemmaextension}
Let \( X \) be a metrizable space, \( Y \) be a Polish space, and \( \alpha \geq 1 \). 
For every \( A \subseteq X \) and \( f \in \Sigma_\alpha(A,Y) \) there is a set \( A \subseteq A' \subseteq \cl(A) \)  and a function \( f' \in \Sigma_\alpha(A',Y) \) such that  \( A' \in \boldsymbol{\Pi}^0_{\alpha+1}(X) \) and \( f' \) extends \( f \). When \( \alpha = 1 \), the space \( Y \) may be assumed to be just completely metrizable.
\end{lemma}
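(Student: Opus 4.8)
The plan is to prove the lemma by first treating the case \( \alpha = 1 \) directly, via a classical oscillation (Lavrentiev-type) argument, and then reducing every \( \alpha \geq 2 \) to it by a change of topology.

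For \( \alpha = 1 \), I would use only complete metrizability of \( Y \). Fix a complete bounded metric \( d \) on \( Y \) and any compatible metric on \( X \), and for \( x \in \cl(A) \) put \( \operatorname{osc}_f(x) = \inf_U \operatorname{diam}_d(f(U \cap A)) \), the infimum ranging over open neighbourhoods \( U \) of \( x \) in \( X \) (each \( f(U \cap A) \) is nonempty since \( x \in \cl(A) \)). The key observation is that \( \{ x \in \cl(A) : \operatorname{osc}_f(x) < \varepsilon \} \) is relatively open in \( \cl(A) \) for each \( \varepsilon > 0 \), so \( A' := \{ x \in \cl(A) : \operatorname{osc}_f(x) = 0 \} \) is a \( G_\delta \) subset of the closed set \( \cl(A) \), whence \( A' \in \boldsymbol{\Pi}^0_2(X) \); continuity of \( f \) forces \( A \subseteq A' \). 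For \( x \in A' \), fixing a decreasing countable neighbourhood base \( (W_n) \) of \( x \) (which exists since \( X \) is metrizable), the closed sets \( \overline{f(W_n \cap A)} \) are nonempty, nested, and have diameters tending to \( 0 \), so by completeness they intersect in a single point, which I take as \( f'(x) \). Routine estimates then show that \( f' \) extends \( f \) and is continuous, i.e.\ \( f' \in \Sigma_1(A', Y) \), which settles this case.

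For \( \alpha \geq 2 \), so that \( Y \) is Polish, I would fix a countable base \( \{ V_k \}_{k \in \omega} \) of \( Y \) and, using \( f \in \Sigma_\alpha(A, Y) \), write \( f^{-1}(V_k) = U_k \cap A \) with \( U_k \in \boldsymbol{\Sigma}^0_\alpha(X, \tau) \). Applying Lemma~\ref{lemmachangetopology} to the countable family \( \{ U_k \mid k \in \omega \} \) yields a metrizable topology \( \tau' \) on \( X \) with \( \tau \subseteq \tau' \subseteq \boldsymbol{\Sigma}^0_\alpha(X, \tau) \) and \( U_k \in \tau' \) for all \( k \). Since the \( V_k \) generate the topology of \( Y \) and their \( f \)-preimages \( U_k \cap A \) are \( \tau' \)-open, the map \( f \colon (A, \tau' \restriction A) \to Y \) is continuous, and applying the already proved case \( \alpha = 1 \) inside the metrizable space \( (X, \tau') \) produces a \( \tau' \)-\( G_\delta \) set \( A' \) with \( A \subseteq A' \subseteq \cl_{\tau'}(A) \) together with a \( \tau' \)-continuous extension \( f' \colon A' \to Y \) of \( f \). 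It then remains to reinterpret this in \( \tau \): since \( \tau \subseteq \tau' \) we get \( \cl_{\tau'}(A) \subseteq \cl_\tau(A) \), hence \( A \subseteq A' \subseteq \cl_\tau(A) \); every \( \tau' \)-open set lies in \( \boldsymbol{\Sigma}^0_\alpha(X, \tau) \subseteq \boldsymbol{\Pi}^0_{\alpha+1}(X, \tau) \) and \( \boldsymbol{\Pi}^0_{\alpha+1} \) is closed under countable intersections, so \( A' \in \boldsymbol{\Pi}^0_{\alpha+1}(X, \tau) \); and for open \( U \subseteq Y \), \( (f')^{-1}(U) \) is \( \tau' \)-open in \( A' \), hence of the form \( W \cap A' \) with \( W \in \boldsymbol{\Sigma}^0_\alpha(X, \tau) \), so \( (f')^{-1}(U) \in \boldsymbol{\Sigma}^0_\alpha(A', \tau) \) and \( f' \in \Sigma_\alpha(A', Y) \) with respect to \( \tau \).

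The step I expect to be the main obstacle is the \( \alpha = 1 \) case — concretely, checking that \( \operatorname{osc}_f \) is upper semicontinuous on \( \cl(A) \) and that the pointwise-limit function \( f' \) is continuous — since everything else is bookkeeping with Lemma~\ref{lemmachangetopology}. It is worth noting that separability of \( Y \) is genuinely needed once \( \alpha \geq 2 \): Lemma~\ref{lemmachangetopology} (cf.\ Remark~\ref{remfailure}) requires the adjoined family of sets to be countable, which is exactly why the general, merely completely metrizable, case of the statement is restricted to \( \alpha = 1 \).
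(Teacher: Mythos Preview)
The paper does not actually prove this lemma --- it records it as a classical result and cites Kuratowski's book. Your argument is correct: the \( \alpha = 1 \) case is the standard Lavrentiev oscillation argument, and the reduction of \( \alpha \geq 2 \) to \( \alpha = 1 \) via the change-of-topology Lemma~\ref{lemmachangetopology} works exactly as you describe (the key points --- that a \( \tau' \)-\( G_\delta \) set is a countable intersection of sets in \( \boldsymbol{\Sigma}^0_\alpha(X,\tau) \subseteq \boldsymbol{\Pi}^0_{\alpha+1}(X,\tau) \), and that \( \tau' \)-continuity of \( f' \) translates to \( \boldsymbol{\Sigma}^0_\alpha \)-measurability with respect to \( \tau \) --- are correctly identified).

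Your route differs from Kuratowski's original proof, which proceeds by transfinite induction on \( \alpha \): one writes \( f \in \Sigma_\alpha(A,Y) \) as a pointwise limit of functions \( f_n \) of strictly lower class, inductively extends each \( f_n \) to some \( A'_n \supseteq A \) in the appropriate \( \boldsymbol{\Pi} \) class, and takes \( A' \) to be the intersection of the \( A'_n \) with the (Borel) set on which the extended sequence converges. Your change-of-topology reduction avoids this induction entirely and meshes nicely with the techniques used elsewhere in the paper; it does, however, rely on Lemma~\ref{lemmachangetopology}, whose full strength (retaining metrizability of \( X \) after adjoining countably many Borel sets) is itself nontrivial. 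Your closing remark is apt: the countable base of \( Y \) is exactly what makes Lemma~\ref{lemmachangetopology} applicable, which explains why the \( \alpha \geq 2 \) case of the statement requires \( Y \) to be Polish rather than merely completely metrizable.
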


Part~\ref{propdefinabledec-2} of Proposition~\ref{propdefinabledec} is implicit in~\cite{keldys}, but we will fully reprove it here for the reader's convenience.

\begin{proposition}\label{propdefinabledec}
Let \( X , Y\) be metrizable spaces with \( Y \) separable, \(  \alpha ,\beta \geq 1 \), and \( f \in \Dec(\alpha)(X,Y) \).
\begin{enumerate-(a)}
\item \label{propdefinabledec-1}
If \( \gr(f) \in \boldsymbol{\Sigma}^0_{\beta} (X \times \widetilde{Y}) \), then \( f \in \D^\alpha_\gamma(X,Y) \) with \( \gamma = \max \{ \alpha + 2, \alpha + (\beta \dotdiv 1)  \} \).
\item \label{propdefinabledec-2}
If \( f \in \Sigma_\beta(X,Y) \), then \( f \in \D^\alpha_{\beta + 1}(X,Y) \).
\end{enumerate-(a)}
When \( \alpha = 1 \), in all the results above the space \( Y \) may be assumed to be just metrizable.
\end{proposition}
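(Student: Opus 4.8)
The plan is to prove both parts by iteratively refining a given decomposition $\langle X_n \rangle_{n \in \omega}$ of $X$ into countably many pieces on which $f$ is $\boldsymbol{\Sigma}^0_\alpha$-measurable, first upgrading each piece to have definable (Borel) complexity and then splitting the pieces further so that the total partition lies in $\boldsymbol{\Delta}^0_\gamma(X)$. The first main step, common to both parts, is to pass from an \emph{arbitrary} partition $\langle X_n \rangle$ witnessing $f \in \Dec(\alpha)(X,Y)$ to one consisting of \emph{Borel} sets: I would apply Kuratowski's extension lemma (Lemma~\ref{lemmaextension}) to each $f \restriction X_n \in \Sigma_\alpha(X_n,Y)$ to obtain an extension $f'_n \in \Sigma_\alpha(X'_n,Y)$ with $X_n \subseteq X'_n \subseteq \cl(X_n)$ and $X'_n \in \boldsymbol{\Pi}^0_{\alpha+1}(X)$, then recover a Borel partition by setting $\widehat{X}_n = \{ x \in X'_n \setminus \bigcup_{k<n} X'_k \mid f'_n(x) = f(x) \}$; the condition ``$f'_n(x) = f(x)$'' is Borel of bounded complexity once one works in $X \times \widetilde Y$ and uses that $\gr(f)$ is Borel (this is exactly where the separability of $Y$, hence metrizability of $\widetilde Y$, and the hypothesis on $\gr(f)$ or on $f$ itself enter). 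The price paid is a controlled increase in complexity: each $\widehat{X}_n$ lies in $\boldsymbol{\Sigma}^0_{\gamma'}(X)$ for $\gamma'$ computed from $\alpha$ and $\beta$.

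For part~\ref{propdefinabledec-1}, the graph assumption $\gr(f) \in \boldsymbol{\Sigma}^0_\beta(X \times \widetilde Y)$ is used as follows: on the piece $X'_n$, the set $\{ x \mid f'_n(x) = f(x) \}$ is the preimage of $\gr(f)$ under the map $x \mapsto (x, f'_n(x))$, which is $\boldsymbol{\Sigma}^0_\alpha$-measurable, so this set is in $\boldsymbol{\Sigma}^0_{\alpha + (\beta \dotdiv 1)}(X'_n)$ roughly speaking (the additive shift by $\alpha$ reflects composing a $\boldsymbol{\Sigma}^0_\beta$ condition with a $\boldsymbol{\Sigma}^0_\alpha$-measurable map; one also absorbs an extra ``$+1$'' or ``$+2$'' coming from the $\boldsymbol{\Pi}^0_{\alpha+1}$ domains and from intersecting/subtracting finitely many pieces). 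Thus $\widehat{X}_n \in \boldsymbol{\Delta}^0_\gamma(X)$ with $\gamma = \max\{\alpha+2, \alpha+(\beta\dotdiv 1)\}$, and since $f \restriction \widehat{X}_n = f'_n \restriction \widehat{X}_n \in \Sigma_\alpha(\widehat{X}_n, Y)$, this exhibits $f \in \D^\alpha_\gamma(X,Y)$. Part~\ref{propdefinabledec-2} is then essentially the special case: if $f \in \Sigma_\beta(X,Y)$ then $\gr(f) \in \boldsymbol{\Pi}^0_{\beta+1}(X \times \widetilde Y)$ (or one argues directly that the relevant correction sets are $\boldsymbol{\Delta}^0_{\beta+1}$), which after the bookkeeping gives the sharper bound $\gamma = \beta+1$; alternatively one gives a direct argument by writing, for a countable open basis $\langle V_k \rangle$ of $Y$, the sets $f^{-1}(V_k)$ as $\boldsymbol{\Sigma}^0_\beta$ and using a standard Kuratowski-style reduction to produce a $\boldsymbol{\Delta}^0_{\beta+1}$ partition on which $f$ is continuous.

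The main obstacle I anticipate is the bookkeeping of ordinal arithmetic in the complexity bounds — specifically, keeping track of exactly where the ``$+1$''s and ``$+2$''s come from (the $\boldsymbol{\Pi}^0_{\alpha+1}$ domains from Kuratowski's lemma, the Boolean operations needed to disjointify the pieces, and the pullback of $\gr(f)$ along a $\boldsymbol{\Sigma}^0_\alpha$-measurable map) and verifying that they assemble into precisely $\gamma = \max\{\alpha+2,\alpha+(\beta\dotdiv 1)\}$ rather than something weaker. A secondary subtlety is that in part~\ref{propdefinabledec-1} the graph is assumed in $X \times \widetilde Y$ rather than $X \times Y$, so one must be slightly careful that the correction condition $f'_n(x) = f(x)$ — which a priori involves points of $Y$, not $\widetilde Y$ — is still captured by a trace of $\gr(f)$; this is fine because $f$ and $f'_n$ both take values in $Y \subseteq \widetilde Y$, and the diagonal of $\widetilde Y$ is closed, so no complexity beyond $\boldsymbol{\Sigma}^0_\beta$ is incurred. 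Finally, in the $\alpha = 1$ case one invokes the version of Lemma~\ref{lemmaextension} that only needs $Y$ completely metrizable, which is why the separability hypothesis on $Y$ can be dropped there.
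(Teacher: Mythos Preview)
Your approach to part~\ref{propdefinabledec-1} is essentially the paper's: extend each $f \restriction X_n$ via Lemma~\ref{lemmaextension} to $g_n$ on a $\boldsymbol{\Pi}^0_{\alpha+1}$ set $A_n$, then cut down to the agreement set $B_n = \{x \in A_n \mid g_n(x) = f(x)\}$ by pulling back $\gr(f)$ along $x \mapsto (x,g_n(x))$. One correction: your disjointification $\widehat{X}_n = \{x \in X'_n \setminus \bigcup_{k<n} X'_k \mid f'_n(x) = f(x)\}$ need not cover $X$ --- a point $x \in X_m$ may lie in some earlier $X'_k$ without satisfying $f'_k(x) = f(x)$, and is then lost. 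The fix is to first form the covering $\seq{B_n}{n \in \omega}$ (which does cover $X$ since $X_n \subseteq B_n$) and \emph{then} disjointify; this is what the paper does.

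For part~\ref{propdefinabledec-2} there is a genuine gap. Treating it as ``essentially the special case'' of part~\ref{propdefinabledec-1} via $\gr(f) \in \boldsymbol{\Pi}^0_\beta \subseteq \boldsymbol{\Sigma}^0_{\beta+1}$ yields $\gamma = \max\{\alpha+2,\alpha+\beta\}$, which for $\alpha \geq 2$ strictly exceeds $\beta+1$. The paper instead observes that (after reducing to $\beta > \alpha$) the map $A_n \to \widetilde{Y} \times \widetilde{Y} \colon x \mapsto (f(x),g_n(x))$ is in $\Sigma_\beta$, and pulls back the \emph{closed} diagonal of $\widetilde{Y}$ rather than $\gr(f)$; this puts $B_n$ in $\boldsymbol{\Pi}^0_\beta \subseteq \boldsymbol{\Delta}^0_{\beta+1}$ directly. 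You mention the diagonal, but only in connection with the $\widetilde{Y}$-vs-$Y$ issue in part~\ref{propdefinabledec-1}; it is precisely this diagonal trick that delivers the sharp bound in part~\ref{propdefinabledec-2}. Your ``alternative'' via a basis $\seq{V_k}{k}$ and Kuratowski-style reduction is not the right tool here: it does not use the $\Dec(\alpha)$ hypothesis and would not produce a $\Sigma_\alpha$-decomposition tied to the given one.
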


\begin{proof}
Let \( \seq{ X_n }{ n \in \omega } \) be a countable partition of 
\( X \) such that \( f \restriction X_n \in \Sigma_\alpha(X_n,Y) \subseteq \Sigma_\alpha(X_n, \widetilde{Y})  \) for every 
\( n \in \omega \). Using 
Lemma~\ref{lemmaextension}, pick for every \( n \in \omega \) a 
\( \boldsymbol{\Pi}^0_{\alpha+1}(X) \) set 
\( X_n \subseteq A_n \subseteq \cl(X_n) \) and a function 
\( g_n \in \Sigma_\alpha(A_n, \widetilde{Y}) \) extending \( f \restriction X_n \). Let 
\( B_n = \{ x \in A_n \mid g_n(x)  = f(x) \} \): then 
\( X_n \subseteq B_n \subseteq A_n \) 
and  \( f \restriction B_n \in \Sigma_\alpha(B_n, \widetilde{Y}) \) (because 
\( f \restriction B_n  = g_n \restriction B_n \)), whence 
\( f \restriction B_n \in \Sigma_\alpha(B_n, Y) \) because \( \range(f) \subseteq Y \).

Assume first that \( \gr(f) \in \boldsymbol{\Sigma}^0_\beta(X \times \widetilde{Y} ) \). Notice 
that 
\[ 
B_n = \{ x \in X \mid x \in A_n \wedge (x,g_n(x)) \in \gr(f) \}, 
\] 
and that the 
map \( A_n \to X \times \widetilde{Y} \colon x \mapsto (x, g_n(x)) \) is in \( \Sigma_\alpha(A_n, X \times \widetilde{Y}) \).
Therefore 
\( B_n \) is the intersection of a \( \boldsymbol{\Pi}^0_{\alpha+1}(X) \) set with a \( \Sigma_{\alpha + (\beta \dotdiv 1)}(X) \) set, whence
\( B_n \in \boldsymbol{\Sigma}^0_{\gamma}(X) \) with 
\( \gamma = \max \{ \alpha+2, \alpha + (\beta \dotdiv 1) \} \). Since \( X_n \subseteq B_n \) implies that the family \( \seq{B_n}{n \in \omega} \) covers \( X \), using standard arguments we can refine it to a countable partition \( \seq{C_k}{k \in \omega} \) of \( X \) into \( \boldsymbol{\Delta}^0_\gamma(X) \)-pieces, and using the fact that  each \( C_k \) is a subset of some \( B_n \) it is straightforward to verify that such a partition witnesses \( f \in \D^\alpha_{\gamma}(X,Y) \).

Assume now that \( f \in \Sigma_\beta(X,Y) \). We may assume without loss of generality that \( \beta > \alpha \) (otherwise \( f \in \Sigma_\alpha(X,Y) \subseteq \D^\alpha_{\gamma} \) for every \( \gamma \geq 1 \)), so that the map \( A_n \to \widetilde{Y} \times \widetilde{Y} \colon x \mapsto (f(x),g_n(x)) \) is in \( \Sigma_\beta(A_n, \widetilde{Y} \times \widetilde{Y}) \). Since the diagonal of \( \widetilde{Y} \) is closed, we get that each \( B_n \) is the intersection of a \( \boldsymbol{\Pi}^0_{\alpha+1} (X) \) set with a \( \boldsymbol{\Pi}^0_{\beta}(X) \) set, whence \( B_n \in \boldsymbol{\Pi}^0_\beta(X) \subseteq \boldsymbol{\Delta}^0_{\beta+1}(X) \). Arguing as in the first part, the covering \( \seq{B_n}{n \in \omega} \) of \( X \) may be refined to a partition \( \seq{C_k}{k \in \omega} \) of \( X \) into \( \boldsymbol{\Delta}^0_{\beta+1}(X) \)-pieces which clearly witnesses \( f \in \D^\alpha_{\beta+1}(X,Y) \).

The additional part follows from the fact that when \( \alpha =1 \) Lemma~\ref{lemmaextension} may be applied in such wider context.
\end{proof}

\begin{corollary}\label{cor+1}
Let \( X,Y \) be metrizable spaces and \( \alpha \geq 1 \). For all \( f \in \Sigma_\alpha(X,Y) \), \( f \) is \(\omega\)-decomposable if and only if \( f \in \D^1_{\alpha+1}(X,Y) \). In particular, \( \D_\alpha^\Dec(X,Y) \subseteq \Sigma_\alpha^\Dec(X,Y) \subseteq \D^1_{\alpha+1}(X,Y) \).
\end{corollary}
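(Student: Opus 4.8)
The plan is to recognize this as an essentially immediate consequence of Proposition~\ref{propdefinabledec}\ref{propdefinabledec-2}, once the definitions are unwound: by Definition~\ref{defdec}, ``$f$ is $\omega$-decomposable'' means exactly ``$f$ is $\omega$-decomposable in $\Sigma_1$ functions'', i.e.\ $f\in\Dec(1)(X,Y)$; and $\D^1_{\alpha+1}(X,Y)$ is precisely the conclusion of that proposition when its parameter ``$\alpha$'' is set to $1$ and its parameter ``$\beta$'' is set to the present $\alpha$.

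For the implication from right to left I would simply observe that a countable partition $\seq{X_n}{n\in\omega}$ of $X$ in $\boldsymbol{\Delta}^0_{\alpha+1}(X)$-pieces on which every restriction $f\restriction X_n$ is continuous is, in particular, a countable partition of $X$ with $f\restriction X_n\in\Sigma_1(X_n,Y)$ for all $n$; hence $f\in\Dec(1)(X,Y)=\Dec(X,Y)$, i.e.\ $f$ is $\omega$-decomposable. This direction uses nothing beyond Definition~\ref{defdec} and does not even invoke the hypothesis $f\in\Sigma_\alpha(X,Y)$.

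For the converse, given $f\in\Sigma_\alpha(X,Y)$ which is $\omega$-decomposable, I would apply Proposition~\ref{propdefinabledec}\ref{propdefinabledec-2} with (the proposition's) ``$\alpha$'' equal to $1$ — so that $f\in\Dec(1)(X,Y)$ plays the role of ``$f\in\Dec(\alpha)(X,Y)$'' — and (the proposition's) ``$\beta$'' equal to the present $\alpha$ — so that $f\in\Sigma_\alpha(X,Y)$ plays the role of ``$f\in\Sigma_\beta(X,Y)$''. The conclusion $f\in\D^1_{\alpha+1}(X,Y)$ is then exactly what is wanted. There is no genuine obstacle here; the one point that requires care is that Proposition~\ref{propdefinabledec} is stated for \emph{separable} $Y$, whereas Corollary~\ref{cor+1} allows $Y$ to be an arbitrary metrizable space. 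This is covered by the final clause of that proposition, which says that when its parameter $\alpha$ equals $1$ the separability of $Y$ may be dropped — so the invocation is legitimate with $Y$ merely metrizable, and this is precisely why Corollary~\ref{cor+1} can be stated at that level of generality.

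Finally, the ``in particular'' chain follows formally: since $\D_\alpha(X,Y)=\Sigma_{\alpha,\alpha}(X,Y)\subseteq\Sigma_{\alpha,1}(X,Y)=\Sigma_\alpha(X,Y)$, intersecting with $\Dec(X,Y)$ yields $\D_\alpha^\Dec(X,Y)\subseteq\Sigma_\alpha^\Dec(X,Y)$; and by the biconditional just established, $\Sigma_\alpha^\Dec(X,Y)$ — the class of $\omega$-decomposable functions lying in $\Sigma_\alpha(X,Y)$ — is contained in $\D^1_{\alpha+1}(X,Y)$. This completes the argument.
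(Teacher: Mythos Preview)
Your proposal is correct and matches the paper's intent: the corollary is stated without proof immediately after Proposition~\ref{propdefinabledec}, and your derivation from part~\ref{propdefinabledec-2} of that proposition (invoking its final clause to drop separability of \(Y\) when the proposition's parameter \(\alpha\) equals \(1\)) is exactly the argument the paper has in mind.
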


\begin{theorem}\label{theor:luzin0}
Let \( X,Y \) be metrizable spaces with \( Y \) separable. 
\begin{enumerate-(a)}
\item \label{theor:luzin0-1}
For all \( \alpha \geq 1 \), if \( f \in \Sigma_{\alpha+1}(X,Y) \) is a proper \( \Sigma_{\alpha+3,3} \) function, then \( f \) is not \(\omega\)-decomposable in \( \Sigma_\alpha \) functions.   
\item \label{theor:luzin0-2}
For all \( 1 \leq m < n < \omega \), if \( f \in \Sigma_n (X,Y) \setminus \Sigma_{n+1, n-m+2}(X,Y) \), then \( f \) is not \(\omega\)-decomposable in \( \Sigma_m \) functions.
\end{enumerate-(a)}
Moreover, in~\ref{theor:luzin0-1} (respectively,~\ref{theor:luzin0-2})  when \( \alpha = 1 \) (respectively, \( m = 1 \)) we may assume \( Y \) to be just a metrizable space.
\end{theorem}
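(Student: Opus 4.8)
The plan is to prove both statements in contrapositive form, reducing everything to Proposition~\ref{propdefinabledec}\ref{propdefinabledec-2} together with Lemma~\ref{lemmadefdecomposable}\ref{lemmadefdecomposable-2}; apart from a little ordinal-arithmetic bookkeeping with \( \dotdiv \), there is essentially nothing else to do.

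For part~\ref{theor:luzin0-1}, I would suppose towards a contradiction that \( f \) is \(\omega\)-decomposable in \( \Sigma_\alpha \) functions, i.e.\ \( f \in \Dec(\alpha)(X,Y) \). Since also \( f \in \Sigma_{\alpha+1}(X,Y) \), Proposition~\ref{propdefinabledec}\ref{propdefinabledec-2} (applied with \( \beta = \alpha+1 \)) yields \( f \in \D^\alpha_{\alpha+2}(X,Y) \). As \( \alpha < \alpha+2 \), Lemma~\ref{lemmadefdecomposable}\ref{lemmadefdecomposable-2} then gives \( f \in \Sigma_{\alpha+2,\,1+((\alpha+2)\dotdiv\alpha)}(X,Y) = \Sigma_{\alpha+2,3}(X,Y) \), because \( (\alpha+2)\dotdiv\alpha = 2 \) (this being true for every \( \alpha \), finite or not). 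Hence \( f^{-1}(S) \in \boldsymbol{\Sigma}^0_{\alpha+2}(X) \subseteq \boldsymbol{\Delta}^0_{\alpha+3}(X) \) for every \( S \in \boldsymbol{\Sigma}^0_3(Y) \), so no \( S \in \boldsymbol{\Sigma}^0_3(Y) \) can witness that \( f \) is a \emph{proper} \( \Sigma_{\alpha+3,3} \) function --- contradicting the hypothesis.

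For part~\ref{theor:luzin0-2}, I would suppose \( f \in \Dec(m)(X,Y) \). Since \( m < n < \omega \) we have \( n \geq 2 \) and \( f \in \Sigma_n(X,Y) \), so Proposition~\ref{propdefinabledec}\ref{propdefinabledec-2} (with \( \alpha = m \), \( \beta = n \)) gives \( f \in \D^m_{n+1}(X,Y) \); and since \( m \leq n < n+1 \), Lemma~\ref{lemmadefdecomposable}\ref{lemmadefdecomposable-2} gives \( f \in \Sigma_{n+1,\,1+((n+1)\dotdiv m)}(X,Y) \). As \( m,n \) are finite and \( m \leq n+1 \), we have \( (n+1)\dotdiv m = n+1-m \), whence \( 1+((n+1)\dotdiv m) = n-m+2 \); thus \( f \in \Sigma_{n+1,n-m+2}(X,Y) \), contradicting \( f \notin \Sigma_{n+1,n-m+2}(X,Y) \). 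The ``Moreover'' clause is then immediate, since when \( \alpha = 1 \) (resp.\ \( m = 1 \)) the relevant instance of Proposition~\ref{propdefinabledec}\ref{propdefinabledec-2} holds with \( Y \) merely metrizable, and Lemma~\ref{lemmadefdecomposable} carries no separability hypothesis.

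There is no serious obstacle here: the argument is short once Proposition~\ref{propdefinabledec}\ref{propdefinabledec-2} and Lemma~\ref{lemmadefdecomposable}\ref{lemmadefdecomposable-2} are available. The only points demanding care are (i) checking that \( 1+((\alpha+2)\dotdiv\alpha) = 3 \) uniformly in \( \alpha \), and that in part~\ref{theor:luzin0-2} the ``shift'' \( 1+((n+1)\dotdiv m) \) really equals \( n-m+2 \); and (ii) invoking the elementary inclusion \( \boldsymbol{\Sigma}^0_{\alpha+2} \subseteq \boldsymbol{\Delta}^0_{\alpha+3} \) to convert ``\( f \in \Sigma_{\alpha+2,3}(X,Y) \)'' into a genuine contradiction with properness in part~\ref{theor:luzin0-1}.
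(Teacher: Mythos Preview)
Your proof is correct and follows essentially the same route as the paper: contrapositive, then Proposition~\ref{propdefinabledec}\ref{propdefinabledec-2} to pass from \( \Dec(\alpha) \cap \Sigma_{\alpha+1} \) (resp.\ \( \Dec(m) \cap \Sigma_n \)) to \( \D^\alpha_{\alpha+2} \) (resp.\ \( \D^m_{n+1} \)), followed by Lemma~\ref{lemmadefdecomposable}\ref{lemmadefdecomposable-2} to land in \( \Sigma_{\alpha+2,3} \) (resp.\ \( \Sigma_{n+1,n-m+2} \)) and obtain the contradiction. Your added details on the ordinal arithmetic and the inclusion \( \boldsymbol{\Sigma}^0_{\alpha+2} \subseteq \boldsymbol{\Delta}^0_{\alpha+3} \) are fine but not essential.
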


\begin{proof}
We first show~\ref{theor:luzin0-1}.
Assume towards a contradiction that \( f \) is \(\omega\)-decomposable in \( \Sigma_\alpha \) functions. Then by \( f \in \Sigma_{\alpha+1}(X,Y) \) and Proposition~\ref{propdefinabledec}\ref{propdefinabledec-2} we get \( f \in \D^\alpha_{\alpha+2}(X,Y) \),
whence \( f \in \Sigma_{\alpha+2,3}(X,Y) \)  by Lemma~\ref{lemmadefdecomposable}\ref{lemmadefdecomposable-2}: but this contradicts the assumption that \( f \) is a proper \( \Sigma_{\alpha+3,3} \) function, and therefore \( f \notin \Dec(\alpha)(X,Y) \).

Part~\ref{theor:luzin0-2} is similar. Assuming towards a contradiction that \( f \in \Dec(m)(X,Y) \), from \( f \in \Sigma_n(X,Y) \) and Proposition~\ref{propdefinabledec}\ref{propdefinabledec-2} we get \( f \in \D^m_{n+1}(X,Y) \), whence \( f \in \Sigma_{n+1, n-m+2} \) by Lemma~\ref{lemmadefdecomposable}\ref{lemmadefdecomposable-2}: since this contradicts our assumption on \( f \), we conclude that \( f \notin \Dec(m)(X,Y) \).
\end{proof}

By Corollary~\ref{corPn}, when \( \alpha = 1 \)
 we can let the function \( f \) in Theorem~\ref{theor:luzin0}\ref{theor:luzin0-1} be the 
Pawlikowsi function \( P \): in this way we get an 
alternative proof of the fact that \( P \) is not \(\omega\)-decomposable. The original proof was based on the observation that if \( P \restriction A \) is continuous for some \( A \subseteq \pre{\omega}{(\omega+1)} \), then \( P(A) \) is nowhere dense in \( \pre{\omega}{\omega} \) (\cite[Lemma 5.4]{cmps}): since \( \pre{\omega}{\omega} \), being Polish, is Baire, this implies that \( P \) is not \(\omega\)-decomposable. Notice that such argument actually proves the stronger result that \( P \) cannot be decomposed into less than \( {\rm cov}(\mathcal{M}) \)-many continuous functions, where \( {\rm cov}(\mathcal{M}) \) is the smallest cardinality of a family of meager sets covering \( \pre{\omega}{\omega} \). On the other hand, the argument based on Theorem~\ref{theor:luzin0}\ref{theor:luzin0-1} has the merit of further showing (when combined in the obvious way with Lemma~\ref{lemma3})
that \emph{every} countable power of a 
\( \boldsymbol{\Sigma}^0_2 \)-measurable discontinuous function is not \(\omega\)-decomposable: using this and Solecki's dichotomy theorem~\ref{theorsoleckidichotomy} one gets as a byproduct that all such functions cannot be decomposed into less than \( {\rm cov}(\mathcal{M}) \)-many continuous functions.

Theorem~\ref{theor:luzin0}\ref{theor:luzin0-1} and Corollary~\ref{corPn} also show that the generalized Pawlikowski functions \( P_n \) (see Definition~\ref{defPn}) have properties analogous to \( P \).

\begin{corollary} \label{corPnindecomposable}
Let \( 1 \leq n < \omega \). Then \( P_n \) is a Baire class \( n \) (equivalently, \( \boldsymbol{\Sigma}^0_{n+1} \)-measurable) function which is not \(\omega\)-decomposable in \( \Sigma_n \) functions.
\end{corollary}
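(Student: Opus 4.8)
The plan is to obtain this corollary immediately by feeding the appropriate instances of Corollary~\ref{corPn} into Theorem~\ref{theor:luzin0}\ref{theor:luzin0-1}. Throughout we take \( X = \pre{\omega}{(\pre{\omega}{2})} \) and \( Y = \pre{\omega}{2} \): both are Polish, hence in particular separable metrizable, so every hypothesis on the ambient spaces in the results invoked below is automatically met, uniformly in \( n \) (and in particular the relaxed metrizability assumption available in Theorem~\ref{theor:luzin0}\ref{theor:luzin0-1} when \( \alpha = 1 \) is a non-issue here, since \( Y \) is Polish regardless).

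First I would record the measurability claim. Applying Corollary~\ref{corPn} with \( \gamma = 0 \), the function \( P_n \) is a proper \( \Sigma_{n+1,1} \) function; in particular \( P_n \in \Sigma_{n+1}(X,Y) \), i.e.\ \( P_n \) is \( \boldsymbol{\Sigma}^0_{n+1} \)-measurable, which is the same as being a Baire class \( n \) function. (This was already noted right after Definition~\ref{defPn} via Proposition~\ref{proppower}, but it also drops out of Corollary~\ref{corPn} directly.)

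Next I would establish non-decomposability by applying Theorem~\ref{theor:luzin0}\ref{theor:luzin0-1} with \( \alpha = n \). That statement requires two inputs on \( f = P_n \): membership in \( \Sigma_{\alpha+1}(X,Y) = \Sigma_{n+1}(X,Y) \), which is exactly the previous paragraph; and that \( P_n \) be a proper \( \Sigma_{\alpha+3,3} = \Sigma_{n+3,3} \) function. For the latter I would invoke Corollary~\ref{corPn} once more, this time with \( \gamma = 2 \): since \( n + 1 + 2 = n + 3 \) and \( 1 + 2 = 3 \), this says precisely that \( P_n \) is a proper \( \Sigma_{n+3,3} \) function. Theorem~\ref{theor:luzin0}\ref{theor:luzin0-1} then yields that \( P_n \) is not \(\omega\)-decomposable in \( \Sigma_n \) functions, completing the proof.

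There is essentially no obstacle here; the only point requiring care is the bookkeeping of the (integer) indices, namely checking that the shift parameter \( \gamma = 2 \) in Corollary~\ref{corPn} is exactly what makes \( P_n \) a proper \( \Sigma_{n+3,3} \) function, so that it matches the ``\( +3 \) in the first coordinate, level \( 3 \) in the second'' demanded by Theorem~\ref{theor:luzin0}\ref{theor:luzin0-1} with \( \alpha = n \).
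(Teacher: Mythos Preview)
Your proof is correct and follows exactly the approach indicated in the paper: invoke Corollary~\ref{corPn} (with \( \gamma = 0 \) for the measurability and \( \gamma = 2 \) for properness in \( \Sigma_{n+3,3} \)) and feed these into Theorem~\ref{theor:luzin0}\ref{theor:luzin0-1} with \( \alpha = n \).
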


In contrast, we do not know if  results similar to Theorem~\ref{theorsoleckidichotomy} 
hold for the generalized Pawlikowski functions as well.

\begin{question}
Let \( X,Y \) be separable metrizable spaces with \( X \) analytic, and fix  \( 1 < n < \omega \). Is it true that for every \( \boldsymbol{\Sigma}^0_{n+1} \)-measurable function \( f \colon X \to Y \), either \( f \) is \(\omega\)-decomposable into \( \Sigma_n \) functions, or else%
\footnote{By Proposition~\ref{propPn}, the validity of this statement is independent of the choice of the set \( C_n \) used to define \( P_n \) (see Definition~\ref{defPn}).}
 \( P_n \sqsubseteq f \)?
\end{question}

Consider now functions from a Polish space \( X \) into a separable metrizable space \( Y \).
By Lemma~\ref{lemma3} and Theorem~\ref{theor:luzin0}\ref{theor:luzin0-2},  by considering the smallest \(n \geq 1 \) such that \( f \in \Sigma_n(X,Y) \) one easily obtains that
\emph{every} countable power of a  
discontinuous finite level Borel function \( f \colon X \to Y \) is not \(\omega\)-decomposable and, similarly,  
\emph{every} countable power of a non \( \boldsymbol{\Sigma}^0_m \)-measurable finite level Borel function \( f \colon X \to Y \) is not 
\(\omega\)-decomposable in \( \Sigma_m \) functions. When \( X \) is uncountable, 
this observation and Proposition~\ref{proppower} imply that for finite level Borel countable powers \( f_\omega \colon \pre{\omega}{X} \to \pre{\omega}{Y} \), being \(\omega\)-decomposable in \( \Sigma_m \) functions already implies that the whole \( f_\omega  \) is \( \boldsymbol{\Sigma}^0_m \)-measurable, and hence that the following curious result holds.

\begin{corollary} 
Let \( X \) be an uncountable Polish space and \( Y \) be a separable metrizable space. Then for every \( f \colon X \to Y \) and \( 1 \leq n < \omega \), if \( f \) (or, equivalently, \( f_\omega \)) is a finite level Borel function, then the following are equivalent:
\begin{enumerate-(a)}
\item
\( f_\omega \) is \(\omega\)-decomposable in \( \boldsymbol{\Sigma}^0_n \)-measurable functions;
\item
\( f_\omega \) itself is \( \boldsymbol{\Sigma}^0_n \)-measurable;
\item
\( f \) is \( \boldsymbol{\Sigma}^0_n \)-measurable.
\end{enumerate-(a)}
\end{corollary}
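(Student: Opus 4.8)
The plan is to prove the three conditions equivalent via the cycle (a)\,\(\Rightarrow\)\,(c)\,\(\Rightarrow\)\,(b)\,\(\Rightarrow\)\,(a). Before starting I would observe that the hypothesis is unambiguous: by the remark following Definition~\ref{deffinitelevel}, \( f \) is a finite level Borel function if and only if \( f_\omega \) is, so it is immaterial which of the two we ask to be finite level.

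The implication (b)\,\(\Rightarrow\)\,(a) is immediate, since a \( \boldsymbol{\Sigma}^0_n \)-measurable function is trivially \(\omega\)-decomposable in \( \Sigma_n \) functions (take the one-piece partition \( X_0 = \pre{\omega}{X} \)). For (c)\,\(\Rightarrow\)\,(b) I would simply invoke Proposition~\ref{proppower} with \( \alpha = n \) and \( \beta = 1 \): this is applicable because \( X \) is uncountable Polish, hence uncountable analytic, and \( \pre{\omega}{Y} \) is nontrivial (as \( Y \) is), and it yields \( f \in \Sigma_n(X,Y) \iff f_\omega \in \Sigma_n(\pre{\omega}{X},\pre{\omega}{Y}) \), i.e.\ \( f \) is \( \boldsymbol{\Sigma}^0_n \)-measurable if and only if \( f_\omega \) is.

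The substantive direction is (a)\,\(\Rightarrow\)\,(c), which I would prove in contrapositive form: assuming \( f \) is \emph{not} \( \boldsymbol{\Sigma}^0_n \)-measurable, the goal is to show \( f_\omega \notin \Dec(n)(\pre{\omega}{X},\pre{\omega}{Y}) \). Since \( f \) is finite level Borel, let \( k \geq 1 \) be least with \( f \in \Sigma_k(X,Y) \); because \( f \notin \Sigma_n(X,Y) \) and \( \Sigma_j(X,Y) \subseteq \Sigma_n(X,Y) \) for \( j \leq n \), we must have \( k > n \), and in particular \( f \in \Sigma_k(X,Y) \setminus \Sigma_{<k}(X,Y) \) with \( k > 1 \). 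Now Lemma~\ref{lemma3} gives that \( f_\omega \) is a proper \( \Sigma_{k+\gamma,1+\gamma} \) function for every \( \gamma < \omega_1 \). Taking \( \gamma = k-n+1 \) produces an \( S \in \boldsymbol{\Sigma}^0_{k-n+2}(\pre{\omega}{Y}) \) with \( f_\omega^{-1}(S) \) a proper \( \boldsymbol{\Sigma}^0_{2k-n+1}(\pre{\omega}{X}) \) set; since \( k > n \) forces \( 2k-n+1 > k+1 \), and a proper \( \boldsymbol{\Sigma}^0_\xi \) set cannot belong to \( \boldsymbol{\Sigma}^0_{\xi'} \) for \( \xi' < \xi \), we obtain \( f_\omega^{-1}(S) \notin \boldsymbol{\Sigma}^0_{k+1}(\pre{\omega}{X}) \), hence \( f_\omega \notin \Sigma_{k+1,k-n+2}(\pre{\omega}{X},\pre{\omega}{Y}) \). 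Together with \( f_\omega \in \Sigma_k(\pre{\omega}{X},\pre{\omega}{Y}) \) (Lemma~\ref{lemma3} with \( \gamma = 0 \)), this lets me apply Theorem~\ref{theor:luzin0}\ref{theor:luzin0-2} with its parameters instantiated as \( k \) (in the role of ``\( n \)'') and \( n \) (in the role of ``\( m \)'') --- legitimate since \( 1 \leq n < k < \omega \) and \( \pre{\omega}{Y} \) is separable metrizable (only metrizability being needed when \( n = 1 \)) --- to conclude that \( f_\omega \) is not \(\omega\)-decomposable in \( \Sigma_n \) functions, as required. This is precisely the observation recorded in the paragraph preceding the corollary.

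I do not expect a genuine obstacle: the corollary is a repackaging of Lemma~\ref{lemma3}, Theorem~\ref{theor:luzin0}\ref{theor:luzin0-2}, and Proposition~\ref{proppower}. The two points deserving care are the index bookkeeping when feeding shifted parameters into Theorem~\ref{theor:luzin0}\ref{theor:luzin0-2} (the choice \( \gamma = k-n+1 \) is exactly what pushes the witnessing preimage strictly above level \( k+1 \)), and recalling that uncountability of \( X \) is used precisely to make Proposition~\ref{proppower} available, thereby tying the \( \boldsymbol{\Sigma}^0_n \)-measurability of \( f \) to that of \( f_\omega \).
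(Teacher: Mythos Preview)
Your proof is correct and follows essentially the same route as the paper: the corollary is stated immediately after the paragraph that derives (a)\,\(\Rightarrow\)\,(c) from Lemma~\ref{lemma3} and Theorem~\ref{theor:luzin0}\ref{theor:luzin0-2}, and you have reproduced that argument accurately (including the index computation with \(\gamma = k-n+1\)), while (b)\,\(\iff\)\,(c) is exactly the content of Proposition~\ref{proppower} (or equivalently the remark after Definition~\ref{def:ctblpower}). One small remark: for (c)\,\(\Rightarrow\)\,(b) you only need the easy direction of Proposition~\ref{proppower}, which does not require \(X\) uncountable or \(Y\) nontrivial, so your final comment slightly overstates where uncountability enters.
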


Finally, using Theorem~\ref{theor:luzin0}\ref{theor:luzin0-1} we can also provide further simple counterexamples (between arbitrary uncountable analytic spaces \( X,Y \) and for every \( \alpha \geq 1 \)) to  the generalized Lusin's question~\ref{questionLusin}.

\begin{corollary}\label{cor:luzin} 
Let \( X,Y \) be uncountable analytic spaces, and let \( \alpha > 1 \). Then \( \Sigma_\alpha(X,Y) \setminus \Dec(<\alpha)(X,Y) \neq \emptyset \), that is: there is a function \( f \in \Sigma_\alpha(X,Y) \) which is not \(\omega\)-decomposable in \( \Sigma_{\beta} \) functions for any \( 1 \leq \beta < \alpha \).

Moreover, when \( \alpha \) is additively closed such an \( f \) may be taken in \( \D_\alpha(X,Y) \), so that in this case we further get  \( \D_\alpha(X,Y) \setminus \Dec(<\alpha)(X,Y) \neq \emptyset \)
\end{corollary}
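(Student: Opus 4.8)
The plan for the first assertion is to take $h\colon X\to Y$ as produced by Corollary~\ref{cor3} for the given $\alpha>1$, so that $h$ is a proper $\Sigma_{\alpha+\gamma,\,1+\gamma}$ function for every $\gamma<\omega_1$. Specialising $\gamma=0$ gives $h\in\Sigma_\alpha(X,Y)$, which is the first half of the claim, and it then suffices to verify that $h\notin\Dec(\beta)(X,Y)$ for every $1\le\beta<\alpha$, i.e.\ that $h\notin\Dec(<\alpha)(X,Y)$.

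So fix $1\le\beta<\alpha$ and suppose towards a contradiction that $h\in\Dec(\beta)(X,Y)$. As $X,Y$ are separable metrizable and $h\in\Sigma_\alpha(X,Y)$, Proposition~\ref{propdefinabledec}\ref{propdefinabledec-2} gives $h\in\D^\beta_{\alpha+1}(X,Y)$, and then Lemma~\ref{lemmadefdecomposable}\ref{lemmadefdecomposable-2} (applicable since $\beta<\alpha+1$) yields $h\in\Sigma_{\alpha+1,\,1+((\alpha+1)\dotdiv\beta)}(X,Y)$. Here $(\alpha+1)\dotdiv\beta$ — the ordinal $\xi$ with $\beta+\xi=\alpha+1$ — is at least $2$, since $\xi=0$ would give $\beta=\alpha+1$ and $\xi=1$ would give $\beta=\alpha$, both against $\beta<\alpha$; hence $h\in\Sigma_{\alpha+1,3}(X,Y)$. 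On the other hand, Corollary~\ref{cor3} with $\gamma=2$ says $h$ is a proper $\Sigma_{\alpha+2,3}$ function, so there is $S\in\boldsymbol{\Sigma}^0_3(Y)$ with $h^{-1}(S)$ a proper $\boldsymbol{\Sigma}^0_{\alpha+2}(X)$ set; but $h\in\Sigma_{\alpha+1,3}(X,Y)$ forces $h^{-1}(S)\in\boldsymbol{\Sigma}^0_{\alpha+1}(X)\subseteq\boldsymbol{\Pi}^0_{\alpha+2}(X)$, a contradiction. This is exactly the mechanism of Theorem~\ref{theor:luzin0}\ref{theor:luzin0-1}, which one may instead invoke directly when $\alpha$ is a successor (it then suffices to take $\beta=\alpha-1$, using $\Dec(\beta')\subseteq\Dec(\alpha-1)$ for $\beta'<\alpha$).

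For the ``moreover'' part one cannot recycle $h$: by Corollary~\ref{cor3variant} it lies outside $\D_{<\alpha\cdot\omega}(X,Y)$, hence outside $\D_\alpha(X,Y)$. The extra ingredient is the remark that, $\alpha$ being additively closed, every $\mu<\alpha$ satisfies $\mu\cdot\omega\le\alpha$ (since $\mu\cdot\omega$ is the least additively closed ordinal above $\mu$ and $\alpha$ is one such), whence $\Sigma_\mu(X,Y)\subseteq\D_{\mu\cdot\omega}(X,Y)\subseteq\D_\alpha(X,Y)$. So first take $X=Y=\pre{\omega}{2}$, write it as a clopen partition $\pre{\omega}{2}=\bigcup_n X_n$ with each $X_n$ homeomorphic to $\pre{\omega}{2}$, fix an increasing sequence $\seq{\mu_n}{n\in\omega}$ of ordinals $>1$ cofinal in $\alpha$, and for each $n$ use Corollary~\ref{cor3} (with base $\mu_n$) to get $g_n\colon X_n\to\pre{\omega}{2}$ which is a proper $\Sigma_{\mu_n+\gamma,\,1+\gamma}$ function for every $\gamma$; put $f=\bigcup_n g_n$. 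Each $g_n\in\Sigma_{\mu_n}(X_n,\pre{\omega}{2})\subseteq\D_\alpha(X_n,\pre{\omega}{2})$ by the remark, and the $X_n$ are clopen (hence $\boldsymbol{\Delta}^0_\alpha$), so $f\in\D_\alpha(\pre{\omega}{2},\pre{\omega}{2})$ by Fact~\ref{fct:basicSigmaalphabeta}; while for any $m<\alpha$, choosing $n$ with $\mu_n>m$ and applying the first part of the proof with $\alpha$ replaced by $\mu_n$ gives $f\restriction X_n=g_n\notin\Dec(m)(X_n,\pre{\omega}{2})$, so $f\notin\Dec(m)(\pre{\omega}{2},\pre{\omega}{2})$, and therefore $f\notin\Dec(<\alpha)$. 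Finally one transfers $f$ to arbitrary uncountable analytic $X,Y$ exactly as in the proof of Corollary~\ref{cor3} — work on closed copies of $\pre{\omega}{2}$ inside $X$ and $Y$, then extend by a constant over the complement, which preserves membership in $\D_\alpha$ because $\alpha>1$, and observe that restricting to the copy of $\pre{\omega}{2}$ still forbids $\Dec(<\alpha)$. The first assertion is essentially routine given Corollary~\ref{cor3} and the two bookkeeping lemmas (the only delicate bit being the arithmetic $(\alpha+1)\dotdiv\beta\ge2$); for the ``moreover'' part the only real input is the short observation $\Sigma_\mu(X,Y)\subseteq\D_\alpha(X,Y)$ for $\mu<\alpha$, so I do not expect a serious obstacle there either.
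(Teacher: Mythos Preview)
Your proof is correct. The main difference from the paper is in how you handle the first assertion: the paper splits into cases (successor $\alpha$ via Theorem~\ref{theor:luzin0}\ref{theor:luzin0-1}, limit $\alpha$ via a patching construction), whereas you give a single uniform argument valid for all $\alpha>1$ by directly unfolding the mechanism of Theorem~\ref{theor:luzin0}\ref{theor:luzin0-1} with the more general bound $(\alpha+1)\dotdiv\beta\geq 2$ for any $\beta<\alpha$. This is a small but genuine streamlining---the paper's limit-case patching is not needed for the first assertion. On the other hand, the paper's approach is more economical overall: its limit-case construction for the first assertion is exactly the construction needed for the ``moreover'' part (every additively closed $\alpha>1$ being limit), so it gets the second assertion essentially for free, while you must redo that construction separately. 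Your version of the ``moreover'' part (first on $\pre{\omega}{2}$, then transferred to general $X,Y$) is correct and essentially equivalent to the paper's direct construction on $X$.
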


\begin{proof}
Let us first consider the case where \( \alpha = \delta+1 \) for some \( \delta \geq 1 \). By Corollary~\ref{cor3} there is some \( f  \in \Sigma_{\alpha}(X,Y) \) which is a proper \( \Sigma_{\alpha +\gamma, 1+\gamma} \) function for every \( \gamma < \omega_1 \). Setting \( \gamma = 2 \) and applying Theorem~\ref{theor:luzin0}\ref{theor:luzin0-1} with \( \alpha \) replaced by \( \delta \), we get that \( f \) is not \(\omega\)-decomposable in \( \Sigma_\delta \) functions and hence \( f \notin \Dec(<\alpha)(X,Y) \).

Assume now that \( \alpha \) is limit, let \( \seq{\alpha_n}{n \in \omega} \) be an increasing sequence of successor ordinals \( < \alpha \) cofinal in \( \alpha \), and let \( \seq{X_n}{n \in \omega} \) be a partition of \( X \) into \emph{uncountable} \( \boldsymbol{\Delta}^0_2 (X) \)-pieces. For each \( n \in \omega \), use the first part to find \( f_n \in \Sigma_{\alpha_n}(X_n,Y) \setminus \Dec(< \alpha_n)(X_n,Y) \): then \( f = \bigcup_{n \in \omega} f_n \) is as required.

Finally, observe that when \( \alpha \) is additively closed, any function \( f \) constructed as in the previous paragraph is in \( \D_\alpha(X,Y) \) by Fact~\ref{fct:basicSigmaalphabeta} and \( \Sigma_{\alpha_n}(X_n,Y) \subseteq \D_\alpha(X_n,Y) \). 
\end{proof}

Let us conclude this section with a small observation concerning extension of the Jayne-Rogers theorem~\ref{theorjaynerogers} to infinite levels.
As noticed in~\cite[p.~45]{motamenable}, no 
``reasonable'' generalization of the Jayne-Rogers theorem can hold at level \( \omega 
\) --- for example one cannot guarantee that all \( \boldsymbol{\Delta}^0_\omega \)-functions can be decomposed into countably many \( \boldsymbol{\Delta}^0_n \)-functions (for \( 1 \leq n < \omega \)). However, one could still speculate that e.g.\ 
every \( \boldsymbol{\Delta}^0_{\omega+1} \)-function is \(\omega\)-decomposable into \( 
\boldsymbol{\Delta}^0_\omega \)-functions with \( \boldsymbol{\Delta}^0_{\omega+1} \)-domain (i.e.\ that \( \D_{\omega+1} = \D^\omega_{\omega+1} \)), and similar conjectures can in principle be proposed for every \( \alpha \geq 1 \).
Nevertheless, the last part of Corollary~\ref{cor:luzin} shows that the set of levels  at which such decomposition theorems can hold is actually very sparse: there is a closed unbounded \( C 
\subseteq \omega_1 \) (namely the club of all additively closed countable ordinals) such 
that for every \( \alpha \in C \), the class \( \D_\alpha(X,Y) \) contains functions which are essentially indecomposable into countably many strictly simpler functions.

\section{Generalizations of the Jayne-Rogers theorem to higher levels} \label{sectionJR}

We begin with a simple lemma which computes the complexity of the graph of 
certain \(\omega\)-decomposable Borel functions; to prove it, we will use the following well-known result.

\begin{fact}[Folklore] \label{factPi}
Let \( X,Y \) be topological spaces with \( Y \) Hausdorff. Then \( \gr(f) \in \boldsymbol{\Pi}^0_\alpha(X \times Y ) \) for every \( f  \in \Sigma_\alpha(X,Y) \).
\end{fact}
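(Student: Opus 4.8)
The plan is to compute the complement of $\gr(f)$ directly as a union of ``open rectangles''. Since $Y$ is Hausdorff, for any $(x,y)$ with $f(x) \neq y$ there are open $U, V \subseteq Y$ with $f(x) \in U$, $y \in V$ and $U \cap V = \emptyset$; then $(x,y) \in f^{-1}(U) \times V$, and any such product with $U \cap V = \emptyset$ is disjoint from $\gr(f)$ (if $f(x') \in U$ and $y' \in V$ then $f(x') \notin V \ni y'$, so $(x',y') \notin \gr(f)$). Hence
\[
(X \times Y) \setminus \gr(f) = \bigcup \{ f^{-1}(U) \times V \mid U, V \subseteq Y \text{ open},\ U \cap V = \emptyset \};
\]
equivalently, $(X\times Y)\setminus\gr(f) = h^{-1}\bigl((Y\times Y)\setminus \Delta_Y\bigr)$ for the map $h(x,y) = (f(x),y)$ and the complement of the diagonal $\Delta_Y$ of $Y$, which is open precisely because $Y$ is Hausdorff.

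To turn this into a $\boldsymbol{\Sigma}^0_\alpha$ description I would first replace the a priori uncountable index set above by a countable one: fixing a countable base $\mathcal{B}$ of $Y$ --- available in the case of interest, where $Y$ is separable metrizable --- the same set is obtained by letting $U, V$ range only over the countably many pairs in $\mathcal{B} \times \mathcal{B}$ with $U \cap V = \emptyset$. For each such pair, $f^{-1}(U) \in \boldsymbol{\Sigma}^0_\alpha(X)$ since $f \in \Sigma_\alpha(X,Y)$ and $U$ is open, while $V$ is open in $Y$; writing $f^{-1}(U) \times V = \pi_X^{-1}(f^{-1}(U)) \cap \pi_Y^{-1}(V)$ and using continuity of the projections $\pi_X, \pi_Y$ (so that preimages of $\boldsymbol{\Sigma}^0_\alpha$ sets and of open sets remain in $\boldsymbol{\Sigma}^0_\alpha(X\times Y)$) together with closure of $\boldsymbol{\Sigma}^0_\alpha(X\times Y)$ under finite intersections, one gets $f^{-1}(U) \times V \in \boldsymbol{\Sigma}^0_\alpha(X \times Y)$. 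A countable union of such sets is again in $\boldsymbol{\Sigma}^0_\alpha(X \times Y)$, so $(X\times Y)\setminus\gr(f) \in \boldsymbol{\Sigma}^0_\alpha(X\times Y)$ and therefore $\gr(f) \in \boldsymbol{\Pi}^0_\alpha(X\times Y)$, as claimed.

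The only non-routine ingredient is the passage to a countable index set, which is exactly where (second\nobreakdash-)countability of $Y$ enters; for $\alpha = 1$ it is not even needed, since the displayed identity then exhibits $(X\times Y)\setminus\gr(f)$ directly as an open set, recovering the classical fact that a continuous map into a Hausdorff space has closed graph. Everything else is a mechanical check of the standard closure properties of the Borel pointclasses recalled in Section~\ref{secpreliminaries}, so I do not expect any real obstacle.
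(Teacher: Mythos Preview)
The paper does not supply a proof of this fact---it is labeled ``Folklore'' and stated without argument---so there is nothing to compare your approach against. Your argument is the standard one and is entirely correct under the additional hypothesis that $Y$ is second-countable: the identity
\[
(X\times Y)\setminus\gr(f)=\bigcup\{f^{-1}(U)\times V\mid U,V\in\mathcal{B},\ U\cap V=\emptyset\}
\]
with $\mathcal{B}$ a countable base, together with the closure properties you cite, yields $(X\times Y)\setminus\gr(f)\in\boldsymbol{\Sigma}^0_\alpha(X\times Y)$ exactly as you say.

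There is, however, a mismatch in generality that you already flag yourself. The fact is stated for arbitrary Hausdorff $Y$, while your reduction to a countable union requires a countable base for $Y$. For $\alpha=1$ this is harmless: your first displayed identity (or equivalently the pullback of the open set $(Y\times Y)\setminus\Delta_Y$ under the continuous map $h$) exhibits the complement of $\gr(f)$ as a union of open rectangles, hence open, with no countability needed. For $\alpha\ge 2$, neither your argument nor the reformulation via $h(x,y)=(f(x),y)$ goes through without second-countability, since open subsets of $Y\times Y$ need not be countable unions of open boxes, and I do not see an alternative route at that level of generality. This is best read as a slight overstatement in the paper's hypotheses rather than a defect in your proof: every use of the fact in the paper (through Lemma~\ref{lemmagraph}) either invokes only the case $\beta=1$, or takes place with $Y$ separable metrizable, and your argument covers all of these.
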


\begin{lemma} \label{lemmagraph}
Let \( X,Y \) be topological spaces with \( Y \) Hausdorff, and let \( f \in \D^\beta_\alpha(X,Y) \) for \( \alpha,\beta \geq 1 \).
\begin{enumerate-(a)}
\item \label{lemmagraph-1}
If \( \beta \geq \alpha \) then \( \gr(f) \in \boldsymbol{\Pi}^0_\beta(X \times Y) \).
\item \label{lemmagraph-2}
If \( \beta < \alpha \) then \( \gr(f) \in \boldsymbol{\Delta}^0_\alpha(X \times Y) \).
\end{enumerate-(a)}
\end{lemma}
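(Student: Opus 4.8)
The plan is to obtain both parts from Fact~\ref{factPi} together with Lemma~\ref{lemmadefdecomposable}, the only real work being the passage between the Borel hierarchy of the subspaces $X_n\times Y$ and that of $X\times Y$. Part~\ref{lemmagraph-1} needs nothing more than quoting these results: since $\beta\geq\alpha$, Lemma~\ref{lemmadefdecomposable}\ref{lemmadefdecomposable-1} gives $\D^\beta_\alpha(X,Y)=\Sigma_\beta(X,Y)$, so $f\in\Sigma_\beta(X,Y)$, and as $Y$ is Hausdorff Fact~\ref{factPi} yields exactly $\gr(f)\in\boldsymbol{\Pi}^0_\beta(X\times Y)$.

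For~\ref{lemmagraph-2} I would fix a partition $\seq{X_n}{n\in\omega}$ of $X$ into $\boldsymbol{\Delta}^0_\alpha(X)$-pieces witnessing $f\in\D^\beta_\alpha(X,Y)$ and prove the inclusions $\gr(f)\in\boldsymbol{\Sigma}^0_\alpha(X\times Y)$ and $\gr(f)\in\boldsymbol{\Pi}^0_\alpha(X\times Y)$ separately. For each $n$, from $f\restriction X_n\in\Sigma_\beta(X_n,Y)$ and $Y$ Hausdorff, Fact~\ref{factPi} gives $\gr(f\restriction X_n)\in\boldsymbol{\Pi}^0_\beta(X_n\times Y)$. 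Since $\beta<\alpha$ we have $\boldsymbol{\Pi}^0_\beta(X_n\times Y)\subseteq\boldsymbol{\Delta}^0_{\beta+1}(X_n\times Y)\subseteq\boldsymbol{\Delta}^0_\alpha(X_n\times Y)$, and since $X_n\times Y$ is the preimage of $X_n\in\boldsymbol{\Delta}^0_\alpha(X)$ under the (continuous) projection $X\times Y\to X$ it lies in $\boldsymbol{\Delta}^0_\alpha(X\times Y)$; the standard description of the hierarchy of a subspace then gives $\boldsymbol{\Delta}^0_\alpha(X_n\times Y)\subseteq\boldsymbol{\Delta}^0_\alpha(X\times Y)$, so $\gr(f\restriction X_n)\in\boldsymbol{\Delta}^0_\alpha(X\times Y)$. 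Hence $\gr(f)=\bigcup_{n\in\omega}\gr(f\restriction X_n)\in\boldsymbol{\Sigma}^0_\alpha(X\times Y)$. For the other inclusion I can either note that $f\in\D^\beta_\alpha(X,Y)\subseteq\Sigma_{\alpha,1+(\alpha\dotdiv\beta)}(X,Y)\subseteq\Sigma_\alpha(X,Y)$ by Lemma~\ref{lemmadefdecomposable}\ref{lemmadefdecomposable-2} and invoke Fact~\ref{factPi} once more, or, symmetrically, use that since the $X_n$ partition $X$ one has $(X\times Y)\setminus\gr(f)=\bigcup_{n\in\omega}\big((X_n\times Y)\setminus\gr(f\restriction X_n)\big)$, each summand lying in $\boldsymbol{\Delta}^0_\alpha(X\times Y)$ exactly as above; in either case $\gr(f)\in\boldsymbol{\Pi}^0_\alpha(X\times Y)$. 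Combining the two inclusions, $\gr(f)\in\boldsymbol{\Delta}^0_\alpha(X\times Y)$.

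The one point warranting caution — and the sole place where the paper's modified definition of $\boldsymbol{\Sigma}^0_2$ for nonmetrizable spaces is relevant — is the subspace fact used above: for a subspace $Z$ of a topological space $W$ and every level $\boldsymbol{\Gamma}$ of the hierarchy one has $\boldsymbol{\Gamma}(Z)=\{\,B\cap Z\mid B\in\boldsymbol{\Gamma}(W)\,\}$, whence $Z\in\boldsymbol{\Delta}^0_\alpha(W)$ forces $\boldsymbol{\Delta}^0_\alpha(Z)\subseteq\boldsymbol{\Delta}^0_\alpha(W)$. This is trivial at level $1$ and propagates through the hierarchy just as in the metrizable case, so I do not anticipate any genuine obstacle; once it and the usual closure properties of the classes are taken for granted, the lemma is a purely formal consequence of Fact~\ref{factPi} and Lemma~\ref{lemmadefdecomposable}.
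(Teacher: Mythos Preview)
Your proof is correct and follows essentially the same approach as the paper: part~\ref{lemmagraph-1} via Lemma~\ref{lemmadefdecomposable}\ref{lemmadefdecomposable-1} and Fact~\ref{factPi}, and part~\ref{lemmagraph-2} by showing each $\gr(f\restriction X_n)\in\boldsymbol{\Delta}^0_\alpha(X\times Y)$ and then taking the union for the $\boldsymbol{\Sigma}^0_\alpha$ direction and the complement decomposition $(X\times Y)\setminus\gr(f)=\bigcup_n\bigl((X_n\times Y)\setminus\gr(f\restriction X_n)\bigr)$ for the $\boldsymbol{\Pi}^0_\alpha$ direction. Your extra care with the subspace-hierarchy fact and your alternative route for $\boldsymbol{\Pi}^0_\alpha$ via Lemma~\ref{lemmadefdecomposable}\ref{lemmadefdecomposable-2} and Fact~\ref{factPi} are harmless additions; the paper simply takes the former for granted and uses only the complement argument.
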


\begin{proof}
If \( \beta \geq \alpha \), then \( \D^\beta_\alpha(X,Y) = \Sigma_\beta(X,Y) \) by Lemma~\ref{lemmadefdecomposable}, hence \( \gr(f) \in \boldsymbol{\Pi}^0_\beta(X \times Y) \) by Fact~\ref{factPi}. Assume now \( \beta < \alpha \). Let \( \seq{ X_n }{ n \in \omega } \) witness \( f \in \D^\beta_\alpha(X,Y) \). Then by Fact~\ref{factPi}, \( \beta < \alpha \), and the choice of the \( X_n \)'s we get
\[
 \gr(f \restriction X_n) \in \boldsymbol{\Pi}^0_\beta(X_n  \times Y) \subseteq \boldsymbol{\Delta}^0_\alpha(X_n \times Y) \subseteq \boldsymbol{\Delta}^0_\alpha(X \times Y).
\]
Therefore, 
\[ 
\gr(f) = \bigcup\nolimits_{n \in \omega} \gr(f \restriction X_n) \in \boldsymbol{\Sigma}^0_\alpha(X \times Y ) 
\] 
and 
\[ 
(X \times Y) \setminus \gr(f) = \bigcup\nolimits_{n \in \omega} ((X_n \times Y) \setminus \gr(f \restriction X_n)) \in \boldsymbol{\Sigma}^0_\alpha(X,Y). \qedhere
 \] 
\end{proof}

Using Lemma~\ref{lemmagraph} and Proposition~\ref{propdefinabledec}\ref{propdefinabledec-1}, we easily get
the following characterization the class \( \D^1_\alpha(X,Y) \) for \( \alpha \geq 3 \).

\begin{theorem}\label{theormain} 
Let \( X , Y \) be metrizable spaces, and \( f \) be a function between \( X \) and \( Y \). For \( \alpha \geq 3 \), the following are equivalent:
\begin{enumerate-(a)}
\item \label{theormain-1}
\( \gr(f) \in \boldsymbol{\Sigma}^0_\alpha (X \times \widetilde{Y}) \) and \( f \) is \(\omega\)-decomposable;
\item \label{theormain-1'}
\( \gr(f) \in \boldsymbol{\Delta}^0_\alpha (X \times \widetilde{Y}) \) and \( f \) is \(\omega\)-decomposable;
\item \label{theormain-2}
\( f \in \D^1_\alpha(X,Y)  \).
\end{enumerate-(a)}
In other words,
\begin{equation} \label{eqmain}
\tag{$\dagger$}
\begin{split}
\D^1_\alpha(X,Y) &= \{ f \colon X \to Y \mid \gr(f) \in \boldsymbol{\Sigma}^0_\alpha(X \times \widetilde{Y}) \}^\Dec \\
& = \{ f \colon X \to Y \mid \gr(f) \in \boldsymbol{\Delta}^0_\alpha(X \times \widetilde{Y}) \}^\Dec.
\end{split}
\end{equation}
\end{theorem}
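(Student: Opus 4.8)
The plan is to prove the cycle of implications \ref{theormain-2} \( \Rightarrow \) \ref{theormain-1'} \( \Rightarrow \) \ref{theormain-1} \( \Rightarrow \) \ref{theormain-2}. Once this is done, the two displayed equalities in~\eqref{eqmain} are immediate: by the definition of \( \F^\Dec(X,Y) \), the class \( \{ f \colon X \to Y \mid \gr(f) \in \boldsymbol{\Sigma}^0_\alpha(X \times \widetilde{Y}) \}^\Dec \) is precisely the collection of \( f \) satisfying~\ref{theormain-1}, and \( \{ f \colon X \to Y \mid \gr(f) \in \boldsymbol{\Delta}^0_\alpha(X \times \widetilde{Y}) \}^\Dec \) is precisely the collection of \( f \) satisfying~\ref{theormain-1'}, so the three-fold equivalence yields both equalities.

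For \ref{theormain-2} \( \Rightarrow \) \ref{theormain-1'}: if \( f \in \D^1_\alpha(X,Y) \), then by definition there is a countable partition of \( X \) into \( \boldsymbol{\Delta}^0_\alpha(X) \)-pieces on each of which \( f \) is continuous, so in particular \( f \) is \(\omega\)-decomposable. Moreover \( f \in \D^1_\alpha(X,Y) \subseteq \D^1_\alpha(X,\widetilde{Y}) \) because \( Y \subseteq \widetilde{Y} \), and \( \widetilde{Y} \), being metrizable, is Hausdorff; since \( 1 < \alpha \) (as \( \alpha \geq 3 \)), Lemma~\ref{lemmagraph}\ref{lemmagraph-2} applied to \( f \) regarded as a map into \( \widetilde{Y} \) gives \( \gr(f) \in \boldsymbol{\Delta}^0_\alpha(X \times \widetilde{Y}) \). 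The implication \ref{theormain-1'} \( \Rightarrow \) \ref{theormain-1} is trivial, since \( \boldsymbol{\Delta}^0_\alpha(X \times \widetilde{Y}) \subseteq \boldsymbol{\Sigma}^0_\alpha(X \times \widetilde{Y}) \).

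The substantive implication is \ref{theormain-1} \( \Rightarrow \) \ref{theormain-2}, and it is here that the hypothesis \( \alpha \geq 3 \) is used. Assuming \( \gr(f) \in \boldsymbol{\Sigma}^0_\alpha(X \times \widetilde{Y}) \) and \( f \in \Dec(1)(X,Y) \) (i.e.\ \( f \) is \(\omega\)-decomposable), I would apply Proposition~\ref{propdefinabledec}\ref{propdefinabledec-1} with its decomposition-level parameter set equal to \( 1 \) and its graph-complexity parameter set equal to our \( \alpha \); this is legitimate because \( f \in \Dec(1)(X,Y) \), and by the final sentence of that proposition the target \( Y \) may be taken to be just metrizable when the decomposition level equals \( 1 \). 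The conclusion is that \( f \in \D^1_\gamma(X,Y) \) for \( \gamma = \max\{\, 1+2 ,\ 1 + (\alpha \dotdiv 1)\,\} \). Since \( 1 + (\alpha \dotdiv 1) = \max\{1,\alpha\} = \alpha \), we get \( \gamma = \max\{3,\alpha\} = \alpha \) because \( \alpha \geq 3 \), and therefore \( f \in \D^1_\alpha(X,Y) \), as required.

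The only point demanding care is this last index computation: one must keep in mind the conventions for \( \dotdiv \) to see that the index produced by Proposition~\ref{propdefinabledec}\ref{propdefinabledec-1} collapses to exactly \( \alpha \) precisely when \( \alpha \geq 3 \) (for \( \alpha = 2 \) the proposition would only give membership in \( \D^1_3(X,Y) \), which is why the statement is restricted to \( \alpha \geq 3 \)). Beyond that, the argument is a routine assembly of Lemma~\ref{lemmagraph}\ref{lemmagraph-2}, the monotonicity \( \D^1_\alpha(X,Y) \subseteq \D^1_\alpha(X,\widetilde{Y}) \), and the trivial class inclusion \( \boldsymbol{\Delta}^0_\alpha \subseteq \boldsymbol{\Sigma}^0_\alpha \).
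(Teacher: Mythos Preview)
Your proof is correct and follows essentially the same route as the paper: the cycle \ref{theormain-2} \(\Rightarrow\) \ref{theormain-1'} \(\Rightarrow\) \ref{theormain-1} \(\Rightarrow\) \ref{theormain-2} via Lemma~\ref{lemmagraph}\ref{lemmagraph-2} and Proposition~\ref{propdefinabledec}\ref{propdefinabledec-1}, with the index computation \( \gamma = \max\{3,\alpha\} = \alpha \) justifying the restriction \( \alpha \geq 3 \). Your write-up is in fact more explicit than the paper's, which simply cites Proposition~\ref{propdefinabledec}\ref{propdefinabledec-1} for \ref{theormain-1} \(\Rightarrow\) \ref{theormain-2} without spelling out the index arithmetic or the use of the ``\(\alpha=1\)'' clause allowing \( Y \) to be merely metrizable.
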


\begin{proof}
\ref{theormain-1}\( \Rightarrow \)\ref{theormain-2} by Proposition~\ref{propdefinabledec}\ref{propdefinabledec-1}.
To show \ref{theormain-2}\( \Rightarrow \)\ref{theormain-1'} simply note that 
since \( f \in \D^1_\alpha(X, Y) \subseteq \D^1_\alpha(X, \widetilde{Y}) \), then \( \gr(f) \in \boldsymbol{\Delta}^0_\alpha(X \times \widetilde{Y}) \) by Lemma~\ref{lemmagraph}\ref{lemmagraph-2}. The implication \ref{theormain-1'} \( \Rightarrow \) \ref{theormain-1} is obvious, hence we are done. 
\end{proof}

\begin{remark} \label{remmain-1}
Theorem~\ref{theormain} can be seen as a generalization to higher levels of~\cite[Theorem 10]{jayrog}, which states that if \( X,Y \) are metrizable spaces with \( Y \) \emph{\(\sigma\)-compact}, then \( f \in \D^1_2(X,Y) \) if and only if \( \gr(f) \in \boldsymbol{\Sigma}^0_2(X \times Y) \) (this last condition is equivalent to \( \gr(f) \in \boldsymbol{\Sigma}^0_2(X \times \widetilde{Y}) \) because \( Y \), being \(\sigma\)-compact, is in \( \boldsymbol{\Sigma}^0_2(\widetilde{Y}) \)). However, note that Theorem~\ref{theormain}, contrarily to~\cite[Theorem 10]{jayrog}, requires the extra hypothesis that \( f \) be \(\omega\)-decomposable; in contrast, we did not require any extra hypothesis on \( X \) and \( Y \) apart from metrizability (in particular, they can also be taken to be nonseparable).
\end{remark}

\begin{remark} \label{remmain-2} 
Since~\ref{theormain}\ref{theormain-2} only depends on the topological spaces \( X , Y \), Theorem~\ref{theormain} shows that also conditions~\ref{theormain-1}--\ref{theormain-1'} are independent of the particular choice of the completely metrizable space \( \widetilde{Y} \) in which \( Y \) is embedded: in fact, if \( \widetilde{Y}' \) were any other completely metrizable space with \( Y \subseteq \widetilde{Y}' \), then
 \[
 \gr(f) \in \boldsymbol{\Sigma}^0_\alpha(X \times \widetilde{Y})  \iff \gr(f) \in \boldsymbol{\Sigma}^0_\alpha(X \times \widetilde{Y}') \iff \gr(f) \in \boldsymbol{\Delta}^0_\alpha(X \times \widetilde{Y}')
\] 
for all \(\omega\)-decomposable functions \( f \colon X \to Y \).
Similar observations hold also for the subsequent statements involving spaces of the form \( \widetilde{Y} \) for some metrizable \( Y \).
\end{remark}

We will now restrict our attention to the special case of finite level Borel functions. Our first goal is to show that all functions in \( \D_{< \omega}(X,Y) = \bigcup_{n \in \omega} \D_n(X,Y) \) are \(\omega\)-decomposable (Lemma~\ref{lemmafinite}), and for this we first need to generalize
 Solecki's dichotomy theorem~\ref{theorsoleckidichotomy} to this wider class of 
functions. Recently, using an elegant (but rather involved) technique, Pawlikowski and Sabok have in fact extended
Theorem~\ref{theorsoleckidichotomy} to \emph{all} Borel 
functions from an analytic space \( X \) to a separable metrizable space \( Y \)~\cite[Theorem 1.1]
{pawsabrev}: however, when restricting ourselves to \emph{finite level} Borel functions, a much more elementary inductive proof can be given.

\begin{lemma}\label{lemmasoleckidichotomyfinite}
Let \( X,Y \) be separable metrizable spaces with \( X \) analytic. For every finite level Borel function \( f \colon X \to Y \), either \( f \) is \(\omega\)-decomposable, or else \( P \sqsubseteq f \).
\end{lemma}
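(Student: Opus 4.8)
The plan is to argue by induction on $n$, proving the uniform statement: \emph{for all separable metrizable spaces $X,Y$ with $X$ analytic and every $f \in \Sigma_n(X,Y)$, either $f$ is $\omega$-decomposable or $P \sqsubseteq f$.} The case $n = 1$ is trivial (then $f$ is continuous, hence $\omega$-decomposable via the one-piece partition), and the case $n = 2$ is exactly Solecki's dichotomy theorem~\ref{theorsoleckidichotomy}; these form the base. Since every finite level Borel function belongs to some $\Sigma_n(X,Y)$, this suffices.

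For the inductive step, assume the statement holds at level $n$ (with $n \geq 2$) and let $f \in \Sigma_{n+1}(X,Y)$. Fix a countable basis $\seq{V_k}{k \in \omega}$ of $Y$ and, for each $k$, write $f^{-1}(V_k) = \bigcup_j A_{k,j}$ with $A_{k,j} \in \boldsymbol{\Pi}^0_n(X)$. Applying Lemma~\ref{lemmachangetopology} with $\alpha = n$ to the countable family $\{ X \setminus A_{k,j} \mid j,k \in \omega \} \subseteq \boldsymbol{\Sigma}^0_n(X,\tau)$, I obtain a finer topology $\tau' \subseteq \boldsymbol{\Sigma}^0_n(X,\tau)$ making $(X,\tau')$ a zero-dimensional, separable metrizable, analytic space in which every $A_{k,j}$ is closed. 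Then each $f^{-1}(V_k)$ is a countable union of $\tau'$-closed sets, so $f\colon (X,\tau') \to Y$ is $\boldsymbol{\Sigma}^0_2$-measurable; by the base case (Solecki's theorem applied to $(X,\tau')$), either $P \sqsubseteq f$ or $f\colon (X,\tau') \to Y$ is $\omega$-decomposable.

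In the first case I must check that $P \sqsubseteq f$ also holds for the original $f\colon (X,\tau) \to Y$: if $\varphi\colon \pre{\omega}{(\omega+1)} \to X$ is the witnessing $\tau'$-embedding, then $\varphi$ remains $\tau$-continuous because $\tau \subseteq \tau'$, and since $\pre{\omega}{(\omega+1)}$ is compact and $(X,\tau)$ is Hausdorff, this continuous injection is automatically a $\tau$-embedding; the second embedding (into $Y$) and the identity $f \circ \varphi = \psi \circ P$ are untouched by the change of topology on $X$. In the second case, by standard arguments — e.g.\ Proposition~\ref{propdefinabledec}\ref{propdefinabledec-2}, applicable because $(X,\tau')$ is metrizable — the witnessing partition $\seq{X_m}{m \in \omega}$ may be taken to consist of $\tau'$-Borel sets, which are $\tau$-Borel since $\tau' \subseteq \boldsymbol{\Sigma}^0_n(X,\tau)$; hence each $(X_m,\tau)$ is analytic (a Borel subset of an analytic space). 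Moreover each $f \restriction X_m$ is $\tau'$-continuous, so, as $\tau' \restriction X_m \subseteq \boldsymbol{\Sigma}^0_n(X_m,\tau)$, it lies in $\Sigma_n((X_m,\tau),Y)$. By the inductive hypothesis, for each $m$ either $P \sqsubseteq f \restriction X_m$, and then $P \sqsubseteq f$ (using $f \restriction X_m \sqsubseteq f$ and transitivity of $\sqsubseteq$), or $f \restriction X_m$ is $\omega$-decomposable; if the latter holds for every $m$, splicing together the partitions of the $X_m$ yields an $\omega$-decomposition of $f$.

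The main obstacle is the transfer of the alternative "$P \sqsubseteq f$" across the refinement of the topology of $X$, and this is exactly where the compactness of $\pre{\omega}{(\omega+1)}$ is essential: a $\tau'$-embedding of it into $X$ is forced to be a $\tau$-embedding. Everything else is routine bookkeeping — checking that $\boldsymbol{\Sigma}^0_{n+1}$-measurability collapses to $\boldsymbol{\Sigma}^0_2$-measurability once the relevant $\boldsymbol{\Pi}^0_n$ sets are made clopen, that the resulting pieces remain analytic, and that $\tau'$-continuity of a piece implies its $\Sigma_n$-measurability with respect to the original topology.
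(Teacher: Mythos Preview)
Your argument is correct and is essentially the same as the paper's own proof: both refine the topology via Lemma~\ref{lemmachangetopology} to make $f$ into a $\boldsymbol{\Sigma}^0_2$-measurable function on an analytic space, apply Solecki's dichotomy there, transfer $P \sqsubseteq f$ back using compactness of $\pre{\omega}{(\omega+1)}$, and in the decomposable case apply the inductive hypothesis to the pieces. The only cosmetic difference is that the paper phrases the decomposable case as $f \in \D^1_3(Z,Y) \Rightarrow f \in \D^n_{n+2}(X,Y)$ (via Corollary~\ref{cor+1}) before invoking the inductive hypothesis, whereas you spell out directly that the $\tau'$-Borel pieces are $\tau$-Borel (hence analytic) and that $\tau'$-continuity of the restrictions gives $\Sigma_n$-measurability in $\tau$.
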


\begin{proof}
We may assume without loss of generality that \( f \) is discontinuous, so
let \( 2 \leq n < \omega \) be such that \( f \in \Sigma_n(X,Y) \) and argue by induction on \( n \). The case \( n = 2 \) is 
the original Solecki's dichotomy theorem~\ref{theorsoleckidichotomy}. For the inductive step, assume that \( f 
\in \Sigma_{n+1}(X,Y) \), let \(\tau\) be the topology of \( X \), and let \( 
 \{ B_k \mid k \in \omega \} \) be a countable basis for the 
topology of \( Y \). Let \( \mathcal{A} = \{ A_{k,i} \mid k,i \in \omega \} 
\subseteq \boldsymbol{\Sigma}^0_n(X,\tau) \) be such that \( f^{-1}(B_k) = 
\bigcup_{i \in \omega} (X \setminus A_{k,i}) \) for every \( k \in \omega \): by Lemma~\ref{lemmachangetopology}, there is topology \( \tau \subseteq \tau' \subseteq 
\boldsymbol{\Sigma}^0_n(X,\tau) \) such that \( Z = (X,
\tau') \) is still analytic and \( \mathcal{A} \subseteq \tau' \). Therefore \( f \in 
\Sigma_2(Z,Y) \), and 
applying Theorem~\ref{theorsoleckidichotomy} we get that either \( P 
\sqsubseteq f \) (when \( f \) is considered as a function from \( Z \) to \( Y \)), 
or else \( f \in \Dec(Z,Y) \). In the former case, since \( \tau \subseteq \tau' \), \( \pre{\omega}{(\omega+1)} \) is compact, and \( X = (X,\tau) \) is Hausdorff, one easily gets that \( P \sqsubseteq f \) also when \( f \) is construed as a function from \( X \) to \( Y \). In the latter case we instead get that 
\( f \in \Sigma_2^\Dec(Z,Y) \), so that
\( f \in \D^1_3(Z,Y) \) by Corollary~\ref{cor+1}, and hence \( f \in \D^n_{n+2}
(X,Y) \) since \( \tau' \subseteq \boldsymbol{\Sigma}^0_n(X,\tau) \). Let \( 
\seq{ X_m }{ m \in \omega } \) witness this last fact: by the inductive hypothesis, 
for each \( m \in \omega \) either \( f \restriction X_m \in \Dec(X_m,Y) \), or 
else \( P \sqsubseteq f \restriction X_m \). If for all \( m \in \omega \) the first 
alternative holds, then \( f \) is \(\omega\)-decomposable as well;  if 
instead there is \( m \in \omega \) for which the second alternative holds, then 
\( P \sqsubseteq f \). Hence in both cases we are done.
\end{proof}

\begin{remark}
Using Corollary~\ref{cor+1}, Lemma~\ref{lemmasoleckidichotomyfinite}  can be sharpened as follows:
If \( f \colon X \to Y \) is a finite level Borel function and \( 1 \leq n < \omega \) is such that \( f \in \Sigma_n(X,Y) \), then either
\begin{enumerate-(I)}
\item \label{corsoleckidichotomy-1}
\( f \in \D^1_{n+1}(X,Y) \), or else
\item \label{corsoleckidichotomy-2}
\( P \sqsubseteq f \).
\end{enumerate-(I)}
(In particular, Solecki's dichotomy theorem~\ref{theorsoleckidichotomy} may be reformulated as: if \( f \in \Sigma_2(X,Y) \), then either \( f \in \D^1_3(X,Y) \) or else \( P \sqsubseteq f \).) 
Moreover, using Corollary~\ref{corPn}, Lemma~\ref{lemmaproperpreserved},  and Theorem~\ref{theor:luzin0}\ref{theor:luzin0-1} together with Corollary~\ref{corfinite} below, one can check that condition~\ref{corsoleckidichotomy-1} is equivalent to each of the following:
\begin{enumerate-(Ia)}
\item \label{corsoleckidichotomy-1a}
\( f \in \D_m(X,Y) \) for some \( m \in \omega \);
\item \label{corsoleckidichotomy-1b}
\( f \) is \(\omega\)-decomposable,
\end{enumerate-(Ia)}
while condition~\ref{corsoleckidichotomy-2} 
is equivalent to each of the following:
\begin{enumerate-(IIa)}
\item \label{corsoleckidichotomy-2b}
\( f \) is not a  \( \D_{n+1,n+1} \) function,
\item \label{corsoleckidichotomy-2c}
\( f \) is not \(\omega\)-decomposable.
\end{enumerate-(IIa)}
If \( n = 2 \),~\ref{corsoleckidichotomy-2} is also equivalent to
\begin{enumerate-(IIc)}
\item
\( f \) is a proper \( \Sigma_{2+\gamma,1+\gamma} \) function for every \( \gamma < \omega_1 \).
\end{enumerate-(IIc)}
Clearly, similar reformulations can be obtained also for the more general~\cite[Theorem 1.1]{pawsabrev}.
\end{remark}

\begin{lemma} \label{lemmafinite}
Let \( X , Y\) be separable metrizable spaces with \( X \) analytic. For every  \( f \in \D_{< \omega} (X,Y) \), \( f \) is \(\omega\)-decomposable. In particular, \( \D_n(X,Y) = \D^\Dec_n(X,Y) \) for every \( 1 \leq n < \omega \). 
\end{lemma}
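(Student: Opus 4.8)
The plan is to prove that every $f \in \D_{<\omega}(X,Y)$ is $\omega$-decomposable by combining Lemma~\ref{lemmasoleckidichotomyfinite} with a counting/diagonalization observation, using the hypothesis that $X$ is analytic. Recall $f \in \D_{<\omega}(X,Y)$ means $f \in \D_n(X,Y) = \Sigma_{n,n}(X,Y)$ for some $1 \leq n < \omega$, so in particular $f$ is a finite level Borel function and Lemma~\ref{lemmasoleckidichotomyfinite} applies: either $f$ is $\omega$-decomposable (and we are done), or else $P \sqsubseteq f$. Thus the whole content of the lemma is to rule out $P \sqsubseteq f$ when $f \in \D_n(X,Y)$.

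The key step is therefore: \emph{if $P \sqsubseteq f$ then $f \notin \D_n(X,Y)$ for any finite $n$.} I would argue as follows. Suppose $P \sqsubseteq f$, witnessed by topological embeddings $\varphi,\psi$ with $f \circ \varphi = \psi \circ P$ (using the notation of the ``contained in'' definition). Since the classes $\Sigma_{n,n}$ are inherited downward along $\sqsubseteq$ (as noted in the list of basic properties following Fact~\ref{fct:basicSigmaalphabeta}), $f \in \D_n(X,Y) = \Sigma_{n,n}(X,Y)$ would force $P \in \Sigma_{n,n}(\pre{\omega}{(\omega+1)}, \pre{\omega}{\omega}) = \D_n(\pre{\omega}{(\omega+1)}, \pre{\omega}{\omega})$. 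But by Corollary~\ref{corPn}, taking $\gamma$ large (e.g.\ $\gamma = n$), $P$ is a proper $\Sigma_{2+n, 1+n}$ function; since $n \geq 1$ we have $2+n > 1+n$, so $P$ witnesses $\Sigma_{2+n,1+n} \setminus \D_{?}$ — more directly, a proper $\Sigma_{2+\gamma,1+\gamma}$ function for \emph{every} $\gamma$ cannot lie in any $\D_m = \Sigma_{m,m}$, exactly as in the proof of Corollary~\ref{cor3variant}: if $P \in \D_m$ then $P \in \Sigma_{m+\gamma, m+\gamma}$ for all $\gamma$, yet for $\gamma$ with $m + \gamma$ additively closed and $> m$ this contradicts properness of $P$ as a $\Sigma_{2+\gamma',1+\gamma'}$ function for a suitable $\gamma'$. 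Hence $P \notin \D_n$, so $P \sqsubseteq f$ is impossible, and Lemma~\ref{lemmasoleckidichotomyfinite} leaves only the alternative that $f$ is $\omega$-decomposable.

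Finally, the ``in particular'' clause: $\D_n(X,Y) = \D^\Dec_n(X,Y)$ for every $1 \leq n < \omega$. The inclusion $\D^\Dec_n(X,Y) \subseteq \D_n(X,Y)$ is trivial by definition of the superscript $\Dec$ (it is $\D_n(X,Y) \cap \Dec(X,Y)$). For the reverse inclusion, given $f \in \D_n(X,Y)$, the first part of the lemma gives $f \in \Dec(X,Y)$, hence $f \in \D_n(X,Y) \cap \Dec(X,Y) = \D^\Dec_n(X,Y)$.

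The main obstacle is purely bookkeeping: making precise that a function which is a proper $\Sigma_{2+\gamma,1+\gamma}$ function for all countable $\gamma$ cannot be a $\D_m$-function for any finite (indeed any) $m$. This is the same ordinal-arithmetic computation used in Corollary~\ref{cor3variant} ($\alpha > 1 \Rightarrow \alpha + \alpha\cdot k = \alpha\cdot(k+1) > 1 + \alpha\cdot k$), applied with $\alpha = 2$; I expect no genuine difficulty beyond invoking Corollary~\ref{corPn} and the downward-closure of $\Sigma_{\alpha,\beta}$ under $\sqsubseteq$. One should also double-check that $P$ itself is genuinely discontinuous so that Lemma~\ref{lemmasoleckidichotomyfinite}'s nontrivial alternative is the relevant one — but this is immediate since $P$ is a proper $\Sigma_{2,1}$ function by Corollary~\ref{corPn}.
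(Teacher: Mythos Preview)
Your proposal is correct and follows essentially the same approach as the paper: apply Lemma~\ref{lemmasoleckidichotomyfinite} to get the dichotomy, then use downward closure of the classes $\Sigma_{\alpha,\beta}$ under $\sqsubseteq$ to reduce to showing $P \notin \D_n$, which follows from Corollary~\ref{corPn}. The only difference is that your derivation of $P \notin \D_n$ is more circuitous than necessary: rather than invoking the $\alpha\cdot\omega$ argument of Corollary~\ref{cor3variant}, simply take $\gamma = n-1$ in Corollary~\ref{corPn} to see that $P$ is a proper $\Sigma_{n+1,n}$ function, which immediately rules out $P \in \Sigma_{n,n} = \D_n$.
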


\begin{proof}
Fix \( 1 \leq n < \omega \) and let \( f \in \D_n(X,Y) \subseteq \Sigma_n(X,Y)\). If \( f \) were not \(\omega\)-decomposable, then \( P \sqsubseteq f \) 
by Lemma~\ref{lemmasoleckidichotomyfinite}: but this would imply \( P \in 
\D_n(\pre{\omega}{(\omega+1)},\pre{\omega}{\omega}) \), contradicting Corollary~\ref{corPn}.  
\end{proof}

Using Corollary~\ref{cor+1} and Lemma~\ref{lemmafinite} we obtain:

\begin{corollary} \label{corfinite}
Let \( X , Y\) be separable metrizable spaces with \( X \) analytic.
\begin{enumerate-(a)}
\item
\( \D_n \subseteq \D^1_{n+1} \) for all \( 1 \leq n < \omega \).
\item
If \( f \colon X \to Y \) is a finite level Borel function, then \( f \) is \(\omega\)-decomposable if and only if \( f \in \D_n(X,Y) \) for some \( n \in \omega \).
\end{enumerate-(a)}
\end{corollary}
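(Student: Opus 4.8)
The plan is to derive both parts directly, with no new constructions: part~(a) and the ``hard'' direction of part~(b) are bookkeeping consequences of Corollary~\ref{cor+1}, Lemma~\ref{lemmafinite}, and the concluding clause of Lemma~\ref{lemmadefdecomposable} (namely \( \D^1_\alpha(X,Y) \subseteq \D_\alpha(X,Y) \)). Throughout we use that \( \D_n(X,Y) \subseteq \Sigma_n(X,Y) \) and that a finite level Borel function lies in \( \Sigma_m(X,Y) \) for some \( m \in \omega \) by Definition~\ref{deffinitelevel}.

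For part~(a), fix \( 1 \leq n < \omega \). Lemma~\ref{lemmafinite} gives \( \D_n(X,Y) = \D^\Dec_n(X,Y) \), i.e.\ every \( f \in \D_n(X,Y) \) is \(\omega\)-decomposable. On the other hand, Corollary~\ref{cor+1} applied with \( \alpha = n \) yields exactly \( \D^\Dec_n(X,Y) \subseteq \D^1_{n+1}(X,Y) \). Chaining these two facts gives \( \D_n(X,Y) = \D^\Dec_n(X,Y) \subseteq \D^1_{n+1}(X,Y) \), which is the assertion. (As a side remark one can then compose with the trivial \( \D^1_{n+1}(X,Y) \subseteq \D_{n+1}(X,Y) \) to recover, in a sharpened form, the familiar inclusion \( \D_n \subseteq \D_{n+1} \).)

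For part~(b), the backward implication is immediate: if \( f \in \D_n(X,Y) \) for some \( n \), then \( f \) is \(\omega\)-decomposable by Lemma~\ref{lemmafinite}. For the forward implication, let \( f \) be a finite level Borel function which is \(\omega\)-decomposable, and pick \( m \in \omega \) with \( f \in \Sigma_m(X,Y) \). Then Corollary~\ref{cor+1} (with \( \alpha = m \)) gives \( f \in \D^1_{m+1}(X,Y) \), and the last sentence of Lemma~\ref{lemmadefdecomposable} gives \( \D^1_{m+1}(X,Y) \subseteq \D_{m+1}(X,Y) \); hence \( f \in \D_n(X,Y) \) with \( n = m+1 \), as desired.

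I do not expect any genuine obstacle here: all the substance is already contained in Lemma~\ref{lemmafinite} (whose proof passes through the finite-level Solecki dichotomy, Lemma~\ref{lemmasoleckidichotomyfinite}) and in Corollary~\ref{cor+1}. The only thing to check is that the hypotheses match up — Lemma~\ref{lemmafinite} needs \( X \) analytic and \( Y \) separable metrizable, which are precisely the standing assumptions of the corollary, so no extra conditions are introduced.
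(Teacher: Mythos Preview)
Your proof is correct and follows exactly the route the paper indicates: the corollary is stated in the paper with only the remark ``Using Corollary~\ref{cor+1} and Lemma~\ref{lemmafinite} we obtain,'' and your argument spells out precisely that derivation (with the additional, and necessary, appeal to the inclusion \( \D^1_{m+1} \subseteq \D_{m+1} \) from Lemma~\ref{lemmadefdecomposable} for the forward direction of part~(b)).
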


The Jayne-Rogers theorem~\ref{theorjaynerogers} can be seen as a characterization of the class \( \D^1_2(X,Y) \) (for appropriate \( X,Y \)): in this respect, the following result may be considered as a (partial) generalization to all finite levels of that theorem.

\begin{theorem}\label{theorJR}
Let \( X , Y\) be separable metrizable spaces with \( X \) analytic. Let \( 1 < n < \omega \), and \( f \colon X \to Y \) be such that \( \gr(f) \in \boldsymbol{\Sigma}^0_n(X \times \widetilde{Y}) \). Then the following are equivalent:
\begin{enumerate-(a)}
\item \label{theorJR-1}
\( f \in \D_n(X,Y) \);
\item \label{theorJR-2}
\( f \in \D^1_n(X,Y) \).
\end{enumerate-(a)}
In fact, 
\begin{equation} \label{eqrestrictedJR}
\tag{$\ddagger$}
\begin{split}
\D^1_n(X,Y) & = \D_n(X,Y) \cap \{ f \colon X \to Y \mid \gr(f) \in \boldsymbol{\Sigma}^0_n(X \times \widetilde{Y}) \} \\
& = \D_n(X,Y) \cap \{ f \colon X \to Y \mid \gr(f) \in \boldsymbol{\Delta}^0_n(X \times \widetilde{Y}) \}.
\end{split}
 \end{equation}
\end{theorem}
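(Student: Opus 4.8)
The plan is to prove \((\ddagger)\) by a short ``squeezing'' argument whose only substantial input is the generalized Solecki dichotomy for finite level Borel functions (Lemma~\ref{lemmasoleckidichotomyfinite}) via its consequence Lemma~\ref{lemmafinite}, together with the characterization of \(\D^1_n(X,Y)\) already obtained in Theorem~\ref{theormain}. First I would record the two ``easy'' inclusions. On the one hand, \(\D^1_n(X,Y) \subseteq \D_n(X,Y)\) by the last part of Lemma~\ref{lemmadefdecomposable} (here we use \(1 < n\), so that \(\beta = 1 < \alpha = n\)). On the other hand, since \(f \in \D^1_n(X,Y) \subseteq \D^1_n(X,\widetilde{Y})\) and \(\widetilde{Y}\) is metrizable, hence Hausdorff, Lemma~\ref{lemmagraph}\ref{lemmagraph-2} (again with \(1 = \beta < \alpha = n\)) gives \(\gr(f) \in \boldsymbol{\Delta}^0_n(X \times \widetilde{Y})\). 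Hence
\[
\D^1_n(X,Y) \subseteq \D_n(X,Y) \cap \{ f \mid \gr(f) \in \boldsymbol{\Delta}^0_n(X \times \widetilde{Y}) \} \subseteq \D_n(X,Y) \cap \{ f \mid \gr(f) \in \boldsymbol{\Sigma}^0_n(X \times \widetilde{Y}) \},
\]
so that both equalities in \((\ddagger)\), as well as the implication \ref{theorJR-2}\(\Rightarrow\)\ref{theorJR-1}, follow once we establish the reverse inclusion \(\D_n(X,Y) \cap \{ f \mid \gr(f) \in \boldsymbol{\Sigma}^0_n(X \times \widetilde{Y}) \} \subseteq \D^1_n(X,Y)\) --- which is also exactly \ref{theorJR-1}\(\Rightarrow\)\ref{theorJR-2} under the standing hypothesis on \(\gr(f)\).

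For this reverse inclusion, fix \(f \in \D_n(X,Y)\) with \(\gr(f) \in \boldsymbol{\Sigma}^0_n(X \times \widetilde{Y})\). When \(n \geq 3\) the argument is then immediate: by Lemma~\ref{lemmafinite} (applicable since \(X\) is analytic and \(f \in \D_n(X,Y) \subseteq \D_{<\omega}(X,Y)\)) the function \(f\) is \(\omega\)-decomposable, so \(f\) satisfies condition~\ref{theormain-1} of Theorem~\ref{theormain}; since \(n \geq 3\), that theorem gives \(f \in \D^1_n(X,Y)\). I expect this to be the conceptual crux of the theorem, but the hard part is already behind us: the only reason ``\(f \in \D_n(X,Y)\)'' can be upgraded to ``\(f\) is \(\omega\)-decomposable'' is the generalized Solecki dichotomy of Lemma~\ref{lemmasoleckidichotomyfinite}, whose elementary inductive proof (running the original Solecki dichotomy through a topology refinement and recursing on \(n\)) is the real engine behind the statement.

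It remains to treat the case \(n = 2\), where Theorem~\ref{theormain} is unavailable since it requires \(\alpha \geq 3\). Here, however, the equivalence \(\D_2(X,Y) = \D^1_2(X,Y)\) is nothing but the Jayne--Rogers theorem~\ref{theorjaynerogers} applied to our separable metrizable spaces with \(X\) analytic. Moreover, by Lemma~\ref{lemmagraph}\ref{lemmagraph-2} every \(f \in \D_2(X,Y) = \D^1_2(X,Y)\) automatically has \(\gr(f) \in \boldsymbol{\Delta}^0_2(X \times \widetilde{Y})\), so the graph hypothesis in the statement is vacuously satisfied whenever \(f \in \D_2(X,Y)\), and all three sets appearing in \((\ddagger)\) coincide with \(\D_2(X,Y)\). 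Combining this with the previous paragraph yields \((\ddagger)\), hence the equivalence \ref{theorJR-1}\(\Leftrightarrow\)\ref{theorJR-2}, for every \(1 < n < \omega\).
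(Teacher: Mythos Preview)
Your proof is correct and follows essentially the same approach as the paper: the case \(n=2\) is reduced to the Jayne--Rogers theorem, while for \(n\geq 3\) you combine Lemma~\ref{lemmafinite} (to get \(\omega\)-decomposability from \(f\in\D_n(X,Y)\)) with Theorem~\ref{theormain}, and the easy inclusions come from Lemma~\ref{lemmadefdecomposable} and Lemma~\ref{lemmagraph}\ref{lemmagraph-2}. The only difference is organizational: the paper first proves \ref{theorJR-1}\(\Leftrightarrow\)\ref{theorJR-2} and then derives \((\ddagger)\), whereas you go straight for the chain of inclusions in \((\ddagger)\), but the ingredients are identical.
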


\begin{proof}
For the first part, the case \( n = 2 \) follows from the Jayne-Rogers theorem~\ref{theorjaynerogers}, hence we can assume \( n \geq 3 \). Then \ref{theorJR-1}\( \Rightarrow \)\ref{theorJR-2} by the assumption that \( \gr(f) \in \boldsymbol{\Sigma}^0_n(X \times \widetilde{Y}) \), Lemma~\ref{lemmafinite}, and Theorem~\ref{theormain}, while \ref{theorJR-2}\( \Rightarrow \)\ref{theorJR-1} by Lemma~\ref{lemmadefdecomposable}.

For the second part, \( \D_n(X,Y) \cap \{ f \colon X \to Y \mid \gr(f) \in \boldsymbol{\Delta}^0_n(X \times \widetilde{Y}) \} \subseteq \D_n(X,Y) \cap \{ f \colon X \to Y \mid \gr(f) \in \boldsymbol{\Sigma}^0_n(X \times \widetilde{Y}) \} \) is obvious, and \( \D_n(X,Y) \cap \{ f \colon X \to Y \mid \gr(f) \in \boldsymbol{\Sigma}^0_n(X \times \widetilde{Y}) \} \subseteq \D^1_n(X,Y) \) by the first part. Finally, \( \D^1_n(X,Y) \subseteq \D_n(X,Y) \) by Lemma~\ref{lemmadefdecomposable}, while \( \D^1_n(X,Y) \subseteq \D^1_n(X, \widetilde{Y}) \subseteq \{ f \colon X \to Y \mid \gr(f) \in \boldsymbol{\Delta}^0_n(X \times \widetilde{Y}) \} \) by Lemma~\ref{lemmagraph}\ref{lemmagraph-2}.
\end{proof}

By Theorem~\ref{theorJR}, the weak generalization of the Jayne-Rogers 
theorem (Conjecture~\ref{conjweakJR}) for the level \( 1 < n < \omega \)  is 
true when restricted to functions having 
\( \boldsymbol{\Sigma}^0_n \)-graph and hence, in particular, to \( 
\boldsymbol{\Sigma}^0_{n-1} \)-measurable functions (or, even more 
generally, to functions in \( \D^m_n(X,Y) \) for some \( 1 \leq m < n \)).

\begin{corollary} \label{corsimplecases} 
Let \( X , Y\) be separable metrizable spaces with \( X \) analytic, \( 1 \leq m < n < \omega \), and \( f \in \D^m_n(X,Y) \). Then the following are equivalent:
\begin{enumerate-(a)}
\item
 \( f \in \D_n(X,Y) \);
\item
\(  f \in \D^1_n(X,Y) \). 
\end{enumerate-(a)}
In particular, the conclusion holds for functions in \( \Sigma_{n-1}(X,Y) \).
\end{corollary}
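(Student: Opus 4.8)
The plan is to reduce Corollary~\ref{corsimplecases} directly to Theorem~\ref{theorJR}. The only thing requiring verification is that the hypothesis \( f \in \D^m_n(X,Y) \) with \( 1 \leq m < n \) already forces \( \gr(f) \in \boldsymbol{\Sigma}^0_n(X \times \widetilde{Y}) \), which is precisely the side condition under which Theorem~\ref{theorJR} gives the equivalence of \( f \in \D_n(X,Y) \) and \( f \in \D^1_n(X,Y) \). Note first that \( 1 < n < \omega \) holds automatically, since \( 1 \leq m < n \).

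First I would observe that \( f \in \D^m_n(X,Y) \subseteq \D^m_n(X,\widetilde{Y}) \): any partition of \( X \) into \( \boldsymbol{\Delta}^0_n(X) \)-pieces witnessing \( f \in \D^m_n(X,Y) \) also witnesses \( f \in \D^m_n(X,\widetilde{Y}) \), the range condition being trivially met. Since \( \widetilde{Y} \) is metrizable, hence Hausdorff, and \( m < n \), Lemma~\ref{lemmagraph}\ref{lemmagraph-2} applies with \( Y \) replaced by \( \widetilde{Y} \) and yields \( \gr(f) \in \boldsymbol{\Delta}^0_n(X \times \widetilde{Y}) \subseteq \boldsymbol{\Sigma}^0_n(X \times \widetilde{Y}) \). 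This places us exactly within the hypotheses of Theorem~\ref{theorJR}, and the equivalence of~(a) and~(b) follows at once.

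For the final sentence, I would use that \( \Sigma_{n-1}(X,Y) \subseteq \D^{n-1}_n(X,Y) \), which is immediate from the one-piece partition \( \{ X \} \) together with the containment \( \Sigma_\beta(X,Y) \subseteq \D^\beta_\alpha(X,Y) \) recorded after Definition~\ref{defdecalpha}. Since \( n - 1 \geq 1 \) (again because \( n > 1 \)) and \( n - 1 < n \), the case \( m = n - 1 \) of the corollary applies, giving the assertion for functions in \( \Sigma_{n-1}(X,Y) \).

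As for obstacles, there is essentially none: Theorem~\ref{theorJR} does all the work, and the corollary is a one-line consequence plus the bookkeeping above. The single point deserving care is the passage from \( Y \) to its completion \( \widetilde{Y} \) when computing the complexity of the graph — one must ensure the witnessing \( \boldsymbol{\Delta}^0_n \)-partition transfers and that Lemma~\ref{lemmagraph} is invoked with a Hausdorff target. Everything else reduces to Theorem~\ref{theorJR}, which in turn rests on the Jayne--Rogers theorem~\ref{theorjaynerogers}, Lemma~\ref{lemmafinite}, and Theorem~\ref{theormain}, all already available.
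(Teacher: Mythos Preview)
Your proposal is correct and follows essentially the same approach as the paper: use \( \D^m_n(X,Y) \subseteq \D^m_n(X,\widetilde{Y}) \) together with Lemma~\ref{lemmagraph}\ref{lemmagraph-2} to obtain \( \gr(f) \in \boldsymbol{\Sigma}^0_n(X \times \widetilde{Y}) \), then invoke Theorem~\ref{theorJR}; the final sentence via \( \Sigma_{n-1}(X,Y) \subseteq \D^{n-1}_n(X,Y) \) is likewise identical to the paper's argument.
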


\begin{proof}
By Lemma~\ref{lemmagraph}, every \( f \in 
\D^m_n(X,Y) \subseteq \D^m_n(X, \widetilde{Y}) \) has graph in \( 
\boldsymbol{\Sigma}^0_n(X \times \widetilde{Y}) \), hence the result follows 
from Theorem~\ref{theorJR}. The second part follows from the obvious  fact 
that \( \Sigma_{n-1}(X,Y) \subseteq \D^{n-1}_n(X,Y) \). 
\end{proof}

Corollary~\ref{corsimplecases} allows to derive 
Theorem~\ref{theorsemmes}\ref{theorsemmes-2} from 
Theorem~\ref{theorsemmes}\ref{theorsemmes-1} in a considerably shorter 
and more elementary way 
with respect to the original Semmes' argument contained in~\cite[Chapter 5]{semmesthesis}.  In 
fact, \( \D^1_3(\pre{\omega}{\omega},\pre{\omega}{\omega}) \subseteq 
\D_3(\pre{\omega}{\omega},\pre{\omega}{\omega}) \) by 
Lemma~\ref{lemmadefdecomposable}. Conversely, \( \D_3(\pre{\omega}
{\omega},\pre{\omega}{\omega}) \subseteq \Sigma_{3,2}(\pre{\omega}
{\omega},\pre{\omega}{\omega}) = \D^2_3(\pre{\omega}{\omega},
\pre{\omega}{\omega}) \) by Theorem~\ref{theorsemmes}\ref{theorsemmes-1}, whence \( \D_3(\pre{\omega}{\omega},
\pre{\omega}{\omega}) \subseteq \D^1_3(\pre{\omega}{\omega},
\pre{\omega}{\omega}) \) by Corollary~\ref{corsimplecases}.

More generally, Corollary~\ref{corsimplecases} implies that 
Conjecture~\ref{conjweakJR} for  the level \( 3 \leq m < \omega \) would 
automatically follow  by \emph{any} ``decomposition theorem'' for a class of 
the form \( \Sigma_{m,n} \) (\( 1 < n < m \)) appearing in 
Conjecture~\ref{conjstrongJR} --- see Question~\ref{quest1} and the 
discussion following it.

\medskip

Lemma~\ref{lemmagraph} and Theorem~\ref{theorJR} together show that the weak generalization of the Jayne-Rogers theorem is equivalent to the following conjecture:

\begin{conjecture} \label{conjweak}
Let \( X , Y\) be separable metrizable spaces with \( X \) analytic. Then for every 
\( 1 < n < \omega \), all functions in \( \D_n(X,Y) \) have 
\( \boldsymbol{\Sigma}^0_n(X \times \widetilde{Y}) \) graph.
\end{conjecture}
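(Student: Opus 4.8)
The plan is to argue by induction on $n$, reducing the graph's complexity to the base case $n=2$ by a change of topology. First I would reduce the target. Since $\D_n(X,Y) \subseteq \Sigma_n(X,Y)$ and $Y$ is Hausdorff, Fact~\ref{factPi} gives $\gr(f) \in \boldsymbol{\Pi}^0_n(X \times \widetilde{Y})$ for free, so proving $\gr(f) \in \boldsymbol{\Sigma}^0_n(X \times \widetilde{Y})$ is the same as proving $\gr(f) \in \boldsymbol{\Delta}^0_n(X \times \widetilde{Y})$. Moreover, by Lemma~\ref{lemmafinite} every $f \in \D_n(X,Y)$ is $\omega$-decomposable, so Theorem~\ref{theormain} shows that the desired conclusion is in turn equivalent to $f \in \D^1_n(X,Y)$. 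In other words, the whole content is to upgrade the \emph{continuous} decomposition of $f$ furnished by Lemma~\ref{lemmafinite} to one whose domains lie in $\boldsymbol{\Delta}^0_n(X)$.

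The base case $n=2$ is exactly the graph half of the Jayne-Rogers theorem~\ref{theorjaynerogers}: there $\D_2(X,Y) = \D^1_2(X,Y)$, whence $\gr(f) \in \boldsymbol{\Delta}^0_2(X \times \widetilde{Y})$ by Lemma~\ref{lemmagraph}\ref{lemmagraph-2}. For the inductive step $n \geq 3$ I would imitate the change-of-topology argument of Lemma~\ref{lemmasoleckidichotomyfinite}. Fix a countable basis $\{ B_k \}$ of $Y$; using $f \in \D_n(X,Y)$ one gets that both $f^{-1}(B_k)$ and $f^{-1}(\cl(B_k))$ lie in $\boldsymbol{\Sigma}^0_n(X,\tau)$ --- for the closed sets $\cl(B_k) \in \boldsymbol{\Sigma}^0_n(Y)$ this is precisely where the $\D_n$-hypothesis, rather than mere $\boldsymbol{\Sigma}^0_n$-measurability, enters. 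Applying Lemma~\ref{lemmachangetopology} to this countable family refines $\tau$ to a zero-dimensional analytic topology $\tau'$ in which $f$ becomes a $\D_2$ function; running the base case in $Z = (X,\tau')$ produces an $F_\sigma$ description of $\gr(f)$ in $Z \times \widetilde{Y}$, which one then pulls back to $X \times \widetilde{Y}$.

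The hard part is that this pull-back is off by exactly one level. To make $f$ a $\D_2$ function of $Z$ one is forced to take $\tau' \subseteq \boldsymbol{\Sigma}^0_n(X,\tau)$, since the preimages of open sets already realize the full complexity $\boldsymbol{\Sigma}^0_n(X,\tau)$; but then a closed subset of $Z \times \widetilde{Y}$ is only $\boldsymbol{\Pi}^0_n(X \times \widetilde{Y})$, so a countable union of such sets lands in $\boldsymbol{\Sigma}^0_{n+1}(X \times \widetilde{Y})$ rather than in the desired $\boldsymbol{\Sigma}^0_n(X \times \widetilde{Y})$. Pulling back into $\boldsymbol{\Sigma}^0_n$ would instead require $\tau' \subseteq \boldsymbol{\Sigma}^0_{n-1}(X,\tau)$, which is available exactly when the canonical $\boldsymbol{\Pi}^0_{n-1}$-generators of the preimages $f^{-1}(B_k)$ are already $\boldsymbol{\Delta}^0_{n-1}$ --- that is, exactly the reducibility the conjecture asserts. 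So the soft argument is irreducibly circular, in keeping with the fact, noted just before the statement, that the conjecture is equivalent to the (open) weak generalization of the Jayne-Rogers theorem (Conjecture~\ref{conjweakJR}).

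What is genuinely needed is therefore a level-$n$ analogue of Solecki's dichotomy~\ref{theorsoleckidichotomy} that keeps track of the exact Borel rank of the domains: for $f \in \D_n(X,Y)$ one would want to conclude that either $f$ admits a partition into continuous pieces with $\boldsymbol{\Delta}^0_n$ domains (whence $\gr(f) \in \boldsymbol{\Sigma}^0_n$ by Theorem~\ref{theormain}), or else some canonical ``hard'' function is $\sqsubseteq$-contained in $f$. Reassuringly, the obvious obstruction cannot live inside $\D_n$: by Corollary~\ref{corPn} the generalized Pawlikowski function $P_{n-1}$ is a proper $\Sigma_{n+1,2}$ function, hence $P_{n-1} \notin \Sigma_{n,2} \supseteq \D_n$, so no $P_{n-1}$-style witness can occur. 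Showing that the second horn of such a dichotomy is for this reason vacuous on $\D_n$ --- plausibly by adapting the technique of Pawlikowski and Sabok~\cite{pawsabrev} so that it preserves the complexity of the domains, and not merely the existence of a continuous decomposition --- is where the decisive work lies.
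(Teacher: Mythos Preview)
The statement you are attempting to prove is labelled and treated in the paper as a \emph{conjecture}, not a theorem: the paper does not prove it. Immediately before stating Conjecture~\ref{conjweak}, the paper explains that Lemma~\ref{lemmagraph} and Theorem~\ref{theorJR} together show it is \emph{equivalent} to the weak generalization of the Jayne-Rogers theorem (Conjecture~\ref{conjweakJR}), which is open in general for $n \geq 4$; the paper records only that the cases $n=2$ (Jayne--Rogers) and $n=3$ for Polish spaces of topological dimension $\neq \infty$ (Semmes) are known.

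Your own analysis arrives at exactly this conclusion, and correctly. The reduction of the statement to $f \in \D^1_n(X,Y)$ via Fact~\ref{factPi}, Lemma~\ref{lemmafinite} and Theorem~\ref{theormain} is right, your handling of the base case $n=2$ is right, and your diagnosis that the change-of-topology argument is forced to take $\tau' \subseteq \boldsymbol{\Sigma}^0_n(X,\tau)$ and hence overshoots by one level is precisely the obstruction. You explicitly acknowledge the circularity and that the conjecture is open; so what you have written is not a proof proposal but an accurate explanation of why the soft approach fails and a sketch of what a genuine proof would require. There is simply no proof in the paper to compare it against.
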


Notice that since Lemma~\ref{lemmagraph} implies
 \( \D^1_n(X,Y) \subseteq \D^1_n(X, \widetilde{Y}) 
\subseteq \{ f \colon X \to \widetilde{Y} \mid \gr(f) \in 
\boldsymbol{\Sigma}^0_n(X \times \widetilde{Y}) \} \), 
Conjecture~\ref{conjweak} is true for the case 
\( n=2 \) by the Jayne-Rogers theorem~\ref{theorjaynerogers}, and also 
for \( n = 3 \) in case \( 
X,Y\) are Polish spaces of topological dimension \( \neq \infty \)  by (the generalization of) 
Semmes' theorem~\ref{theorsemmes}\ref{theorsemmes-2}. 

These observations suggest the following general questions:

\begin{question} \label{quest1}
Let \( X , Y\) be separable metrizable spaces with \( X \) analytic. Let \( \alpha >1 \) and \( f \in \Sigma_\alpha(X,Y) \).
\begin{enumerate-(a)}
\item \label{quest1-1}
Does \( f \in \D_\alpha(X,Y) \) imply \( \gr(f) \in \boldsymbol{\Sigma}^0_\alpha(X \times \widetilde{Y}) \)?
\item \label{quest1-2}
More generally, for  which \( 1 < \beta \leq \alpha \)  does \( f \in \Sigma_{\alpha,\beta}(X,Y) \) imply \( \gr(f) \in \boldsymbol{\Sigma}^0_\alpha(X \times \widetilde{Y}) \)? In particular, does \( f \in \Sigma_{\alpha,2}(X,Y) \) already imply \( \gr(f) \in \boldsymbol{\Sigma}^0_\alpha(X \times \widetilde{Y}) \)?
\item \label{quest1-3}
Is \( \gr(f) \) a \emph{proper \( \boldsymbol{\Pi}^0_\alpha(X \times \widetilde{Y}) \) set} whenever \( f \) is a \emph{proper \( \Sigma_\alpha \) function}?
\end{enumerate-(a)}
\end{question}

Obviously, a positive answer to any of Question~\ref{quest1}\ref{quest1-1} or Question~\ref{quest1}\ref{quest1-2} 
for finite \( \alpha \)'s would imply Conjecture~\ref{conjweak}, and hence also Conjecture~\ref{conjweakJR}. 
Moreover, the second part of Question~\ref{quest1}\ref{quest1-2} is equivalent to the converse of 
Question~\ref{quest1}\ref{quest1-3}, namely to the following statement:
if \( \gr(f) \) is a proper \( \boldsymbol{\Pi}^0_\alpha(X \times \widetilde{Y}) \) set then \( f  \) is a proper \( 
\Sigma_\alpha \) function. 
To see this, notice that \( f \) is \emph{not} a proper \( \Sigma_\alpha \) function if and only if it is \( 
\boldsymbol{\Delta}^0_\alpha(X) \)-measurable (i.e.\ \( f^{-1}(U) \in \boldsymbol{\Delta}^0_\alpha(X) \) for every 
open \( U \subseteq Y \)), if and only if \( f \in \Sigma_{\alpha,2} \).
Concerning Question~\ref{quest1}\ref{quest1-3}, a sufficient condition for \(f \in \Sigma_\alpha(X,Y) \) having a 
proper \( \boldsymbol{\Pi}^0_\alpha(X \times \widetilde{Y} ) \) graph is that of requiring \( f^{-1}(y) \) to be a 
proper \( \boldsymbol{\Pi}^0_\alpha(X) \) set for some \( y \in Y \). However such condition is not 
necessary, as if e.g.\ \( X = Y = \pre{\omega}{\omega} \), then for every \( \alpha \geq 1 \) there are injective \( f \in \Sigma_\alpha(X,Y) \) with  proper 
\( \boldsymbol{\Pi}^0_\alpha(X \times \widetilde{Y}) \) graph.

As discussed after Conjecture~\ref{conjweak},  the Jayne-Rogers theorem~\ref{theorjaynerogers} and the (generalization of) 
Semmes' theorem~\ref{theorsemmes} (together with Lemma~\ref{lemmagraph}) answer positively Question~\ref{quest1}\ref{quest1-1}--\ref{quest1-2} in the case \( \alpha \leq 3 \) (for many interesting \( X,Y \)):
below we provide also a positive answer to Question~\ref{quest1}\ref{quest1-3} for \( \alpha = 2 \) for the special case  of a \(\sigma\)-compact \( Y \). 

\begin{proposition} \label{propnew} 
Let \( X \) be an analytic metrizable space and \(Y \) be a \(\sigma\)-compact metrizable space. For every \( f \in \Sigma_2(X,Y) \), the following are equivalent:
\begin{enumerate-(a)}
\item \label{propnew-1}
\( f \) is a proper \( \Sigma_2 \) function;
\item \label{propnew-2}
\( \gr(f) \) is a proper \( \boldsymbol{\Pi}^0_2(X \times \widetilde{Y}) \) set;
\item \label{propnew-3}
\( f \notin \D^1_2(X,Y) \);
\item \label{propnew-4}
\( f \notin \D_2(X,Y) \).
\end{enumerate-(a)}
\end{proposition}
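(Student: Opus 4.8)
The plan is to derive the fourfold equivalence from the Jayne--Rogers theorem --- in its original form~\ref{theorjaynerogers} and in the graph-characterization form of~\cite[Theorem 10]{jayrog} recalled in Remark~\ref{remmain-1} --- treating everything else as bookkeeping. I would begin by collecting three standing observations. Since a $\sigma$-compact metrizable space is separable, both $X$ and $Y$ are separable metrizable with $X$ analytic, so the Jayne--Rogers theorem~\ref{theorjaynerogers} applies; $\sigma$-compactness of $Y$ gives $Y \in \boldsymbol{\Sigma}^0_2(\widetilde{Y})$, so, as noted in Remark~\ref{remmain-1}, the two conditions $\gr(f) \in \boldsymbol{\Sigma}^0_2(X \times Y)$ and $\gr(f) \in \boldsymbol{\Sigma}^0_2(X \times \widetilde{Y})$ are equivalent; and, viewing $f$ as a $\boldsymbol{\Sigma}^0_2$-measurable map into the (Hausdorff) space $\widetilde{Y}$ and invoking Fact~\ref{factPi}, one always has $\gr(f) \in \boldsymbol{\Pi}^0_2(X \times \widetilde{Y})$. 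Consequently condition~\ref{propnew-2} is nothing but the bare negation $\gr(f) \notin \boldsymbol{\Sigma}^0_2(X \times \widetilde{Y})$.

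With these in hand the equivalences come out in a short cycle. First, $\ref{propnew-1} \Leftrightarrow \ref{propnew-4}$: by the observation following Question~\ref{quest1}, a function in $\Sigma_2(X,Y)$ fails to be a proper $\Sigma_2$ function exactly when it is $\boldsymbol{\Delta}^0_2$-measurable, that is, exactly when it lies in $\Sigma_{2,2}(X,Y) = \D_2(X,Y)$; this is precisely the negation of~\ref{propnew-1} matched with the negation of~\ref{propnew-4}. Next, $\ref{propnew-3} \Leftrightarrow \ref{propnew-4}$: the inclusion $\D^1_2(X,Y) \subseteq \D_2(X,Y)$ is Lemma~\ref{lemmadefdecomposable}, and conversely the Jayne--Rogers theorem~\ref{theorjaynerogers} exhibits any $f \in \D_2(X,Y) = \Sigma_{2,2}(X,Y)$ as a countable union of continuous functions with $\boldsymbol{\Delta}^0_2$ domains, which --- after refining the cover to a partition into $\boldsymbol{\Delta}^0_2$-pieces --- puts $f$ in $\D^1_2(X,Y)$; hence $\D^1_2(X,Y) = \D_2(X,Y)$. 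Finally, $\ref{propnew-2} \Leftrightarrow \ref{propnew-3}$: by the preliminaries $\ref{propnew-2}$ says $\gr(f) \notin \boldsymbol{\Sigma}^0_2(X \times \widetilde{Y})$, equivalently $\gr(f) \notin \boldsymbol{\Sigma}^0_2(X \times Y)$, and by~\cite[Theorem 10]{jayrog} (recalled in Remark~\ref{remmain-1}), for $\sigma$-compact $Y$ this happens if and only if $f \notin \D^1_2(X,Y)$, i.e.\ $\ref{propnew-3}$. Chaining these yields $\ref{propnew-1} \Leftrightarrow \ref{propnew-4} \Leftrightarrow \ref{propnew-3} \Leftrightarrow \ref{propnew-2}$.

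The only step that is more than formal manipulation is the appeal to~\cite[Theorem 10]{jayrog} in the last equivalence, and this is exactly where $\sigma$-compactness of $Y$ is essential: the implication from $\gr(f) \in \boldsymbol{\Sigma}^0_2(X \times \widetilde{Y})$ to $f \in \D^1_2(X,Y)$ for arbitrary metrizable $Y$ would already settle the open Question~\ref{quest1}\ref{quest1-3}. I would also stress that one cannot shortcut this via Theorem~\ref{theorJR}, since that result assumes $\gr(f) \in \boldsymbol{\Sigma}^0_2(X \times \widetilde{Y})$ at the outset and only delivers the equivalence $\ref{propnew-3} \Leftrightarrow \ref{propnew-4}$ conditionally on it; turning the graph hypothesis into a conclusion is precisely what the $\sigma$-compact graph characterization provides.
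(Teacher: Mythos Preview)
Your proof is correct and follows essentially the same route as the paper's: both establish \ref{propnew-1}$\Leftrightarrow$\ref{propnew-4} directly, \ref{propnew-3}$\Leftrightarrow$\ref{propnew-4} via the Jayne--Rogers theorem~\ref{theorjaynerogers}, and \ref{propnew-2}$\Leftrightarrow$\ref{propnew-3} via \cite[Theorem~10]{jayrog}. The only cosmetic difference is that for the implication \ref{propnew-2}$\Rightarrow$\ref{propnew-3} the paper invokes Lemma~\ref{lemmagraph} rather than the reverse direction of \cite[Theorem~10]{jayrog}, but since the latter is stated as an equivalence in Remark~\ref{remmain-1} your use of it is equally valid.
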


\begin{proof}
\ref{propnew-1}\( \iff \)\ref{propnew-4} is obvious,
while \ref{propnew-4}\( \iff \)\ref{propnew-3} by the Jayne-Rogers theorem~\ref{theorjaynerogers}. 
Moreover, \ref{propnew-2}\( \Rightarrow \)\ref{propnew-3}, as if 
\( f \in \D^1_2(X,Y) \subseteq \D^1_2(X, \widetilde{Y}) \), then 
\( \gr(f) \in \boldsymbol{\Sigma}^0_2(X \times \widetilde{Y}) \) by 
Lemma~\ref{lemmagraph} (which can be applied since \( Y \), being metrizable and \( \sigma \)-compact, is also separable). Finally, \ref{propnew-3}\( \Rightarrow \)\ref{propnew-2} by~\cite[Theorem 10]{jayrog}.
\end{proof}

The proof of Proposition~\ref{propnew} heavily relies on~\cite[Theorem 10]{jayrog}, 
which is specific for functions in \( \D^1_2(X,Y) \) and apparently does not admit 
straightforward full generalizations to higher levels (see Remark~\ref{remmain-1}): nevertheless, is we restrict our attention to \(\omega\)-decomposable functions we can replace~\cite[Theorem 10]{jayrog} with Theorem~\ref{theormain} to get a partial answer to Question~\ref{quest1}\ref{quest1-3} also for 
\( \alpha = 3 \), as shown in the next proposition. Notice also that such proposition further generalizes Semmes' theorem~\ref{theorsemmes}\ref{theorsemmes-1} (for the special case of functions in \( \Dec(X,Y) \)) by 
adding to it two more equivalent conditions, namely the negations of~\ref{prop:semmes-1} and~\ref{prop:semmes-2}.

\begin{proposition} \label{prop:semmes}
Let \( X \) be a Polish space of topological dimension \( \neq \infty \), and 
\( Y \) be a separable metrizable space. If \( f \in \Sigma_3(X,Y) \) and \( f \) is 
\(\omega\)-decomposable, then the following are equivalent:
\begin{enumerate-(a)}
\item \label{prop:semmes-1}
\( f \) is a proper \( \Sigma_3 \) function;
\item \label{prop:semmes-2}
\( \gr(f) \) is a proper \( \boldsymbol{\Pi}^0_3(X \times \widetilde{Y}) \) set;
\item \label{prop:semmes-3}
\( f \notin \D^{2}_3(X,Y) \);
\item \label{prop:semmes-4}
\( f \notin \Sigma_{3,2}(X,Y) \).
\end{enumerate-(a)}
\end{proposition}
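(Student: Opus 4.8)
The plan is to follow the scheme of the proof of Proposition~\ref{propnew}, but with Theorem~\ref{theormain} playing the role of~\cite[Theorem 10]{jayrog} and with (the generalization to Polish spaces of topological dimension \( \neq \infty \) of) Semmes' theorem~\ref{theorsemmes}\ref{theorsemmes-1} playing the role of the Jayne--Rogers theorem~\ref{theorjaynerogers}. Concretely, I would organize everything around the implications
\[
\ref{prop:semmes-1} \Leftrightarrow \ref{prop:semmes-4} \Rightarrow \ref{prop:semmes-3} \Leftrightarrow \ref{prop:semmes-2}, \qquad \ref{prop:semmes-3} \Rightarrow \ref{prop:semmes-4},
\]
which clearly gives the full equivalence. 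The first of these, \( \ref{prop:semmes-1} \Leftrightarrow \ref{prop:semmes-4} \), is just the observation recorded after Question~\ref{quest1}: since \( f \in \Sigma_3(X,Y) \), the function \( f \) fails to be a proper \( \Sigma_3 \) function exactly when it is \( \boldsymbol{\Delta}^0_3(X) \)-measurable, i.e.\ exactly when \( f \in \Sigma_{3,2}(X,Y) \). The implication \( \ref{prop:semmes-4} \Rightarrow \ref{prop:semmes-3} \), read contrapositively as \( f \in \D^2_3(X,Y) \Rightarrow f \in \Sigma_{3,2}(X,Y) \), is immediate from Lemma~\ref{lemmadefdecomposable}\ref{lemmadefdecomposable-2}.

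For the equivalence \( \ref{prop:semmes-3} \Leftrightarrow \ref{prop:semmes-2} \) I would first note that \( \gr(f) \in \boldsymbol{\Pi}^0_3(X \times \widetilde{Y}) \) holds unconditionally, by Fact~\ref{factPi} applied to \( f \in \Sigma_3(X,Y) \subseteq \Sigma_3(X,\widetilde{Y}) \) (and \( \widetilde{Y} \) Hausdorff); hence the negation of~\ref{prop:semmes-2} is equivalent to \( \gr(f) \in \boldsymbol{\Delta}^0_3(X \times \widetilde{Y}) \). Then \( \ref{prop:semmes-2} \Rightarrow \ref{prop:semmes-3} \) is proved by contraposition: if \( f \in \D^2_3(X,Y) \subseteq \D^2_3(X,\widetilde{Y}) \), then \( \gr(f) \in \boldsymbol{\Delta}^0_3(X \times \widetilde{Y}) \subseteq \boldsymbol{\Sigma}^0_3(X \times \widetilde{Y}) \) by Lemma~\ref{lemmagraph}\ref{lemmagraph-2} (with \( \beta = 2 < 3 = \alpha \)), so \( \gr(f) \) is not a proper \( \boldsymbol{\Pi}^0_3(X \times \widetilde{Y}) \) set. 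Conversely, for \( \ref{prop:semmes-3} \Rightarrow \ref{prop:semmes-2} \), again by contraposition: if \( \gr(f) \) is not a proper \( \boldsymbol{\Pi}^0_3(X \times \widetilde{Y}) \) set, then \( \gr(f) \in \boldsymbol{\Sigma}^0_3(X \times \widetilde{Y}) \), and since \( f \) is \(\omega\)-decomposable Theorem~\ref{theormain} (case \( \alpha = 3 \)) gives \( f \in \D^1_3(X,Y) \subseteq \D^2_3(X,Y) \). This is precisely where the \(\omega\)-decomposability hypothesis enters, and where Theorem~\ref{theormain} does the work that~\cite[Theorem 10]{jayrog} did in Proposition~\ref{propnew}.

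What remains, and what I expect to be the crux, is \( \ref{prop:semmes-3} \Rightarrow \ref{prop:semmes-4} \), i.e.\ the implication \( f \in \Sigma_{3,2}(X,Y) \Rightarrow f \in \D^2_3(X,Y) \); this is the one step that uses the hypothesis that \( X \) has topological dimension \( \neq \infty \), and it is the ``hard'' direction of the generalization of Semmes' theorem~\ref{theorsemmes}\ref{theorsemmes-1}. When \( Y \) is zero-dimensional this generalization (to \( X \) Polish of dimension \( \neq \infty \)) is the one recalled in the Introduction, so the real point is to reduce the general separable metrizable target to the zero-dimensional case. Here I would use that \( f \) is \(\omega\)-decomposable: writing \( f = \bigcup_n f_n \) with each \( f_n \) continuous on a Borel domain, \( \range(f) = \bigcup_n \range(f_n) \) is a countable union of analytic sets and hence analytic, so one may replace \( Y \) by \( \range(f) \) and assume \( Y \) itself analytic; then one transfers the problem to a zero-dimensional analytic space and pulls back a witnessing \( \boldsymbol{\Delta}^0_3 \)-partition.

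The main obstacle, as I see it, is exactly this reduction to a zero-dimensional target: a naive refinement of the topology of \( Y \) (via Lemma~\ref{lemmachangetopology}) does \emph{not} keep \( f \) in \( \Sigma_{3,2} \), because the \( \boldsymbol{\Sigma}^0_2 \)-sets of the refined target are only \( \boldsymbol{\Sigma}^0_3 \)-level in the original topology and so may have \( \boldsymbol{\Sigma}^0_4 \), rather than \( \boldsymbol{\Sigma}^0_3 \), preimages under \( f \). One therefore has to be more careful — for instance, by lifting \( f \) through a continuous surjection from a zero-dimensional Polish space onto the analytic target \( Y \) (keeping control of the complexity of the lift), or by invoking a form of Semmes' theorem already established for arbitrary separable metrizable targets. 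Once \( \ref{prop:semmes-3} \Rightarrow \ref{prop:semmes-4} \) is in hand, it combines with \( \ref{prop:semmes-4} \Rightarrow \ref{prop:semmes-3} \), with \( \ref{prop:semmes-3} \Leftrightarrow \ref{prop:semmes-2} \), and with \( \ref{prop:semmes-1} \Leftrightarrow \ref{prop:semmes-4} \) to close the four-fold equivalence, and the rest is just bookkeeping of inclusions.
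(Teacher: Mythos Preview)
Your proof matches the paper's almost exactly: \ref{prop:semmes-1}\(\iff\)\ref{prop:semmes-4} by the observation after Question~\ref{quest1}, \ref{prop:semmes-4}\(\iff\)\ref{prop:semmes-3} via the generalized Semmes theorem~\ref{theorsemmes}\ref{theorsemmes-1}, \ref{prop:semmes-2}\(\Rightarrow\)\ref{prop:semmes-3} by Lemma~\ref{lemmagraph}, and \ref{prop:semmes-3}\(\Rightarrow\)\ref{prop:semmes-2} by Theorem~\ref{theormain} (this being where \(\omega\)-decomposability enters). The paper is simply terser: it bundles both directions of \ref{prop:semmes-3}\(\iff\)\ref{prop:semmes-4} into a single citation of ``(the discussion following) Semmes' theorem'', rather than isolating the easy direction via Lemma~\ref{lemmadefdecomposable}\ref{lemmadefdecomposable-2} as you do.

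Your last two paragraphs over-labor a point the paper itself does not address. You are literally correct that the Introduction states the generalization of Theorem~\ref{theorsemmes}\ref{theorsemmes-1} only for zero-dimensional \( Y \); but the paper's proof simply invokes the generalized result without further comment, and in particular does \emph{not} use the \(\omega\)-decomposability of \( f \) (or the analyticity of \( \range(f) \)) to reduce the target to the zero-dimensional case. So either the paper is tacitly taking Semmes' result in the needed generality, or the same residual issue attaches to the paper's own argument --- in either case, the reduction you sketch (and rightly flag as problematic, since refining the topology of \( Y \) destroys the \( \Sigma_{3,2} \) hypothesis) is not part of the intended proof and can be replaced by a one-line citation.
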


\begin{proof}
\ref{prop:semmes-1}\( \iff \)\ref{prop:semmes-4} is obvious (see the paragraph after Question~\ref{quest1}), while
\ref{prop:semmes-4}\( \iff \)\ref{prop:semmes-3} by (the discussion following) Semmes'
theorem~\ref{theorsemmes}\ref{theorsemmes-1}. \ref{prop:semmes-2}\( \Rightarrow \)\ref{prop:semmes-3}
because if \( f \in \D^2_3(X,Y) \subseteq \D^2_3(X, \widetilde{Y}) \) then \( \gr(f) \in \boldsymbol{\Sigma}^0_3(X \times \widetilde{Y}) \) by 
Lemma~\ref{lemmagraph}, so it remains only to show that \ref{prop:semmes-3}\( \Rightarrow \)\ref{prop:semmes-2}. Suppose 
towards a contradiction that \( \gr(f) \in \boldsymbol{\Sigma}^0_3(X \times \widetilde{Y}) \): since \( f \) is assumed to be 
\(\omega\)-decomposable, then \( f \in \D^1_3(X,Y) \subseteq \D^2_3(X,Y) \) by Theorem~\ref{theormain}, contradicting~\ref{prop:semmes-3}.
\end{proof}

The next proposition shows that the kind of functions considered in Proposition~\ref{prop:semmes} always exists as long as \( Y \) is not discrete (Proposition~\ref{propproperanddecomposable} fails if \( Y \) is  discrete because in this case if \( f \in \Sigma_\alpha(X,Y) \) then \( f^{-1}(A) \in \boldsymbol{\Delta}^0_\alpha(X) \) \emph{for every} \( A \subseteq Y \), hence there are no proper \( \Sigma_\alpha \) functions at all).

\begin{proposition} \label{propproperanddecomposable}
Let \( X  \) be an uncountable analytic space and \( Y \) be
an infinite nondiscrete metrizable space. Then for every \( \alpha \geq 1 \)
there is a proper \( \Sigma_\alpha \) function
\( f \colon X \to Y \) which is \(\omega\)-decomposable.
\end{proposition}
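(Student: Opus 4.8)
The plan is to prove the statement first in the model case \( X = \pre{\omega}{2} \), \( Y = \omega+1 \) (where \( \omega+1 \) carries the order topology, i.e.\ it is a convergent sequence of isolated points together with its limit \( \omega \)), and then to transfer it to arbitrary \( X,Y \). The transfer rests on two standard facts: every uncountable analytic space \( X \) contains a homeomorphic copy \( C \) of \( \pre{\omega}{2} \) (by the perfect set property for analytic sets), and such a copy is compact, hence closed in the Hausdorff space \( X \); and every infinite nondiscrete metrizable space \( Y \) contains a homeomorphic copy \( D \) of \( \omega+1 \), obtained by extracting from a non-isolated point of \( Y \) a sequence of pairwise distinct points converging to it.

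For the model case, fix a proper \( \boldsymbol{\Sigma}^0_\alpha(\pre{\omega}{2}) \) set \( A \) (which exists since \( \pre{\omega}{2} \) is uncountable Polish), write \( A = \bigcup_{n \in \omega} B_n \) with \( B_n \in \boldsymbol{\Delta}^0_\alpha(\pre{\omega}{2}) \) — possible for every \( \alpha \geq 1 \), using \( \boldsymbol{\Pi}^0_{<\alpha} \subseteq \boldsymbol{\Delta}^0_\alpha \) when \( \alpha \geq 2 \) and zero-dimensionality of \( \pre{\omega}{2} \) when \( \alpha = 1 \) — and then disjointify, which keeps the pieces in \( \boldsymbol{\Delta}^0_\alpha \) because this class is an algebra; discarding empty pieces we may assume the \( B_n \) are pairwise disjoint and nonempty (there must be infinitely many nonempty ones, else \( A \in \boldsymbol{\Delta}^0_\alpha \subseteq \boldsymbol{\Pi}^0_\alpha \)). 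Now set \( g \colon \pre{\omega}{2} \to \omega+1 \), \( g(x) = n \) if \( x \in B_n \) and \( g(x) = \omega \) if \( x \notin A \). Since the open subsets of \( \omega+1 \) are exactly the subsets of \( \omega \) and the sets \( \{\omega\} \cup D_0 \) with \( \omega \setminus D_0 \) finite, one checks at once that \( g \in \Sigma_\alpha(\pre{\omega}{2},\omega+1) \): indeed \( g^{-1}(S) = \bigcup_{n \in S} B_n \in \boldsymbol{\Sigma}^0_\alpha \) for \( S \subseteq \omega \), and \( g^{-1}(\{\omega\} \cup D_0) = \pre{\omega}{2} \setminus \bigcup_{n \notin D_0} B_n \in \boldsymbol{\Delta}^0_\alpha \). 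Moreover \( g \) is a proper \( \Sigma_\alpha \) function because \( g^{-1}(\omega) = A \) is a proper \( \boldsymbol{\Sigma}^0_\alpha \) set, and it is \(\omega\)-decomposable, being constant (hence continuous) on each piece of the countable partition \( \{\pre{\omega}{2} \setminus A\} \cup \{B_n \mid n \in \omega\} \); note also that \( g \) is onto.

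Now let \( C \subseteq X \), \( D \subseteq Y \) be as above and let \( g' \colon C \to D \subseteq Y \) be \( g \) conjugated by the homeomorphisms \( \pre{\omega}{2} \cong C \) and \( \omega+1 \cong D \); then \( g \sqsubseteq g' \) and \( g' \sqsubseteq g \), so by Lemma~\ref{lemmaproperpreserved} and the basic properties of the classes \( \Sigma_{\alpha,\beta} \), \( g' \) is a proper \( \Sigma_\alpha \) function from \( C \) to \( Y \) which is \(\omega\)-decomposable. For \( \alpha \geq 2 \) we extend by a constant: pick an open \( S \subseteq D \) with \( (g')^{-1}(S) \) a proper \( \boldsymbol{\Sigma}^0_\alpha(C) \) set (so \( (g')^{-1}(S) \subsetneq C \), whence \( S \subsetneq D \) as \( g' \) is onto), fix \( y_0 \in D \setminus S \), and put \( f \restriction C = g' \), \( f \restriction (X \setminus C) \equiv y_0 \). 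As \( C \) is closed, \( C, X \setminus C \in \boldsymbol{\Delta}^0_2(X) \subseteq \boldsymbol{\Delta}^0_\alpha(X) \), so \( f \in \Sigma_\alpha(X,Y) \) by Fact~\ref{fct:basicSigmaalphabeta}; \( f \) is \(\omega\)-decomposable (add the piece \( X \setminus C \) to the decomposition of \( g' \)); and, choosing an open \( S' \subseteq Y \) with \( S' \cap D = S \), one has \( y_0 \notin S' \), hence \( f^{-1}(S') = (g')^{-1}(S) \), a proper \( \boldsymbol{\Sigma}^0_\alpha \) set in \( C \) and therefore, \( C \) being closed in \( X \), also in \( X \); so \( f \) is a proper \( \Sigma_\alpha \) function. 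For \( \alpha = 1 \), where a \( \Sigma_1 \) function must be continuous, \( g \) is already continuous and one extends \( g' \colon C \to D \) to a continuous \( f \colon X \to Y \) by the devices of Corollary~\ref{cor3} — \cite[Theorem 7.3]{kec} when \( X \) is zero-dimensional (extending into the Polish space \( D \cong \omega+1 \)), or Tietze's theorem \cite[Theorem 1.3]{kec} when \( Y \) contains a copy of \( \RR \) (extending into \( \RR \supseteq D \)) — and then \( f^{-1}(S') \) is open but not closed in \( X \) since \( f^{-1}(S') \cap C = (g')^{-1}(S) \) is not closed in \( C \).

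The only real subtlety for \( \alpha \geq 2 \) is the tension between \(\omega\)-decomposability and the complexity of preimages, and the construction resolves it by the key observation that a proper \( \boldsymbol{\Sigma}^0_\alpha \) set splits into countably many disjoint \( \boldsymbol{\Delta}^0_\alpha \) pieces which, routed to distinct isolated points of \( \omega+1 \), make \( f^{-1}(\omega) = A \) regain its full complexity — the rest being routine bookkeeping with the algebra \( \boldsymbol{\Delta}^0_\alpha \). For \( \alpha = 1 \) the obstacle is topological rather than descriptive-set-theoretic, and (exactly as for Corollary~\ref{cor3}) some hypothesis on \( X \) or \( Y \) is unavoidable: if \( X \) is connected and \( Y \) totally disconnected then every continuous \( f \colon X \to Y \) is constant, so no proper \( \Sigma_1 \) function \( X \to Y \) exists at all.
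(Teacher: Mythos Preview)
Your argument for \( \alpha \geq 2 \) is correct and rests on the same core idea as the paper's: route the \( \boldsymbol{\Delta}^0_\alpha \)-pieces of a proper \( \boldsymbol{\Sigma}^0_\alpha \) set to the terms of a convergent sequence and send the complement to the limit, so that the preimage of the set of isolated points recovers the proper set while the function, having countable range, is trivially \(\omega\)-decomposable. The paper carries this out more directly: it picks a sequence \( y_n \to y \) in \( Y \) and a proper \( \boldsymbol{\Sigma}^0_\alpha(X) \) set \( A = \bigcup_n A_n \) with \( A_n \in \boldsymbol{\Delta}^0_\alpha(X) \) increasing, and defines \( f \) on \( X \) immediately, without your detour through the model case \( \pre{\omega}{2} \to \omega+1 \) and the subsequent embed-and-extend step. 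Your packaging is not wrong, merely longer; the substance is the same.

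Your remark on \( \alpha = 1 \) is well taken and in fact points to a genuine issue with the statement as written: for connected \( X \) (e.g.\ \( X = [0,1] \)) and totally disconnected \( Y \) (e.g.\ \( Y = \omega+1 \)) every continuous \( f \colon X \to Y \) is constant, so no proper \( \Sigma_1 \) function exists; correspondingly, the paper's proof silently assumes that a proper \(\boldsymbol{\Sigma}^0_1(X)\) set can be written as an increasing union of clopen sets, which fails in such \( X \). The proposition is only invoked for \( \alpha \geq 2 \), where both arguments go through cleanly.
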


\begin{proof}
Let \( \seq{ y_n }{ n \in \omega } \) be a sequence of points in \( Y \) converging to some \( y \in Y \) and let \( A 
\subseteq X \) be a proper \( \boldsymbol{\Sigma}^0_\alpha(X) \) set. Let \( A_n \in 
\boldsymbol{\Delta}^0_\alpha(X) \)
be such that \( n \leq m \Rightarrow A_n \subseteq A_m \) and \( A = \bigcup_{n \in \omega} 
A_n \), and set \( A_{-1} = \emptyset \). Define \( f \colon X \to Y \) by setting \( f(x) = y_n \) if \( x \in A \) and \( n \in \omega \) is smallest such that \( x \in 
A_n \), and \( f(x) = y \) otherwise. 

We first show that \( f \in \Sigma_\alpha(X,Y) \). Let \( U \subseteq Y \) be open: setting  \( I  = \{ i \in  \omega \mid y_i \in U \} \), it is straightforward to check that if \( y 
\notin U \) then \( f^{-1}(U) = \bigcup_{i \in I} (A_i \setminus A_{i-1}) \), while if \( y \in U \) then 
\( f^{-1}(U) = X \setminus \bigcup_{i \notin I} (A_i \setminus A_{i-1}) \). 
Since \( \omega \setminus I \) is finite whenever \( y \in U \), 
in both cases we get \( f^{-1}(U) \in \boldsymbol{\Sigma}^0_\alpha(X) \). 

To see that \( f \) is in fact a proper \( \Sigma_\alpha \) 
function, notice that \( f^{-1}(Y \setminus \{ y \}) 
= A \). Finally, \( f \) is trivially \(\omega\)-decomposable because it has a countable range.
\end{proof}

This also suggests the following general question.

\begin{question} \label{questiondecomposable}
Let \( X,Y \) be uncountable analytic spaces. Given \( 1 \leq n \leq m < \omega \), let \( \widehat{\Sigma}_{m,n}(X,Y) \) be the class of functions from \( \Sigma_{m,n} (X,Y) \) which do not appear in any other finite level Borel class which does not already contain \( \Sigma_{m,n}(X,Y) \), i.e.
\[ 
\widehat{\Sigma}_{m,n}(X,Y) = \Sigma_{m,n}(X,Y) \setminus \bigcup	\{ \Sigma_{m',n'}(X,Y) \mid m' < m \vee m'-m' < m-n \}.
 \] 
(By Theorem~\ref{theorinclusion}\ref{theorinclusion-1} each \( \widehat{\Sigma}_{m,n}(X,Y) \) is nonempty.)
Are there \(\omega\)-decomposable \( f \in \widehat{\Sigma}_{m,n}(X,Y) \)?
\end{question}

Proposition~\ref{propproperanddecomposable} and Lemma~\ref{lemmafinite} give a positive answer to Question~\ref{questiondecomposable} if either \( n = 1 \) or \( m = n \), but the general case remains open.

\section{A reformulation of the strong generalization of the Jayne-Rogers theorem} \label{secstrongJR}

In this section we will show that the following statement is equivalent to the strong generalization of the Jayne-Rogers theorem (Conjecture~\ref{conjstrongJR}).

\begin{conjecture}\label{conjstrong} 
For every analytic space \( X \), every separable metrizable space \( Y \), every \( 1 < n < m < \omega \), and every \( f \in \Sigma_{m,n}(X,Y) \), the topology \( \tau \) of \( X \) can be refined by a topology \( \tau' \subseteq \boldsymbol{\Sigma}^0_2(X,\tau) \) such that \( Z = (X, \tau') \) is an analytic space and \( f \in \Sigma_{m-1,n}(Z,Y) \).
\end{conjecture}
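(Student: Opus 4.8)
The plan is to recast the desired refinement as a purely combinatorial ``one-level descent'' for the preimages under \( f \), and then to supply it through the Kuratowski-type refinement of Lemma~\ref{lemmachangetopology}. Fix \( f \in \Sigma_{m,n}(X,Y) \) with \( 1 < n < m \), write \( \tau \) for the topology of \( X \), and recall that \( f \in \Sigma_{m-1,n}(Z,Y) \) means exactly that \( f^{-1}(S) \in \boldsymbol{\Sigma}^0_{m-1}(X,\tau') \) for every \( S \in \boldsymbol{\Sigma}^0_n(Y) \). Since \( f^{-1} \) commutes with countable unions and \( \boldsymbol{\Sigma}^0_{m-1} \) is closed under them, it suffices to secure this for the generating sets \( S \in \boldsymbol{\Pi}^0_{n-1}(Y) \) (here \( n-1 \geq 1 \)), each of whose preimage is a \( \boldsymbol{\Sigma}^0_m(X,\tau) \) set. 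Thus the whole problem reduces to producing a single countable family \( \mathcal{D} \subseteq \boldsymbol{\Sigma}^0_2(X,\tau) \) such that, after passing to the refinement \( \tau' \subseteq \boldsymbol{\Sigma}^0_2(X,\tau) \) furnished by Lemma~\ref{lemmachangetopology} with \( \mathcal{D} \subseteq \tau' \) and \( Z = (X,\tau') \) analytic, every \( f^{-1}(S) \) drops from \( \boldsymbol{\Sigma}^0_m(X,\tau) \) to \( \boldsymbol{\Sigma}^0_{m-1}(X,\tau') \).

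First I would establish the descent for a single (hence, by taking unions, for any countably many) target set. Given \( A \in \boldsymbol{\Sigma}^0_m(X,\tau) \), decompose it recursively down the hierarchy: write \( A = \bigcup_i P^1_i \) with \( P^1_i \in \boldsymbol{\Pi}^0_{m-1}(X,\tau) \), then \( X \setminus P^1_i = \bigcup_j P^2_{ij} \) with \( P^2_{ij} \in \boldsymbol{\Pi}^0_{m-2}(X,\tau) \), and continue until reaching the level-\(2\) constituents \( P^{m-2}_{\vec{\imath}} \in \boldsymbol{\Pi}^0_2(X,\tau) \). Collecting the complements \( X \setminus P^{m-2}_{\vec{\imath}} \in \boldsymbol{\Sigma}^0_2(X,\tau) \) into \( \mathcal{D} \) and making them \( \tau' \)-open, a straightforward induction up the tree shows \( P^{k}_{\vec{\imath}} \in \boldsymbol{\Pi}^0_{m-k-1}(X,\tau') \) at each level, so that \( A = \bigcup_i P^1_i \in \boldsymbol{\Sigma}^0_{m-1}(X,\tau') \); the resulting \( \mathcal{D} \) is countable, and Lemma~\ref{lemmachangetopology} keeps \( \tau' \subseteq \boldsymbol{\Sigma}^0_2(X,\tau) \) and \( Z \) analytic. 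This already proves the conjecture whenever the relevant preimages \( f^{-1}(S) \) range over a \emph{countable} subfamily of \( \boldsymbol{\Sigma}^0_n(Y) \) — for instance when \( f \) has countable range, since then there are only countably many distinct such preimages.

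The main obstacle is precisely the \emph{uniformity} demanded by the quantifier ``for every \( S \in \boldsymbol{\Sigma}^0_n(Y) \)'': the sets \( f^{-1}(S) \) form an uncountable family, while only countably many \( \boldsymbol{\Sigma}^0_2(X,\tau) \)-sets may be adjoined to \( \tau \) (adjoining uncountably many is genuinely impossible, cf.\ Remark~\ref{remfailure}). One cannot run the descent of the previous paragraph separately on each \( f^{-1}(S) \); instead one must extract from the hypothesis \( f \in \Sigma_{m,n}(X,Y) \) a single countable reservoir of level-\(2\) sets that simultaneously trivialises the top \( \boldsymbol{\Pi}^0_2 \)-constituents of \emph{all} these preimages. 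The natural attack is an induction on \( m \) in the spirit of Lemma~\ref{lemmasoleckidichotomyfinite}, peeling off one level at a time and using analyticity of \( X \) to choose the decompositions in a Borel-uniform way; but reconciling such a uniform choice with the restriction \( \tau' \subseteq \boldsymbol{\Sigma}^0_2 \), rather than the far more permissive \( \boldsymbol{\Sigma}^0_n \) that was available in Lemma~\ref{lemmasoleckidichotomyfinite}, is the delicate point. I expect this uniform selection — and not the per-set descent — to be where the entire difficulty is concentrated: as observed before the statement, Conjecture~\ref{conjstrong} is equivalent to the strong generalization of the Jayne--Rogers theorem (Conjecture~\ref{conjstrongJR}), so any complete argument along these lines must in effect resolve that open problem at exactly this uniformity step.
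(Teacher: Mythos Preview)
You have not written a proof, and neither has the paper: the statement is labeled a \emph{conjecture} precisely because it is open. The paper's contribution concerning this statement is Theorem~\ref{theorsgJR}, which shows that Conjecture~\ref{conjstrong} is \emph{equivalent} to the strong generalization of the Jayne--Rogers theorem (Conjecture~\ref{conjstrongJR}); no proof of either conjecture is offered. So there is no ``paper's own proof'' to compare against.

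That said, your analysis of where the difficulty lies is accurate and matches the paper's viewpoint. Your per-set descent --- building a countable family of closed sets whose adjunction to \( \tau \) drops a given \( A \in \boldsymbol{\Sigma}^0_m(X,\tau) \) to \( \boldsymbol{\Sigma}^0_{m-1}(X,\tau') \) --- is exactly the content of Lemma~\ref{lemmalowering}, and your observation that this suffices for countably many targets is correct. You then correctly isolate the genuine obstruction: the class \( \{ f^{-1}(S) : S \in \boldsymbol{\Sigma}^0_n(Y) \} \) is uncountable, Lemma~\ref{lemmachangetopology} only handles countable families, and by Remark~\ref{remfailure} this countability restriction cannot be circumvented in general. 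Your final paragraph is honest about this: you explicitly note that completing the uniformity step would amount to proving Conjecture~\ref{conjstrongJR}, and you do not claim to have done so.

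In short, what you have written is a sound diagnosis of the problem rather than a proof. The ``delicate point'' you flag --- extracting a single countable reservoir of \( \boldsymbol{\Sigma}^0_2 \)-sets that uniformly lowers the level of \emph{all} relevant preimages --- is exactly the content of the conjecture, and remains open.
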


Notice that Conjecture~\ref{conjstrong} makes sense only for finite level Borel classes: for every \( 1 \leq n,k  < \omega \) and every \( f \in \Sigma_{\omega,n}(X,Y) \) there is no refinement \( \tau' \) of the topology \( \tau \) of \( X \) with the property that \( \tau' \subseteq \boldsymbol{\Sigma}^0_k(X,\tau) \), \( Z = (X,\tau') \) is analytic, and \( f \in \Sigma_{< \omega, n}(Z,Y) \).

\begin{lemma}[Folklore] \label{lemmalowering}
Let \( (X,\tau) \) be a topological space, \( 1 < m < \omega \), and \( S \in \boldsymbol{\Sigma}^0_m(X, \tau) \). 
Then there is a countable family \( \mathcal{P} =  \{ P_i \mid i \in \omega \} \subseteq \boldsymbol{\Pi}^0_1(X, \tau) \) such that \( 
S \in \boldsymbol{\Sigma}^0_{m-1}(X,\tau') \) for every topology \( \tau' \) on \( X \) such that \( \mathcal{P} \subseteq \tau' \).
\end{lemma}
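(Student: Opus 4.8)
The plan is to argue by induction on \( m \geq 2 \), the underlying idea being that one can lower the complexity of a \emph{fixed} \( \boldsymbol{\Sigma}^0_m \) set by exactly one level by declaring countably many of the relevant closed sets to be open. Throughout I would only consider topologies \( \tau' \) that refine \( \tau \) — this is implicit in the statement and is the only case used in the applications of the lemma (cf.\ the use of ``refinement'' in Lemma~\ref{lemmachangetopology} and Conjecture~\ref{conjstrong}) — so that every \( \tau \)-open set stays open in \( \tau' \).

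For the base case \( m = 2 \), recall that with the definition of \( \boldsymbol{\Sigma}^0_2 \) adopted for arbitrary topological spaces in Section~\ref{secpreliminaries} we may write \( S = \bigcup_{n \in \omega}(U^0_n \setminus U^1_n) \) with all \( U^i_n \in \tau \). I would simply take \( \mathcal{P} = \{ X \setminus U^1_n \mid n \in \omega \} \subseteq \boldsymbol{\Pi}^0_1(X,\tau) \): for any topology \( \tau' \supseteq \tau \cup \mathcal{P} \) each piece \( U^0_n \setminus U^1_n = U^0_n \cap (X \setminus U^1_n) \) is a finite intersection of \( \tau' \)-open sets, hence \( \tau' \)-open, and therefore \( S \in \tau' = \boldsymbol{\Sigma}^0_1(X,\tau') \). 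For the inductive step with \( m \geq 3 \), write \( S = \bigcup_{n \in \omega} A_n \) with each \( A_n \in \boldsymbol{\Pi}^0_{m-1}(X,\tau) \) (possible since the hierarchy is increasing and, at levels \( \geq 3 \), \( \boldsymbol{\Sigma}^0_m \) is the \(\sigma\)-closure of \( \bigcup_{\beta < m}\boldsymbol{\Pi}^0_\beta = \boldsymbol{\Pi}^0_{m-1} \)). Put \( T_n = X \setminus A_n \in \boldsymbol{\Sigma}^0_{m-1}(X,\tau) \) and apply the inductive hypothesis to each \( T_n \), obtaining countable \( \mathcal{P}_n \subseteq \boldsymbol{\Pi}^0_1(X,\tau) \) with \( T_n \in \boldsymbol{\Sigma}^0_{m-2}(X,\tau') \) whenever \( \tau' \supseteq \tau \cup \mathcal{P}_n \). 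Then set \( \mathcal{P} = \bigcup_{n \in \omega}\mathcal{P}_n \), still a countable subfamily of \( \boldsymbol{\Pi}^0_1(X,\tau) \): for \( \tau' \supseteq \tau \cup \mathcal{P} \) we get \( T_n \in \boldsymbol{\Sigma}^0_{m-2}(X,\tau') \), hence \( A_n \in \boldsymbol{\Pi}^0_{m-2}(X,\tau') \subseteq \boldsymbol{\Sigma}^0_{m-1}(X,\tau') \) (using \( m - 2 \geq 1 \)), and finally \( S = \bigcup_n A_n \in \boldsymbol{\Sigma}^0_{m-1}(X,\tau') \) by closure of \( \boldsymbol{\Sigma}^0_{m-1} \) under countable unions.

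The computations are routine, so there is no genuine analytic obstacle; the only points requiring care are bookkeeping ones. First, one must invoke exactly the structural facts about the Borel stratification that survive the modified definition of \( \boldsymbol{\Sigma}^0_2 \) for non-metrizable spaces — increasing-ness of the hierarchy, the inclusions \( \boldsymbol{\Pi}^0_k \subseteq \boldsymbol{\Sigma}^0_{k+1} \), \(\sigma\)-closure of the additive levels, and the representation of \( \boldsymbol{\Sigma}^0_m \) (\( m \geq 3 \)) as countable unions of \( \boldsymbol{\Pi}^0_{m-1} \) sets — all of which are guaranteed by the paper's remark that the usual inclusion structure and closure properties are preserved. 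Second, the hypothesis that \( \tau' \) refines \( \tau \) is genuinely necessary: the \( \tau \)-open sets \( U^0_n \) (and the lower-level pieces produced by the induction) must remain open in \( \tau' \), and the conclusion does fail for topologies generated by \( \mathcal{P} \) alone. Thus the ``hard part'', such as it is, is purely conceptual: realizing that after a single unfolding of the definition of \( \boldsymbol{\Sigma}^0_m \) it suffices to add only the innermost closed sets of the form \( X \setminus U^1_n \), and that each such unfolding collapses precisely one level of the hierarchy.
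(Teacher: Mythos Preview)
Your argument is correct and is precisely the induction on \( m \geq 2 \) that the paper only names without spelling out; the base case via the modified \( \boldsymbol{\Sigma}^0_2 \) representation and the inductive step via complements of \( \boldsymbol{\Pi}^0_{m-1} \) pieces are exactly the intended unfolding. Your remark that the hypothesis \( \tau \subseteq \tau' \) is implicitly required is well taken: the statement as literally written is false for topologies \( \tau' \) generated by \( \mathcal{P} \) alone, and in every application in the paper (Lemmas~\ref{lemmaonedirection} and Theorem~\ref{theorcomposition}) the topology \( \tau' \) is obtained via Lemma~\ref{lemmachangetopology} as a refinement of \( \tau \).
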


\begin{proof}
By induction on \( m \geq 2 \).
\end{proof}

\begin{lemma}\label{lemmaonedirection}	
Let \( X, Y \) be separable metrizable spaces with \( X \) analytic. For every \( 1 < n < m < \omega \) and \( f \in \D^{m-n+1}_m(X,Y) \) the topology \(\tau\) of \( X \) can be refined by a topology \( \tau' \subseteq \boldsymbol{\Sigma}^0_2(X, \tau	) \) such that \( Z = (X ,\tau') \) is an analytic space and \( f \in \D^{m-n}_{m-1}(Z,Y) \).
\end{lemma}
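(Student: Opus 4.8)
The plan is to keep the very partition \( \seq{X_k}{k \in \omega} \) witnessing \( f \in \D^{m-n+1}_m(X,Y) \) and to produce a single refinement \( \tau' \) of the topology \( \tau \) of \( X \) with \( \tau' \subseteq \boldsymbol{\Sigma}^0_2(X,\tau) \) and \( (X,\tau') \) analytic, arranged so that this same partition witnesses \( f \in \D^{m-n}_{m-1}((X,\tau'),Y) \). This requires two simultaneous one-level reductions: lowering the complexity of each piece \( X_k \) from \( \boldsymbol{\Delta}^0_m \) to \( \boldsymbol{\Delta}^0_{m-1} \), and lowering the local complexity of \( f \) from \( \Sigma_{m-n+1} \) to \( \Sigma_{m-n} \) on each \( X_k \). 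The engine for both is Lemma~\ref{lemmalowering}, which pushes any \( \boldsymbol{\Sigma}^0_{m'} \) set down to \( \boldsymbol{\Sigma}^0_{m'-1} \) by adjoining to the topology only \emph{countably many closed sets}; and closed sets always belong to \( \boldsymbol{\Sigma}^0_2(X,\tau) \), so adjoining them is compatible with the ceiling \( \tau' \subseteq \boldsymbol{\Sigma}^0_2(X,\tau) \).

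First I would treat the partition. For each \( k \), applying Lemma~\ref{lemmalowering} to \( X_k \) and to \( X \setminus X_k \) (both in \( \boldsymbol{\Sigma}^0_m(X,\tau) \), since \( X_k \in \boldsymbol{\Delta}^0_m \) by Definition~\ref{defdecalpha}) yields a countable family \( \mathcal{P}_k \subseteq \boldsymbol{\Pi}^0_1(X,\tau) \) such that \( X_k \in \boldsymbol{\Delta}^0_{m-1}(X,\tau') \) for every topology \( \tau' \supseteq \mathcal{P}_k \). Next I would treat the function on each piece. Fixing a countable basis \( \{ B_\ell \mid \ell \in \omega \} \) of \( Y \), for each \( k \) we have \( (f \restriction X_k)^{-1}(B_\ell) \in \boldsymbol{\Sigma}^0_{m-n+1}(X_k, \tau \restriction X_k) \) (note \( m-n+1 \geq 2 \) because \( n < m \)), so Lemma~\ref{lemmalowering} gives a countable family \( \mathcal{Q}_{k,\ell} \) of \( (\tau \restriction X_k) \)-closed sets with \( (f \restriction X_k)^{-1}(B_\ell) \in \boldsymbol{\Sigma}^0_{m-n}(X_k, \sigma) \) for every topology \( \sigma \supseteq \mathcal{Q}_{k,\ell} \). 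Each member of \( \mathcal{Q}_{k,\ell} \) has the form \( F \cap X_k \) for some \( \tau \)-closed \( F \subseteq X \); I collect all such \( F \)'s (over \( k \) and \( \ell \)) into a family \( \mathcal{R} \), and set \( \mathcal{A} = \bigcup_k \mathcal{P}_k \cup \mathcal{R} \), a countable subfamily of \( \boldsymbol{\Pi}^0_1(X,\tau) \subseteq \boldsymbol{\Sigma}^0_2(X,\tau) \).

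Now I would glue. By Lemma~\ref{lemmachangetopology} (applied with its \( \alpha \) equal to \( 2 \)) there is a topology \( \tau' \subseteq \boldsymbol{\Sigma}^0_2(X,\tau) \) refining \( \tau \), with \( \mathcal{A} \subseteq \tau' \), such that \( Z = (X,\tau') \) is analytic. Since \( \mathcal{P}_k \subseteq \tau' \), each \( X_k \in \boldsymbol{\Delta}^0_{m-1}(X,\tau') \) by the first step, so \( \seq{X_k}{k \in \omega} \) is a partition of \( Z \) into \( \boldsymbol{\Delta}^0_{m-1} \)-pieces. Since the closed lifts of all the \( \mathcal{Q}_{k,\ell} \) lie in \( \tau' \), their traces on \( X_k \) lie in \( \tau' \restriction X_k \); hence \( \tau' \restriction X_k \) refines the topology generated on \( X_k \) by \( (\tau \restriction X_k) \cup \bigcup_\ell \mathcal{Q}_{k,\ell} \), and using Lemma~\ref{lemmalowering} together with the (routine) monotonicity of the classes \( \boldsymbol{\Sigma}^0_\beta \) under refinement of the ambient topology, and the fact that \( \boldsymbol{\Sigma}^0_{m-n} \) is closed under countable unions (so that preimages of \emph{all} open subsets of \( Y \), not just basic ones, are controlled), I get \( f \restriction X_k \in \Sigma_{m-n}((X_k, \tau' \restriction X_k), Y) \). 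Thus \( \seq{X_k}{k \in \omega} \) witnesses \( f \in \D^{m-n}_{m-1}(Z,Y) \), as required.

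The main obstacle is precisely the tension between the ceiling \( \tau' \subseteq \boldsymbol{\Sigma}^0_2(X,\tau) \) — which restricts us to adjoining \( F_\sigma \), hence essentially closed, sets — and the fact that a brute-force reduction of \( \Sigma_{m-n+1} \) to \( \Sigma_{m-n} \) would a priori want to adjoin open sets of complexity as high as \( \boldsymbol{\Sigma}^0_{m-n} \). Lemma~\ref{lemmalowering} is the device that circumvents this, stepping the hierarchy down one level while paying only in closed sets. A secondary point to get right is that this reduction is carried out locally on the pieces \( X_k \), whereas the output topology must be a single global topology on \( X \); this is handled by lifting relatively closed subsets of \( X_k \) to genuinely closed subsets of \( X \) and using that \( \boldsymbol{\Sigma}^0_\beta \)-measurability of a restriction can only improve when the ambient topology is refined.
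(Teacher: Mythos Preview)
Your proposal is correct and follows essentially the same approach as the paper's proof: fix the witnessing partition, apply Lemma~\ref{lemmalowering} to lower both the pieces \(X_k\) and the preimages of basic open sets by one level at the cost of countably many closed sets, and then invoke Lemma~\ref{lemmachangetopology} with \(\alpha=2\). The paper streamlines two of your steps slightly --- it lifts each \((f\restriction X_k)^{-1}(B_\ell)\) to a set \(S_{k,l}\in\boldsymbol{\Sigma}^0_{m-n+1}(X,\tau)\) \emph{before} applying Lemma~\ref{lemmalowering} (avoiding the subspace bookkeeping), and it observes that making each \(X_k\in\boldsymbol{\Sigma}^0_{m-1}(X,\tau')\) already forces \(X_k\in\boldsymbol{\Delta}^0_{m-1}(X,\tau')\) since the \(X_k\) form a partition (so treating \(X\setminus X_k\) separately is unnecessary) --- but these are cosmetic simplifications, not a different argument.
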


\begin{proof}
Let 
\( \seq{ X_k }{ k \in \omega } \) witness \( f \in \D^{m-n+1}_m(X,Y) \). Let 
\( \{ B_l \mid l \in \omega \} \) be a countable basis for \( Y \), and for 
\( k , l \in \omega  \) let \( S_{k,l} \in \boldsymbol{\Sigma}^0_{m-n+1}(X,\tau) \) 
be such that \( (f \restriction X_k)^{-1}(B_l) = f^{-1}(B_l) \cap X_k =  S_{k,l} \cap X_k  \). Using 
Lemma~\ref{lemmalowering}, pick for every \( k,l \in \omega \) families 
\( \mathcal{P} = \{ P_{k,l,i} \mid i \in \omega \} \) and 
\( \widetilde{\mathcal{P}} = \{ \widetilde{P}_{k,i} \mid i \in \omega \} \) of (\(\tau\)-)closed sets such that 
\( S_{k,l} \in \boldsymbol{\Sigma}^0_{m-n}(X,\tau') \) and 
\( X_k \in \boldsymbol{\Sigma}^0_{m-1}(X,\tau') \) (hence 
\( X_k \in \boldsymbol{\Delta}^0_{m-1}(X,\tau') \), since  \( \seq{ X_k }{ k \in \omega } \) is 
a countable partition of \( X \)) for every topology \( \tau ' \supseteq \mathcal{P} \cup \widetilde{\mathcal{P}} \). By Lemma~\ref{lemmachangetopology}, there is a refinement \( \tau' \) of \( \tau \) such that 
\(  \mathcal{P} \cup \widetilde{\mathcal{P}}  \subseteq \tau' \subseteq \boldsymbol{\Sigma}^0_2(X,\tau) \) and \( Z = (X,\tau') \) is  an analytic space: therefore  
\( S_{k,l} \in \boldsymbol{\Sigma}^0_{m-n}(Z) \) and 
\( X_k \in \boldsymbol{\Delta}^0_{m-1}(Z) \), whence 
\( f \in \D^{m-n}_{m-1}(Z,Y) \).
\end{proof}

\begin{theorem} \label{theorsgJR}
Let \( X, Y \) be separable metrizable spaces with \( X \) analytic. Then Conjecture~\ref{conjstrong} is equivalent to the strong generalization of the Jayne-Rogers theorem (i.e.\ to Conjecture~\ref{conjstrongJR}).
\end{theorem}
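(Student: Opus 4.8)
The plan is to prove the two implications separately, keeping in mind that both conjectures are universally quantified over the relevant spaces, so it suffices to establish the equivalence of the two statements in full generality. Recall first that, for $1\le n\le m<\omega$, the inclusion $\D^{m-n+1}_m(X,Y)\subseteq\Sigma_{m,n}(X,Y)$ always holds by Lemma~\ref{lemmadefdecomposable}\ref{lemmadefdecomposable-2} (using $m\dotdiv(m-n+1)=n-1$), that the reverse inclusion $\Sigma_{m,n}(X,Y)\subseteq\D^{m-n+1}_m(X,Y)$ is trivial when $n=1$ (as $\D^m_m(X,Y)=\Sigma_m(X,Y)$ by Lemma~\ref{lemmadefdecomposable}\ref{lemmadefdecomposable-1}) and holds unconditionally when $m\le 2$ (the case $m=n=2$ being the Jayne--Rogers theorem~\ref{theorjaynerogers}). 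Thus the content of Conjecture~\ref{conjstrongJR} is exactly the inclusion $\Sigma_{m,n}(X,Y)\subseteq\D^{m-n+1}_m(X,Y)$ for $1<n\le m<\omega$.

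Assume first Conjecture~\ref{conjstrongJR}. Given $1<n<m<\omega$ and $f\in\Sigma_{m,n}(X,Y)$ with $X$ analytic, we get $f\in\D^{m-n+1}_m(X,Y)$, so Lemma~\ref{lemmaonedirection} provides a topology $\tau'\subseteq\boldsymbol{\Sigma}^0_2(X,\tau)$ with $Z=(X,\tau')$ analytic and $f\in\D^{m-n}_{m-1}(Z,Y)$. Since $n>1$ forces $m-n<m-1$, Lemma~\ref{lemmadefdecomposable}\ref{lemmadefdecomposable-2} applies and gives $f\in\Sigma_{m-1,\,1+((m-1)\dotdiv(m-n))}(Z,Y)=\Sigma_{m-1,n}(Z,Y)$, which is precisely the conclusion of Conjecture~\ref{conjstrong}.

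Conversely, assume Conjecture~\ref{conjstrong} and prove Conjecture~\ref{conjstrongJR} by induction on $m\ge 2$; by the observations above it suffices to treat $1<n\le m$, and the base case $m=2$ is the Jayne--Rogers theorem. Fix $m\ge 3$, assume Conjecture~\ref{conjstrongJR} at level $m-1$ (for all separable metrizable spaces with analytic domain), and let $f\in\Sigma_{m,n}(X,Y)$. If $1<n<m$, apply Conjecture~\ref{conjstrong} to obtain a refinement $\tau'\subseteq\boldsymbol{\Sigma}^0_2(X,\tau)$ with $Z=(X,\tau')$ analytic and $f\in\Sigma_{m-1,n}(Z,Y)$; by the inductive hypothesis $f\in\D^{m-n}_{m-1}(Z,Y)$, witnessed by a partition $\seq{Z_k}{k\in\omega}$ of $Z$ into $\boldsymbol{\Delta}^0_{m-1}(Z)$-pieces with each $f\restriction Z_k$ in $\Sigma_{m-n}(Z_k,Y)$ (computed in $\tau'$). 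Since $\tau'\subseteq\boldsymbol{\Sigma}^0_2(X,\tau)$ one checks by an easy induction that $\boldsymbol{\Sigma}^0_\alpha(X,\tau')\subseteq\boldsymbol{\Sigma}^0_{\alpha+1}(X,\tau)$ for every $\alpha\ge 1$; hence $\boldsymbol{\Delta}^0_{m-1}(Z)\subseteq\boldsymbol{\Delta}^0_m(X,\tau)$ and each $f\restriction Z_k$ lies in $\Sigma_{m-n+1}(Z_k,Y)$ with respect to $\tau$, so $\seq{Z_k}{k\in\omega}$ witnesses $f\in\D^{m-n+1}_m(X,Y)$, as desired. If instead $n=m$, then $f\in\D_m(X,Y)=\Sigma_{m,m}(X,Y)\subseteq\Sigma_{m,m-1}(X,Y)$, and since $1<m-1<m$ the case just treated (with $n$ replaced by $m-1$) gives $f\in\D^2_m(X,Y)$; as $2<m$, Corollary~\ref{corsimplecases} applied to $f\in\D^2_m(X,Y)\cap\D_m(X,Y)$ yields $f\in\D^1_m(X,Y)=\D^{m-m+1}_m(X,Y)$. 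This completes the induction.

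The step I expect to be the main obstacle is the diagonal case $n=m$: Conjecture~\ref{conjstrong} refines topologies only for functions in $\Sigma_{m,n}$ with $n$ \emph{strictly} smaller than $m$, so it does not directly address the weak generalization of the Jayne--Rogers theorem. The resolution above is to first pass to $\Sigma_{m,m-1}$, use the already-proved ``$n<m$'' case to land inside $\D^2_m$, and then upgrade $\D^2_m\cap\D_m$ to $\D^1_m$ by invoking Corollary~\ref{corsimplecases} (equivalently Theorem~\ref{theorJR}). The remaining ingredient---the bookkeeping of how the pointclasses $\boldsymbol{\Sigma}^0_\alpha$ and $\Sigma_\alpha$-measurability transfer between $\tau$ and the refinement $\tau'\subseteq\boldsymbol{\Sigma}^0_2(X,\tau)$, which is responsible for the single ``$+1$'' shift in the inductive step---is entirely routine.
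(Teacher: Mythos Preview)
Your proof is correct and follows essentially the same approach as the paper: both directions are handled identically, using Lemma~\ref{lemmaonedirection} together with Lemma~\ref{lemmadefdecomposable} for the implication from Conjecture~\ref{conjstrongJR} to Conjecture~\ref{conjstrong}, and an induction on $m$ (with the Jayne--Rogers theorem as base case, the topology-refinement argument for $n<m$, and the passage through $\Sigma_{m,m-1}\subseteq\D^2_m$ plus Corollary~\ref{corsimplecases} for the diagonal case $n=m$) for the converse. Your write-up is slightly more explicit about the pointclass transfer $\boldsymbol{\Sigma}^0_\alpha(X,\tau')\subseteq\boldsymbol{\Sigma}^0_{\alpha+1}(X,\tau)$, but otherwise the arguments coincide.
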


\begin{proof}
Assume first that Conjecture \ref{conjstrong} is true: we want to show that \( \Sigma_{m,n}(X,Y)  = \D^{m-n+1}_m(X,Y) \) for every \( 1 < n \leq m < \omega \).
Lemma~\ref{lemmadefdecomposable} implies \( \D^{m-n+1}_m(X,Y) \subseteq \Sigma_{m,n}(X,Y) \) for every \( 1 < n \leq m < \omega \), hence only the reverse inclusions 
\begin{equation}\label{eqreverseinclusion}
 \Sigma_{m,n}(X,Y) \subseteq \D^{m-n+1}_m(X,Y) 
\end{equation}
need to be considered. 
We argue by induction 
on \( 2 \leq m < \omega \).
If \( m=2 \) then \( m = n \) and the corresponding instantiation of~\eqref{eqreverseinclusion} is \( 
\Sigma_{2,2}(X,Y) = \D_2(X,Y) \subseteq \D^1_2(X,Y) \), which is true by the Jayne-Rogers theorem~\ref{theorjaynerogers}.
So let us assume that \( m 
\geq 3 \) and that, by inductive hypothesis, for every \( X',Y' \) separable metrizable spaces with \( X' \) analytic and every \(1 < n \leq m-1 \) it holds that
 \begin{equation} \label{eqinductive}
\Sigma_{m-1,n}(X',Y')  \subseteq \D_{m-1}^{m-n}(X',Y').
\end{equation}
Let us first assume that \( n<m\). Given \( f \in \Sigma_{m,n}
(X,Y) \), by Conjecture \ref{conjstrong} and~\eqref{eqinductive}  
we get \( 
f \in \D_{m-1}^{m-n}(Z,Y) \) (where \( Z = (X,\tau') \) is as in the statement of Conjecture~\ref{conjstrong}), i.e.\ 
that there is  a countable 
partition \( \seq{ X_k }{ k \in \omega } \) of \( X \) in \( 
\boldsymbol{\Delta}^0_{m-1}(X,\tau') \)-pieces such that \( f \restriction X_k \in 
\Sigma_{m-n}(Z_k,Y) \) for every \( k \in \omega \), where \( Z_k \) denotes the space \( X_k \) endowed with the relativization of \( \tau' \). Since \( \tau' \subseteq \boldsymbol{\Sigma}^0_2(X,\tau) \),  \( \seq{  X_k }{ k \in \omega } \) witnesses \( f \in \D^{m-n+1}_m(X,Y) \) as desired. Now we consider the unique remaining case, i.e.\ we show that \( \Sigma_{m,m}(X,Y) \subseteq \D^1_m(X,Y) \). Since we already showed that~\eqref{eqreverseinclusion} holds for all \( n<m \),  setting \( n = m-1 \) we get that 
\[  
\Sigma_{m,m}(X,Y) \subseteq \Sigma_{m,m-1}(X,Y)  \subseteq \D^2_m(X,Y) . 
\]
Since \( \Sigma_{m,m}(X,Y) = \D_m(X,Y) \) and \( m \geq 3 \) by case assumption, this implies \( \Sigma_{m,m}(X,Y) \subseteq \D_m^1(X,Y) \) by Corollary~\ref{corsimplecases}, as required.

Conversely, assume that Conjecture~\ref{conjstrongJR} 
holds. Let \( 1 < n < m < \omega \) and let \( \tau \) be the topology of \( X \). 
Given \( f \in \Sigma_{m,n}(X,Y) = \D^{m-n+1}_m(X,Y) \), applying Lemma~\ref{lemmaonedirection} we get a topology \( \tau \subseteq \tau' \subseteq \boldsymbol{\Sigma}^0_2(X,\tau) \) such that \( Z = (X,\tau') \) is an analytic space and \( f \in \D^{m-n}_{m-1}(Z,Y) \), so that  \( f \in \Sigma_{m-1,n}(Z,Y) \) by 
Lemma~\ref{lemmadefdecomposable}. Therefore Conjecture~\ref{conjstrong} holds as well.
\end{proof}

It follows from the proof of Theorem~\ref{theorsgJR} that every instantiation of Conjecture~\ref{conjstrongJR} for 
parameters \( 1 < n < m < \omega \) implies the corresponding instantiation of Conjecture~\ref{conjstrong} (for the 
same parameters).
Using this fact, we get that Conjecture~\ref{conjstrong} is true for \( m \leq 3 \) (i.e.\ for \( n = 2 \) and \( m = 3 \)) 
when \( X \) is a Polish space of dimension \( \neq \infty \) and \( Y\) is a zero-dimensional metrizable space by (the 
generalization of) Semmes' theorem~\ref{theorsemmes}\ref{theorsemmes-1}.

Finally, as a corollary to the proof of Theorem~\ref{theorsgJR}, one can show that Conjecture~\ref{conjstrong} for the special case \( n = m-1 \) already suffices to prove the weak generalization of the Jayne-Rogers theorem (Conjecture~\ref{conjweakJR}).

\begin{corollary}
Suppose that for every analytic space \( X' \), every separable metrizable space \( Y' \), every \( 2  < m < \omega \), and every \( f \in \Sigma_{m,m-1}(X',Y') \), the topology \( \tau \) of \( X' \) can be refined by a topology \( \tau' \subseteq \boldsymbol{\Sigma}^0_2(X',\tau) \) such that \( Z = (X', \tau') \) is an analytic space and \( f \in \Sigma_{m-1,m-1}(Z,Y') \). Then \( \D_m(X,Y) = \D^1_m(X,Y) \) for every \( 1 < m < \omega \) and every separable metrizable spaces \( X,Y \) with \( X \) analytic.
\end{corollary}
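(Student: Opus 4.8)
The plan is to follow the proof of Theorem~\ref{theorsgJR}, retaining only what the weak generalization actually needs. By Lemma~\ref{lemmadefdecomposable} one always has \( \D^1_m(X,Y) \subseteq \D_m(X,Y) \), so it suffices to prove the reverse inclusion \( \D_m(X,Y) \subseteq \D^1_m(X,Y) \) for all separable metrizable \( X,Y \) with \( X \) analytic, and I would argue by induction on \( m \geq 2 \). The base case \( m = 2 \) is exactly the Jayne--Rogers theorem~\ref{theorjaynerogers}, since \( \D_2 = \Sigma_{2,2} \). So fix \( m \geq 3 \) and assume \( \D_{m-1}(X',Y') = \D^1_{m-1}(X',Y') \) for all separable metrizable \( X',Y' \) with \( X' \) analytic.

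Let \( f \in \D_m(X,Y) = \Sigma_{m,m}(X,Y) \subseteq \Sigma_{m,m-1}(X,Y) \) and let \( \tau \) be the topology of \( X \). Applying the hypothesis of the corollary with the parameters \( m \) and \( n = m-1 \) (legitimate because \( 2 < m \)), I get a topology \( \tau \subseteq \tau' \subseteq \boldsymbol{\Sigma}^0_2(X,\tau) \) with \( Z = (X,\tau') \) analytic --- hence still separable metrizable --- and \( f \in \Sigma_{m-1,m-1}(Z,Y) = \D_{m-1}(Z,Y) \). The inductive hypothesis, applied to \( Z \) and \( Y \), then yields \( f \in \D^1_{m-1}(Z,Y) \); fix a partition \( \seq{X_k}{k \in \omega} \) of \( X \) into \( \boldsymbol{\Delta}^0_{m-1}(Z) \)-pieces such that \( f \restriction X_k \) is \( \tau' \)-continuous from \( X_k \) to \( Y \) for each \( k \).

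Now \( \tau' \subseteq \boldsymbol{\Sigma}^0_2(X,\tau) \) gives, by the usual computation on refinements (exactly as in the proof of Theorem~\ref{theorsgJR}), \( \boldsymbol{\Delta}^0_{m-1}(Z) \subseteq \boldsymbol{\Delta}^0_m(X,\tau) \) and \( \tau' \restriction X_k \subseteq \boldsymbol{\Sigma}^0_2(X_k, \tau \restriction X_k) \); hence each \( X_k \) is \( \boldsymbol{\Delta}^0_m(X) \) and each \( f \restriction X_k \) lies in \( \Sigma_2(X_k,Y) \) for the original topology, so \( \seq{X_k}{k \in \omega} \) witnesses \( f \in \D^2_m(X,Y) \). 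Finally, since \( 2 < m < \omega \) and \( f \in \D^2_m(X,Y) \), Corollary~\ref{corsimplecases} gives \( f \in \D_m(X,Y) \iff f \in \D^1_m(X,Y) \); as \( f \in \D_m(X,Y) \) by hypothesis, \( f \in \D^1_m(X,Y) \), which closes the induction. (Equivalently one can argue directly: \( f \in \D^2_m(X,Y) \) forces \( \gr(f) \in \boldsymbol{\Sigma}^0_m(X \times \widetilde{Y}) \) by Lemma~\ref{lemmagraph}, \( f \) is \(\omega\)-decomposable by Lemma~\ref{lemmafinite}, and then Theorem~\ref{theormain} applies because \( m \geq 3 \).)

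The argument is routine once Theorem~\ref{theorsgJR} and its supporting results are in place; the two points needing attention are the topology bookkeeping --- verifying that a \( \boldsymbol{\Delta}^0_{m-1}(Z) \)-partition into \( \tau' \)-continuous restrictions is, back in \( (X,\tau) \), a \( \boldsymbol{\Delta}^0_m \)-partition into \( \boldsymbol{\Sigma}^0_2 \)-measurable restrictions --- and the observation that the corollary's hypothesis is invoked only for \( n = m-1 \), so the full strength of Conjecture~\ref{conjstrong} is not used. I expect the genuine content to sit in the ``gain one level'' step, i.e.\ in Corollary~\ref{corsimplecases} and the generalized Solecki dichotomy (Lemma~\ref{lemmasoleckidichotomyfinite}) behind it, but that is black-boxed here.
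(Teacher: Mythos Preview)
Your proof is correct and follows essentially the same approach as the paper's: induction on \( m \), base case Jayne--Rogers, inductive step via the hypothesis to land in \( \D_{m-1}(Z,Y) = \D^1_{m-1}(Z,Y) \), translate back to \( \D^2_m(X,Y) \), and finish with Corollary~\ref{corsimplecases}. The only differences are cosmetic --- you index with \( m-1,m \) where the paper uses \( m,m+1 \), and you spell out the topology bookkeeping and the alternative route through Lemma~\ref{lemmagraph} and Theorem~\ref{theormain} a bit more explicitly than the paper does.
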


\begin{proof}
We argue by induction on \( m \geq 2 \). The basic case \( m=2 \) is the 
Jayne-Rogers theorem~\ref{theorjaynerogers}. Assume now that \( 
\D_m(X,Y) = \D^1_m(X,Y) \): we want to show that \( \D_{m+1}(X,Y) = 
\D^1_{m+1}(X,Y) \). If \( f \in \Sigma_{m+1,m}(X,Y) \), by our assumption 
we have \( f \in \Sigma_{m,m}(Z,Y) \), where \( Z = (X, \tau')\) is as in the 
hypotheses of the corollary. Then \( f \in \D^1_m(Z,Y) \) by inductive 
hypothesis, whence \( f \in \D^2_{m+1}(X,Y) \). This shows \( 
\Sigma_{m+1,m}(X,Y) \subseteq \D^2_{m+1}(X,Y) \) (in fact, \( 
\Sigma_{m+1,m}(X,Y) = \D^2_{m+1}(X,Y) \) by 
Lemma~\ref{lemmadefdecomposable}). Therefore, since \( \D_{m+1}(X,Y) \subseteq 
\Sigma_{m+1,m}(X,Y) \) we get \( \D_{m+1}(X,Y) = \D^1_{m+1}(X,Y) \) 
by Corollary~\ref{corsimplecases}.
\end{proof}

The strong generalization of the Jayne-Rogers theorem (equivalently, 
Conjecture~\ref{conjstrong}) would have a remarkable influence on the 
structure of finite level Borel classes. For example, by 
Lemma~\ref{lemmagraph} it would provide a positive answer to both 
Question~\ref{quest1}\ref{quest1-1} and Question~\ref{quest1}\ref{quest1-2} for every \( 1 < \alpha < \omega \). Moreover it would imply 
that given arbitrary \( 1 < n < m < \omega \), \emph{every} proper 
\( \Sigma_{m,n} \) function \( f \colon X \to Y \) (where \( X,Y \) are separable metrizable 
spaces with \( X \) analytic) cannot be \(\omega\)-decomposable. In fact,  by 
Lemma~\ref{lemmagraph} we would get 
\( \gr(f) \in \boldsymbol{\Sigma}^0_m(X \times \widetilde{Y}) \)  for every 
\( f \in \Sigma_{m,n}(X,Y) \subseteq \Sigma_{m,n}(X, \widetilde{Y}) = \D^n_m(X,\widetilde{Y}) \) (where the last equality is obtained by applying the strong generalization of the Jayne-Rogers theorem). Therefore if such an \( f \) is also \(\omega\)-decomposable, then \( f \in \D^1_m(X,Y) \subseteq \D_m(X,Y) \) by 
Theorem~\ref{theormain} (since \( m \geq 3 \)), whence \( f \) cannot be a 
proper \( \Sigma_{m,n} \) function. In other words, for \( m \geq 3 \) we 
would get that for every \( f \in \Sigma_m^\Dec(X,Y) \) 
\begin{equation} \tag{$*$} \label{eqdichotomy}
\text{either } f \in \D^1_m(X,Y) \text{ or } f \text{ is a proper } \Sigma_m \text{ function}. 
\end{equation}
In particular, since all functions \( f \in \widehat{\Sigma}_{m,n}(X,Y) \) are necessarily proper \( \Sigma_{m,n} \) functions, this would completely answer Question~\ref{questiondecomposable}: \( \widehat{\Sigma}_{m,n}(X,Y) \) contains \(\omega\)-decomposable functions if and only if \( n = 1 \) or \( m =n \). 

\begin{remark}
If \( m = 2 \), the unrestricted version of dichotomy~\eqref{eqdichotomy}  (i.e.\ when considering all functions in \( \Sigma_2(X,Y) \), without restricting to \(\omega\)-decomposable functions) is 
true by the Jayne-Rogers theorem~\ref{theorjaynerogers}, and in fact it is easily seen to be equivalent to it. 
In 
contrast, notice that for \( m \geq 3 \) the restriction to \(\omega\)-decomposable 
functions when looking for~\eqref{eqdichotomy} cannot be 
relaxed: any function in \( \widehat{\Sigma}_{m,m-1}(X,Y) \) (which is nonempty by Theorem~\ref{theorinclusion}\ref{theorinclusion-1}) is in \( \Sigma_m(X,Y) \), but it is neither in \( \D^1_m(X,Y) \) nor a proper \( \Sigma_m \) function.
\end{remark}

\section{Characterizing finite level Borel functions as compositions of simpler functions} \label{seccomposition}

The next theorem shows that when \( 1 < n < m < \omega \) the functions in \( \D^n_m(X,Y) \) can be decomposed into simpler functions 
also in a different sense, namely in terms of composition of two (strictly) simpler functions. 
It turns out that such decomposition actually characterizes the mentioned class.

\begin{theorem} \label{theorcomposition}
Let \( X,Y \) be separable metrizable spaces, and \( 1 \leq n \leq m < \omega \). For every \( f \colon X \to Y \) the following are equivalent:
\begin{enumerate-(a)}
\item \label{theorcomposition-1}
\( f \in \D^n_m(X,Y) \);
\item \label{theorcomposition-2}
\( f = h \circ g \) with \( g \in \Sigma_n(X,Z) \) and \( h \in \D^1_{m-n+1}(Z,Y) \) for some separable metrizable space \( Z \).
\end{enumerate-(a)}
Moreover, the space \( Z \) in~\ref{theorcomposition-2} may be chosen to be zero-dimensional, and if \( X \) is analytic (respectively, Polish) then \( Z \) can be taken to be analytic (respectively, Polish) as well.
\end{theorem}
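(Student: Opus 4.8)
The plan is to prove the two implications separately; the whole content is in $\ref{theorcomposition-1}\Rightarrow\ref{theorcomposition-2}$, which I would obtain by taking $Z$ to be the set $X$ equipped with a suitable refinement $\tau'$ of its original topology $\tau$, and setting $g=\id_X$ and $h=f$. Then $h\circ g=f$ automatically, $g\in\Sigma_n(X,Z)$ is equivalent to $\tau'\subseteq\boldsymbol{\Sigma}^0_n(X,\tau)$, and everything reduces to choosing $\tau'$ so that $f$, read on $(X,\tau')$, belongs to $\D^1_{m-n+1}(Z,Y)$.

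The implication $\ref{theorcomposition-2}\Rightarrow\ref{theorcomposition-1}$ is the easy one: given $f=h\circ g$ with $g\in\Sigma_n(X,Z)$ and a partition $\langle Z_k\rangle_{k\in\omega}$ of $Z$ into $\boldsymbol{\Delta}^0_{m-n+1}(Z)$-pieces witnessing $h\in\D^1_{m-n+1}(Z,Y)$, set $X_k=g^{-1}(Z_k)$. Since $g\in\Sigma_n(X,Z)=\Sigma_{n,1}(X,Z)\subseteq\Sigma_{m,m-n+1}(X,Z)$, each $X_k$ belongs to $\boldsymbol{\Delta}^0_m(X)$; the $X_k$'s partition $X$; and $f\restriction X_k=(h\restriction Z_k)\circ(g\restriction X_k)$ is the composition of a $\Sigma_n$ function with a continuous one, hence lies in $\Sigma_n(X_k,Y)$. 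So $\langle X_k\rangle$ witnesses $f\in\D^n_m(X,Y)$.

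For $\ref{theorcomposition-1}\Rightarrow\ref{theorcomposition-2}$, the case $n=1$ is trivial ($\D^1_{m-n+1}=\D^1_m$; take $Z=X$, $g=\id_X$, $h=f$), so assume $n\ge 2$. Fix a partition $\langle X_k\rangle_{k\in\omega}$ of $X$ into $\boldsymbol{\Delta}^0_m(X,\tau)$-pieces with $f\restriction X_k\in\Sigma_n(X_k,Y)$, a countable basis $\{B_l\}_{l\in\omega}$ of $Y$, and sets $S_{k,l}\in\boldsymbol{\Sigma}^0_n(X,\tau)$ with $(f\restriction X_k)^{-1}(B_l)=S_{k,l}\cap X_k$. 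The crucial auxiliary fact, which I would isolate and prove as a claim, is a complexity-lowering lemma: \emph{for every $S\in\boldsymbol{\Sigma}^0_m(X,\tau)$ there is a countable family $\mathcal R_S\subseteq\boldsymbol{\Sigma}^0_n(X,\tau)$ such that $S\in\boldsymbol{\Sigma}^0_{m-n+1}(X,\tau'')$ for every topology $\tau''\supseteq\tau\cup\mathcal R_S$.} This would be proved by iterating Lemma~\ref{lemmalowering} exactly $n-1$ times; the point that needs checking is that the ``closed'' sets added at the $i$-th step --- being closed in a space already refined $i-1$ times --- are only $\boldsymbol{\Pi}^0_i(X,\tau)$, so that for $1\le i\le n-1$ they still lie in $\boldsymbol{\Pi}^0_{n-1}(X,\tau)\subseteq\boldsymbol{\Sigma}^0_n(X,\tau)$, which is precisely where $n\ge 2$ is used.

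Granting the claim, put $\mathcal A=\{S_{k,l}\mid k,l\in\omega\}\cup\bigcup_{k\in\omega}\mathcal R_{X_k}$, a countable subfamily of $\boldsymbol{\Sigma}^0_n(X,\tau)$, and apply Lemma~\ref{lemmachangetopology} with $\alpha=n>1$ to obtain a refinement $\tau'\supseteq\tau$ with $\mathcal A\subseteq\tau'\subseteq\boldsymbol{\Sigma}^0_n(X,\tau)$ and $Z=(X,\tau')$ zero-dimensional and separable metrizable --- and analytic, respectively Polish, when $X$ is. Then $g=\id_X\colon X\to Z$ is in $\Sigma_n(X,Z)$; by the claim each $X_k$ lies in $\boldsymbol{\Sigma}^0_{m-n+1}(Z)$, hence --- the $X_k$'s being a partition --- in $\boldsymbol{\Delta}^0_{m-n+1}(Z)$ (for $m-n+1=1$ this just says the $X_k$'s become $\tau'$-clopen); and $(f\restriction X_k)^{-1}(B_l)=S_{k,l}\cap X_k$ is relatively $\tau'$-open in $X_k$, so $f\restriction X_k$ is $\tau'$-continuous. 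Hence $\langle X_k\rangle$ witnesses that $h:=f\colon Z\to Y$ lies in $\D^1_{m-n+1}(Z,Y)$, and $h\circ g=f$; the zero-dimensionality and the analytic/Polish transfer in the ``moreover'' clause are exactly what Lemma~\ref{lemmachangetopology} provides. I expect the main obstacle to be the complexity-lowering claim, and more precisely the bookkeeping needed to collapse the $\boldsymbol{\Delta}^0_m$-pieces of the given partition all the way down to $\boldsymbol{\Delta}^0_{m-n+1}$ while keeping the new topology inside $\boldsymbol{\Sigma}^0_n(X,\tau)$ so that $g$ remains a $\Sigma_n$ function; a minor secondary point is checking that the boundary cases $n=1$ and $m=n$ are genuinely covered by the argument.
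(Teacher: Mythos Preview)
Your proposal is correct and follows essentially the same route as the paper: both take $Z=(X,\tau')$ for a refinement $\tau'\subseteq\boldsymbol{\Sigma}^0_n(X,\tau)$ obtained via Lemma~\ref{lemmachangetopology}, set $g=\id_X$ and $h=f$, and use iterated applications of Lemma~\ref{lemmalowering} to push the partition pieces $X_k$ down to $\boldsymbol{\Delta}^0_{m-n+1}$ and make the $S_{k,l}$ open in $\tau'$. The only cosmetic difference is that you throw the $S_{k,l}$ directly into $\mathcal A$ (they are already $\boldsymbol{\Sigma}^0_n$), whereas the paper feeds them through the lowering lemma as well and ends up adding only $\boldsymbol{\Pi}^0_{n-1}$ sets.
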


\begin{proof}
We first show \ref{theorcomposition-1}\( \Rightarrow \)\ref{theorcomposition-2} arguing as in the proof of 
Theorem~\ref{theorsgJR}. Let \( \tau \) be the topology of \( X \), and let \( \seq{ X_k }{ k \in \omega } \) witness  \( f \in \D^n_m(X,Y) \). Let \( \{ B_l \mid l \in \omega \} \) be a countable basis for \( Y \), and let \( 
S_{k,l} \in \boldsymbol{\Sigma}^0_n(X,\tau) \) be such that \( (f \restriction X_k)^{-1}(B_l) = f^{-1}(B_l) \cap X_k 
= S_{k,l} \cap X_k  \) for every \( k,l \in \omega \).
 By repeatedly applying \( n-1 \)-times Lemma~\ref{lemmalowering}, we get that there are countable collections \( \mathcal{P} = \{ 
P_{k,l,i} \mid i \in \omega \}  \)  and \( \widetilde{\mathcal{P}} =  \{ \widetilde{P}_{k,i}  \mid i \in \omega \} \) of \( 
\boldsymbol{\Pi}^0_{n-1}(X) \)-sets such that \( X_k \in \boldsymbol{\Sigma}^0_{m-n+1}(X, \tau') \) and \( S_{k,l} 
\in \boldsymbol{\Sigma}^0_1(X,\tau') \) (for all \( k,l \in \omega \)) for every topology \( \tau' \) on \( X \) with \( \mathcal{P} \cup \widetilde{\mathcal{P}} \subseteq \tau' \). By Lemma~\ref{lemmachangetopology}, we can find such a \( \tau' \) with the 
further properties that \( Z = (X,\tau') \) is (zero-dimensional) separable metrizable and \( \tau \subseteq \tau' \subseteq 
\boldsymbol{\Sigma}^0_n(X,\tau) \). Moreover, if \( X \) is analytic (respectively, Polish), then \( \tau' \) can be chosen so that \( Z = (X,\tau') \) is analytic (respectively, Polish) as well. It follows that \( f \in \D^1_{m-n+1}(Z,Y) \) and that the identity function \( 
\id \) on \( X \) belongs to \( \Sigma_n(X,Z) \). Therefore, setting \( g = \id \colon X \to Z \) and \( h = f \colon 
Z \to Y \) we get~\ref{theorcomposition-2}.

Conversely, assume that~\ref{theorcomposition-2} holds, and let \( \seq{ Z_k }{ k \in \omega } \) witness \( h \in \D^1_{m-n+1}(Z,Y) \). Then \( X_k = g^{-1}(Z_k) \in \boldsymbol{\Delta}^0_m(X) \) and \( f \restriction X_k = (h \restriction Z_k) \circ (g \restriction X_k) \in \Sigma_n(X_k,Y) \). Therefore \( f \in \D^n_m(X,Y) \), i.e.\ \ref{theorcomposition-1} holds.
\end{proof} 

Using (the generalization of) Semmes' theorem~\ref{theorsemmes}\ref{theorsemmes-1} on \( \Sigma_{3,2} \) functions, setting \( m= 3 \) and \( n = 2 \) in Theorem~\ref{theorcomposition} we get the following corollary.

\begin{corollary} \label{corcomposition}
Let \( X \) be a Polish space of topological dimension \( \neq \infty \), and let \( Y \) be a zero-dimensional separable metrizable space. For every \( f \colon X \to Y \) the following are equivalent:
\begin{enumerate-(a)}
\item \label{corcomposition-1}
\( f \in \Sigma_{3,2}(X,Y) \);
\item \label{corcomposition-2}
\( f = h \circ g \) with \( g \in \Sigma_2(X,Z) \) and \( h \in \D_2(Z,Y) \) for some zero-dimensional Polish space \( Z \).
\end{enumerate-(a)}
\end{corollary}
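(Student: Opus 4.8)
The plan is to derive Corollary~\ref{corcomposition} as a direct instantiation of Theorem~\ref{theorcomposition} with $m=3$ and $n=2$, using the hypothesis that $X$ is a Polish space of topological dimension $\neq \infty$ and $Y$ is a zero-dimensional separable metrizable space, so that the generalization of Semmes' theorem~\ref{theorsemmes}\ref{theorsemmes-1} applies. First I would observe that by the generalization of Semmes' theorem~\ref{theorsemmes}\ref{theorsemmes-1} (valid in this setting by the discussion following Theorem~\ref{theorsemmes}), we have $\Sigma_{3,2}(X,Y) = \D^2_3(X,Y)$. Thus condition~\ref{corcomposition-1} is equivalent to $f \in \D^2_3(X,Y)$, which is exactly condition~\ref{theorcomposition-1} of Theorem~\ref{theorcomposition} for the parameters $m=3$, $n=2$.

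Next I would apply Theorem~\ref{theorcomposition} itself: $f \in \D^2_3(X,Y)$ if and only if $f = h \circ g$ with $g \in \Sigma_2(X,Z)$ and $h \in \D^1_{3-2+1}(Z,Y) = \D^1_2(Z,Y)$ for some separable metrizable space $Z$. Since $X$ is Polish, the ``moreover'' clause of Theorem~\ref{theorcomposition} allows us to take $Z$ to be zero-dimensional and Polish. It then remains to identify $\D^1_2(Z,Y)$ with $\D_2(Z,Y)$: this is precisely the content of the Jayne-Rogers theorem~\ref{theorjaynerogers}, which applies since $Z$ is (zero-dimensional) Polish, hence separable metrizable and analytic. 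Therefore $h \in \D^1_2(Z,Y) = \D_2(Z,Y)$, and condition~\ref{theorcomposition-2} of Theorem~\ref{theorcomposition} becomes exactly condition~\ref{corcomposition-2} of the corollary.

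There is no real obstacle here: the statement is a corollary precisely because every ingredient has already been established. The only point requiring a small amount of care is checking that the hypotheses of the corollary ($X$ Polish of dimension $\neq \infty$, $Y$ zero-dimensional separable metrizable) are exactly what is needed to invoke the generalized Semmes theorem on the $\Sigma_{3,2}$ side, and that the auxiliary space $Z$ produced by Theorem~\ref{theorcomposition} inherits enough structure (zero-dimensional Polish) to apply the Jayne-Rogers theorem on the $\D_2$ side. Both of these are immediate from the statements already in the excerpt, so the proof is a short chain of equivalences: $f \in \Sigma_{3,2}(X,Y) \iff f \in \D^2_3(X,Y) \iff f = h \circ g$ with $g \in \Sigma_2(X,Z)$ and $h \in \D^1_2(Z,Y) \iff f = h \circ g$ with $g \in \Sigma_2(X,Z)$ and $h \in \D_2(Z,Y)$, for a suitable zero-dimensional Polish $Z$.
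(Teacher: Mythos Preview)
Your proposal is correct and matches the paper's approach: the paper simply says ``Using (the generalization of) Semmes' theorem~\ref{theorsemmes}\ref{theorsemmes-1} on \( \Sigma_{3,2} \) functions, setting \( m = 3 \) and \( n = 2 \) in Theorem~\ref{theorcomposition} we get the following corollary,'' and you have spelled out exactly these steps, including the use of the Jayne-Rogers theorem~\ref{theorjaynerogers} to pass between \( \D^1_2(Z,Y) \) and \( \D_2(Z,Y) \) (a point the paper leaves implicit).
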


When \( X = Y = \pre{\omega}{\omega} \), Corollary~\ref{corcomposition} assumes the particularly elegant form
\[ 
\Sigma_{3,2} = \D_2 \circ \Sigma_2,
 \] 
where for simplicity of notation we put \( \Sigma_{3,2} = \Sigma_{3,2}
(\pre{\omega}{\omega},\pre{\omega}{\omega}) \), \( \D_2 = 
\D_2(\pre{\omega}{\omega},\pre{\omega}{\omega}) \), \( \Sigma_2 = 
\Sigma_2(\pre{\omega}{\omega},\pre{\omega}{\omega}) \), and  \( \D_2 
\circ \Sigma_2 = \{ h \circ g \mid {h \in \D_2} \wedge {g \in \Sigma_2} \} \).
This is because every zero-dimensional Polish space \( Z \) is homeomorphic 
to a closed subset of \( \pre{\omega}{\omega} \) 
by~\cite[Theorem 7.8]{kec}, and every \( \boldsymbol{\Delta}^0_2 \)-function from a \( 
\boldsymbol{\Delta}^0_2(\pre{\omega}{\omega}) \) (hence, in particular, a closed) 
subset of \( \pre{\omega}{\omega} \) to \( \pre{\omega}{\omega} \) can be 
trivially extended to a function in \( \D_2(\pre{\omega}{\omega},\pre{\omega}
{\omega}) \).

Theorem~\ref{theorcomposition} actually shows that a further consequence of the strong generalization of the Jayne-Rogers theorem (Conjecture~\ref{conjstrongJR}) would be the full characterization of all finite level Borel classes in terms of composition of functions belonging to Borel classes of a lower level, namely:
\begin{quote}
Let \( X \) be an analytic space and \( Y \) be separable metrizable. For every \( 1 < n < m < \omega \) and \( f \colon X \to Y \) the following are equivalent:
\begin{enumerate-(a)}
\item
\( f \in \Sigma_{m,n}(X,Y) \);
\item
\( f = h \circ g \) with \( g \in \Sigma_{m-n+1}(X,Z) \) and \( h \in \D_n(Z,Y) \) for some analytic space \( Z \).
\end{enumerate-(a)}
\end{quote}
In other words: all the ``nonstandard'' finite level Borel classes \( \Sigma_{m,n}(X,Y) \) (\( 1 < n < m < \omega \)) would be generated via composition by the more familiar Borel classes of the form \( \Sigma_k(X,Y) \) or \( \D_k(X,Y) \).

\section{An application to Banach spaces} \label{sectionBanach}

Let \( X \) be a Polish space and \( \alpha < \omega_1 \). Following~\cite{jayne}, let \( \mathcal{B}_\alpha(X) \) be the set of all  Baire class 
\(\alpha\) (equivalently, \( \boldsymbol{\Sigma}^0_{\alpha+1} \)-measurable) real-valued 
functions (in particular, \( \mathcal{B}_0(X) = C(X) \) is 
the class of all real-valued continuous functions on \( X \)), and denote by \( \mathcal{B}^*_\alpha(X) \) the collection of all 
bounded functions in \( \mathcal{B}_\alpha(X) \). When endowed in the obvious way with the pointwise  operations, the spaces \( 
\mathcal{B}_\alpha(X) \) and \( \mathcal{B}_\alpha^*(X) \) can be viewed as lattice-ordered algebras, rings, or multiplicative semigroups. 
Moreover,  each space \( \mathcal{B}^*_\alpha(X) \) is a  Banach space with respect to the supremum norm. These spaces are 
quite popular and well-studied in Banach space theory --- see e.g.~\cite{das,jayne,das2,shazaf, gonmar}: for example, 
in~\cite{das} it is shown that \( \mathcal{B}^*_\alpha(I) \) is not complemented in \( \mathcal{B}^*_\beta(I) \) for \( 0 < \alpha < \beta 
\) (whence \( \mathcal{B}^*_\alpha(I) \) is not linearly isometric to \( \mathcal{B}^*_\beta(I) \)), while  in~\cite{jayne} it is further shown 
that if \( X \) and \( Y \) are compact Polish 
spaces and at least one of them is uncountable, then \( \mathcal{B}^*_\alpha(X) \) is not linearly isometric to \( \mathcal{B}^*_\beta(Y) \) 
whenever \( \alpha \neq \beta \).  Consider now the following problem.

\begin{question} \label{questionBanach}
Given \( \alpha \geq 1 \) and two Polish spaces \( X \) and \( Y \), are the Banach spaces \( \mathcal{B}^*_\alpha(X) \) and \( \mathcal{B}^*_\alpha(Y) \) linearly isometric (in symbols, \( \mathcal{B}^*_\alpha(X) \cong_{\rm li} \mathcal{B}^*_\alpha(Y) \))?
\end{question}

A major progress in answering Question~\ref{questionBanach} is given by~\cite[Theorem 2]{jayne}. First of all, it is shown that \( \mathcal{B}^*_\alpha(X) \cong_{\rm li} \mathcal{B}^*_\alpha(Y) \) if and only if one of the following equivalent conditions hold:
\begin{enumerate-(a)}
\item
\( \mathcal{B}^*_\alpha(X) \) and \( \mathcal{B}^*_\alpha(Y) \) are isometric;
\item
\( \mathcal{B}^*_\alpha(X) \) and \( \mathcal{B}^*_\alpha(Y) \) are isomorphic as rings (equivalently, lattices, or multiplicative semigroups);
\item
\( \mathcal{B}_\alpha(X) \) and \( \mathcal{B}_\alpha(Y) \) are isomorphic as rings (equivalently, lattices, or multiplicative semigroups).
\end{enumerate-(a)}

A further equivalent condition from~\cite[Theorem 2]{jayne} is related to the following notion.

\begin{definition}	
Let \( X,Y \) be two topological spaces and \( \alpha \geq 1 \). We say that \( X \) and \( Y \) are \emph{\( \D_\alpha \)-isomorphic} (in symbols, \( X \simeq_\alpha Y \)) if there is a bijection \( f \colon X \to Y \) such that \( f \in \D_\alpha(X,Y) \) and \( f^{-1} \in \D_\alpha(Y,X) \).
\end{definition}

\begin{theorem}[Jayne] \label{theorjayne}
Let \( X,Y \) be Polish spaces.%
\footnote{Actually~\cite[Theorem 2]{jayne} holds in the broader context of realcompact spaces. Recall that every Polish space, being second-countable, is Lindel\"of, and hence also realcompact. A further extension of this result to perfectly normal spaces can be found in~\cite[Theorem 2.1]{shazaf}.}
For every \( \alpha \geq 1 \), \( \mathcal{B}^*_\alpha(X) \cong_{\rm li} \mathcal{B}^*_\alpha(Y) \) if and only if \( X \simeq_{\alpha+1} Y \).
\end{theorem}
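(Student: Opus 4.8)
The statement is essentially \cite[Theorem 2]{jayne}, and I would follow Jayne's scheme, treating the two implications separately. The direction ``\( X \simeq_{\alpha+1} Y \Rightarrow \mathcal{B}^*_\alpha(X) \cong_{\rm li} \mathcal{B}^*_\alpha(Y) \)'' is elementary; the converse is where the Banach-space (really, commutative-ring) machinery is needed, and it is the main obstacle.

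For the easy direction, assume \( X \simeq_{\alpha+1} Y \), witnessed by a bijection \( f \colon X \to Y \) with \( f \in \D_{\alpha+1}(X,Y) \) and \( f^{-1} \in \D_{\alpha+1}(Y,X) \), and consider \( T_f \colon \mathcal{B}^*_\alpha(Y) \to \mathcal{B}^*_\alpha(X) \), \( T_f(h) = h \circ f \). If \( h \in \mathcal{B}^*_\alpha(Y) \), so that \( h \) is bounded and \( h^{-1}(U) \in \boldsymbol{\Sigma}^0_{\alpha+1}(Y) \) for every open \( U \subseteq \RR \), then \( (h \circ f)^{-1}(U) = f^{-1}(h^{-1}(U)) \in \boldsymbol{\Sigma}^0_{\alpha+1}(X) \) because \( f \in \D_{\alpha+1}(X,Y) \); hence \( h \circ f \in \mathcal{B}^*_\alpha(X) \), and \( T_f \) is linear with \( \| h \circ f \|_\infty = \| h \|_\infty \) (surjectivity of \( f \)), so it is an isometry. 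The same argument applied to \( f^{-1} \) shows \( T_{f^{-1}} \colon \mathcal{B}^*_\alpha(X) \to \mathcal{B}^*_\alpha(Y) \) is a well-defined linear isometry, and \( T_{f^{-1}} \circ T_f = \mathrm{id} \) (and symmetrically); thus \( T_f \) is a surjective linear isometry and \( \mathcal{B}^*_\alpha(X) \cong_{\rm li} \mathcal{B}^*_\alpha(Y) \).

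For the converse, I would first invoke the earlier part of \cite[Theorem 2]{jayne} recalled just above the statement: a surjective linear isometry \( \mathcal{B}^*_\alpha(Y) \to \mathcal{B}^*_\alpha(X) \) gives rise to a ring (equivalently, lattice) isomorphism \( \Psi \colon \mathcal{B}_\alpha(X) \to \mathcal{B}_\alpha(Y) \) --- at the level of the bounded algebras this is a Banach--Stone/Gelfand-type fact, realizing \( \mathcal{B}^*_\alpha(X) \) and \( \mathcal{B}^*_\alpha(Y) \) as \( C(K_X) \) and \( C(K_Y) \) for the compact maximal ideal spaces \( K_X, K_Y \) of the \( C^* \)-algebras \( \mathcal{B}^*_\alpha(X), \mathcal{B}^*_\alpha(Y) \). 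The key descriptive observation is then that, for \( \alpha \geq 1 \), a set \( A \subseteq X \) lies in \( \boldsymbol{\Sigma}^0_{\alpha+1}(X) \) \emph{if and only if} \( A = \{ x \in X \mid g(x) \neq 0 \} \) for some \( g \in \mathcal{B}_\alpha(X) \): from such a \( g \) one has \( A = \bigcup_n \{ |g| > 1/n \} \), while conversely, writing \( A = \bigcup_n F_n \) with \( F_n \in \boldsymbol{\Pi}^0_\alpha(X) \), one takes \( g = \sum_n 2^{-n} \chi_{F_n} \), a uniform limit of Baire class \( \alpha \) functions. Hence the zero-sets of the ring \( \mathcal{B}_\alpha(X) \) are exactly the members of \( \boldsymbol{\Pi}^0_{\alpha+1}(X) \), and this family, together with its lattice operations (\( Z(g) \cup Z(h) = Z(gh) \), \( Z(g) \cap Z(h) = Z(g^2 + h^2) \)), is recovered from the ring structure alone.

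It remains to reconstruct the underlying point set. A Gelfand--Kolmogorov-style argument shows that the \emph{real} maximal ideals of \( \mathcal{B}_\alpha(X) \) (kernels of ring homomorphisms onto \( \RR \)) are precisely the fixed ideals \( M_x = \{ g \in \mathcal{B}_\alpha(X) \mid g(x) = 0 \} \), \( x \in X \), and \( \bigcap_{g \in M_x} Z(g) = \{ x \} \). Granting this --- the main technical point, and the only place where ``\( X \) Polish, hence second-countable and realcompact'' is used --- the isomorphism \( \Psi \) carries each \( M_x \) to some \( M_{f(x)} \), defining a bijection \( f \colon X \to Y \); since \( x \in Z(g) \iff g \in M_x \iff \Psi(g) \in M_{f(x)} \iff f(x) \in Z(\Psi(g)) \), we get \( f(Z(g)) = Z(\Psi(g)) \) for every \( g \in \mathcal{B}_\alpha(X) \), so \( f \) and \( f^{-1} \) send \( \boldsymbol{\Pi}^0_{\alpha+1} \) sets to \( \boldsymbol{\Pi}^0_{\alpha+1} \) sets, equivalently \( \boldsymbol{\Sigma}^0_{\alpha+1} \) sets to \( \boldsymbol{\Sigma}^0_{\alpha+1} \) sets, i.e.\ \( f \in \D_{\alpha+1}(X,Y) \) and \( f^{-1} \in \D_{\alpha+1}(Y,X) \). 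Thus \( X \simeq_{\alpha+1} Y \). The hard part is this last direction --- specifically the abstract recovery of the point set and its \( \boldsymbol{\Sigma}^0_{\alpha+1} \)-structure from the ring \( \mathcal{B}_\alpha(X) \) --- which is exactly what Jayne's Banach-space arguments supply.
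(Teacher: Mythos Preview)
The paper does not prove this theorem: it is stated as a result of Jayne, cited from \cite[Theorem 2]{jayne}, and used as a black box in Section~\ref{sectionBanach}. So there is no ``paper's own proof'' to compare against.

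Your sketch is a faithful outline of the standard Gelfand--Kolmogorov/Banach--Stone route that Jayne follows. The easy direction is complete and correct. For the hard direction you have correctly isolated the two ingredients: (i) the cozero sets of \( \mathcal{B}_\alpha(X) \) are exactly \( \boldsymbol{\Sigma}^0_{\alpha+1}(X) \) (your argument with \( \sum 2^{-n}\chi_{F_n} \) is fine, using closure of Baire classes under uniform limits), and (ii) the real maximal ideals of \( \mathcal{B}_\alpha(X) \) are exactly the fixed ideals \( M_x \). You explicitly flag (ii) as the genuine obstacle and defer to Jayne for it, which is appropriate: that step requires a nontrivial argument (in Jayne's treatment, via the representation \( \mathcal{B}^*_\alpha(X) \cong C(K) \) for a suitable totally disconnected compactum \( K \) and an identification of \( X \) inside \( K \)), and a self-contained proof would have to supply it. As a reconstruction of the cited result your proposal is sound, with the caveat that it remains a sketch at exactly the point you identify.
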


Let \( 1 \leq \alpha \leq \beta \): since \( \D_{\alpha+1}(X,Y) \subseteq \D_{\beta + 1 }(X,Y) \) then \( X \simeq_{\alpha + 1} Y \Rightarrow X \simeq_{\beta + 1} Y \), and thus we get that if \( \mathcal{B}^*_\alpha(X) \cong_{\rm li} \mathcal{B}^*_\alpha(Y) \) then \( \mathcal{B}^*_\beta(X) \cong_{\rm li} \mathcal{B}^*_\beta(Y) \).
Theorem~\ref{theorjayne} then reduces Question~\ref{questionBanach} to the problem of analyzing the minimal possible 
complexity (in terms of the classes \( \D_\alpha \)) of an isomorphism between \( X \) and \( Y \), a problem which is also interesting \emph{per se}
--- see~\cite[Section 4]{motschsel} for more results on this topic. 

Assume first that \( 
X \) and \( Y \) are countable Polish spaces. If they have different cardinalities, 
then there is no bijection between them, whence \( 
\mathcal{B}^*_\alpha(X) \not\cong_{\rm li} \mathcal{B}^*_\alpha(Y) \) for every \( \alpha < \omega_1 \). If \( X , Y \) are finite and of 
the same cardinality, then they must be discrete and hence homeomorphic (whence \( \mathcal{B}^*_\alpha(X) 
\cong_{\rm li} \mathcal{B}^*_\alpha(Y) \) for every \( \alpha < \omega_1 \)). Finally, if both \( X, Y \) are infinite, then
 any bijection between \( X \) and \( Y \) witnesses \( X \simeq_2 Y \), so that in this case \( \mathcal{B}^*_\alpha(X) \cong_{\rm li} \mathcal{B}^*_\alpha(Y) \) for every \( \alpha \geq 1 
\). This bound cannot be further lowered in general because taking e.g.\ \( X = \omega \) with the discrete topology 
and \( Y = \omega +1 \) with the order topology we get \( X \not\simeq_1 Y \), and hence \(  \mathcal{B}^*_0(X) 
\not\cong_{\rm li} \mathcal{B}^*_0(Y) \).

Let us now consider the uncountable case. Since  between any two uncountable Polish spaces \(X , Y \) there is a Borel 
isomorphism such that \( f \in \Sigma_2(X,Y) \) and \( f^{-1} \in \Sigma_2(Y,X) \) (see~\cite[p.~212]{kur}), by \( 
\Sigma_2(X,Y) \subseteq \D_\omega(X,Y) \) and \( \Sigma_2(Y,X) \subseteq \D_\omega(Y,X) \) we get \( X 
\simeq_\omega Y \), and hence also \( X \simeq_{\omega+1} Y \). This implies \( \mathcal{B}^*_\alpha(X) \cong_{\rm li} 
\mathcal{B}^*_\alpha(Y) \) for every \( \alpha \geq \omega \), and therefore \(\omega\) provides a rough upper bound for the 
minimal \( \alpha < \omega_1 \) such that \( \mathcal{B}^*_\alpha(X) \cong_{\rm li} \mathcal{B}^*_\alpha(Y) \).
 In~\cite[Theorem 8.1]{jayrogisomorphismII} (see also~\cite[Theorem 4.21]{motschsel}), Jayne and Rogers essentially showed that if \( X , Y \) are further assumed to be both of topological 
dimension \( \neq \infty \), then this bound can be considerably lowered: in fact, in this case we get \( X \simeq_3 Y \), whence \( 
\mathcal{B}^*_\alpha(X) \cong_{\rm li} \mathcal{B}^*_\alpha(Y) \) for every \( \alpha \geq 2 \). This  bound cannot be further decreased: for example  \( \RR 
\not\simeq_2 \pre{\omega}{\omega} \) (since any \( f \in \D_2(\RR,\pre{\omega}{\omega}) = \D^1_2(\RR,\pre{\omega}{\omega}) \) maps \( K_\sigma \) sets to \( K_\sigma \) sets, and hence cannot be surjective),
thus \( \mathcal{B}^*_\alpha(\RR) \cong_{\rm li} \mathcal{B}^*_\alpha(\pre{\omega}{\omega}) \) if and only if  \( \alpha \geq 2 \). Similarly, it is not hard to check that \( \RR \simeq_2 I \) but \( \RR \not\simeq_1 I \), 
whence \( \mathcal{B}^*_\alpha(\RR) \cong_{\rm li} \mathcal{B}^*_\alpha(I) \) for \( \alpha \geq 1 \) but \( \mathcal{B}^*_0(\RR)  \not\cong_{\rm li} 
\mathcal{B}^*_0(I) \). 

Thus~\cite{jayrogisomorphismI, jayrogisomorphismII} completely answer Question~\ref{questionBanach} for the 
case of uncountable Polish spaces of topological dimension \( \neq \infty \), but the problem of determining whether \( 
\mathcal{B}^*_\alpha(X) \cong_{\rm li} \mathcal{B}^*_\alpha(Y) \) (for \(  \alpha < \omega \)) when one of \( X,Y \) has topological 
dimension \( \infty \) has remained open (at least to the author's best knowledge) until now. One may be tempted to 
conjecture that also in this case \( \mathcal{B}^*_\alpha(X) \cong_{\rm li} \mathcal{B}^*_\alpha(Y) \) for \( \alpha = 3 \), or at least for some 
finite \( \alpha \) (possibly depending on \( X \) and \( Y \)). The next result, which is an immediate and easy consequence of Lemma~\ref{lemmafinite}, refutes this 
conjecture and shows that the  upper bound of \( \omega \) obtained at the beginning of the previous paragraph cannot be lowered for arbitrary Polish spaces (even when restricting to compact spaces).

\begin{theorem}
Let \( X,Y \) be uncountable Polish spaces such that \( X \) has topological dimension \( \infty \) and \( Y \) has 
topological dimension \( \neq \infty \) (e.g.\ we can take \( X = [0,1]^\omega \) and \( Y = I \)). Then \( \mathcal{B}^*_n(X) \not\cong_{\rm li} \mathcal{B}^*_n(Y) \) for every \( n < \omega \).
\end{theorem}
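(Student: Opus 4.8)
The plan is to combine Jayne's Theorem~\ref{theorjayne} with Lemma~\ref{lemmafinite} and then rerun the dimension-theoretic computation of Example~\ref{exisomorphism}. By Theorem~\ref{theorjayne}, for each \( n < \omega \) we have \( \mathcal{B}^*_n(X) \cong_{\rm li} \mathcal{B}^*_n(Y) \) if and only if \( X \simeq_{n+1} Y \), so it suffices to show that there is no bijection \( f \colon X \to Y \) with \( f \in \D_{n+1}(X,Y) \) and \( f^{-1} \in \D_{n+1}(Y,X) \), for any \( n < \omega \). Assume towards a contradiction that such an \( f \) exists. Since \( \D_{n+1}(X,Y) \subseteq \D_{<\omega}(X,Y) \), and likewise for \( f^{-1} \), both \( f \) and \( f^{-1} \) are finite level Borel functions between separable metrizable spaces whose domain is analytic (being Polish); hence by Lemma~\ref{lemmafinite} both \( f \) and \( f^{-1} \) are \(\omega\)-decomposable into continuous functions.

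Next I would reproduce the bookkeeping of Example~\ref{exisomorphism}. Write \( f = \bigcup_{k \in \omega} f_k \) and \( f^{-1} = \bigcup_{j \in \omega} g_j \) with all the \( f_k \)'s and \( g_j \)'s continuous partial functions, and set \( X_{k,j} = \dom(f_k) \cap \range(g_j) \) and \( Y_{j,k} = \dom(g_j) \cap \range(f_k) \). Using that \( f \) is a bijection one checks exactly as there that \( X = \bigcup_{k,j \in \omega} X_{k,j} \) and that \( f \restriction X_{k,j} \) is a homeomorphism of \( X_{k,j} \) onto \( Y_{j,k} \). Since \( Y \) has finite topological dimension \( d \), each \( Y_{j,k} \), being a subspace of \( Y \), satisfies \( \dim(Y_{j,k}) \leq d \); by the decomposition theorem of dimension theory a \( d \)-dimensional separable metrizable space is a finite union of zero-dimensional subspaces, so each \( Y_{j,k} \) (hence each \( X_{k,j} \cong Y_{j,k} \)) is countable-dimensional, and therefore so is \( X = \bigcup_{k,j} X_{k,j} \). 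But \( X = [0,1]^\omega \) cannot be written as a countable union of zero-dimensional subspaces, by the theorem of Hurewicz quoted in Example~\ref{exisomorphism}; this is the desired contradiction. (The same argument of course works for any uncountable Polish \( X \) which fails to be countable-dimensional, the Hilbert cube being the basic example.)

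The real content is entirely absorbed into Lemma~\ref{lemmafinite}, so the argument is essentially routine; the one point requiring a moment's care is that nothing forces the pieces \( X_{k,j} \) to be \( F_\sigma \) in \( X \), so one must invoke \emph{countable-dimensionality} — for which arbitrary countable unions of zero-dimensional subspaces are allowed — rather than the countable \emph{closed} sum theorem for covering dimension. With that understood, the only ingredients beyond Lemma~\ref{lemmafinite} are the standard facts that a finite-dimensional separable metrizable space is (finitely, hence countably) a union of zero-dimensional subspaces, that countable-dimensionality passes to subspaces and to countable unions, and Hurewicz's theorem on \( [0,1]^\omega \) — all of which are already used, or cited, in Example~\ref{exisomorphism}.
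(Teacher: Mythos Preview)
Your proof is correct and follows essentially the same route as the paper's: reduce via Theorem~\ref{theorjayne} to showing \( X \not\simeq_{n+1} Y \), invoke Lemma~\ref{lemmafinite} to get \(\omega\)-decomposability of both \( f \) and \( f^{-1} \), and then run the dimension-theoretic argument of Example~\ref{exisomorphism}. The only difference is that the paper first replaces \( Y \) by \( \pre{\omega}{\omega} \) (using the Jayne--Rogers result \cite[Theorem 8.1]{jayrogisomorphismII} that every uncountable Polish space of dimension \( \neq \infty \) is \( \D_3 \)-isomorphic to \( \pre{\omega}{\omega} \)), so that the pieces \( Y_{j,k} \) are automatically zero-dimensional, whereas you keep \( Y \) general and instead invoke the decomposition theorem to write each finite-dimensional \( Y_{j,k} \) as a finite union of zero-dimensional subspaces; both variants feed the same contradiction with Hurewicz's theorem.
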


\begin{proof}
By Theorem~\ref{theorjayne}, it is enough to prove that \( X \not\simeq_n Y \) for every \( 1 \leq n < \omega \). 
Since \( Y 
\cong_3 \pre{\omega}{\omega} \) by our assumption on \( Y \) and the mentioned~\cite[Theorem 8.1]{jayrogisomorphismII}, 
we 
can assume without loss of generality that \( Y = \pre{\omega}{\omega} \). By (the argument contained in) Example~\ref{exisomorphism}, there 
is no 
Borel isomorphism \( f \) between \( X \) and \( \pre{\omega}{\omega} \) such that both \( f \) and \( f^{-1} \) are 
\(\omega\)-decomposable. Since all functions in \( \D_n(X,\pre{\omega}{\omega}) \) and \( \D_n(\pre{\omega}
{\omega},X) \) 
are \(\omega\)-decomposable by Lemma~\ref{lemmafinite}, this implies \( X \not\simeq_n \pre{\omega}{\omega} \) 
for 
every \( 1 \leq n < \omega \), as required.
\end{proof}

It would be interesting to understand what happens if \( X \) and \( Y \) have both topological dimension \( \infty \). As 
observed in~\cite[Proposition 4.27]{motschsel}, if both \( X \) and \( Y \) are universal for Polish spaces, e.g.\ if \(X \) 
is the Hilbert cube \( [0,1]^\omega \) and \( Y = \RR^\omega \), then \( X \simeq_3 Y \), whence \( 
\mathcal{B}^*_\alpha(X) \cong_{\rm li} \mathcal{B}^*_\alpha(Y) \) for every \( \alpha \geq 2 \) (but notice that \( [0,1]^\omega \not\simeq_2 \RR^\omega \), hence \( \mathcal{B}^*_1(X) \not\cong_{\rm li} \mathcal{B}^*_1(Y) \)). In some cases, e.g.\ when \( X , Y \) are both compact universal Polish spaces, then one can also get \( X \simeq_2 Y \) (together with \( X \not\simeq_1 Y \)): an example is obtained by letting \( X  \) be \( [0,1]^\omega \) and \( Y \) be the disjoint union of two copies of \( [0,1]^\omega \). However, the general case remains open:

\begin{question}
Are there Polish spaces \( X, Y \), both of topological dimension \( \infty \), such that \( X \not\simeq_3 Y \)? Are there \( X, Y \) as before such that \( X \not\simeq_n Y \) for every \( 1 \leq n < \omega \)?
\end{question}

\end{document}